\definecolor{ghcolor}{RGB}{0, 150, 200} 
\definecolor{winestain}{rgb}{0.5,0,0}
\newtheorem{theorem}[subsubsection]{Theorem}
\newtheorem{thm}[subsubsection]{Theorem}
\newtheorem{lemma}[subsubsection]{Lemma}
\newtheorem{lem}[subsubsection]{Lemma}
\newtheorem{cor}[subsubsection]{Corollary}
\newtheorem{corollary}[subsubsection]{Corollary}
\newtheorem{prop}[subsubsection]{Proposition}
\theoremstyle{definition}
\newtheorem{defn}[subsubsection]{Definition}
\newtheorem{definition}[subsubsection]{Definition}
\newtheorem{example}[subsubsection]{Example}
\newtheorem{Notation}[subsubsection]{Notation}
\newtheorem{notation}[subsubsection]{Notation}
\newtheorem{Convention}[subsubsection]{Convention}
\theoremstyle{remark}
\newtheorem{remark}[subsubsection]{Remark}
\newtheorem{rem}[subsubsection]{Remark}
\numberwithin{equation}{subsection}
\def\upi{\underline \pi}
\def\Mat{\text{\rm Mat}}
\def\val{\text{\rm val}}
\def\O{\mathcal{O}}
\def\E{\mathcal{E}}
 \def \E{\mathcal E}
\def \Z {\mathbb Z}
\def \inj {\hookrightarrow }
\def \to {\rightarrow}
\def \spec \text{spec}
\def \cont \text{cont}
 \def \M{\mathfrak M}
\def \GL {\mathrm{GL}}
\def \an {\mathrm{an} }
\def \la {\mathrm{la}}
\def \Tr {\mathrm{Tr}}
\DeclareMathOperator{\gal}{Gal}
\DeclareMathOperator{\dR}{dR}
\def \Z {\mathbb Z}
\def \O {\mathcal O}
\def \gs {\mathfrak S}
\def \ur {\mathrm{ur}}
\def \dR {{\textnormal{dR}}}
\def \Ker {\textnormal{Ker}}
\def \Fr {\textnormal{Fr}}
\def \upi {\underline \pi }
\def \< {\left <}
\def \> {\right >}
\def \upi {{\underline{\pi}}}
\def \Md {{\rm M}_d}
\def \rig {{\rm{rig}}}
\def \Zp { \mathbb Z_p}
\def \Qp { \mathbb Q_p}
\DeclareMathOperator{\Gal}{Gal}
\newcommand*{\wt}[1]{\widetilde{#1}}
\renewcommand{\phi}{\varphi}
\renewcommand{\projlim}{\varprojlim}
\newcommand{\cbf}{\mathbf{c}}
\newcommand{\kbf}{\mathbf{k}}
\newcommand{\pa}{\mathrm{pa}}
\newcommand{\dan}{\text{$\mbox{-}\mathrm{an}$}}
\newcommand{\dla}{\text{$\mbox{-}\mathrm{la}$}}
\newcommand{\dpa}{\text{$\mbox{-}\mathrm{pa}$}}
\newcommand{\dacc}[1]{\{\!\{ #1 \}\!\}}
\newcommand{\smat}[1]{\left( \begin{smallmatrix} #1 \end{smallmatrix} \right)}
\def \Kpinfty {K_{p^\infty}}
\begin{document}

\title{Locally analytic vectors and overconvergent $(\varphi, \tau)$-modules }
\date{\today}
\author{Hui Gao}
\address{Department of Mathematics and Statistics, University of Helsinki, FI-00014, Finland}
\email{hui.gao@helsinki.fi}

\author{L\'{e}o Poyeton}
\address{UMPA, \'{E}cole Normale Sup\'erieure de Lyon, 46 all\'ee d'Italie, 69007 Lyon, France}
\email{leo.poyeton@ens-lyon.fr}


\subjclass[2010]{Primary  11F80, 11S20}

\keywords{Locally analytic vectors, Overconvergence, $(\varphi, \tau)$-modules}

\begin{abstract} Let $p$ be a prime, let $K$ be a complete discrete valuation field of characteristic $0$ with a perfect residue field of characteristic $p$, and let $G_K$ be the Galois group. Let $\pi$ be a fixed uniformizer of $K$, let $K_\infty$ be the extension by adjoining to $K$ a system of compatible $p^n$-th roots of $\pi$ for all $n$, and let $L$ be the Galois closure of $K_\infty$. Using these field extensions, Caruso constructs the $(\varphi, \tau)$-modules, which classify $p$-adic Galois representations of $G_K$. In this paper, we study locally analytic vectors in some period rings with respect to the $p$-adic Lie group $\gal(L/K)$, in the spirit of the work by Berger and Colmez. Using these locally analytic vectors, and using the classical overconvergent $(\varphi, \Gamma)$-modules, we can establish the overconvergence property of the  $(\varphi, \tau)$-modules.

\end{abstract}

\maketitle

\tableofcontents

\section{Introduction}
\subsection{Overview and main theorem}

Let  $p$ be a prime, and let $K$ be a complete discrete valuation field of characteristic $0$ with a perfect residue field of characteristic $p$. We fix an algebraic closure $\overline {K}$ of $K$ and set $G_K:=\Gal(\overline{K}/K)$. In $p$-adic Hodge theory, we use various ``linear algebra" tools to study $p$-adic representations of $G_K$. A key idea in $p$-adic Hodge theory is to first restrict the Galois representations to some subgroups of $G_K$. For example, the classical $(\varphi, \Gamma)$-modules are constructed by using the subgroup $G_{p^\infty}:= \gal (\overline K / K_{p^\infty})$ where $K_{p^\infty}$ is the extension of $K$ by adjoining a compatible system of $p^n$-th  primitive roots of $1$ for all $n$ (cf. Notation \ref{nota fields} below). Later, it becomes clear that it is also important to study other possible theories arising from other subgroups. In this paper, we will study the $(\varphi, \tau)$-modules, which are constructed by using the subgroup $G_{\infty}:= \gal (\overline K / K_{\infty})$ where $K_{\infty}$ is the extension of $K$ by adjoining a compatible system of $p^n$-th roots of a fixed uniformizer of $K$ for all $n$ (cf. Notation \ref{nota fields} below).

The $(\varphi, \tau)$-modules, firstly constructed by Caruso (cf. \cite{Car13}), originated from works by Breuil and Kisin (cf. e.g., \cite{Bre99b, Kis06}); they look quite similar to the $(\varphi, \Gamma)$-modules, but in certain situations (in particular, if we consider the semi-stable representations), give much more useful information than the later. For example, these semi-stable $(\varphi, \tau)$-modules (called Kisin modules, or Breuil-Kisin modules, or $(\varphi, \hat{G})$-modules in various contexts) can be used to study Galois deformation rings (cf. \cite{Kis08}),
to classify semi-stable (integral) Galois representations (cf. \cite{Liu10}),
and to study integral models of Shimura varieties (cf. \cite{Kis10}), to name just a few. In contrast, the $(\varphi, \Gamma)$-modules can only achieve very partial results in the aforementioned situations. However, the $(\varphi, \Gamma)$-modules have their own advantages; for example, they can be used to interprete Iwasawa cohomology (cf. \cite{CC99}), to prove $p$-adic monodromy theorem (cf. \cite{Ber02}), and most fantastically, to construct $p$-adic Langlands correspondence in the $\GL_2(\Qp)$-situation (cf. \cite{Col10}). To explore other possible applications of the $(\varphi, \tau)$-modules (and also the $(\varphi, \Gamma)$-modules), it is desirable to establish more parallel properties and build more links between these two theories. In this paper, we will study the overconvergence property of the $(\varphi, \tau)$-modules; the analogous property of the $(\varphi, \Gamma)$-modules, first established by Cherbonnier and Colmez (cf. \cite{CC98}), played a fundamental role  in almost all applications of the $(\varphi, \Gamma)$-modules.

Let us be more precise now.

\begin{Notation} \label{nota fields}
Let $k$ be the (perfect) residue field of $K$, let $W(k)$ be the ring of Witt vectors, and let $K_0 :=W(k)[1/p]$.
Thus $K$ is a totally ramified finite extension of $K_0$; write $e := [K: K_0]$.
Let $C_p$ be the $p$-adic completion of $\overline{K}$. Let $v_p$ be the valuation on $C_p$ such that $v_p(p)=1$.
For any subfield $Y \subset C_p$, let $\mathcal{O}_Y$ be its ring of integers.

Let $\pi \in   K$ be a uniformizer, and let $E(u) \in W(k)[u]$ be the irreducible polynomial of $\pi$ over $K_0$. Define a sequence of elements $\pi _n \in \overline K$ inductively such that $\pi_0 = \pi$ and $(\pi_{n+1})^p = \pi_n$. Define $\mu _n \in \overline K$ inductively such that $\mu_1$ is a primitive $p$-th root of unity and $(\mu_{n+1})^p = \mu_n$.
Let
$$K_{\infty} : = \cup _{n = 1} ^{\infty} K(\pi_n), \quad K_{p^\infty}=  \cup _{n=1}^\infty
K(\mu_{n}), \quad L:=  \cup_{n = 1} ^{\infty} K(\pi_n, \mu_n).$$
Let $$G_{\infty}:= \gal (\overline K / K_{\infty}), \quad G_{p^\infty}:= \gal (\overline K / K_{p^\infty}), \quad G_L: =\gal(\overline K/L), \quad \hat G: =\gal (L/K) .$$
\end{Notation}

Let $V$ be a finite dimensional $\Qp$-vector space equipped with a continuous $\Qp$-linear $G_K$-action.
In \cite{Car13}, using the theory of field of norms for the field $K_\infty$, Caruso associates to $V$  an \'etale $(\varphi, \tau)$-module (if one uses the field $K_{p^\infty}$ instead, one would get the usual \'etale $(\varphi, \Gamma)$-module); this induces an equivalence between the category of $p$-adic representations of $G_K$ and the category of \'etale $(\varphi, \tau)$-modules.
An \'etale $(\varphi, \tau)$-module is a triple $\hat D = (D, \varphi_D, \hat{G})$ (see Def. \ref{defn phi tau mod} for more details).
Here, we only mention that $D$ is a finite dimensional vector space over the field $\mathbf{B}_{K_\infty}:=\mathbf{A}_{K_\infty}[1/p]$ where
$$\mathbf{A}_{K_\infty}: =\{ \sum_{i=-\infty}^{+\infty} a_i u^i : a_i \in W(k), v_p(a_i) \to +\infty, \text{ as } i \to -\infty \},$$
and $\varphi_D$ is a certain map $D \to D$ (here, we ignore the discussion of the $\hat{G}$-data).
 We say that $\hat D$ is \emph{overconvergent} if
  we can ``descend" the module $D$ to a $\varphi$-stable submodule $D^\dagger$ over a subring $\mathbf{B}_{K_\infty}^\dagger$ (called the overconvergent subring) of $\mathbf{B}_{K_\infty}$, where
$$\mathbf{B}_{K_\infty}^\dagger: =\{ \sum_{i=-\infty}^{+\infty} a_i u^i \in \mathbf{B}_{K_\infty}, v_p(a_i) +i\alpha \to +\infty \text{ for some } \alpha >0, \text{ as } i \to -\infty \}.$$
The following is our main theorem.

\begin{thm} \label{thm intro main}
For any finite dimensional $\Qp$-representation $V$ of $G_K$, its associated $(\varphi, \tau)$-module is overconvergent.
\end{thm}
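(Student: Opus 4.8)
The plan is to bootstrap from the known overconvergence of $(\varphi,\Gamma)$-modules (Cherbonnier--Colmez) via a careful study of locally analytic vectors for the $p$-adic Lie group $\hat G = \gal(L/K)$. Note that $L$ contains both $K_\infty$ and $K_{p^\infty}$, so $\hat G$ surjects onto both $\gal(L/K_{p^\infty})$-related quotients and contains information visible to $G_\infty$; concretely $\gal(L/K_{p^\infty})$ is (procyclic, topologically generated by an element $\tau$) while $\gal(L/K_\infty)$ is the cyclotomic-type quotient. First I would set up the relevant period rings --- the Robba-type rings $\BBBrn$, their $K_\infty$-variants, and the big ring $\bt^\dagger_{\mathrm{rig}}$ --- and for each one identify the $\hat G$-locally analytic vectors. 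The key computational input is a ``$\dla$'' (locally analytic) description: one expects that taking locally analytic vectors inside a period ring on which $\hat G$ acts recovers a ring built out of both the $u$-variable (the $K_\infty$-side uniformizer parameter) and a second variable $t$ or $\log[\pibar]$ coming from the cyclotomic/$\tau$-direction, with an explicit action of the Lie algebra.

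The core of the argument is then a descent in two stages. Stage one: since the $(\varphi,\Gamma)$-module of $V$ is overconvergent, we have an overconvergent model over $\bdag{}_{K_{p^\infty}}$; I would show that after pushing up to an appropriate big overconvergent ring with $G_L$- or $\hat G$-action, the module is defined over the locally analytic vectors of that ring. Stage two: intersect back down. The point is that the ring $\mathbf{B}^\dagger_{K_\infty}$ of overconvergent power series in $u$ should be recovered as $(\text{big overconvergent ring})^{G_\infty} \cap (\text{locally analytic vectors for }\hat G)$, or more precisely as a $\tau = 1$ (or $\gal(L/K_\infty) = 1$) fixed subring of the locally analytic vectors computed in stage one. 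This requires a Tate--Sen / Ax--Sen--Tate style argument together with a monodromy descent: knowing a $\varphi$-module over a big locally analytic ring descends to the small overconvergent ring is where one uses that locally analytic vectors are ``not too big'' --- they form a ring which is, in each relevant slope/radius range, a finite or pro-finite-type extension of $\BBKrn$.

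The main obstacle, as I see it, is precisely the explicit identification of the locally analytic vectors in the $(\varphi,\tau)$ period rings and control of the $\hat G$-action on them. Unlike the cyclotomic case, $\hat G$ is a non-commutative $p$-adic Lie group (an extension of $\Z_p^\times$-type quotient by a $\Z_p$), so the Lie algebra has two generators with a non-trivial bracket, and one must track how $\varphi$ and the two infinitesimal actions interact on elements like $\log[\pibar]$ and the relevant analytic functions of $u$. Getting the analogue of the Cherbonnier--Colmez/Berger ``$\gamma - 1$ is invertible on a complement'' estimate --- here for the operator coming from $\tau$ or from a topological generator of $\gal(L/K_\infty)$ --- on the right Banach pieces of the Robba ring, uniformly enough to descend, is the technical heart. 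I would expect to need: (i) a decompletion lemma identifying $(\btrig{}^{\dagger,r}\!)^{\hat G\dla}$ with an explicit ring; (ii) a finiteness/almost-étale statement making that ring a well-controlled extension of $\BBKrn$; and (iii) the overconvergence of the $(\varphi,\Gamma)$-module to produce, after base change, a $\varphi$-stable lattice inside the locally analytic vectors, which one then shows automatically lands in $D^\dagger$ over $\mathbf{B}^\dagger_{K_\infty}$ by a $\tau$-invariance (or $\gal(L/K_\infty)$-invariance) argument combined with a Frobenius-regularization to clear denominators in the radius $r$.
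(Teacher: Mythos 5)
Your overall strategy is the one the paper follows: compute locally analytic vectors in the big period rings for the two-dimensional group $\hat G=\gal(L/K)$, feed in the Cherbonnier--Colmez overconvergence of the $(\varphi,\Gamma)$-module to produce enough $\hat G$-locally analytic vectors in $\wt{D}^I_L(V)$, and then descend to the $\gal(L/K_\infty)$-invariants by a Tate--Sen (or monodromy) argument. One point of emphasis you get slightly backwards: the decisive computation is not of the full ring $(\wt{\mathbf{B}}^{I}_L)^{\hat G\dla}$ (which does involve a second variable, the element $b=t/p\lambda$), but of the smaller space of $\tau$-locally analytic vectors that are \emph{killed by} $\gal(L/K_\infty)$ --- so the relevant invariance condition is $\gal(L/K_\infty)=1$ (``$\gamma=1$''), not $\tau=1$; the latter would recover the cyclotomic side. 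With that choice the answer is the one-variable ring $\cup_m\varphi^{-m}(\mathbf{B}^{\dagger}_{\rig,K_\infty})$, and it is this identification (Thm.~\ref{thm loc ana gamma 1}, via the structure results of \S\ref{sec: rings} and the extension to finite levels $M/K_\infty$ in \S\ref{sec fieldnorm}) that makes the descent land where you want it.

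The genuine gap is in your step (iii), where you assert that the descended lattice ``automatically lands in $D^\dagger$ over $\mathbf{B}^\dagger_{K_\infty}$.'' This is not automatic. The locally analytic vectors live in Fr\'echet completions, so after the Tate--Sen/\'etale descent you only obtain, for each closed interval $I_k=p^kI$, a finite free module over $\varphi^{-m}(\mathbf{B}^{p^mI_k}_{K_\infty})$; these must first be glued into a vector bundle over the Robba ring $\mathbf{B}^{\dagger}_{\rig,K_\infty}$ (using \cite[Thm.~2.8.4]{Ked05}), and then one must descend from the Robba ring to the \emph{bounded} ring $\mathbf{B}^{\dagger}_{K_\infty}$. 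That last step requires knowing the resulting $\varphi$-module over $\mathbf{B}^{\dagger}_{\rig,K_\infty}$ is pure of slope zero --- which follows from the comparison $\wt{\mathbf{B}}^{\dagger}_{\rig}\otimes D^{\dagger}_{\rig,K_\infty}\simeq\wt{\mathbf{B}}^{\dagger}_{\rig}\otimes_{\Qp}V$ --- and an appeal to Kedlaya's slope filtration theorem \cite[Thm.~6.3.3]{Ked05}, followed by a check (via $(\wt{\mathbf{B}}\otimes D')^{\varphi=1}$) that the \'etale model one obtains really recovers $D_{K_\infty}(V)$. Your ``Frobenius-regularization to clear denominators'' gestures at the gluing but does not supply the slope-theoretic input, without which there is no way to pass from the analytic (Robba) level back to the overconvergent ring $\mathbf{B}^\dagger_{K_\infty}$ demanded by the definition of overconvergence.
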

\begin{rem}
\begin{enumerate}
\item Thm. \ref{thm intro main} is originally proposed as a question by Caruso in \cite[\S 4]{Car13}, as an analogue of the classical overconvergence theorem for \'etale $(\varphi, \Gamma)$-modules by Cherbonnier and Colmez (\cite{CC98}).
  \item In a previous joint work by the first named author and T. Liu, Thm \ref{thm intro main} is established when $K$ is a finite extension of $\Qp$, using a completely different method (see \cite{GL}); a key ingredient in \emph{loc. cit.} is the construction of  ``loose crystalline lifts" of torsion Galois representations, which requires the finiteness of $k$ (see e.g., \cite[Rem. 1.1.2]{GL}).

  \item There does not seem to be any obvious comparison between the proof in this paper and that in \cite{GL}. The main idea in \cite{GL} is to ``approximate" a general $p$-adic Galois representation by torsion crystalline representations; whereas we do not use any torsion representations in the current paper.
\end{enumerate}
\end{rem}

\begin{rem}
\begin{enumerate}

\item In an upcoming work  by the first named author, the overconvergence property will also be established for $(\varphi, \tau)$-modules attached to an arithmetic family of Galois representations $V_S$ over a rigid analytic space $S$ (we need to assume $K/\Qp$ finite there).
Furthermore, we will use these family of overconvergent $(\varphi, \tau)$-modules to study sheaves of Fontaine periods (e.g., as in  \cite{Bel15}).

\item Using ideas and methods in this paper, it also seems very plausible to formulate and prove overconvergence results for \emph{geometric} families of  $(\varphi, \tau)$-modules, in analogy with results in \cite{KL2}.

\item In contrast, the methods in \cite{GL} can not be generalized to families (either arithmetic or geometric) of Galois representations.
\end{enumerate}
\end{rem}

\begin{rem}
  We refer to  \cite[\S 1.2]{GL} for some discussions of the importance and usefulness of overconvergence results in $p$-adic Hodge theory. In particular, in \emph{loc. cit.}, we mentioned about the \emph{link} between the category of all Galois representations and the category of geometric (i.e., semi-stable, crystalline) representations. Indeed, in \emph{loc. cit.}, we used this link to prove the overconvergence theorem. In the current paper, we do not use any semi-stable representations; instead, some results we obtain in the current paper will be used to study semi-stable representations. One result worth mentioning is Thm. \ref{thm loc ana gamma 1}(4) (see also Rem. \ref{rem kisin ring}), where we show certain ring of locally analytic vectors is related with the ring $\mathcal{O}_{[0, 1)}$ in \cite{Kis06}. We will report some progress (in particular, on the theory of $(\varphi, \hat{G})$-modules) in a future work by the first named author and T. Liu.
     \end{rem}

\subsection{Strategy of proof}
The key ingredient for the proof of Thm. \ref{thm intro main} is the calculation of locally analytic vectors in some period rings, in the spirit of the work by Berger and Colmez (\cite{BC16, Ber16}).
The philosophy that overconvergence of Galois representations is related with locally analytic vectors is first observed by Colmez, in the framework of $p$-adic Langlands correspondence (cf. \cite[Intro. 13.3]{Col10}). For example, overconvergent $(\varphi, \Gamma)$-modules (cf. \cite{CC98}) are closely related with locally analytic vectors in the $p$-adic Langlands correspondence for $\GL_2(\Qp)$ (cf. \cite{LXZ12, Col14}), i.e., via the ``\emph{locally analytic $p$-adic Langlands correspondence}".

To study the $p$-adic Langlands correspondence for $\GL_2(F)$ where $F/\Qp$ is a finite extension, Berger recently proves overconvergence of the Lubin-Tate $(\varphi, \Gamma)$-modules (cf. \cite{Ber16}).
The key idea in \emph{loc. cit.}, very roughly speaking, is that there should exist ``enough" locally analytic vectors in the Lubin-Tate $(\varphi, \Gamma)$-modules.
To find these locally analytic vectors, one first ``enlarges" the space of Lubin-Tate $(\varphi, \Gamma)$-modules over a bigger period ring; then there are indeed enough locally analytic vectors, by \emph{using the classical overconvergent $(\varphi, \Gamma)$-modules as an input} (cf. \cite[Thm. 9.1]{Ber16}). One then descends from the bigger space of locally analytic vectors to the level of  Lubin-Tate $(\varphi, \Gamma)$-modules, via a monodromy theorem (cf. \cite[\S 6]{Ber16}).


The key idea in our paper is similar to that in \cite{Ber16}. Indeed, (very roughly speaking), we first ``enlarge" the space of the $(\varphi, \tau)$-module  over the big period ring $\wt{\mathbf{B}}^{\dagger}_{\rig, L}$ (which is $\gal(\overline{K}/L)$-invariant of the well-known ring  $\wt{\mathbf{B}}^{\dagger}_{\rig}$); there are enough locally analytic vectors on this level, by \emph{using the classical overconvergent $(\varphi, \Gamma)$-modules as an input} again (cf. the proof of Thm. \ref{thm final}).
To descend these locally analytic vectors to the level of $(\varphi, \tau)$-modules, we can use a Tate-Sen descent or a monodromy descent (see Prop. \ref{prop M triv} and Rem. \ref{rem mono des} for more details).


As the strategy suggests, one needs to compute locally analytic vectors in some period rings (e.g., $\wt{\mathbf{B}}^{\dagger}_{\rig, L}$). In the case of $(\varphi, \Gamma)$-modules, the concerned $p$-adic Lie group is  $\gal(K_{p^\infty}/K)$ (see Notation \ref{nota fields}), which is one-dimensional. In the case of Lubin-Tate $(\varphi, \Gamma)$-modules, the $p$-adic Lie group is $\mathcal{O}_F^\times$, which is of dimension $[F: \Qp]$. In general, it would be very difficult to calculate locally analytic vectors for $p$-adic Lie groups of dimension higher than one. In \cite{Ber16}, Berger considers firstly the ``$F$-analytic" locally analytic vectors, which behave similar to the one-dimensional case. He then uses these ``$F$-analytic" locally analytic vectors to determine the full space of $\mathcal{O}_F^\times$-locally analytic vectors.
In our paper, the concerned $p$-adic Lie group is $\hat{G}=\gal(L/K)$, which is of dimension two. The key observation is that we need to firstly consider $\hat{G}$-locally analytic vectors which are \emph{furthermore $\gal(L/K_\infty)$-invariant}; these locally analytic vectors then again behave similar to the one-dimensional case. Indeed, we have:

\begin{theorem}
Let $(\wt{\mathbf{B}}^{\dagger}_{\rig, L})^{\tau\dpa, \gamma=1}$ denote the set of $\gal(L/K_{p^\infty})$-(pro)-locally analytic vectors which are furthermore fixed by $\gal(L/K_{\infty})$. Then we have
$$  (\wt{\mathbf{B}}^{\dagger}_{\rig, L})^{\tau\dpa, \gamma=1} =\cup_{m \geq 0}\varphi^{-m} ( \mathbf{B}_{\rig, K_\infty}^{\dagger}  ) ,$$
where $\mathbf{B}_{\rig, K_\infty}^{\dagger}$ is the ``Robba ring with coefficients in $K_0$" (cf. Def. \ref{defn rig ring}).
\end{theorem}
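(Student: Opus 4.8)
The plan is to prove the two inclusions separately, the easy one first. For ``$\supseteq$'': since $\gamma$ acts trivially on $\mathbf{B}_{\rig, K_\infty}^{\dagger}$ (because $K_\infty$ is fixed by $\gal(L/K_\infty)$ and $\gal(L/K_{p^\infty})$ acts on $L$ through its action coming from $\hat G$, with $\gal(L/K_\infty)$ being the relevant direction), and since $\varphi^{-m}$ commutes with the $\hat G$-action, it suffices to check that every element of $\mathbf{B}_{\rig, K_\infty}^{\dagger}$ is $\gal(L/K_{p^\infty})$-pro-locally analytic. This should follow from the fact that $\mathbf{B}_{\rig, K_\infty}^{\dagger}$ is built out of $K_\infty$-data together with the variable $u = [\pi^\flat]$ (or its ``rig'' completion), on which the relevant group acts through a one-dimensional $p$-adic Lie quotient in the by-now standard way (analogous to $\mathbf{B}_{\rig, K}^{\dagger}$ being $\gal(K_{p^\infty}/K)$-locally analytic in the $(\varphi,\Gamma)$-module theory); I would cite or adapt the corresponding computation from \cite{Ber16} or from the earlier sections of this paper.

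For the hard inclusion ``$\subseteq$'', the strategy is to reduce the two-dimensional group $\hat G = \gal(L/K)$ to the one-dimensional situation by exploiting the hypothesis ``$\gamma = 1$''. Concretely, an element $x \in (\wt{\mathbf{B}}^{\dagger}_{\rig, L})^{\tau\dpa, \gamma=1}$ is fixed by $\gal(L/K_\infty)$, hence lies in $(\wt{\mathbf{B}}^{\dagger}_{\rig, L})^{\gal(L/K_\infty)} = \wt{\mathbf{B}}^{\dagger}_{\rig, K_\infty}$ (the $\gal(L/K_\infty)$-invariants of the big Robba ring; this identification of invariants should already be available in the excerpt's earlier sections, or follows from an almost-étale / Ax--Sen--Tate type argument after passing to $\wt{\mathbf{B}}^{+}_{\rig}$-level). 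So $x$ already lives in a ring attached to $K_\infty$ alone, and the residual group acting on it is $\gal(L/K_\infty)$ acting via $\tau$, which through its action on $\wt{\mathbf{B}}^{\dagger}_{\rig, K_\infty}$ factors through a one-dimensional $p$-adic Lie group (the ``$\tau$-direction''). Being $\tau$-pro-locally-analytic, $x$ is then a locally analytic vector for this one-dimensional group, and the problem becomes: compute $(\wt{\mathbf{B}}^{\dagger}_{\rig, K_\infty})^{\tau\dpa}$. This is precisely the kind of one-dimensional locally-analytic-vector computation that the Berger--Colmez machinery handles: one shows the locally analytic vectors for the $\tau$-action inside $\wt{\mathbf{B}}^{\dagger}_{\rig, K_\infty}$ coincide with $\cup_{m\geq 0}\varphi^{-m}(\mathbf{B}_{\rig, K_\infty}^{\dagger})$, by identifying $\mathbf{B}_{\rig, K_\infty}^{\dagger}$ with the image of the ``decompletion'' map and using that $\varphi^{-m}$ of it exhausts the analytic vectors as the radius shrinks.

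The main obstacle will be the one-dimensional locally-analytic-vector computation $(\wt{\mathbf{B}}^{\dagger}_{\rig, K_\infty})^{\tau\dpa} = \cup_m \varphi^{-m}(\mathbf{B}_{\rig, K_\infty}^{\dagger})$ itself. In the classical $(\varphi, \Gamma)$ setting the analogous statement (Berger, Colmez) rests on a fairly delicate analysis: a Tate--Sen / Sen-operator argument showing that a locally analytic vector is killed by high powers of (a twist of) the Lie-algebra operator and hence lies in a finite-level piece, combined with explicit estimates on the radii of convergence that force the $\varphi^{-m}$. Here one must set up the correct analogue of the Sen operator for the $\tau$-action — which is subtler because $\tau$ does not generate a procyclic group in the naive sense and the relevant Kummer-tower geometry is different from the cyclotomic one — and verify the requisite Tate--Sen axioms for the tower $K \subset K_\infty \subset \cdots$. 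I expect this to be where the real work lies, and I would structure it as: (i) establish the Tate--Sen formalism for $\tau$ (likely already done in an earlier section of the paper); (ii) deduce that $\tau$-locally-analytic vectors are ``locally finite'' for the associated Lie operator; (iii) run the radius-of-convergence estimate to pin down the answer; (iv) splice in the $\gamma=1$ reduction above. Steps (i)--(iii) are the substantive ones; step (iv) and the inclusion ``$\supseteq$'' are comparatively formal.
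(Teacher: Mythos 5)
Your easy inclusion and your identification of where the difficulty lies (a one-dimensional decompletion with radius control forcing the $\varphi^{-m}$) are on target, but the reduction you propose for the hard inclusion has a structural flaw. The subgroup $\gal(L/K_\infty)$ is \emph{not} normal in $\hat G$ (equivalently, $G_\infty$ is not normal in $G_K$), so $\tau$ does not act on $\wt{\mathbf{B}}^{\dagger}_{\rig, K_\infty}=(\wt{\mathbf{B}}^{\dagger}_{\rig})^{G_\infty}$, and the object ``$(\wt{\mathbf{B}}^{\dagger}_{\rig, K_\infty})^{\tau\dpa}$'' you want to compute is not well-posed: there is no ``residual one-dimensional group'' acting on the invariant ring. (You have also swapped the two directions: $\gal(L/K_\infty)$ is the $\gamma$-direction, while $\tau$ generates $\gal(L/K_{p^\infty})$.) The only correct formulation is the one in the statement itself: compute, inside $\wt{\mathbf{B}}^{I}_L$ with its full $\hat G$-action, the intersection of the $\tau$-pro-analytic vectors with the $\gal(L/K_\infty)$-fixed vectors. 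So your ``reduction'' either does not parse or is a restatement of the theorem, and all the content is pushed into your steps (i)--(iii), which are left as an outline.

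That outline, moreover, does not describe a mechanism that closes the argument. The paper does not set up a Tate--Sen formalism for the Kummer tower, nor a local-finiteness argument for a Sen operator attached to $\tau$, in order to prove this theorem. Instead it runs Berger's successive-approximation scheme on each closed annulus $I=[r_\ell,r_k]$: apply $\theta\circ\iota_k$, use that the $(\tau_{m+k}\dan,\gamma=1)$-vectors of $\O_{\hat L}$ are exactly $\O_{K(\pi_{m+k})}$ (this is where Berger--Colmez Sen theory genuinely enters, at the residue level), subtract an element of $W(k)[\varphi^{-m}(u)]$, and divide by the generator $\varphi^{k}(E(u))/p$ of $\Ker(\theta\circ\iota_k)$, iterating. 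The two key lemmas making this work are: (a) $\varphi^{-n}(u)\in(\wt{\mathbf{B}}^{I}_L)^{\tau_{n+k}\dan}$ with that precise radius of analyticity --- this is exactly what produces the $\varphi^{-m}$ in the answer; and (b) $t/\varphi^{k}(E(u))$ is itself an analytic vector, so that dividing an analytic vector by $\varphi^{k}(E(u))$ preserves analyticity. One also needs the $\gamma=1$ and $\tau$-analyticity conditions to persist at every stage of the iteration (not just once at the outset), and an intersection statement of the form $\mathbf{A}^{[r,s]}_{K_\infty}\cap\wt{\mathbf{A}}^{[r',s]}=\mathbf{A}^{[r',s]}_{K_\infty}$ to land in the right ring at the end. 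None of these ingredients appears in your plan, so as it stands the proposal is a programme rather than a proof, and the programme points in a direction different from the one that is known to work.
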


With the above theorem established, we can also completely determine the $\hat{G}$-locally analytic vectors in $\wt{\mathbf{B}}^{\dagger}_{\rig, L}$; since the statement is too technical, we refer the reader to Thm. \ref{thm Ghat la}.


\subsection{Structure of the paper}
In \S \ref{sec: rings}, we study the rings $\wt{\mathbf{B}}^{I}$ and $\mathbf{B}^{I}$ (where $I$ is an interval), as well as their $\gal(\overline{K}/K_\infty)$-invariants which are denoted as $\wt{\mathbf{B}}^{I}_{K_\infty}$ and $\mathbf{B}^{I}_{K_\infty}$. In \S \ref{sec: loc ana}, we  compute locally analytic vectors in $\wt{\mathbf{B}}^{I}_{K_\infty}$; and in \S \ref{sec fieldnorm}, we need to carry out similar calculations when we replace $K_\infty$ with a finite extension.
In \S \ref{sec Ghat la}, we compute the $\hat{G}$-locally analytic vectors in $\wt{\mathbf{B}}^{I}_L$.
All these calculations will be used in \S \ref{sec oc} to carry out the descent of locally analytic vectors, giving us the desired overconvergence result.

\subsection{Notations} \label{subsec notations}

\subsubsection{Convention on ring notations.} \label{subsub style}
 In this paper, we will use many  rings. Let us mention some of the conventions about how we choose the notations; it also serves as a brief index of ring notations.
\begin{enumerate}[leftmargin=*]
\item In \S \ref{period rings}, we define some basic   rings. We also compare them with notations commonly used in integral $p$-adic Hodge theory (see Rem. \ref{rem notation}).

\item In \S \ref{subsec Btilde}, we define the rings $\wt{\mathbf{A}}^{I}$ and  $\wt{\mathbf{B}}^{I}$ (where $I$ is an interval), which are exactly the same as $\wt{\mathbf{A}}^{I}$ and  $\wt{\mathbf{B}}^{I}$ in \cite{Ber08ANT} (which are $\wt{\mathbf{A}}_I$ and  $\wt{\mathbf{B}}_I$ in \cite{Ber02}). (See also the table in \cite[\S 1.1]{Ber08ANT} for a comparison of notations with those of Colmez and Kedlaya).

\item When $Y$ is a ring with a $G_K$-action, $X \subset \overline{K}$ is a subfield, we use $Y_X$ to denote the $\gal(\overline{K}/X)$-invariants of  $Y$. Some examples include when $Y=\wt{\mathbf{A}}^{I}, \wt{\mathbf{B}}^{I}, {\mathbf{A}}^{I}, {\mathbf{B}}^{I}$ and $X=L, K_\infty , M$ where $M/K_\infty$ is a finite extension. This ``style of notation" imitates that of \cite{Ber08ANT}, which uses the subscript $\ast_{K}$ to denote $G_{p^\infty}$-invariants.



\item In \S \ref{subsec BI}, we define the rings $\mathbf{A}^{I}$ and $\mathbf{B}^{I}$ and study their $G_\infty$-invariants: $\mathbf{A}^{I}_{K_\infty}$ and $\mathbf{B}^{I}_{K_\infty}$. These rings ``correspond" to those rings studied in \cite[\S 6.3, \S 7]{Col08}. Our $\mathbf{A}^{I}$ and $\mathbf{B}^{I}$ are \emph{different} from $\mathbf{A}^{I}$ and $\mathbf{B}^{I}$ in \cite{Col08} (cf. Rem. \ref{rem notation}); fortunately, we are mostly interested in $\mathbf{A}^{I}_{K_\infty}$ and $\mathbf{B}^{I}_{K_\infty}$, and since we are using $K_\infty$ as subscripts, confusions are avoided.
\end{enumerate}


\subsubsection{Period rings}  \label{period rings}
Let $\wt{\mathbf{E}}^+:=\varprojlim \limits \O_{\overline K}/ p \O_{\overline K}$ where the transition maps are $x\mapsto x^p$, let $\wt{\mathbf{E}}:=\mathrm{Fr} \widetilde{\mathbf{E}}^+$. An element of $\wt{\mathbf{E}}$ can be uniquely represented by $(x^{(n)})_{n \geq 0}$ where $x^{(n)} \in C_p$ and $(x^{(n+1)})^{p}=(x^{(n)})$; let $v_{\wt{\mathbf{E}}}$ be the usual valuation where $v_{\wt{\mathbf{E}}}(x):=v_p(x^{(0)})$.
Let
$$\wt{\mathbf{A}}^+:= W(\wt{\mathbf{E}}^+), \quad  \wt{\mathbf{A}}:= W(\wt{\mathbf{E}}), \quad \wt{\mathbf{B}}^+:= \wt{\mathbf{A}}^+[1/p], \quad \wt{\mathbf{B}}:= \wt{\mathbf{A}}[1/p],$$
where $W(\cdot)$ means the ring of Witt vectors.
There is a unique surjective ring homomorphism  $\theta : \wt{\mathbf{A}}^+ \to   \O_{C_p}$,
which lifts the  projection $\wt{\mathbf{E}}^+ \to \O_{\overline K}/ p$
onto the first factor in the inverse limit.
Let $\mathbf{B}_\dR^+$ be the $\Ker \theta[1/p]$-adic completion of $\wt{\mathbf{B}}^+$ (so the $\theta$-map extends to $\mathbf{B}_\dR^+$).
Let $\underline{\varepsilon}=\{\mu_n\}_{n \geq 0} \in \wt{\mathbf{E}}^+$, let $[\underline{\varepsilon}] \in \wt{\mathbf{A}}^+$ be its Teichm\"uller lift, and let $t:=\log([\underline{\varepsilon}]) \in \mathbf{B}_{\dR}^+$ as usual.



Let $\upi:=\{\pi_n\}_{n \geq 0} \in \wt{\mathbf{E}}^+$.
Let $\mathbf{E}^+_{K_\infty} :=k[\![\upi]\!]$, $\mathbf{E}_{K_\infty} :=k((\upi))$, and let $\mathbf{E}$ be the separable closure of $\mathbf{E}_{K_\infty}$ in $\wt{\mathbf{E}}$. By the theory of field of norms (cf. \S \ref{sec fieldnorm}), $\gal(\mathbf{E}/\mathbf{E}_{K_\infty}) \simeq G_{\infty}$. Furthermore, the completion of $\mathbf{E}$ with respect to $v_{\wt{\mathbf{E}}}$ is $\wt{\mathbf{E}}$.

Let $[\underline \pi ]\in \wt{\mathbf{A}}^+$ be the Teichm\"uller lift of $\upi$.
Let $\mathbf{A}^+_{K_\infty} : = W[\![u]\!]$ with Frobenius $\varphi$ extending the arithmetic Frobenius on $W(k)$ and $\varphi (u ) = u ^p$.
There is a $W(k)$-linear Frobenius-equivariant embedding $\mathbf{A}^+_{K_\infty} \inj \wt{\mathbf{A}}^+$ via $u\mapsto [\underline \pi]$.
Let $\mathbf{A}_{K_\infty}$ be the $p$-adic completion of $\mathbf{A}^+_{K_\infty}[1/u]$.
Our fixed embedding $\mathbf{A}^+_{K_\infty}\hookrightarrow \wt{\mathbf{A}}^+$ determined by  $\upi$
uniquely extends to a $\varphi$-equivariant embedding $\mathbf{A}_{K_\infty} \hookrightarrow \wt{\mathbf{A}}$, and we identify $\mathbf{A}_{K_\infty}$ with its image in $\wt{\mathbf{A}}$.
We note that $\mathbf{A}_{K_\infty}$ is a complete discrete valuation ring with uniformizer $p$ and residue field
$\mathbf{E}_{K_\infty}$.

Let $\mathbf{B}_{K_\infty}:=\mathbf{A}_{K_\infty}[1/p]$. Let $\mathbf{B}$ be the $p$-adic completion of the maximal unramified extension of $\mathbf{B}_{K_\infty}$ inside $\wt{\mathbf{B}}$, and let $\mathbf{A} \subset \mathbf{B}$ be the ring of integers.
Let $\mathbf{A}^+ : = \wt{\mathbf{A}}^+ \cap \mathbf{A}$. Then we have:
$$ (\mathbf{A})^{G_\infty} = \mathbf{A}_{K_\infty}, \quad (\mathbf{B})^{G_\infty} = \mathbf{B}_{K_\infty}, \quad
(\mathbf{A}^+)^{G_\infty} = \mathbf{A}^+_{K_\infty}.  $$

\begin{remark} \label{rem notation}
\begin{enumerate}
\item
The following rings (and their ``$\mathbf{B}$-variants") that we defined above,
$$\wt{\mathbf{E}}^+, \quad \wt{\mathbf{E}}, \quad \wt{\mathbf{A}}^+,\quad \wt{\mathbf{A}}, \quad \mathbf{A}^+_{K_\infty},\quad \mathbf{A}_{K_\infty}, \quad \mathbf{A}, \quad \mathbf{A}^+ $$
are precisely the following rings which are commonly used in integral $p$-adic Hodge theory (e.g., in \cite{GL}):
$$R, \quad \Fr R,\quad W(R),\quad W(\Fr R),\quad \gs,\quad \O_\E,\quad \O_{\widehat{\E} ^\ur}, \quad\gs^{\ur}. $$
\item The rings $\mathbf{A}$ and $\mathbf B$ (and their variants, e.g., $\mathbf{A}^I, \mathbf{B}^I$, in \S \ref{subsec BI}) are \emph{different} from the ``$\mathbf{A}$" and ``$\mathbf B$" in \cite{Ber08ANT} or \cite{Col08}. Indeed, they are the same algebraic rings, but with different structures (e.g., Frobenius structure).
In the proof of our final main theorem  (Thm. \ref{thm final}), we will use the font  $\mathbb A$, $\mathbb B$ to denote those rings in the $(\varphi, \Gamma)$-module setting.
 \end{enumerate}
\end{remark}


\subsubsection{Valuations and norms} \label{subsub val}
A non-Archimedean valuation of a ring $ A$ is a map $v:  {A} \to \mathbb{R}\cup \{+\infty\}$ such that
$v(x)=+\infty \Leftrightarrow x=0$ and $v(x+y) \geq \inf\{v(x), v(y)\}$. It is called \emph{sub-multiplicative} (resp. \emph{multiplicative}) if $v(xy)\geq v(x)+v(y)$ (resp. $v(xy)=v(x)+v(y)$), for all $x, y$. All the valuations in this paper are sub-multiplicative (some are multiplicative). Given a matrix $T=(t_{i, j})_{i, j}$ over $A$, let $v(T):=\min \{v(t_{i, j})\}$. A non-Archimedean valuation $v$ on $A$ induces a non-Archimedean norm where $\|a\|:=p^{-v(a)}$, and vice versa.


\subsubsection{Some other notations}
Throughout this paper, we reserve $\varphi$ to denote Frobenius operator. We sometimes add subscripts to indicate on which object Frobenius is defined. For example, $\varphi_\M$ is the Frobenius defined on $\M$. We always drop these subscripts if no confusion arises. We use $\Md (A)$ (resp. $\GL_d(A)$) to denote the set of $d \times d$-matrices (resp. invertible $d \times d$-matrices) with entries in $A$.


\subsection*{Acknowledgement}
We would like to dedicate this paper to Professor Jean-Marc Fontaine and Professor Jean-Pierre Wintenberger, with great admiration.
We thank Laurent Berger and Tong Liu for many useful discussions.
The influence of the work of Laurent Berger and Pierre Colmez in this paper will be evident to the reader. We thank the anonymous referees for several useful comments.
H.G. is partially supported by a postdoctoral position in University of Helsinki, funded by Academy of Finland through Kari Vilonen.
L.P. is currently a PhD student of Laurent Berger at the ENS de Lyon.

\section{A study of some rings} \label{sec: rings}
In this section, we study some rings which are denoted as $\wt{\mathbf{B}}^{I}$ and $\mathbf{B}^{I}$ (where $I$ is an interval). In particular, we study their $G_\infty$-invariants (see \ref{nota fields} for $G_\infty$), which are denoted as $\wt{\mathbf{B}}^{I}_{K_\infty}$ and $\mathbf{B}^{I}_{K_\infty}$.
The results will be used in Section \ref{sec: loc ana} to further determine the link between these rings.
All results in this section are analogues of their $G_{p^\infty}$-versions, established in \cite{Ber02, Col08}; the proofs are also similar.

\subsection{The ring $\wt{\mathbf{B}}^{I}$ and its $G_\infty$-invariants} \label{subsec Btilde}

Let $\overline \pi =\underline{\varepsilon} -1 \in \wt{\mathbf E}^+$ (this is not $\underline \pi$), and let $[\overline \pi] \in \wt{\mathbf{A}}^+$ be its Teichm\"uller lift.
When $ A$ is a $p$-adic complete ring, we use $  A\{X, Y\}$ to denote the $p$-adic completion of  $ A[X, Y]$.
As in \cite[\S 2]{Ber02}, we define the following rings.

\begin{defn} \label{defn wt rings}
  \begin{enumerate}
    \item  Let
    \begin{eqnarray*}
    \wt{\mathbf{A}}^{[r, s]} : &=& \wt{\mathbf{A}}^+ \{\frac{p}{[\overline \pi]^r}, \frac{[\overline \pi]^s}{p} \}, \text{ when } r\leq s \in \mathbb{Z}^{\geq 0}[1/p], s>0; \\
     \wt{\mathbf{A}}^{[r, +\infty]}: &=& \wt{\mathbf{A}}^+ \{\frac{p}{[\overline \pi]^r} \}, \text{ when } r \in \mathbb{Z}^{\geq 0}[1/p];\\
     \wt{\mathbf{A}}^{[+\infty, +\infty]}: &=& \wt{\mathbf{A}}.
    \end{eqnarray*}
Here, to be rigorous, $\wt{\mathbf{A}}^+ \{ {p}/{[\overline \pi]^r},  {[\overline \pi]^s}/{p} \}$ is defined as $\wt{\mathbf{A}}^+ \{X, Y \}/([\overline \pi]^rX-p, pY-[\overline \pi]^s, XY-[\overline \pi]^{s-r})$, and similarly for $\wt{\mathbf{A}}^+ \{ {p}/{[\overline \pi]^r} \}$ (and other similar occurrences later); see \cite[\S 2]{Ber02} for more details.

\item If $I$ is one of the closed intervals above, then let $   \wt{\mathbf{B}}^{I}:=\wt{\mathbf{A}}^{I}[1/p]$.
      \end{enumerate}
\end{defn}

\begin{rem}\label{remnoa00}
  We do not define $\wt{\mathbf{A}}^{[0, 0]}$. Indeed, we will refrain from using  the interval $[0, 0]$ throughout the paper; see Rem. \ref{rem donot defn} and Rem. \ref{remeqnw00} for more remarks concerning $[0, 0]$.
\end{rem}

\subsubsection{} \label{subsubcontain}
If $I$ is one of the closed intervals above, then $\wt{\mathbf{A}}^{I}$ is $p$-adically separated and complete; we use $V^{I}$ to denote its $p$-adic valuation (which is sub-multiplicative). When $I  \subset J$ are two closed intervals as above, then by \cite[Lem. 2.5]{Ber02}, there exists a natural (continuous) embedding
$\wt{\mathbf{A}}^{J}   \hookrightarrow  \wt{\mathbf{A}}^{I}$; we identify $\wt{\mathbf{A}}^{J}$ with its image (as algebraic rings) in this case.

\begin{defn}
When $r \in \mathbb{Z}^{\geq 0}[1/p]$, let
$$\wt{\mathbf{B}}^{[r, +\infty)}: = \bigcap_{n \geq 0} \wt{\mathbf{B}}^{[r, s_n]}$$
where $s_n  \in \mathbb{Z}^{>0}[1/p]$ is any sequence increasing to $+\infty$.  We equip $\wt{\mathbf{B}}^{[r, +\infty)}$ with its natural Fr\'echet  topology.
\end{defn}

\begin{lemma}\label{lemdensity}
\begin{enumerate}
\item \label{item dense} Let $I  \subset J$ be as in \S \ref{subsubcontain}. If $0 \notin J$, then $\wt{\mathbf{B}}^{J}$  is dense in $\wt{\mathbf{B}}^{I}$ with respect to $V^I$.

\item \label{item closed} Suppose $r\leq s \in \mathbb{Z}^{\geq 0}[1/p]$  and $s>0$, then $\wt{\mathbf{B}}^{[0, s]}$ is closed in $\wt{\mathbf{B}}^{[r, s]}$ with respect to $V^{[r, s]}$.

\item \label{item closure}Suppose $0 \leq s_1 \leq s_2 \leq s \leq +\infty$ and $s_2>0$,
then the   closure of $\wt{\mathbf{B}}^{[0, s]}$ in $\wt{\mathbf{B}}^{[s_1, s_2]}$ (with respect to $V^{[s_1, s_2]}$) is $\wt{\mathbf{B}}^{[0, s_2]}$.

\item \label{item frechet} When $r \in \mathbb{Z}^{\geq 0}[1/p]$, $\wt{\mathbf{B}}^{[r, +\infty)}$ is complete with respect to its Fr\'echet topology, and contains
$\wt{\mathbf{B}}^{[r, +\infty]}$ as a dense subring.

\end{enumerate}
\end{lemma}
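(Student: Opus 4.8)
\textbf{Proof proposal for Lemma \ref{lemdensity}.}

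The plan is to prove the four assertions more or less in the order stated, using as the sole input the explicit presentations of the rings $\wt{\mathbf{A}}^{I}$ from Definition \ref{defn wt rings} together with the embedding $\wt{\mathbf{A}}^{J}\hookrightarrow\wt{\mathbf{A}}^{I}$ of \S\ref{subsubcontain} and the basic facts about these rings from \cite[\S 2]{Ber02}. The key computational device throughout is that, because every $\wt{\mathbf{A}}^{I}$ is $p$-adically separated and complete and arises by adjoining to $\wt{\mathbf{A}}^+$ a few ratios of $p$ and $[\overline\pi]$, any element of $\wt{\mathbf{B}}^{I}$ can be written, after multiplying by a power of $p$, as a convergent sum of monomials $a_k [\overline\pi]^k$ with $a_k\in\wt{\mathbf{A}}^+$ (or $W(\wt{\mathbf{E}})$ at the $+\infty$ end), where the valuation $V^{I}$ is computed termwise from the endpoints of $I$. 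The strategy for each density/closure statement is then to truncate such a series.

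For \eqref{item dense}: write $I=[r_1,r_2]$, $J=[s_1,s_2]$ with $r_1\le s_1\le s_2\le r_2$ and $0\notin J$, so $s_1>0$. Given $x\in\wt{\mathbf{B}}^{I}$, expand $x$ as above and truncate the tails in both directions ($[\overline\pi]^k$ with $k$ very negative or, if $r_2<+\infty$, $k$ very positive) to land inside $\wt{\mathbf{B}}^{J}$; the point is that since $s_1>0$, making the negative-power tail small in $V^{I}$ (which weights $[\overline\pi]^{-1}$ by $r_1$) automatically keeps it in $\wt{\mathbf{B}}^{J}$, and symmetrically at the top end. This is exactly the argument of \cite[Lem. 2.5 ff.]{Ber02}, which I would cite for the bookkeeping. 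For \eqref{item closed}: an element of $\wt{\mathbf{B}}^{[r,s]}$ lies in $\wt{\mathbf{B}}^{[0,s]}$ iff its expansion has no negative powers of $[\overline\pi]$ (equivalently, it extends to an element on which $\theta\circ\varphi^{-n}$-type specializations behave correctly); this condition is visibly closed under $V^{[r,s]}$-limits because the coefficient of each fixed $[\overline\pi]^k$ is continuous for $V^{[r,s]}$ and the vanishing of the coefficients of $[\overline\pi]^{k}$ for all $k<0$ is a closed condition. Part \eqref{item closure} combines \eqref{item dense} and \eqref{item closed}: $\wt{\mathbf{B}}^{[0,s_2]}$ is closed in $\wt{\mathbf{B}}^{[s_1,s_2]}$ by \eqref{item closed} (applied with $r=s_1$), and it contains $\wt{\mathbf{B}}^{[0,s]}$; conversely $\wt{\mathbf{B}}^{[0,s]}$ is dense in $\wt{\mathbf{B}}^{[0,s_2]}$ because $\wt{\mathbf{B}}^{[0,s]}=\wt{\mathbf{B}}^{[0,s_2]}\cap\wt{\mathbf{B}}^{[s,s]}$-type density follows from truncating the positive-power tail (here no lower-end issue arises since the lower endpoint is $0$ on both sides). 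Finally \eqref{item frechet}: $\wt{\mathbf{B}}^{[r,+\infty)}=\varprojlim_n \wt{\mathbf{B}}^{[r,s_n]}$ is a countable inverse limit of Banach spaces, hence Fr\'echet and complete; density of $\wt{\mathbf{B}}^{[r,+\infty]}$ follows because an element of $\wt{\mathbf{B}}^{[r,+\infty)}$ is, by definition, compatible at all finite levels, and one approximates it in each $\wt{\mathbf{B}}^{[r,s_n]}$ simultaneously by a common truncation which lies in $\wt{\mathbf{B}}^{[r,+\infty]}$ (the truncation of the $[\overline\pi]$-expansion removes only finitely many issues and $\wt{\mathbf{A}}^+[\tfrac{p}{[\overline\pi]^r}][1/p]$ maps into every $\wt{\mathbf{B}}^{[r,s_n]}$).

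The main obstacle I anticipate is making the ``termwise expansion'' rigorous: the presentation of $\wt{\mathbf{A}}^{[r,s]}$ as $\wt{\mathbf{A}}^+\{X,Y\}/(\ldots)$ does \emph{not} immediately give unique convergent $[\overline\pi]$-expansions, because $[\overline\pi]$ is not a unit and the $p$-adic completion mixes the $X,Y$ variables. So the careful part is to set up a normal form for elements of $\wt{\mathbf{B}}^{[r,s]}$ — or, more robustly, to phrase all the density/closedness statements in terms of the continuous maps $\wt{\mathbf{B}}^{[r,s]}\to\wt{\mathbf{B}}^{[r',s']}$ for $[r',s']\subset[r,s]$ and the ``edge'' specialization maps, exactly as Berger does — so that one never actually needs unique expansions, only monotonicity of $V^{I}$ in the interval $I$ and the fact that $\wt{\mathbf{B}}^{[0,s]}$ is the kernel of the natural map to the ``negative part''. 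Since the statement asserts these are all analogues of results in \cite{Ber02, Col08} with analogous proofs, I would in the write-up import the relevant lemmas of \cite[\S 2]{Ber02} essentially verbatim, checking only that $[\overline\pi]$ here plays the role that it plays there (which it does, being the Teichm\"uller lift of $\underline\varepsilon-1$, exactly as in \emph{loc. cit.}).
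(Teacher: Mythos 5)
The weak point is item \ref{item closed}, which is also where the real content of the lemma lies (the paper notes that items \ref{item closed} and \ref{item closure} correct an imprecision in \cite{Ber02}). Your argument for it rests on reading off ``the coefficient of each fixed $[\overline\pi]^k$'' and observing that vanishing of the negative-power coefficients is a closed condition; but, as you yourself concede in your final paragraph, elements of $\wt{\mathbf{A}}^{[r,s]}$ do \emph{not} admit unique $[\overline\pi]$-expansions ($\wt{\mathbf{E}}$ is perfect, so ``coefficients'' are not well defined), and there is consequently no continuous projection onto a ``negative part'' whose kernel is $\wt{\mathbf{B}}^{[0,s]}$. Your proposed fallback --- to phrase everything via the transition maps and such a kernel description --- does not fill the hole, because that projection is exactly what is missing. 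Since $V^{[r,s]}$ is the $p$-adic valuation and $\wt{\mathbf{A}}^{[0,s]}$ is $p$-adically complete, what one actually needs is the saturation statement
$$\wt{\mathbf{A}}^{[0, s]} \cap p\wt{\mathbf{A}}^{[r, s]} = p\wt{\mathbf{A}}^{[0, s]},$$
from which closedness follows by a routine Cauchy-sequence argument. The paper proves this by writing $x=x^-+x^+$ with $x^-\in\wt{\mathbf{A}}^{[r,+\infty]}$ and $x^+\in\wt{\mathbf{A}}^{[0,s]}$ (a decomposition that exists but is \emph{not} unique, which is precisely why one cannot argue coefficientwise), and then trapping $px^-$ inside $p\wt{\mathbf{A}}\cap\wt{\mathbf{A}}^{[0,+\infty]}=p\wt{\mathbf{A}}^{[0,+\infty]}$ using the intersection formula $\wt{\mathbf{A}}^{[s,+\infty]}\cap\wt{\mathbf{A}}^{[0,s]}=\wt{\mathbf{A}}^{[0,+\infty]}$ of \cite[Lem. 2.15]{Ber02}; alternatively one can cite \cite[Lem. 3.2(3)]{Ber16}.

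The rest of your outline matches the paper's structure: item \ref{item dense} is regarded as easy, item \ref{item frechet} is \cite[Lem. 2.19]{Ber02} (whose proof also covers $r=0$), and your deduction of item \ref{item closure} from items \ref{item dense} and \ref{item closed} is exactly the paper's. But since the same termwise-expansion heuristic underlies your treatment of item \ref{item dense} and of the density half of item \ref{item frechet}, those passages should likewise be rewritten to rest on the cited lemmas of \cite{Ber02} rather than on expansions that do not exist in the $\wt{\mathbf{A}}$-rings.
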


\begin{proof}
Item \ref{item dense} is easy. To prove Item \ref{item closed}, it suffices to show that
\begin{equation}\label{eqptimes}
\wt{\mathbf{A}}^{[0, s]} \cap p\wt{\mathbf{A}}^{[r, s]} =p\wt{\mathbf{A}}^{[0, s]}
\end{equation}
This is indeed \cite[Lem. 3.2(3)]{Ber16}; however, in \emph{loc. cit.}, the definitions of $\wt{\mathbf{A}}^{[0, s]}$ and $\wt{\mathbf{A}}^{[r, s]}$ rely on the valuations $W^I$  (denoted as ``$V(x, I)$" in \emph{loc. cit.}) which we will recall in Def. \ref{defnew}. Here we give a ``direct" proof using the \emph{explicit} structure of these rings per our Def. \ref{defn wt rings}.
Let $x \in \wt{\mathbf{A}}^{[r, s]}$ such that $px \in \wt{\mathbf{A}}^{[0, s]}$. We can decompose $x=x^- + x^+$ with $x^- \in \wt{\mathbf{A}}^{[r, +\infty]}$ and $x^+ \in \wt{\mathbf{A}}^{[0, s]}$ (the decomposition is not unique). It suffices to show that $px^- \in p\wt{\mathbf{A}}^{[0, s]}$. But indeed,
\begin{eqnarray*}
px^- &\in&  p\wt{\mathbf{A}}^{[r, +\infty]} \cap \wt{\mathbf{A}}^{[0, s]}\\
  &=& p\wt{\mathbf{A}}^{[r, +\infty]} \cap (\wt{\mathbf{A}}^{[r, +\infty]} \cap   \wt{\mathbf{A}}^{[0, s]})\\
  &\subset& p\wt{\mathbf{A}}^{[r, +\infty]} \cap (\wt{\mathbf{A}}^{[s, +\infty]} \cap  \wt{\mathbf{A}}^{[0, s]})\\
   &=& p\wt{\mathbf{A}}^{[r, +\infty]} \cap \wt{\mathbf{A}}^{[0, +\infty]}, \text{ by \cite[Lem. 2.15]{Ber02}}  \\
   &\subset& p\wt{\mathbf{A}} \cap \wt{\mathbf{A}}^{[0, +\infty]}\\
&=&   p\wt{\mathbf{A}}^{[0, +\infty]}.
\end{eqnarray*}
To prove Item \ref{item closure}, simply note that $\wt{\mathbf{B}}^{[0, s]}$ is contained in $\wt{\mathbf{B}}^{[0, s_2]}$ but its closure contains $\wt{\mathbf{B}}^{[0, s_2]}$,  and then apply Item \ref{item closed}. (Note that Items \ref{item closed} and \ref{item closure} correct the statements above \cite[Rem. 2.6]{Ber02}, as Berger never explicitly requires $0 \notin J$.) Item \ref{item frechet} is \cite[Lem. 2.19]{Ber02} (the proof there works for $r=0$ as well).
\end{proof}

\begin{remark} \label{rem Ic}
\begin{enumerate}
  \item For any interval $I$ such that $\wt{\mathbf{A}}^{I}$ and $\wt{\mathbf{B}}^{I}$ are defined, there is a natural bijection (called Frobenius) $\varphi: \wt{\mathbf{A}}^{I} \to \wt{\mathbf{A}}^{pI}$ which is  valuation-preserving.

  \item   For $n \in \mathbb Z^{\geq 0}$, let $r_n: =(p-1)p^{n-1}$. Let
  $$I_c: =\{[r_\ell, r_k], [r_\ell, +\infty], [0, r_k], [0, +\infty] \}, \text{ where } \ell \leq k \text{ run through } \mathbb{Z}^{\geq 0}. $$
  By item (1), in many situations, it would suffice to study $\wt{\mathbf{A}}^{I}$ (and $\wt{\mathbf{B}}^{I}$) for $I \in I_c$ or $I =[+\infty, +\infty]$. The cases for $I$ a general closed interval can be deduced using Frobenius operation; the cases for $I=[r, +\infty)$ can be deduced by taking Fr\'echet completion.
\end{enumerate}
\end{remark}

\begin{Convention} \label{convellk}
From now on, whenever we define rings with an interval as superscript (such as $\wt{\mathbf{A}}^I$, or ${\mathbf{A}}^I$, ${\mathcal{A}}^I$ etc. in the following), we always define in the general case with $\inf(I), \sup(I) \in \{Z^{\geq 0}[1/p], +\infty \}$. But we will only compute (the explicit structure of) these rings with $\inf(I), \sup(I) \in \{0, r_\ell, r_k, +\infty \}$ (when applicable); the general case can always be easily deduced using Frobenius operations.
\end{Convention}

There is another type of valuation $W^I$ on $\wt{\mathbf{B}}^{[r, +\infty]}$, which we quickly recall. A particularly useful fact is that $W^{[s, s]}$ are \emph{multiplicative} valuations (not just sub-multiplicative), see Lem. \ref{lem W} below.

\begin{defn} \label{defnew}
Suppose $r \in \mathbb{Z}^{\geq 0}[1/p]$, and let $x= \sum_{i \geq i_0} p^i[x_i] \in \wt{\mathbf{B}}^{[r, +\infty]}$ ($\subset \wt{\mathbf{B}}^{[+\infty, +\infty]}$). Denote $w_k(x) := \inf_{i \leq k} \{v_{\wt{\mathbf E}}(x_i)\}$ . See \cite[\S 5.1]{Col08} for the properties of $w_k$; in particular, we have $w_k(x+y) \geq \inf \{ w_k(x), w_k(y)\}$ with equality when
$w_k(x)\neq w_k(y)$.
For $s\geq r$ and $s>0$, let
$$W^{[s, s]}(x) :=\inf_{k \geq k_0} \{k+\frac{p-1}{ps}\cdot v_{\wt{\mathbf E}}(x_k)\} =   \inf_{k \geq k_0} \{ k+\frac{p-1}{ps}\cdot w_k(x)\};$$
this is a well-defined valuation (cf. \cite[Prop. 5.4]{Col08}).
For $I \subset [r, +\infty)$ a non-empty closed interval such that $I \neq [0, 0]$, let
$$W^{I}(x) := \inf_{\alpha \in I, \alpha \neq 0} \{W^{[\alpha, \alpha]}(x) \}.$$
\end{defn}

\begin{rem}\label{rem donot defn}
We do not define ``$W^{[0, 0]}$".
Indeed when $r=0$, then $\wt{\mathbf{B}}^{[0, +\infty]}=\wt{\mathbf{B}}^+$.
 It might seem that we could   define ``
$W^{[0, 0]}(x) := \inf_{x_k \neq 0} \{k\},$" which is precisely the $p$-adic valuation of $\wt{\mathbf{B}}^+$. However, this valuation is ``\emph{incompatible}" with the valuations $W^{[s, s]}$ for $s>0$. Indeed, one observes that the valuations $W^{[s, s]}$ behave   \emph{continuously}  with respect to $s>0$; but this continuity breaks for ``$s=0$". Indeed, $W^{[s, s]}(x)$ do  not converge to the aforementioned ``$W^{[0, 0]}(x)$" when $s \to  0$; this phenomenon is best explained using the geometric picture of the ``degeneration of annuli to a closed disk", cf. Rem \ref{remeqnw00}.
 Alternatively, it might seem that we could   define ``
$W^{[0, 0]}(x) :=+\infty, \forall x$"; however this is not a valuation anymore (cf. \S \ref{subsub val}).

\end{rem}

\begin{lemma} \label{lem W}
Suppose $r \leq s \in \mathbb{Z}^{\geq 0}[1/p]$ and $s>0$, then the following holds.
\begin{enumerate}


\item \label{item comp}When $r>0$, $\wt{\mathbf{A}}^{[r, +\infty]}$ and $\wt{\mathbf{A}}^{[r, +\infty]}[1/[\overline \pi]]$ are complete with respect to $W^{[r, r]}$.
\item  \label{item mult} $W^{[s, s]}(xy)=W^{[s, s]}(x) +W^{[s, s]}(y), \forall x, y \in \wt{\mathbf{B}}^{[r, +\infty]}.$

\item  \label{item max} Let $x \in \wt{\mathbf{B}}^{[r, +\infty]}$.
\begin{enumerate}
\item When $r>0$, $W^{[r, s]}(x) = \inf \{ W^{[r, r]}(x), W^{[s, s]}(x)\}.$
\item When $r=0$, $W^{[r, s]}(x) (=W^{[0, s]}(x)) =   W^{[s, s]}(x).$
\end{enumerate}

\item \label{item compa} For $x \in \wt{\mathbf{B}}^{[r, +\infty]}$, we have $V^{[r, s]}(x) =\lfloor W^{[r, s]}(x) \rfloor,$ where $V^{[r, s]}(x)$ is defined by considering $x$ as an element in $\wt{\mathbf{B}}^{[r, s]}$.

\item \label{item rs} The completion of  $\wt{\mathbf{B}}^{[r, +\infty]}$  with respect to $W^{[r, s]}$ is isomorphic  to  $\wt{\mathbf{B}}^{[r, s]}$ as topological rings. (Thus, we can extend $W^{[r, s]}$ to $\wt{\mathbf{B}}^{[r, s]}$).

\end{enumerate}
\end{lemma}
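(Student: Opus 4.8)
The plan is to treat the five items roughly in the order they depend on each other, using the explicit presentation of $\wt{\mathbf{A}}^{[r,+\infty]}$ and $\wt{\mathbf{A}}^{[r,s]}$ from Def. \ref{defn wt rings} together with the formula for $W^{[s,s]}$ via the functions $w_k$ from Def. \ref{defnew}. For item \eqref{item comp}, I would first note that $w_k$ gives $\wt{\mathbf{A}}^{[r,+\infty]}$ the structure of a ring which is $w_r$-adically (equivalently $W^{[r,r]}$-adically) separated, and completeness is checked by showing a $W^{[r,r]}$-Cauchy sequence can be rearranged into a convergent series $\sum p^i[x_i]$ with $i + \frac{p-1}{pr}v_{\wt{\mathbf E}}(x_i)\to+\infty$; the case of $\wt{\mathbf{A}}^{[r,+\infty]}[1/[\overline\pi]]$ follows because inverting $[\overline\pi]$ only shifts the valuation $v_{\wt{\mathbf E}}$-component by a bounded amount. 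For item \eqref{item mult}, multiplicativity of $W^{[s,s]}$: writing $x=\sum p^i[x_i]$, $y=\sum p^j[y_j]$, the product's Teichmüller coordinates involve the "carries" from Witt vector addition, but the key point (as in \cite[Prop. 5.4]{Col08}) is that $w_k(xy) = \min_{i+j\le k}(w_i(x)+w_j(y))$ up to the usual analysis; I would cite \cite{Col08} for the properties of $w_k$ and then deduce that the infimum defining $W^{[s,s]}(xy)$ is attained additively. The subtlety here is the same as in the classical $(\varphi,\Gamma)$ setting and is exactly why one works with $w_k$ rather than the naive valuation.

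For item \eqref{item max}, the inequality $W^{[r,s]}(x)\le \inf\{W^{[r,r]}(x),W^{[s,s]}(x)\}$ is immediate from the definition as an infimum over $\alpha\in[r,s]$; the reverse inequality amounts to showing the function $\alpha\mapsto W^{[\alpha,\alpha]}(x)$, $\alpha>0$, is \emph{concave} (it is an infimum of the affine-in-$1/\alpha$ functions $k+\frac{p-1}{p\alpha}w_k(x)$... more precisely affine in $1/\alpha$), hence attains its minimum on $[r,s]$ at an endpoint — this gives case (a). For case (b), when $r=0$, the endpoint $\alpha=0$ is excluded from the infimum by Def. \ref{defnew}, and since $\alpha\mapsto W^{[\alpha,\alpha]}(x)$ is monotonic near $0$ in the relevant direction (the term $\frac{p-1}{p\alpha}w_k(x)$ blows up unless $w_k(x)\ge 0$, and one checks the infimum over $\alpha\in(0,s]$ equals the value at $s$), one gets $W^{[0,s]}(x)=W^{[s,s]}(x)$; alternatively this is forced by item \eqref{item max}(a) applied on $[\epsilon,s]$ and letting $\epsilon\to 0$. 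I would spell out the concavity computation since it is the real content.

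For items \eqref{item compa} and \eqref{item rs}, I would proceed together: the inequality $V^{[r,s]}(x)\ge \lfloor W^{[r,s]}(x)\rfloor$ comes from checking that the generators $p/[\overline\pi]^r$ and $[\overline\pi]^s/p$ of $\wt{\mathbf{A}}^{[r,s]}$ over $\wt{\mathbf{A}}^+$ have $W^{[r,s]}$-valuation $\ge 0$, so $\wt{\mathbf{A}}^{[r,s]}$ is contained in the $W^{[r,s]}$-unit ball completion; the reverse (that anything with $W^{[r,s]}\ge 0$ and hence $V^{[r,s]}\ge 0$ lies in $\wt{\mathbf{A}}^{[r,s]}$, up to $p$-adic limits) uses the explicit presentation to write such an element as a $p$-adically convergent series in the two generators. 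This simultaneously identifies the $W^{[r,s]}$-completion of $\wt{\mathbf{B}}^{[r,+\infty]}$ with $\wt{\mathbf{B}}^{[r,s]}$, giving \eqref{item rs}, and pins down $V^{[r,s]}=\lfloor W^{[r,s]}\rfloor$, giving \eqref{item compa}. Essentially all of this is in \cite[\S 2]{Ber02} and \cite[\S 5]{Col08}; my proof would mostly consist of citing those references and indicating the (minor) modifications needed to include the endpoint $r=0$.

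\textbf{Main obstacle.} I expect the genuine difficulty to be concentrated in item \eqref{item mult} — the multiplicativity of $W^{[s,s]}$ — since this is where the combinatorics of Witt vector multiplication and the "carry" terms enter, and where one cannot avoid the careful analysis of the functions $w_k$; the $r=0$ edge cases in \eqref{item max}(b) and \eqref{item compa} are the second most delicate point, precisely because of the incompatibility of $W^{[s,s]}$ with a hypothetical $W^{[0,0]}$ flagged in Rem. \ref{rem donot defn}, so care is needed that every infimum is taken over $\alpha>0$ only. Everything else is a routine transcription of Berger's and Colmez's arguments.
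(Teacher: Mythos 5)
Your proposal is correct and matches the paper's treatment, which likewise disposes of each item by citing \cite[Prop.~5.6]{Col08}, \cite[Lem.~21.3]{Ber}, \cite[Cor.~2.20, Lem.~2.7]{Ber02} and Lem.~\ref{lemdensity}, with the only real content being the observations you single out: the maximum modulus principle as concavity of $\alpha \mapsto W^{[\alpha,\alpha]}(x)$ in $1/\alpha$, and the $r=0$ case of (3)(b) via $w_k(x)\geq 0$ for $x\in\wt{\mathbf{B}}^{[0,+\infty]}$ so that the infimum over $\alpha\in(0,s]$ is attained at $\alpha=s$. No gaps.
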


\begin{proof}
All these results are well-known.
Item \ref{item comp} is \cite[Prop. 5.6]{Col08}; note that the ring ``$\wt{\mathbf{A}}^{(0, r]}$" in \emph{loc. cit.} is our $\wt{\mathbf{A}}^{[(p-1)/(pr), +\infty]}[1/[\overline \pi]]$, and the ring of integers in $\wt{\mathbf{A}}^{(0, r]}$ is precisely  our $\wt{\mathbf{A}}^{[(p-1)/(pr), +\infty]}$.
Item \ref{item mult} is \cite[Lem. 21.3]{Ber}.
Item \ref{item max}(a) (the maximum modulus principle) is \cite[Cor. 2.20]{Ber02}; indeed, it follows easily by looking at the definition of $W^{[\alpha, \alpha]}(x)$. Item \ref{item max}(b) follows from similar observation, by noting that $x \in \wt{\mathbf{B}}^{[0, +\infty]}$ implies $v_{\wt{\mathbf E}}(x_k)\geq 0$ for all $k \geq k_0$ in Def. \ref{defnew}.
Item \ref{item compa} is \cite[Lem. 2.7]{Ber02}; the proof works for $r=0$ as well (which Berger did not explicitly mention).
Item \ref{item rs} follows from Item \ref{item compa} and Lem. \ref{lemdensity}.

\end{proof}


\begin{remark} Let $r>0$.
\begin{enumerate}
  \item  Suppose $x \in \wt{\mathbf{B}}^{[r, +\infty]}$, then $W^{[r, r]}(x)\geq 0$ does not imply that $x \in \wt{\mathbf{A}}^{[r, +\infty]}$, it only implies that $x \in \wt{\mathbf{A}}^{[r, r]}$. However, if $x \in \wt{\mathbf{A}}^{[r, +\infty]}[1/[\overline{\pi}]]$, then $W^{[r, r]}(x)\geq 0$ if and only if $x \in \wt{\mathbf{A}}^{[r, +\infty]}$.
  \item In comparison to Lem. \ref{lem W}(\ref{item comp}), $\wt{\mathbf{B}}^{[r, +\infty]}$  is not complete with respect to  $W^{[r, r]}$; indeed, its completion is $\wt{\mathbf{B}}^{[r, r]}$  by Lem. \ref{lem W}(\ref{item rs}).
      \item In comparison to Lem. \ref{lem W}(\ref{item rs}), the completion of  $\wt{\mathbf{A}}^{[r, +\infty]}$  with respect to $W^{[r, s]}$ is {strictly} contained in $\wt{\mathbf{A}}^{[r, s]}$ (which is already the case when $r=s$ by Lem. \ref{lem W}(\ref{item comp})). Also note that $\wt{\mathbf{A}}^{[r, s]}$ is complete with respect to $W^{[r, s]}$, since it is the ring of integers in $\wt{\mathbf{B}}^{[r, s]}$. (Thus, $\wt{\mathbf{A}}^{[r, +\infty]}$ is a closed subset of $\wt{\mathbf{A}}^{[r, r]}$ with respect to $W^{[r, r]}$).
\end{enumerate}
\end{remark}

Let $I$ be an interval. When $\wt{\mathbf{B}}^{I}$ (resp. $\wt{\mathbf{A}}^{I}$) is defined, let $\wt{\mathbf{B}}^{I}_{K_\infty}: = (\wt{\mathbf{B}}^{I})^{G_\infty}$ (resp. $\wt{\mathbf{A}}^{I}_{K_\infty}: = (\wt{\mathbf{A}}^{I})^{G_\infty}$).
Recall that as in \cite[\S 2.2]{Ber02}, when $r_n \in I$, there exists $\iota_n: \wt{\mathbf{B}}^{I} \hookrightarrow \mathbf{B}_{\dR}^+$. Let $\theta: \mathbf{B}_{\dR}^+ \to C_p$ be the usual map.

\begin{lemma} \label{prop theta ker}
Let $q:=  ( [\underline \varepsilon]^p-1 )/([\underline \varepsilon]-1)$.
Suppose $I=[r_\ell, r_k]$ or $[0, r_k]$. We have
\begin{enumerate}
\item $\Ker(\theta\circ \iota_k: \wt{\mathbf{A}}^{I} \to C_p) =  \frac{\varphi^{k-1}(q)}{p}\wt{\mathbf{A}}^{I}=  \frac{\varphi^{k }(E(u))}{p}\wt{\mathbf{A}}^{I}$,

\noindent  $\Ker(\theta\circ \iota_k: \wt{\mathbf{B}}^{I} \to C_p) =\varphi^{k-1}(q)\wt{\mathbf{B}}^{I} =\varphi^{k }(E(u))\wt{\mathbf{B}}^{I}$.
\item $\Ker(\theta\circ \iota_k: \wt{\mathbf{A}}_{K_\infty}^{I} \to C_p) =\frac{\varphi^{k }(E(u))}{p} \wt{\mathbf{A}}^{I}_{  K_\infty}$,

\noindent  $\Ker(\theta\circ \iota_k: \wt{\mathbf{B}}_{K_\infty}^{I} \to C_p) =\varphi^{k }(E(u))\wt{\mathbf{B}}^{I}_{  K_\infty}$.
\end{enumerate}
\end{lemma}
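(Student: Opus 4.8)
The plan is to first establish part (1), the computation of $\Ker(\theta\circ\iota_k)$ on $\wt{\mathbf{A}}^{I}$ and $\wt{\mathbf{B}}^{I}$, and then descend to $G_\infty$-invariants to get part (2). For part (1), the starting point is the well-known fact that $\Ker(\theta\circ\iota_k\colon\wt{\mathbf{A}}^{I}\to C_p)$ is principal, generated by $\varphi^{k-1}(q)/p$ (up to a unit); this is analogous to \cite[\S 2.2]{Ber02}, where one checks that the element $\varphi^{k-1}(q)/p$ lies in $\wt{\mathbf{A}}^{I}$ and maps to zero under $\theta\circ\iota_k$, and that it generates the full kernel because $\theta\circ\iota_k$ is surjective onto $\O_{C_p}$ and the quotient is a domain with the right valuation-theoretic properties. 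I would then need to explain the second equality $\varphi^{k-1}(q)/p$ and $\varphi^{k}(E(u))/p$ generate the same ideal in $\wt{\mathbf{A}}^{I}$. This is where the choice of uniformizer $\pi$ and the polynomial $E(u)$ enters: one has $\iota_k([\underline\pi])$ related to $\pi_{\text{something}}$, and a direct computation of $v_p$ of both elements (both have the same ``$\theta\circ\iota_k$-vanishing order'') shows they differ by a unit in $\wt{\mathbf{A}}^{I}$. The $\wt{\mathbf{B}}^{I}$ statement follows immediately by inverting $p$.

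For part (2), the inclusion $\frac{\varphi^{k}(E(u))}{p}\wt{\mathbf{A}}^{I}_{K_\infty}\subseteq \Ker(\theta\circ\iota_k\colon\wt{\mathbf{A}}^{I}_{K_\infty}\to C_p)$ is clear since $\varphi^{k}(E(u))\in\mathbf{A}^+_{K_\infty}$ is $G_\infty$-fixed. The content is the reverse inclusion: if $x\in\wt{\mathbf{A}}^{I}_{K_\infty}$ with $\theta\circ\iota_k(x)=0$, then by part (1) we can write $x=\frac{\varphi^{k}(E(u))}{p}\cdot y$ for some $y\in\wt{\mathbf{A}}^{I}$, and we must show $y$ can be taken in $\wt{\mathbf{A}}^{I}_{K_\infty}$. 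Since $\varphi^{k}(E(u))$ is $G_\infty$-invariant and is a nonzerodivisor, for any $g\in G_\infty$ we get $\frac{\varphi^{k}(E(u))}{p}\cdot g(y) = g(x)=x=\frac{\varphi^{k}(E(u))}{p}\cdot y$, hence $g(y)=y$; so $y\in (\wt{\mathbf{A}}^{I})^{G_\infty}=\wt{\mathbf{A}}^{I}_{K_\infty}$ automatically. The same argument works for $\wt{\mathbf{B}}^{I}_{K_\infty}$.

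The only genuinely delicate point I anticipate is the principality statement in part (1) itself — namely that $\Ker(\theta\circ\iota_k\colon\wt{\mathbf{A}}^{I}\to C_p)$ is exactly the principal ideal generated by $\varphi^{k-1}(q)/p$, not something larger. This requires knowing that $\wt{\mathbf{A}}^{I}/(\varphi^{k-1}(q)/p)$ is already $\O_{C_p}$ (or at least $p$-torsion-free with the correct structure), which in \cite{Ber02} follows from a careful analysis of the rings $\wt{\mathbf{A}}^{[r,s]}$ via the explicit presentation in Definition \ref{defn wt rings}, together with the fact that $\theta\circ\iota_k$ is surjective with the kernel generated in degree one. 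I would cite \cite[\S 2.2]{Ber02} for this and focus the written proof on: (i) verifying $\varphi^{k-1}(q)/p$ and $\varphi^{k}(E(u))/p$ generate the same ideal via a valuation computation using that $E(\pi)=0$ and tracking $\iota_k$, and (ii) the clean Galois-descent argument for part (2) sketched above. A secondary minor point to address is that both closed intervals $I=[r_\ell,r_k]$ and $I=[0,r_k]$ are handled uniformly, since in both cases $r_k\in I$ so $\iota_k$ is defined, and the decomposition $\wt{\mathbf{A}}^{I}=\wt{\mathbf{A}}^{[0,r_k]}+\wt{\mathbf{A}}^{[r_\ell,+\infty]}$-type arguments, if needed, reduce the general case to the endpoint cases as in Convention \ref{convellk}.
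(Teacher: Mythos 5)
Your proposal is correct and follows essentially the same route as the paper: part (1) is Berger's computation of $\Ker(\theta\circ\iota_k)$ with generator $\varphi^{k-1}(q)/p$ combined with the observation that $E(u)$ and $\varphi^{-1}(q)$ generate the same ideal (namely $\Ker\theta$) in $\wt{\mathbf{A}}^+$, and part (2) is exactly the "easy consequence" the paper intends, namely that the generator $\varphi^{k}(E(u))/p$ is $G_\infty$-invariant and a nonzerodivisor, so the cofactor $y$ is automatically $G_\infty$-invariant.
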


\begin{proof}
Item (1) is easily deduced from \cite[Prop. 2.12]{Ber02}, because $E(u)$ and $\varphi^{-1}(q)$ generate the same ideal in $\wt{\mathbf{A}}^+$ (i.e., the kernel of the $\theta$-map in \S \ref{period rings}).
Item (2) is an easy consequence of (1).
\end{proof}

In the following, we study more detailed structure of the rings $\wt{\mathbf{B}}_{K_\infty}^{I}$ and $\wt{\mathbf{A}}_{K_\infty}^{I}$. These results (Lem. \ref{lem decomp}, Prop. \ref{lem decomp 3} and Prop. \ref{lemdense}) will not be used in this paper. We still include them here because they are standard and will be useful in the future; also, they serve as prelude to the computation of the rings ${\mathbf{B}}_{K_\infty}^{I}$ and ${\mathbf{A}}_{K_\infty}^{I}$ in next subsection.

\begin{lemma}\label{lem decomp}
Suppose $\ell \leq k$, then we have the following short exact sequence
\begin{equation} \label{eqnseqa}
 0 \to \wt{\mathbf{B}}_{K_\infty}^{[0, +\infty]} \to   \wt{\mathbf{B}}_{ K_\infty}^{[r_\ell, +\infty]}\oplus \wt{\mathbf{B}}^{[0, r_k]}_{ K_\infty}  \to  \wt{\mathbf{B}}^{[r_\ell, r_k]}_{K_\infty} \to 0,
\end{equation}
where the second arrow is $x \mapsto (x, x)$, and the third arrow is $(a, b) \mapsto a-b$.
\end{lemma}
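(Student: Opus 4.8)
The plan is to reduce the exactness of \eqref{eqnseqa} to the corresponding statement \emph{before} taking $G_\infty$-invariants, together with a vanishing result for the relevant Galois cohomology. First I would establish the analogous short exact sequence for the ambient rings $\wt{\mathbf{B}}^{I}$ (without the $K_\infty$-subscript):
\begin{equation*}
 0 \to \wt{\mathbf{B}}^{[0, +\infty]} \to   \wt{\mathbf{B}}^{[r_\ell, +\infty]}\oplus \wt{\mathbf{B}}^{[0, r_k]}  \to  \wt{\mathbf{B}}^{[r_\ell, r_k]} \to 0.
\end{equation*}
Injectivity of $x\mapsto(x,x)$ is clear, and exactness in the middle ($\wt{\mathbf{B}}^{[0,+\infty]} = \wt{\mathbf{B}}^{[r_\ell,+\infty]}\cap\wt{\mathbf{B}}^{[0,r_k]}$ inside $\wt{\mathbf{B}}^{[r_\ell,r_k]}$) is exactly (the $\mathbf{B}$-version of) \cite[Lem. 2.15]{Ber02}, which was already invoked in the proof of Lemma \ref{lemdensity}. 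The only content is surjectivity of $(a,b)\mapsto a-b$, i.e. that every element of $\wt{\mathbf{B}}^{[r_\ell, r_k]}$ can be written as a difference $a-b$ with $a$ holomorphic near $+\infty$ and $b$ holomorphic near $0$; this is the standard ``decomposition into principal parts'' for these Robba-type rings and is \cite[Lem. 2.15]{Ber02} (or can be proved directly: using Convention \ref{convellk} reduce to $I\in I_c$, decompose $x=x^-+x^+$ as in the proof of Lemma \ref{lemdensity}\eqref{item closed} using the explicit description in Def. \ref{defn wt rings}, and note the two pieces land in the desired subrings).

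Next I would take $G_\infty$-invariants. Since $\wt{\mathbf{B}}^{I}_{K_\infty} = (\wt{\mathbf{B}}^{I})^{G_\infty}$ by definition, the functor $(-)^{G_\infty}$ applied to the above sequence gives the left-exact sequence
\begin{equation*}
 0 \to \wt{\mathbf{B}}_{K_\infty}^{[0, +\infty]} \to   \wt{\mathbf{B}}_{ K_\infty}^{[r_\ell, +\infty]}\oplus \wt{\mathbf{B}}^{[0, r_k]}_{ K_\infty}  \to  \wt{\mathbf{B}}^{[r_\ell, r_k]}_{K_\infty},
\end{equation*}
and the obstruction to surjectivity on the right lies in $H^1(G_\infty, \wt{\mathbf{B}}^{[0, +\infty]})$. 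So it suffices to show that the connecting map vanishes, for which the cleanest route is to exhibit a $G_\infty$-equivariant continuous section of $(a,b)\mapsto a-b$, or equivalently a $G_\infty$-equivariant projector realizing the decomposition $\wt{\mathbf{B}}^{[r_\ell,r_k]} = \wt{\mathbf{B}}^{[r_\ell,+\infty]} + \wt{\mathbf{B}}^{[0,r_k]}$. Such a canonical splitting is available from the structure of these rings: the ``holomorphic part near $+\infty$'' of an element, extracted via its expansion in powers of $p/[\overline\pi]^{r}$ versus $[\overline\pi]^{s}/p$ (Def. \ref{defn wt rings}), is manifestly functorial in the $G_\infty$-action since $G_\infty$ acts by continuous ring automorphisms fixing $p$ and acting on $[\overline\pi]$; hence the projector commutes with $G_\infty$. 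Taking invariants of the split sequence then yields \eqref{eqnseqa} with the right-hand arrow surjective.

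The main obstacle I anticipate is making the decomposition $\wt{\mathbf{B}}^{[r_\ell,r_k]}=\wt{\mathbf{B}}^{[r_\ell,+\infty]}+\wt{\mathbf{B}}^{[0,r_k]}$ genuinely \emph{canonical} (hence $G_\infty$-equivariant) rather than merely existent: the naive decomposition $x=x^-+x^+$ used in Lemma \ref{lemdensity} is explicitly \emph{not} unique, so one must either pin down a canonical choice (e.g. by normalizing the ``tail'' coefficients, using $w_k$ and the valuations $W^{[s,s]}$ from Def. \ref{defnew} to control convergence) or instead argue cohomologically that $H^1(G_\infty, \wt{\mathbf{B}}^{[0,+\infty]})$ — or at least the image of the relevant class in it — vanishes, perhaps by dévissage to $H^1(G_\infty, \wt{\mathbf{A}}^+/p^n)$ and the field-of-norms description $\wt{\mathbf{E}}^{G_\infty}=\mathbf{E}_{K_\infty}$. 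Since the lemma is explicitly flagged as standard and not used later, I would favor the equivariant-projector argument and keep the write-up short, deferring the finer convergence bookkeeping to the cited results of \cite{Ber02, Col08}.
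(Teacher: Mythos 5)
Your setup --- first proving the short exact sequence for the ambient rings $\wt{\mathbf{B}}^{I}$ and then reducing surjectivity after taking $G_\infty$-invariants to the vanishing of the connecting map into $H^1(G_\infty, \wt{\mathbf{B}}^{[0,+\infty]})$ --- is exactly the paper's. The gap is in how you kill that obstruction. Your preferred route, a ``manifestly functorial'' $G_\infty$-equivariant projector extracting the part holomorphic near $+\infty$, does not exist: an element of $\wt{\mathbf{A}}^{[r_\ell,r_k]}=\wt{\mathbf{A}}^+\{p/[\overline{\pi}]^{r_\ell},[\overline{\pi}]^{r_k}/p\}$ has no distinguished expansion, the decomposition $x=x^-+x^+$ is genuinely non-unique, and there is no analogue in the tilde rings of the unique splitting $f=f^-+f^+$ of Laurent series from Lem.~\ref{lem laurent series} --- the paper remarks after Prop.~\ref{lem decomp 3} that precisely this failure is why even the integral statement \eqref{eqnabf} is unknown. (Note also that $G_\infty$ does not fix $[\overline{\pi}]=[\underline{\varepsilon}-1]$, so even a fixed choice of expansion would not be obviously equivariant.) Writing $x=a-b$ with $a,b$ in the two subrings, the cocycle $g\mapsto g(a)-a$ valued in $\wt{\mathbf{B}}^{[0,+\infty]}$ is exactly the obstruction, and its vanishing is genuine content, not bookkeeping.

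The paper supplies that content via the cohomological route you mention only as a fallback: following \cite[Lem.~2.27]{Ber02}, the vanishing of the map $\delta$ in \eqref{eqndelta} is reduced to $H^1(G_\infty,\mathfrak{m}_{\wt{\mathbf{E}}^+})=0$, a Tate--Sen / almost-\'etale statement. This is proved by verifying Colmez's axioms (C1)--(C3) of \cite[IV.1]{Col98} for the APF extension $K_\infty/K$, which in turn relies on the theory of the field of norms for $K_\infty$ (cf.\ \cite{Bre99b}). That is the one nontrivial input missing from your write-up; if you abandon the projector and take the cohomological branch, inserting this vanishing (after d\'evissage mod $p$ to reduce to cocycles valued in $\mathfrak{m}_{\wt{\mathbf{E}}^+}$, as in Berger's argument), your proof becomes the paper's.
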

\begin{proof}
The proof is analogous to \cite[Lem. 2.27]{Ber02}. By the proof of \cite[Lem. 2.18]{Ber02}, we have
$$ 0 \to \wt{\mathbf{B}}^{[0, +\infty]} \to   \wt{\mathbf{B}}^{[r_\ell, +\infty]}\oplus \wt{\mathbf{B}}^{[0, r_k]}   \to  \wt{\mathbf{B}}^{[r_\ell, r_k]} \to 0.$$
Take $G_\infty$-invariants, and consider the long exact sequence, it suffices to show that the map
\begin{equation}\label{eqndelta}
\delta: \wt{\mathbf{B}}^{[r_\ell, r_k]}_{K_\infty} \to H^1(G_\infty, \wt{\mathbf{B}}^+ )
\end{equation}
is the zero map. By exactly the same argument as in \cite[Lem. 2.27]{Ber02}, it suffices to show that $H^1(G_\infty, \mathfrak{m}_{\wt{\mathbf{E}}^+})=0$ (where $\mathfrak{m}_{\wt{\mathbf{E}}^+}$ is the maximal ideal of $\wt{\mathbf{E}}^+$); and this is an analogue of \cite[Prop. IV.1.4(iii)]{Col98}. Indeed, the ring $\wt{\mathbf{E}}^+$ satisfies the conditions (C1), (C2) and (C3) in \cite[IV.1]{Col98} with respect to our APF extension $K_\infty$ (note that the $K_\infty$ in \emph{loc. cit.} is our $K_{p^{\infty}}$); the proof is verbatim as in \cite[Rem. IV.1.1(iii)]{Col98}, since the theory of fields of norms for our extension $K_\infty$ also works (see e.g. \cite[\S 2]{Bre99b} for a detailed development).
\end{proof}

\begin{prop}\label{lem decomp 3}

\begin{enumerate}
\item $ \wt{\mathbf{A}}_{K_\infty}^{[0, r_k]} = \wt{\mathbf{A}}^+_{K_\infty}\{ \frac{\varphi^k(E(u))}{p} \} =  \wt{\mathbf{A}}^+_{K_\infty}\{ \frac{u^{ep^k}}{p} \} $.
\item $\wt{\mathbf{A}}_{K_\infty}^{[r_\ell, +\infty]} = \wt{\mathbf{A}}^+_{K_\infty}\{ \frac{p}{u^{ep^\ell}}    \}  $.
\item $\wt{\mathbf{B}}_{K_\infty}^{[r_\ell, r_k]}  = \wt{\mathbf{A}}^+_{K_\infty} \{ \frac{p}{u^{ep^\ell}}, \frac{u^{ep^k}}{p}  \}[\frac{1}{p}] $.
\end{enumerate}
\end{prop}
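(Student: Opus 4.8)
The plan is to establish all three formulas in two steps: first replace the three rings on the left by completed localizations of $\wt{\mathbf A}^{+}$ at the $G_\infty$-fixed elements $u^{ep^{k}}$ and $p/u^{ep^{\ell}}$, and then compute their $G_\infty$-invariants. For the first step, note that since $E(u)\in W(k)[u]$ is Eisenstein we have $E(u)\equiv u^{e}\pmod p$, hence $\varphi^{k}(E(u))=u^{ep^{k}}+p\,h(u)$ for some $h\in W(k)[u]$, so $\wt{\mathbf A}^{+}[u^{ep^{k}}/p]=\wt{\mathbf A}^{+}[\varphi^{k}(E(u))/p]$ inside $\wt{\mathbf A}^{+}[1/p]$, and the equality persists after $p$-adic completion. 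Moreover a valuation count gives $v_{\wt{\mathbf E}}(\upi^{ep^{k}})=v_{\wt{\mathbf E}}(\overline{\pi}^{\,r_{k}})=p^{k}$ (using $v_{\wt{\mathbf E}}(\upi)=1/e$, $v_{\wt{\mathbf E}}(\overline\pi)=p/(p-1)$ and $r_{k}=(p-1)p^{k-1}$); as $\wt{\mathbf E}^{+}$ is a valuation ring, $\upi^{ep^{k}}$ and $\overline{\pi}^{\,r_{k}}$ differ by a unit of $\wt{\mathbf E}^{+}$, so $u^{ep^{k}}=[\upi]^{ep^{k}}$ and $[\overline\pi]^{r_{k}}$ differ by a unit of $\wt{\mathbf A}^{+}$, and likewise for $[\overline\pi]^{r_{\ell}}$ and $u^{ep^{\ell}}$. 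Substituting these into Definition \ref{defn wt rings} (after a harmless unit change of variables in the presentations there) gives $\wt{\mathbf A}^{[0,r_{k}]}=\wt{\mathbf A}^{+}\{u^{ep^{k}}/p\}=\wt{\mathbf A}^{+}\{\varphi^{k}(E(u))/p\}$, $\wt{\mathbf A}^{[r_{\ell},+\infty]}=\wt{\mathbf A}^{+}\{p/u^{ep^{\ell}}\}$, and $\wt{\mathbf A}^{[r_{\ell},r_{k}]}=\wt{\mathbf A}^{+}\{p/u^{ep^{\ell}},u^{ep^{k}}/p\}$, all compatibly with the $G_\infty$-action.

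For the second step one must show that $(-)^{G_\infty}$ commutes with these completed localizations, for instance that $(\wt{\mathbf A}^{+}\{u^{ep^{k}}/p\})^{G_\infty}=\wt{\mathbf A}^{+}_{K_\infty}\{u^{ep^{k}}/p\}$ with $\wt{\mathbf A}^{+}_{K_\infty}=(\wt{\mathbf A}^{+})^{G_\infty}$; since $(-)^{G_\infty}$ commutes with inverting $p$, this also yields (3). The inclusion $\supseteq$ is formal because $\wt{\mathbf A}^{+}_{K_\infty}[u^{ep^{k}}/p]$ lies in the $p$-adically closed subset $(\wt{\mathbf A}^{[0,r_{k}]})^{G_\infty}$. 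For the reverse inclusion I would follow \cite[\S2.2, Lem.~2.18 and Lem.~2.27]{Ber02} and \cite[\S5--6]{Col08} verbatim, replacing the cyclotomic data with the data attached to $K_\infty$: every element of $\wt{\mathbf A}^{[0,r_{k}]}$ (resp. $\wt{\mathbf A}^{[r_{\ell},+\infty]}$, $\wt{\mathbf A}^{[r_{\ell},r_{k}]}$) has a unique Teichm\"uller expansion $x=\sum_{n}p^{n}[x_{n}]$ with $x_{n}\in\wt{\mathbf E}$ subject to the $w_{n}$-growth conditions of Definition \ref{defnew} and \cite[\S5]{Col08}; this expansion is $G_\infty$-equivariant and $(\wt{\mathbf E})^{G_\infty}=\mathbf E_{K_\infty}$, so $x$ is $G_\infty$-invariant iff every $x_{n}\in\mathbf E_{K_\infty}$, and then, since $\mathbf E_{K_\infty}=k(\!(\upi)\!)$ is again a field in which $u^{ep^{k}}$ and $p/u^{ep^{\ell}}$ have the valuations computed above, rerunning the first step over $\mathbf E_{K_\infty}$ in place of $\wt{\mathbf E}$ identifies the set of such $x$ with $\wt{\mathbf A}^{+}_{K_\infty}\{u^{ep^{k}}/p\}$ (resp. with $\wt{\mathbf A}^{+}_{K_\infty}\{p/u^{ep^{\ell}}\}$ and $\wt{\mathbf A}^{+}_{K_\infty}\{p/u^{ep^{\ell}},u^{ep^{k}}/p\}$). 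Alternatively, once (1) and (2) hold, (3) follows from the short exact sequence of Lemma \ref{lem decomp} upon identifying the amalgam of $\wt{\mathbf A}^{+}_{K_\infty}\{p/u^{ep^{\ell}}\}[1/p]$ and $\wt{\mathbf A}^{+}_{K_\infty}\{u^{ep^{k}}/p\}[1/p]$ over $\wt{\mathbf A}^{+}_{K_\infty}[1/p]$ with $\wt{\mathbf A}^{+}_{K_\infty}\{p/u^{ep^{\ell}},u^{ep^{k}}/p\}[1/p]$, exactly as in \cite[Lem.~2.18]{Ber02}.

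The main obstacle is this second step: writing down the precise Teichm\"uller/Laurent-series description of $\wt{\mathbf A}^{[r_{\ell},r_{k}]}$ and checking that taking $G_\infty$-invariants amounts to restricting the coefficients to $\mathbf E_{K_\infty}$. This is routine but delicate, and it is exactly where the APF property of $K_\infty$ and the theory of fields of norms are used; the relevant cohomological input (the vanishing of $H^{1}(G_\infty,\mathfrak m_{\wt{\mathbf E}^{+}})$) is precisely the one already invoked in the proof of Lemma \ref{lem decomp}. The first step, by contrast, is elementary.
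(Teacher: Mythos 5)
Your Step 1 is correct and is essentially Lem.~\ref{lem div p}(1): since $v_{\wt{\mathbf E}}(\upi^{ep^k})=v_{\wt{\mathbf E}}(\overline{\pi}^{\,r_k})=p^k$, the elements $u^{ep^k}$ and $[\overline{\pi}]^{r_k}$ differ by a unit of $\wt{\mathbf A}^+$, and $\varphi^k(E(u))\equiv u^{ep^k} \bmod p$. The problem is in Step 2, where you assert $(\wt{\mathbf E})^{G_\infty}=\mathbf E_{K_\infty}$. This is false: $(\wt{\mathbf E})^{G_\infty}=\wt{\mathbf E}_{K_\infty}$ is the completed \emph{perfection} of $\mathbf E_{K_\infty}=k(\!(\upi)\!)$ (only $\mathbf E^{G_\infty}=\mathbf E_{K_\infty}$ for the separable closure $\mathbf E$, via the field of norms). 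For instance $[\upi^{1/p}]\in\wt{\mathbf A}^+_{K_\infty}$ is $G_\infty$-invariant, yet its Teichm\"uller coefficient $\upi^{1/p}$ does not lie in $k(\!(\upi)\!)$. Restricting the coefficients $x_n$ to $\mathbf E_{K_\infty}$ computes the \emph{imperfect} rings $\mathbf A^I_{K_\infty}$ of Prop.~\ref{cor A rl rk} (cf.\ Lem.~\ref{lem inj}), not the rings $\wt{\mathbf A}^I_{K_\infty}=(\wt{\mathbf A}^I)^{G_\infty}$ of the present proposition, so as written your Step 2 proves a different statement whose right-hand sides do not match the left-hand sides here. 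The repair is to replace $\mathbf E_{K_\infty}$ by $\wt{\mathbf E}_{K_\infty}$ throughout.

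Even after that correction, the Teichm\"uller-expansion route only settles item (2) directly: an element of $\wt{\mathbf A}^{[r_\ell,+\infty]}_{K_\infty}$ is an honest Witt vector $\sum_k p^k[x_k]$ with $x_k\in\wt{\mathbf E}_{K_\infty}$, and the growth condition gives $v_{\wt{\mathbf E}}(x_k\upi^{ek})\to+\infty$, whence membership in $\wt{\mathbf A}^+_{K_\infty}\{p/u^{ep^\ell}\}$ --- this is exactly the paper's proof of (2). For item (1), however, $\wt{\mathbf A}^{[0,r_k]}$ is not a subring of $W(\wt{\mathbf E})$ and there is no termwise criterion in the Teichm\"uller expansion identifying $\wt{\mathbf A}^+_{K_\infty}\{u^{ep^k}/p\}$ inside it, so ``rerunning Step 1 over the invariants'' is not yet an argument. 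The paper instead runs a successive approximation with $\theta\circ\iota_k$: one needs the surjectivity of $\theta:\wt{\mathbf A}^+_{K_\infty}\to\mathcal O_{\hat K_\infty}$ (which uses the explicit description of $\wt{\mathbf E}^+_{K_\infty}/u^e\wt{\mathbf E}^+_{K_\infty}$) together with Lemma~\ref{prop theta ker} to divide by $\varphi^k(E(u))/p$ at each stage while staying inside the invariant ring. Your alternative derivation of (3) from (1), (2) and the exact sequence of Lemma~\ref{lem decomp} is precisely the paper's argument and is fine; note that the vanishing of $H^1(G_\infty,\mathfrak m_{\wt{\mathbf E}^+})$ enters only there, not in (1) or (2).
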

\begin{proof}  Item (1) is an analogue of \cite[Lem. 2.29]{Ber02}. By applying $\varphi^{-k}$ to all rings, it suffices to prove it when $k=0$.
By definition of $ \wt{\mathbf{A}}^{[0, r_k]}$, it is obvious that $ \wt{\mathbf{A}}^+_{K_\infty}\{ \frac{\varphi^k(E(u))}{p} \} \subset \wt{\mathbf{A}}_{K_\infty}^{[0, r_k]} $; it suffices to show the inclusion is identity.
  Since $\wt{\mathbf{E}}^+_{K_\infty}/u^e \wt{\mathbf{E}}^+_{K_\infty}$ has a basis of $u^i$ for $i \in \mathbb Z [1/p] \cap [0, e)$, we can easily deduce that $\theta: \wt{\mathbf{A}}^+_{K_\infty} \to \O_{\hat{K_\infty}}$ is surjective.
  Given $x \in \wt{\mathbf{A}}_{K_\infty}^{[0, r_0]}$, we recursively define two sequences $x_i \in \wt{\mathbf{A}}_{K_\infty}^{[0, r_0]}$ and $a_i \in \wt{\mathbf{A}}^+_{K_\infty}$ as follows:
  \begin{itemize}
  \item  let $x_0=x$;
  \item  choose any $a_i \in \wt{\mathbf{A}}^+_{K_\infty}$ such that $\theta(a_i)=\theta(x_i)  \in \O_{\hat{K_\infty}}$;
  \item let $x_{i+1} :=(x_i-a_i)\cdot \frac{p}{E(u)}$, then $x_{i+1} \in \wt{\mathbf{A}}_{K_\infty}^{[0, r_0]}$ by Lem. \ref{prop theta ker}.
  \end{itemize}
  Then it is easy to check that $x=\sum_{i \geq 0} a_i (E(u)/p)^i$ with  $a_i \to 0$.

  For Item (2), again it suffices to consider the case $\ell=0$. Let $x \in \wt{\mathbf{A}}_{K_\infty}^{[r_0, +\infty]}$, write it as $x=\sum_{k \geq 0}p^k[x_k]$, then clearly $x_k \in (\wt{\mathbf E})^{G_\infty}$. Since $(pr_0)/(p-1)\cdot  k+  v_{\wt{\mathbf E}}(x_k) \to +\infty$ as $k \to +\infty$, so $k+v_{\wt{\mathbf E}}(x_k) \to +\infty$, and so $v_{\wt{\mathbf E}}(x_k\cdot \underline{\pi}^{ek}) \to +\infty$. Then one can easily show that $x\ \in \wt{\mathbf{A}}^+_{K_\infty}\{ \frac{p}{u^{e }}    \}$.


  Consider Item (3). By Lem. \ref{lem decomp}, any element  $x \in \wt{\mathbf{B}}_{K_\infty}^{[r_\ell, r_k]}$ can be written as a sum $x=a+b$ with $a \in \wt{\mathbf{B}}_{ K_\infty}^{[r_\ell, +\infty]} $ and $b \in  \wt{\mathbf{B}}^{[0, r_k]}_{ K_\infty} $, so we can apply Items (1) and (2) to conclude.
 \end{proof}

\begin{rem}
We do not know if we have
\begin{equation}\label{eqnabf}
\wt{\mathbf{A}}_{K_\infty}^{[r_\ell, r_k]}  = \wt{\mathbf{A}}^+_{K_\infty} \{ \frac{p}{u^{ep^\ell}}, \frac{u^{ep^k}}{p}  \}.
\end{equation}
Equivalently, we do not know if the ``$\wt{\mathbf{A}}$"-version of \eqref{eqnseqa} (by changing all $\wt{\mathbf{B}}$ there to $\wt{\mathbf{A}}$) holds. Indeed, to show that the $\delta$-map in \eqref{eqndelta} is the zero map following \cite[Lem. 2.27]{Ber02}, it is critical to use the fact that $u$ is invertible in $\wt{\mathbf{B}}^{[r_\ell, r_k]}_{K_\infty}$ (which fails in $\wt{\mathbf{A}}^{[r_\ell, r_k]}_{K_\infty}$). We tend to think that \eqref{eqnabf} holds. In particular, the ``$\mathbf{A}$"-version of  \eqref{eqnabf} holds, cf. Prop.\ref{cor A rl rk}; the proof critically relies on the \emph{unique} decompostion $f=f^-+f^+$ in Lem. \ref{lem laurent series}, which fails inside $\wt{\mathbf{A}}_{K_\infty}^{[r_\ell, r_k]}$. Fortunately, \eqref{eqnabf} is perhaps useless anyway; e.g., the ``$\Kpinfty$-version" was never studied in \cite{Ber02}. In contrast, Prop. \ref{cor A rl rk} (indeed Cor. \ref{lem aa inter}) plays a key role in our Thm. \ref{thm loc ana gamma 1}.
\end{rem}

\begin{prop} \label{lemdense}
\begin{enumerate}
\item  The ring $\wt{\mathbf{B}}_{K_\infty}^{[r_\ell, +\infty]}$ is dense in $\wt{\mathbf{B}}_{ K_\infty}^{[r_\ell, +\infty)}$ for the Fr{\'e}chet topology.
\item  The ring $\wt{\mathbf{B}}_{K_\infty}^{[0, +\infty]}$ is dense in $\wt{\mathbf{B}}_{ K_\infty}^{[0, +\infty)}$ for the Fr{\'e}chet topology.
\end{enumerate}

\end{prop}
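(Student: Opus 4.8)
The plan is to deduce both statements directly from the explicit descriptions of the rings $\wt{\mathbf{B}}_{K_\infty}^{I}$ obtained in Prop.~\ref{lem decomp 3}, reducing everything to a $p$-adic approximation by polynomials. First I would reduce to a single ``finite level''. Writing $\wt{\mathbf{B}}_{K_\infty}^{[r, +\infty)} = \bigcap_{k \geq k_0} \wt{\mathbf{B}}_{K_\infty}^{[r, r_k]}$ (with $r_{k_0} \geq r$), its Fr\'echet topology is the one defined by the family of valuations $\{V^{[r, r_k]}\}_{k}$; by Lem.~\ref{lem W}(\ref{item compa}) we have $V^{[r, r_k]} = \lfloor W^{[r, r_k]} \rfloor$, and since $W^{[r, r_k]}(x) = \inf_{\alpha \in [r, r_k], \alpha \neq 0} W^{[\alpha, \alpha]}(x)$ by Def.~\ref{defnew}, the quantity $V^{[r, r_k]}(x)$ is non-increasing in $k$. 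Hence a basic neighbourhood of $0$ in $\wt{\mathbf{B}}_{K_\infty}^{[r, +\infty)}$ is already controlled by a single $V^{[r, r_k]}$ with $k$ large, and it is enough to prove: for every such $k$ and every $N$, each $x \in \wt{\mathbf{B}}_{K_\infty}^{[r, +\infty)}$ admits some $y \in \wt{\mathbf{B}}_{K_\infty}^{[r, +\infty]}$ with $V^{[r, r_k]}(x - y) \geq N$.

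\textbf{Part (1).} Fix $\ell$ and $k \geq \ell$, and take $x \in \wt{\mathbf{B}}_{K_\infty}^{[r_\ell, +\infty)} \subset \wt{\mathbf{B}}_{K_\infty}^{[r_\ell, r_k]}$. By Prop.~\ref{lem decomp 3}(3), $\wt{\mathbf{B}}_{K_\infty}^{[r_\ell, r_k]} = \wt{\mathbf{A}}^+_{K_\infty}\{ p/u^{ep^\ell}, u^{ep^k}/p \}[1/p]$, so $x = p^{-M} z$ for some $M \geq 0$ and some $z$ in the $p$-adic completion $\wt{\mathbf{A}}^+_{K_\infty}\{ p/u^{ep^\ell}, u^{ep^k}/p \}$, inside which the $\wt{\mathbf{A}}^+_{K_\infty}$-polynomials in $p/u^{ep^\ell}$ and $u^{ep^k}/p$ are $p$-adically dense; I would then pick such a polynomial $P$ with $z - P \in p^{N+M}\wt{\mathbf{A}}^+_{K_\infty}\{ p/u^{ep^\ell}, u^{ep^k}/p \}$ and set $y := p^{-M} P$, which gives $V^{[r_\ell, r_k]}(x - y) = V^{[r_\ell, r_k]}(z - P) - M \geq N$. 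The one thing that needs checking is $y \in \wt{\mathbf{B}}_{K_\infty}^{[r_\ell, +\infty]}$: here $p/u^{ep^\ell} \in \wt{\mathbf{A}}_{K_\infty}^{[r_\ell, +\infty]}$ by Prop.~\ref{lem decomp 3}(2), while $u^{ep^k}/p = [\upi]^{ep^k}/p \in \wt{\mathbf{A}}^+_{K_\infty}[1/p] = \wt{\mathbf{B}}^+_{K_\infty} = \wt{\mathbf{B}}_{K_\infty}^{[0, +\infty]} \subseteq \wt{\mathbf{B}}_{K_\infty}^{[r_\ell, +\infty]}$ (the inclusion because $[r_\ell, +\infty] \subset [0, +\infty]$, cf.~\S\ref{subsubcontain}), and $\wt{\mathbf{A}}^+_{K_\infty} \subseteq \wt{\mathbf{B}}_{K_\infty}^{[r_\ell, +\infty]}$ as well; so any $\wt{\mathbf{A}}^+_{K_\infty}$-polynomial in these two generators, divided by a power of $p$, lands in $\wt{\mathbf{B}}_{K_\infty}^{[r_\ell, +\infty]}$. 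Letting $N \to \infty$ finishes part (1).

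\textbf{Part (2) and the main obstacle.} Part (2) is the same argument with Prop.~\ref{lem decomp 3}(1) in place of (3): $\wt{\mathbf{B}}_{K_\infty}^{[0, r_k]} = \wt{\mathbf{A}}^+_{K_\infty}\{ u^{ep^k}/p \}[1/p]$, the $\wt{\mathbf{A}}^+_{K_\infty}$-polynomials in $u^{ep^k}/p$ are $p$-adically (hence $V^{[0, r_k]}$-) dense in it, and all of them lie in $\wt{\mathbf{A}}^+_{K_\infty}[1/p] = \wt{\mathbf{B}}^+_{K_\infty} = \wt{\mathbf{B}}_{K_\infty}^{[0, +\infty]}$. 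The only genuinely delicate step is the topological reduction in the first paragraph --- that finitely many of the defining Fr\'echet seminorms are dominated by a single $V^{[r, r_k]}$ --- which is exactly the monotonicity of $W^I$ in $I$ built into Def.~\ref{defnew}; everything else is routine $p$-adic approximation once the explicit rings of Prop.~\ref{lem decomp 3} are in hand. (One could instead bypass Prop.~\ref{lem decomp 3} and try to transport the density of Lemma~\ref{lemdensity}(\ref{item frechet}) through $G_\infty$-invariants using the vanishing $H^1(G_\infty, \mathfrak{m}_{\wt{\mathbf{E}}^+}) = 0$ from the proof of Lemma~\ref{lem decomp}, but the computation above is shorter and more self-contained.)
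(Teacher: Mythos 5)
Your proof is correct and is essentially the paper's argument: the paper simply defers to Berger's Prop.~2.30 of \cite{Ber02} (with $q$ replaced by $E(u)$), whose proof is exactly this reduction — via the explicit presentations of Prop.~\ref{lem decomp 3} — to $p$-adically approximating elements of $\wt{\mathbf{A}}^+_{K_\infty}\{p/u^{ep^\ell}, u^{ep^k}/p\}[1/p]$ by polynomials in the generators, which lie in $\wt{\mathbf{B}}^{[r_\ell,+\infty]}_{K_\infty}$. Your write-up merely makes this self-contained by quoting Prop.~\ref{lem decomp 3} directly rather than re-running Berger's argument.
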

\begin{proof}
The proof (for both Items) is verbatim as the proof of \cite[Prop. 2.30]{Ber02}, by changing $q$ there to $E(u)$.
\end{proof}

\subsection{The ring ${\mathbf B}^{I}$ and its $G_\infty$-invariants} \label{subsec BI}

\begin{definition}
\begin{enumerate}
\item When $r \in \mathbb{Z}^{\geq 0}[1/p]$, let
$$\mathbf{A}^{[r, +\infty]} := \mathbf A \cap \wt{\mathbf{A}}^{[r, +\infty]}, \quad \mathbf{B}^{[r, +\infty]} :=\mathbf B \cap \wt{\mathbf{B}}^{[r, +\infty]} .$$
\item When $r, s \in \mathbb{Z}^{\geq 0}[1/p], s \neq 0$, let $\mathbf{B}^{[r, s ]}$ be the closure of $\mathbf{B}^{[r, +\infty]}$  in $\wt{\mathbf{B}}^{[r, s]}$ with respect to $W^{[r, s]}$ (By Rem. \ref{remnoa00}, there is no $\wt{\mathbf{B}}^{[0, 0]}$ hence no  ${\mathbf{B}}^{[0, 0]}$). Let $\mathbf{A}^{[r, s ]}: =\mathbf{B}^{[r, s ]} \cap \wt{\mathbf{A}}^{[r, s]}$, which is the ring of integers in $\mathbf{B}^{[r, s ]}$.


\item When $r \in \mathbb{Z}^{\geq 0}[1/p]$, let
$$\mathbf{B}^{[r, +\infty)} : =\bigcap_{n \geq 0} \mathbf{B}^{[r, s_n]}$$
where $s_n  \in \mathbb{Z}^{>0}[1/p]$ is any sequence increasing to $+\infty$.
\end{enumerate}
\end{definition}

\begin{definition}
 For $r \in \mathbb{Z}^{\geq 0}[1/p]$, let $\mathcal{A}^{[r, +\infty]}(K_0)$ be the ring consisting of infinite series $f=\sum_{k \in \mathbb Z} a_kT^k$ where $a_k \in W(k)$ such that $f$ is a holomorphic function on the annulus defined by
$$v_p(T) \in  (0,  \quad     \frac{p-1}{ep}\cdot \frac{1}{r} ].$$
(Note that when $r=0$, it implies that $a_k=0, \forall k<0$).
Let $\mathcal{B}^{[r, +\infty]}(K_0): =\mathcal{A}^{[r, +\infty]}(K_0)[1/p] $.
\end{definition}

\begin{defn}\label{defcalwss}
Suppose  $f=\sum_{k \in \mathbb Z} a_kT^k \in \mathcal{B}^{[r, +\infty]}(K_0)$.
\begin{enumerate}
\item  When $s \geq r$, $s>0$, let
$$
\mathcal W^{[s, s]}(f) : =  \inf_{k \in \Z}  \{ v_p(a_k)  + \frac{p-1}{ps}\cdot   \frac{k}{e}     \}. $$


\item For $I  \subset [r, +\infty)$ a non-empty   closed  interval,
let
\begin{equation}\label{eqwiwi}
\mathcal W^{I}(f) := \inf_{\alpha \in I, \alpha \neq 0}\mathcal W^{[\alpha, \alpha]}(f).
\end{equation}

\end{enumerate}
\end{defn}
It is well-known that $\mathcal W^{[s, s]}$ for any $s>0$ is an multiplicative valuation; thus $\mathcal W^{I}$ is an sub-multiplicative valuation.

\begin{defn}
For $r\leq s \in \mathbb{Z}^{\geq 0}[1/p], s \neq 0$, let $\mathcal{B}^{[r, s]}(K_0)$ be the completion of $\mathcal{B}^{[r, +\infty]}(K_0)$ with respect to $\mathcal W^{[r, s]}$. Let $\mathcal{A}^{[r, s]}(K_0)$ be the ring of integers in $\mathcal{B}^{[r, s]}(K_0)$ with respect to $\mathcal W^{[r, s]}$.
\end{defn}

\begin{lemma} \label{lem laurent series}
\begin{enumerate}
\item $\mathcal{B}^{[r_\ell, +\infty]}(K_0)$ is complete with respect to $\mathcal W^{[r_\ell, r_\ell]}$, and $\mathcal{A}^{[r_\ell, +\infty]}(K_0)$ is the ring of integers with respect to this valuation. Furthermore, we have
\begin{equation} \label{eqa0}
\mathcal{A}^{[r_\ell, +\infty]}(K_0)  =W(k)[\![T]\!] \{ \frac{p}{T^{ep^\ell}} \}.
\end{equation}

\item We have $\mathcal W^{[0, r_k]}(x) =  \mathcal W^{[r_k, r_k]}(x)$. Furthermore, $\mathcal{B}^{[0, r_k]}(K_0)$ is  the ring consisting of infinite series $f=\sum_{k \in \mathbb Z} a_kT^k$ where $a_k \in K_0$ such that $f$ is a holomorphic function on the \emph{closed disk} defined by
$$v_p(T) \in  [ \frac{p-1}{ep}\cdot \frac{1}{r_k}, \quad +\infty ].$$
Indeed,  we have
\begin{equation}
\label{eqa1}\mathcal{A}^{[0, r_k]}(K_0)=W(k)[\![T]\!] \{ \frac{T^{ep^k}}{p}\},  \text{ and } \mathcal{B}^{[r, s]}(K_0)=\mathcal{A}^{[r, s]}(K_0)[1/p].
\end{equation}

\item For $I=[r, s]= [r_\ell, r_k]$, we have $\mathcal W^{I}(x) = \inf \{\mathcal W^{[r, r]}(x), \mathcal W^{[s, s]}(x)\}$. Furthermore, $\mathcal{B}^{[r, s]}(K_0)$ is  the ring consisting of infinite series $f=\sum_{k \in \mathbb Z} a_kT^k$ where $a_k \in K_0$ such that $f$ is a holomorphic function on the annulus defined by
$$v_p(T) \in  [ \frac{p-1}{ep}\cdot \frac{1}{s},  \quad \frac{p-1}{ep}\cdot \frac{1}{r} ].$$
Indeed, we have
\begin{equation}
\label{eqa2}\mathcal{A}^{[r_\ell, r_k]}(K_0)=W(k)[\![T]\!] \{ \frac{p}{T^{ep^\ell}} , \frac{T^{ep^k}}{p}\},   \text{ and } \mathcal{B}^{[r, s]}(K_0)=\mathcal{A}^{[r, s]}(K_0)[1/p].
\end{equation}

\end{enumerate}
\end{lemma}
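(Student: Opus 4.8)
\textbf{Proof plan for Lemma \ref{lem laurent series}.}
The plan is to reduce everything to explicit manipulations of Laurent series in $T$ over $W(k)$, exactly as in the classical case of the Robba ring, since $\mathcal{B}^{[r,+\infty]}(K_0)$ is by definition a ring of holomorphic functions on an annulus. First I would establish Item (1). The valuation $\mathcal W^{[r_\ell,r_\ell]}$ is the Gauss norm associated to the single radius $v_p(T)=\frac{p-1}{ep}\cdot\frac{1}{r_\ell}$; a series $f=\sum a_kT^k$ with $a_k\in W(k)$ is holomorphic on the half-open annulus $v_p(T)\in(0,\frac{p-1}{ep r_\ell}]$ if and only if $v_p(a_k)+\frac{p-1}{ps}\cdot\frac{k}{e}\to+\infty$ as $k\to-\infty$ for every $s\le r_\ell$, and if and only if (after checking the boundary radius dominates) $\mathcal W^{[r_\ell,r_\ell]}(f)>-\infty$ together with the decay at $-\infty$; the point is that on this annulus the smallest radius is the relevant one for completeness, so $\mathcal{B}^{[r_\ell,+\infty]}(K_0)$ is already complete for $\mathcal W^{[r_\ell,r_\ell]}$. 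For the identification \eqref{eqa0}, I would observe that $W(k)[\![T]\!]\{p/T^{ep^\ell}\}$ consists exactly of the series $\sum_{k\in\Z}a_kT^k$ with $a_k\in W(k)$, $v_p(a_k)+\frac{k}{ep^\ell}\to+\infty$ as $k\to-\infty$ (write a general element as $\sum_j b_j(p/T^{ep^\ell})^j$ with $b_j\in W(k)[\![T]\!]$, $b_j\to0$ $p$-adically, and expand), and this decay condition is precisely holomorphy on $v_p(T)\in(0,\frac{p-1}{ep r_\ell}]$ since $r_\ell=(p-1)p^{\ell-1}$ gives $\frac{p-1}{ep r_\ell}=\frac{1}{ep^\ell}$.

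For Item (2), the key observation is the ``maximum modulus''-type phenomenon: for a holomorphic function on an annulus whose inner radius corresponds to a \emph{larger} value of $v_p(T)$, the Gauss norm is monotone, so $\mathcal W^{[s,s]}(x)$ is non-decreasing in $s$ on $[0,r_k]$ (equivalently, as $v_p(T)$ increases the norm only increases for series with $a_k=0$ for $k<0$); hence the infimum defining $\mathcal W^{[0,r_k]}$ is attained at $s=r_k$, giving $\mathcal W^{[0,r_k]}=\mathcal W^{[r_k,r_k]}$. The holomorphy on the closed disk $v_p(T)\in[\frac{p-1}{ep r_k},+\infty]$ together with the constraint $a_k\in K_0$ forces $a_k=0$ for $k<0$ (a genuine Laurent tail would blow up as $v_p(T)\to+\infty$), and then $\mathcal{A}^{[0,r_k]}(K_0)$ — the unit ball — is exactly the series with $v_p(a_k)+\frac{k}{ep^k}\ge0$, i.e. $v_p(a_k)\ge -k/(ep^k)$, which one checks term by term equals $W(k)[\![T]\!]\{T^{ep^k}/p\}$ by the same expansion argument as in Item (1) (write elements as $\sum_j c_j(T^{ep^k}/p)^j$). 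Inverting $p$ gives the second half of \eqref{eqa1}.

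Item (3) combines the two: for $I=[r_\ell,r_k]$ the annulus $v_p(T)\in[\frac{p-1}{ep r_k},\frac{p-1}{ep r_\ell}]$ has two finite boundary radii, and the standard convexity of $s\mapsto \mathcal W^{[s,s]}(f)$ in the variable $1/s$ (the Newton-polygon/three-circles argument: $\mathcal W^{[s,s]}(f)$ is the lower convex envelope of the points $(1/s, v_p(a_k)+\frac{(p-1)k}{eps})$, hence a concave function of $1/s$) shows the infimum over $\alpha\in[r_\ell,r_k]$ is attained at one of the two endpoints, giving $\mathcal W^{I}(x)=\inf\{\mathcal W^{[r_\ell,r_\ell]}(x),\mathcal W^{[r_k,r_k]}(x)\}$. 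The ring-theoretic identity \eqref{eqa2} then follows by intersecting the descriptions from (1) and (2): an element lies in $\mathcal{A}^{[r_\ell,r_k]}(K_0)$ iff its two ``halves'' (the tail $\sum_{k<0}$ and the head $\sum_{k\ge0}$, uniquely split since coefficients lie in the field $K_0$ after inverting $p$, or more carefully using that the completion is a ring of convergent series) lie respectively in $W(k)[\![T]\!]\{p/T^{ep^\ell}\}[1/p]$ and $W(k)[\![T]\!]\{T^{ep^k}/p\}[1/p]$, which is the $[1/p]$ of $W(k)[\![T]\!]\{p/T^{ep^\ell},T^{ep^k}/p\}$. The main obstacle I anticipate is being careful about the completions: one must verify that the abstract completion $\mathcal{B}^{[r,s]}(K_0)$ of $\mathcal{B}^{[r,+\infty]}(K_0)$ for $\mathcal W^{[r,s]}$ really is the concrete ring of functions holomorphic on the corresponding annulus (no ``extra'' elements appear), which is a density statement — that finite-tailed Laurent polynomials, or the explicit generators, are dense — rather than anything deep, but it needs to be stated cleanly to make the identifications \eqref{eqa0}, \eqref{eqa1}, \eqref{eqa2} rigorous.
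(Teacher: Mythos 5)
Your proposal is correct and takes essentially the same route as the paper: the paper's own proof declares Items (1) and (2) elementary and sketches only \eqref{eqa2}, via exactly your key step — the unique decomposition $f=f^-+f^+$ into negative- and non-negative-power parts, combined with the fact that $\mathcal W^{[s,s]}$ is computed term-wise, reducing \eqref{eqa2} to \eqref{eqa0} and \eqref{eqa1}. (Minor bookkeeping slip: in Item (1) the annulus $v_p(T)\in(0,\tfrac{p-1}{ep r_\ell}]$ corresponds to $s\ge r_\ell$, not $s\le r_\ell$, with the binding constraint at $s=r_\ell$ as you correctly use afterwards.)
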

\begin{proof}
Everything is elementary and well-known; we only sketch how to prove \eqref{eqa2}. Let $f=\sum_{k \in \mathbb{Z}} a_kT^k \in \mathcal{A}^{[r_\ell, r_k]}(K_0)$, then we can decompose $f=f^-+f^+$ uniquely where $f^-=\sum_{k<0} a_kT^k $ and $f^+=\sum_{k \geq 0} a_kT^k $. Since the valuations $\mathcal W^{[s, s]}$ are defined in a ``term-wise" fashion (i.e., $\mathcal W^{[s, s]}(f) =\inf_k \mathcal W^{[s, s]}(a_kT^k)$), it is easy to see that $f^- \in \mathcal{A}^{[r_\ell, +\infty]}(K_0)$ and $f^+ \in \mathcal{A}^{[0, r_k]}(K_0)$; then we can conclude using \eqref{eqa0} and \eqref{eqa1}.
\end{proof}

\begin{rem}\label{remeqnw00}
When $r=0$ in Def. \ref{defcalwss}, it actually makes perfect sense to define
\begin{equation}\label{eqnw00}
\mathcal{W}^{[0, 0]}(f):= v_p(a_0).
\end{equation}
Indeed, the valuations $\mathcal{W}^{[s, s]}(f)$ (for $s>0$) correspond to the Gauss norms on the \emph{circle} of radius $p^{-(p-1)/eps}$, and this ``$ \mathcal{W}^{[0, 0]}(f)$"  corresponds precisely to the norm on the zero point.
Using \eqref{eqnw00}, we could even modify \eqref{eqwiwi} (when $0 \in I$) to be
\begin{equation}
\mathcal W^{[0, s]}(f) := \inf_{\alpha \in [0, s]}\mathcal W^{[\alpha, \alpha]}(f).
\end{equation}
But these two definitions give the same valuation (namely, $\mathcal W^{[s, s]}(f)$), because the  zero point  is not on the boundary (of the relevant closed disk) anyway! Since we do not have a ``compatible" $W^{[0, 0]}$ by Rem. \ref{rem donot defn}, it is better for us to completely ignore ``$\mathcal{W}^{[0, 0]}$".
\end{rem}

\begin{lemma} \label{lem inj}
 Let $r\leq s \in \mathbb{Z}^{\geq 0}[1/p]$, $s > 0$.
\begin{enumerate}
\item
The map $f(T) \mapsto f(u)$ induces ring isomorphisms
\begin{eqnarray*}
 \mathcal{A}^{[0, +\infty]}(K_0)  &\simeq& \mathbf{A}^{[0,+\infty]}_{K_\infty}, \text{ when } r=0; \\
 \mathcal{A}^{[r, +\infty]}(K_0) &\simeq &  \mathbf{A}^{[r,+\infty]}_{K_\infty}[1/u], \text{ when } r>0.
\end{eqnarray*}
Furthermore, given $f \in \mathcal{A}^{[r, +\infty]}(K_0)$,  we have
$$\mathcal W^{[s, s]}(f(T)) = W^{[s, s]}(f(u)).$$

\item The map $f(T) \mapsto f(u)$ induces isometric isomorphisms
\begin{eqnarray*}
 \mathcal{A}^{[0, s]}(K_0)  &\simeq& \mathbf{A}^{[0,s]}_{K_\infty}, \text{ when } r= 0; \\
 \mathcal{A}^{[r, s]}(K_0) &\simeq &  \mathbf{A}^{[r,s]}_{K_\infty}, \text{ when } r>0.
\end{eqnarray*}
\end{enumerate}

\end{lemma}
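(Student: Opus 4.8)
The plan is to reduce everything to the explicit descriptions of the rings on both sides and the identification $\mathbf{A}^+_{K_\infty} \simeq W(k)[\![u]\!]$ via $u \mapsto [\underline\pi]$. First I would treat the ``$+\infty$'' endpoint cases (Item (1)). When $r = 0$, both $\mathcal{A}^{[0,+\infty]}(K_0) = W(k)[\![T]\!]$ (power series, no negative terms) and $\mathbf{A}^{[0,+\infty]}_{K_\infty}$ should coincide with $\mathbf{A}^+_{K_\infty} = W(k)[\![u]\!]$: indeed $\mathbf{A}^{[0,+\infty]}_{K_\infty} = \mathbf{A}_{K_\infty} \cap \wt{\mathbf{A}}^{[0,+\infty]}_{K_\infty}$, and $\wt{\mathbf{A}}^{[0,+\infty]}_{K_\infty} = \wt{\mathbf{A}}^+_{K_\infty} = (\wt{\mathbf{A}}^+)^{G_\infty}$ contains $\mathbf{A}^+_{K_\infty}$, while an element of $\mathbf{A}_{K_\infty}$ (Laurent series in $u$) lying in $\wt{\mathbf{A}}^+$ must have no negative powers of $u$. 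When $r > 0$, by Prop. \ref{lem decomp 3}(2) applied with $K_\infty$-invariants (or rather its ``$\mathbf{A}$''-analogue that one proves directly using the unique decomposition $f = f^- + f^+$ of Lem. \ref{lem laurent series}, cf. Prop. \ref{cor A rl rk}), $\mathbf{A}^{[r,+\infty]}_{K_\infty}[1/u]$ is exactly $W(k)[\![u]\!]\{p/u^{ep^\ell}\}$, which matches \eqref{eqa0}. The valuation identity $\mathcal{W}^{[s,s]}(f(T)) = W^{[s,s]}(f(u))$ is then a term-by-term check: writing $f = \sum a_k T^k$ and recalling that $[\underline\pi]$ has $v_{\wt{\mathbf{E}}}$-valuation equal to $v_{\wt{\mathbf{E}}}(\underline\pi) = v_p(\pi) = 1/e$ (since $\pi$ is a uniformizer of $K$ and $e = [K:K_0]$), one sees $W^{[s,s]}([\underline\pi]^k) = \frac{p-1}{ps}\cdot \frac{k}{e}$, which is precisely the $k$-th term of $\mathcal{W}^{[s,s]}$. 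The multiplicativity of $W^{[s,s]}$ (Lem. \ref{lem W}(\ref{item mult})) and of $\mathcal{W}^{[s,s]}$ (stated after Def. \ref{defcalwss}) together with non-archimedean additivity reduce the general comparison to these monomial computations, using that the $w_k$ of the Teichm\"uller expansion reads off $v_{\wt{\mathbf{E}}}$ of the coefficients.

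Next, for Item (2), I would pass to completions. By definition $\mathcal{A}^{[r,s]}(K_0)$ is the $\mathcal{W}^{[r,s]}$-completion of $\mathcal{A}^{[r,+\infty]}(K_0)$ (for $r > 0$; for $r = 0$ one uses that $\mathcal{W}^{[0,s]} = \mathcal{W}^{[s,s]}$ by Lem. \ref{lem laurent series}(2)), and $\mathbf{A}^{[r,s]}_{K_\infty} = \mathbf{B}^{[r,s]}_{K_\infty} \cap \wt{\mathbf{A}}^{[r,s]}$, where $\mathbf{B}^{[r,s]}$ is the $W^{[r,s]}$-closure of $\mathbf{B}^{[r,+\infty]}$ in $\wt{\mathbf{B}}^{[r,s]}$. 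Since $f \mapsto f(u)$ is an isometry from $(\mathcal{A}^{[r,+\infty]}(K_0), \mathcal{W}^{[r,s]})$ onto its image in $(\wt{\mathbf{A}}^{[r,s]}, W^{[r,s]})$ by Item (1), it extends to an isometric isomorphism of completions. What remains is to identify the completion of the image with $\mathbf{A}^{[r,s]}_{K_\infty}$: one inclusion is clear since the image lands in $\mathbf{A}_{K_\infty} \cap \wt{\mathbf{A}}^{[r,s]}$ and is closed; for the reverse, an element of $\mathbf{A}^{[r,s]}_{K_\infty}$ is a limit of elements of $\mathbf{A}^{[r,+\infty]}_{K_\infty}$, and by Item (1) (the $[1/u]$ version when $r>0$) those are represented by Laurent series $f(u)$ with $f(T) \in \mathcal{A}^{[r,+\infty]}(K_0)$, so the limit lies in the completion $\mathcal{A}^{[r,s]}(K_0)$. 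One must check that intersecting with $\wt{\mathbf{A}}^{[r,s]}$ on the source side corresponds to taking the integer ring $\mathcal{A}^{[r,s]}(K_0)$ for $\mathcal{W}^{[r,s]}$ — this follows because $W^{[r,s]}$ restricted to the image equals $\mathcal{W}^{[r,s]}$ and $V^{[r,s]} = \lfloor W^{[r,s]} \rfloor$ (Lem. \ref{lem W}(\ref{item compa})), so ``integral for $V^{[r,s]}$'' and ``$\mathcal{W}^{[r,s]} \geq 0$'' agree.

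The step I expect to be the main obstacle is establishing, for $r > 0$, that $\mathbf{A}^{[r,+\infty]}_{K_\infty}[1/u]$ is genuinely equal to $W(k)[\![u]\!]\{p/u^{ep^\ell}\}$ — i.e., the ``$\mathbf{A}$''-analogue of Prop. \ref{lem decomp 3}(2) at the level of $G_\infty$-invariants rather than the full ring. The subtlety, flagged already in the remark after Prop. \ref{lem decomp 3}, is that the connecting map $\delta$ into $H^1(G_\infty, \wt{\mathbf{A}}^+)$ need not vanish (the argument using invertibility of $u$ only works after inverting $u$, i.e., over $\mathbf{B}$), so one cannot simply take $G_\infty$-invariants of the decomposition. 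The resolution, as Prop. \ref{cor A rl rk} indicates, is to work directly with the unique decomposition $f = f^- + f^+$ of Lem. \ref{lem laurent series}: given $x \in \mathbf{A}^{[r,+\infty]}_{K_\infty}$, view it inside $\mathbf{A}_{K_\infty}$ as a Laurent series in $u$, use the integrality condition coming from $x \in \wt{\mathbf{A}}^{[r,+\infty]}$ together with the explicit valuation formula from Item (1) to control the coefficients, and conclude $x \in W(k)[\![u]\!]\{p/u^{ep^\ell}\}$ directly. Once this is in hand, all the comparisons of valuations and rings are routine term-by-term estimates and completions.
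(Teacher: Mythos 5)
Your Part‑1 observations are fine: the inequality $W^{[s,s]}(f(u)) \geq \inf_k W^{[s,s]}(a_k u^k) = \mathcal W^{[s,s]}(f)$ follows from the ultrametric inequality plus the monomial computation $W^{[s,s]}(a_k u^k) = v_p(a_k) + \frac{p-1}{ps}\cdot\frac{k}{e}$, completeness of $\mathbf{A}^{[r,+\infty]}_{K_\infty}[1/u]$ for $W^{[r,r]}$ shows the map lands in the right ring, and the passage to completions for Item (2) is essentially the paper's route. But the heart of the lemma --- surjectivity, and the reverse inequality $\mathcal W^{[s,s]}(f) \geq W^{[s,s]}(f(u))$ needed for the isometry --- is not established by what you propose. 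Multiplicativity and the ultrametric inequality give only one direction for an infinite sum whose terms $a_k u^k$ can share the same $W^{[s,s]}$-valuation, so ``reduce the general comparison to monomial computations'' does not close the gap; and your plan for the $r>0$ case (``use the integrality condition coming from $x \in \wt{\mathbf{A}}^{[r,+\infty]}$ together with the explicit valuation formula from Item (1) to control the coefficients'') is circular, since that valuation formula is precisely part of what Item (1) asserts, and its hard direction is the point at issue. The underlying difficulty is that membership in $\wt{\mathbf{A}}^{[r,+\infty]}$ is a condition on the Witt/Teichm\"uller expansion $x=\sum_j p^j[x_j]$ (via the $w_j$), whereas membership in $\mathcal{A}^{[r,+\infty]}(K_0)$ is a condition on the $p$-adic Laurent coefficients $a_i$; some device must translate between the two expansions, and nothing in your outline supplies it.

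The paper's device is the multiplicative section $s:\mathbf{A}_{K_\infty}/p \to \mathbf{A}_{K_\infty}$, $\sum \overline{a_i}u^i \mapsto \sum[\overline{a_i}]u^i$, and the expansion $x=\sum_{n\geq 0}p^n s(\overline{x_n})$ with $x_{n+1}=p^{-1}(x_n-s(\overline{x_n}))$. The key estimate is $w_0(x_n)\geq w_n(x)$, proved using that $w_k([\overline{a_i}]u^i)=i/e$ takes distinct values for distinct $i$; it says the Laurent series $f_n(T)$ with $f_n(u)=s(\overline{x_n})$ starts in degree at least $e\,w_n(x)$, whence $\mathcal W^{[s,s]}(p^n f_n) \geq n+\frac{p-1}{ps}w_n(x) \geq W^{[s,s]}(x)$. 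This simultaneously gives convergence of $f=\sum p^n f_n$ in $\mathcal{A}^{[r,+\infty]}(K_0)$ (so surjectivity) and $\mathcal W^{[s,s]}(f)\geq W^{[s,s]}(x)$ (so, with Part 1, the isometry). Note also that Prop. \ref{cor A rl rk}, which you invoke as input for the $r>0$ case, is in the paper a \emph{consequence} of Lem. \ref{lem inj} together with Lem. \ref{lem laurent series}; proving it independently would require exactly the reconstruction argument you are missing.
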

Before we prove the lemma, we introduce the section $s$ and use it to build an approximating sequence.

\subsubsection{The section $s$.}  \label{subsub sec s Kinfty}
 Denote
$$s: \mathbf{A}_{K_\infty}/p \to \mathbf{A}_{K_\infty}$$
the section where for $\overline{x}=\sum_{i \gg-\infty} \overline{a_i}u^i$, let $s(\overline{x}):=\sum_{i \gg-\infty} [\overline{a_i}]u^i$. One can see that $s(\overline{x}) \in \mathbf{A}^{[r,+\infty]}_{K_\infty}[1/u]$ for any $r\geq 0$. Furthermore, for any $k \geq 0$, we have
\begin{equation}\label{w0}
  w_k(s(\overline{x})) =\inf_i \{w_k([\overline{a_i}]u^i)  \}= \frac{1}{e}\min  \{i: \overline{a_i} \neq 0\} =v_{\wt{\mathbf E}}(\overline{x}),
\end{equation}
where the first identity holds because $w_k([\overline{a_i}]u^i)$ are distinct for different $i$.

\subsubsection{An approximating sequence.} \label{subsub approx}
Let $r\geq 0$, given  $x \in  \mathbf{A}^{[r,+\infty]}_{K_\infty}[1/u]$, define a sequence $\{x_n\}$ in $\mathbf{A}^{[r,+\infty]}_{K_\infty}[1/u]$ where $x_0=x$ and $x_{n+1}:=p^{-1}(x_n-s(\overline{x_n}))$.
Then $x=\sum_{n\geq 0}p^n \cdot s(\overline{x_n})$, and we have that
\begin{eqnarray*}
w_k(x_{n+1}) &= & w_{k+1}(px_{n+1}) \\
  &\geq& \inf  \{ w_{k+1}(x_n), w_{k+1}( s(\overline{x_n}) )\} \\
   &=& \inf  \{ w_{k+1}(x_n), w_{0}(x_n )\}, \text{ by } \eqref{w0},\\
&=& w_{k+1}(x_n).
  \end{eqnarray*}
Iterating the above process, we get
\begin{equation}\label{wn}
  w_0(x_n) \geq w_n(x_0)=w_n(x).
 \end{equation}

\begin{proof}[Proof of Lem. \ref{lem inj}]
Lem. \ref{lem inj} is  an analogue of \cite[Prop. 7.5]{Col08}, and the proof uses similar ideas. It suffices to prove Item (1).



\textbf{Part 1}. Given $f(T)=\sum_{k \in \mathbb Z} a_kT^k \in \mathcal{A}^{[r, +\infty]}(K_0)$, then for any $s \in [r, +\infty), s>0$,
$$W^{[s, s]}(f(u)) \geq \inf_k \{W^{[s, s]}(a_ku^k)  \} =\inf_k \{   v_p(a_k) +\frac{p-1}{ps}\cdot \frac{k}{e}   \}  =\mathcal W^{[s, s]}(f(T)). $$
When $r>0$, $ v_p(a_k) +\frac{p-1}{pr}\cdot \frac{k}{e} \to +\infty$ for $k \to +\infty$  or $k \to -\infty$. By Lem. \ref{lem W}, $\mathbf{A}^{[r,+\infty]}_{K_\infty}[1/u]$ is complete with respect to $W^{[r, r]}$; thus $f(u) \in \mathbf{A}^{[r,+\infty]}_{K_\infty}[1/u]$ when $r>0$.
 When $r=0$, then it is clear that $f(u) \in \mathbf{A}^{[0,+\infty]}_{K_\infty}.$
Also, it is obvious that the map $f(T) \mapsto f(u)$ is injective.



\textbf{Part 2}.
Given $x \in \mathbf{A}^{[r,+\infty]}_{K_\infty}[1/u]$ when $r>0$ (resp. $x\in \mathbf{A}^{[0,+\infty]}_{K_\infty}$ when $r=0$), let $\{x_n\}$ be the sequence constructed in \S \ref{subsub approx} (note that when $x\in \mathbf{A}^{[0,+\infty]}_{K_\infty}$, then $x_n\in \mathbf{A}^{[0,+\infty]}_{K_\infty}, \forall n$ ).
Let $f_n(T)$ be the formal series $\sum_{k \in \mathbb Z} f_{n, k}T^k$ such that $f_n(u) =s(\overline{x_n})$, and let $f(T):=\sum_{n \geq 0}p^nf_n(T)$.
By \eqref{wn},
\begin{equation*}
  v_{\wt{\mathbf E}}(\overline{x_n})= w_0(x_n)\geq w_n(x),
\end{equation*}
so the expression for $s(\overline{x_n})$ would be of the form $\sum_{i \geq e w_n(x)} [\overline{a_i}]u^i$ (recall that $v_{\wt{\mathbf E}}(u)=1/e$). Thus $f_n(T)=\sum_{i \geq e w_n(x)} [\overline{a_i}]T^i$, and so
$$\mathcal W^{[s, s]}(p^nf_n(T)) \geq   \mathcal W^{[s, s]}(p^n T^{\lceil e w_n(x) \rceil})  \geq   n+ \frac{p-1}{ps}\cdot \frac{1}{e}\cdot ew_n(x) \geq W^{[s, s]}(x). $$
When $r>0$, $  n+\frac{p-1}{pr}\cdot w_n(x) \to +\infty$ when $n \to +\infty$, so $f(T)$ converges in  $\mathcal{A}^{[r, +\infty]}(K_0)$. (When $r=0$, $f(T)$ automatically converges in $\mathcal{A}^{[0, +\infty]}(K_0)$).
 It is clear $f(u)=x$, and $\mathcal W^{[s, s]}(f(T)) \geq W^{[s, s]}(x).$ Combined with Part 1, this concludes the proof.
\end{proof}

\begin{prop} \label{cor A rl rk}
We have
\begin{eqnarray*}
 \mathbf{A}^{[0, +\infty]}_{K_\infty} &=& \mathbf{A}^+_{K_\infty}, \\
 \mathbf{A}_{K_\infty}^{[0, r_k]} &=& \mathbf{A}^+_{K_\infty} \{\frac{u^{ep^k}}{p}\}, \\
 \mathbf{A}_{K_\infty}^{[r_\ell, +\infty]} &=& \mathbf{A}^+_{K_\infty} \{ \frac{p}{u^{ep^\ell}}\}, \\
  \mathbf{A}_{K_\infty}^{[r_\ell, r_k]} &=& \mathbf{A}^+_{K_\infty} \{ \frac{p}{u^{ep^\ell}} , \frac{u^{ep^k}}{p}\}.
\end{eqnarray*}
\end{prop}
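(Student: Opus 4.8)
The plan is to deduce everything from Lemma \ref{lem inj} together with the explicit descriptions of the Robba-type rings in Lemma \ref{lem laurent series}, so that the work reduces to transporting the identities \eqref{eqa0}, \eqref{eqa1}, \eqref{eqa2} across the isomorphism $f(T) \mapsto f(u)$. The case $r = 0$ is the cleanest: Lemma \ref{lem inj}(1) gives $\mathcal{A}^{[0, +\infty]}(K_0) \simeq \mathbf{A}^{[0,+\infty]}_{K_\infty}$, and since $r = 0$ forces $a_k = 0$ for $k < 0$ in the definition of $\mathcal{A}^{[0, +\infty]}(K_0)$, the source ring is just $W(k)[\![T]\!]$, which maps onto $\mathbf{A}^+_{K_\infty} = W[\![u]\!]$. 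This proves the first equation.

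Next I would treat the three remaining cases via Lemma \ref{lem inj}(2), which provides isometric isomorphisms $\mathcal{A}^{[0,s]}(K_0) \simeq \mathbf{A}^{[0,s]}_{K_\infty}$ and $\mathcal{A}^{[r,s]}(K_0) \simeq \mathbf{A}^{[r,s]}_{K_\infty}$ for $r > 0$. For $\mathbf{A}_{K_\infty}^{[0, r_k]}$, take $s = r_k$ and apply \eqref{eqa1}: $\mathcal{A}^{[0, r_k]}(K_0) = W(k)[\![T]\!]\{T^{ep^k}/p\}$, which maps to $\mathbf{A}^+_{K_\infty}\{u^{ep^k}/p\}$. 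For $\mathbf{A}_{K_\infty}^{[r_\ell, +\infty]}$ (with $\ell \geq 0$, so $r_\ell > 0$ when $\ell \geq 1$; the case $\ell = 0$ gives $r_0 = (p-1)/p > 0$ as well, so we are always in the $r > 0$ regime here), combine Lemma \ref{lem inj}(1) with \eqref{eqa0}: $\mathcal{A}^{[r_\ell, +\infty]}(K_0) = W(k)[\![T]\!]\{p/T^{ep^\ell}\} \simeq \mathbf{A}^{[r_\ell,+\infty]}_{K_\infty}[1/u]$. Here one must be careful: Lemma \ref{lem inj}(1) only identifies $\mathcal{A}^{[r_\ell, +\infty]}(K_0)$ with $\mathbf{A}^{[r_\ell,+\infty]}_{K_\infty}[1/u]$, not with $\mathbf{A}^{[r_\ell,+\infty]}_{K_\infty}$ itself; but $\mathbf{A}^+_{K_\infty}\{p/u^{ep^\ell}\}$ already contains $u$ but not $1/u$, and one checks directly that $W(k)[\![T]\!]\{p/T^{ep^\ell}\}$ contains no negative powers of $T$, so its image lands in $\mathbf{A}^+_{K_\infty}\{p/u^{ep^\ell}\} \subset \wt{\mathbf{A}}^{[r_\ell,+\infty]}$, and conversely this latter ring is visibly inside $\mathbf{A} \cap \wt{\mathbf{A}}^{[r_\ell,+\infty]} = \mathbf{A}^{[r_\ell,+\infty]}_{K_\infty}$; combined with completeness for $W^{[r_\ell, r_\ell]}$ (Lemma \ref{lem W}(1)) this pins down the ring. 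Finally $\mathbf{A}_{K_\infty}^{[r_\ell, r_k]}$ follows from Lemma \ref{lem inj}(2) applied with $[r,s] = [r_\ell, r_k]$ and the description \eqref{eqa2}.

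The main obstacle, I expect, is the bookkeeping around the $[1/u]$ discrepancy in Lemma \ref{lem inj}(1): one must argue that the specific subring $W(k)[\![T]\!]\{p/T^{ep^\ell}\}$ of $\mathcal{A}^{[r_\ell, +\infty]}(K_0)$ maps precisely onto $\mathbf{A}^+_{K_\infty}\{p/u^{ep^\ell}\}$ and that this equals $\mathbf{A}^{[r_\ell,+\infty]}_{K_\infty} = \mathbf{A} \cap \wt{\mathbf{A}}^{[r_\ell,+\infty]}$ rather than something larger. The containment $\mathbf{A}^+_{K_\infty}\{p/u^{ep^\ell}\} \subseteq \mathbf{A}^{[r_\ell,+\infty]}_{K_\infty}$ is immediate from the definitions; for the reverse, given $x \in \mathbf{A}^{[r_\ell,+\infty]}_{K_\infty} \subset \mathbf{A}^{[r_\ell,+\infty]}_{K_\infty}[1/u]$, Lemma \ref{lem inj}(1) produces a unique $f(T) \in \mathcal{A}^{[r_\ell, +\infty]}(K_0)$ with $f(u) = x$, and the point is that $x \in \mathbf{A}^+$ (no negative powers of $u$, in the $\wt{\mathbf{A}}^+$-sense after reduction) forces $f \in W(k)[\![T]\!]\{p/T^{ep^\ell}\}$ — this can be checked using the approximating sequence of \S \ref{subsub approx} together with \eqref{wn}, exactly as in the proof of Lemma \ref{lem inj}, observing that each $s(\overline{x_n})$ already lies in $\mathbf{A}^+_{K_\infty}[1/u]$ and that the non-negativity of the relevant $w_k$ values (inherited from $x \in \mathbf{A}^+$) kills the negative-exponent tail beyond what $\{p/u^{ep^\ell}\}$ allows. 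Everything else is elementary manipulation of Gauss norms and $p$-adic completions, for which the paper has already done the heavy lifting in Lemmas \ref{lem inj} and \ref{lem laurent series}.
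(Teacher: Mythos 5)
Your argument is correct and is essentially the paper's own proof, which simply cites Lemma \ref{lem inj} and Lemma \ref{lem laurent series}; in particular, your care with the $[1/u]$ discrepancy in Lemma \ref{lem inj}(1) for the interval $[r_\ell,+\infty]$, resolved via the valuation identity $\mathcal W^{[r_\ell,r_\ell]}(f(T))=W^{[r_\ell,r_\ell]}(f(u))$ and the approximating sequence of \S \ref{subsub approx}, is exactly the intended route. (One cosmetic slip: $W(k)[\![T]\!]\{p/T^{ep^\ell}\}$ \emph{does} contain negative powers of $T$, e.g.\ $p/T^{ep^\ell}$ itself, and likewise a general $x\in\mathbf{A}^{[r_\ell,+\infty]}_{K_\infty}$ need not lie in $\mathbf{A}^+$; the statement you actually need, and which your final paragraph in effect proves, is that $W(k)[\![T]\!]\{p/T^{ep^\ell}\}$ is precisely the subring of $\mathcal{A}^{[r_\ell,+\infty]}(K_0)$ cut out by $\mathcal W^{[r_\ell,r_\ell]}\geq 0$, matching $\mathbf{A}^{[r_\ell,+\infty]}_{K_\infty}$ as the subring of $\mathbf{A}^{[r_\ell,+\infty]}_{K_\infty}[1/u]$ cut out by $W^{[r_\ell,r_\ell]}\geq 0$.)
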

\begin{proof}
This easily follows from Lem. \ref{lem inj} and Lem. \ref{lem laurent series}.
\end{proof}

\begin{corollary} \label{lem aa inter}
Suppose $[r, s]=[r_\ell, r_k] \subset [r', s]=[r_{\ell'}, r_k]$, then
$\mathbf{A}^{[r, s]}_{K_\infty}  \cap \wt{\mathbf{A}}^{[r', s]} = \mathbf{A}^{[r', s]}_{K_\infty}$.
\end{corollary}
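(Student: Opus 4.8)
The plan is to reduce the statement to the explicit descriptions of the rings $\mathbf{A}^{[r,s]}_{K_\infty}$ provided by Proposition \ref{cor A rl rk}, together with the corresponding statement at the level of $\wt{\mathbf{A}}$. Write $[r,s]=[r_\ell,r_k]$ and $[r',s]=[r_{\ell'},r_k]$ with $\ell'\le\ell$, so that $r'\le r$, and hence there is a natural inclusion $\wt{\mathbf{A}}^{[r',s]}\hookrightarrow\wt{\mathbf{A}}^{[r,s]}$ (and likewise $\mathbf{A}^{[r',s]}_{K_\infty}\hookrightarrow\mathbf{A}^{[r,s]}_{K_\infty}$), so the inclusion $\mathbf{A}^{[r',s]}_{K_\infty}\subseteq\mathbf{A}^{[r,s]}_{K_\infty}\cap\wt{\mathbf{A}}^{[r',s]}$ is immediate. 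The content is the reverse inclusion: an element of $\mathbf{A}^{[r,s]}_{K_\infty}$ that happens to lie in $\wt{\mathbf{A}}^{[r',s]}$ already lies in $\mathbf{A}^{[r',s]}_{K_\infty}$.

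For the reverse inclusion, I would take $x\in\mathbf{A}^{[r,s]}_{K_\infty}\cap\wt{\mathbf{A}}^{[r',s]}$ and use the isometric isomorphism of Lemma \ref{lem inj}(2) to view $x=f(u)$ with $f(T)=\sum_{k\in\mathbb Z}a_kT^k\in\mathcal{A}^{[r,s]}(K_0)$. By Lemma \ref{lem laurent series}(3), this means $f$ is holomorphic on the annulus $v_p(T)\in[\frac{p-1}{eps},\frac{p-1}{epr}]$, equivalently $v_p(a_k)+\frac{p-1}{ps}\cdot\frac{k}{e}\to+\infty$ as $k\to\pm\infty$ and the $\mathcal W^{[r,s]}$-norm is finite. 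The key is the \emph{unique} decomposition $f=f^-+f^+$ with $f^-=\sum_{k<0}a_kT^k$ and $f^+=\sum_{k\ge0}a_kT^k$ from the proof of Lemma \ref{lem laurent series}: since $x\in\wt{\mathbf{A}}^{[r',s]}$, I want to show that the constraint at the outer radius improves from $r$ to $r'$, i.e. that $\mathcal W^{[r',r']}(f)\ge0$, equivalently $v_p(a_k)+\frac{p-1}{pr'}\cdot\frac{k}{e}\to+\infty$ as $k\to-\infty$ (the behavior as $k\to+\infty$, governed by $f^+$, is unaffected since $f^+\in\mathcal{A}^{[0,r_k]}(K_0)$ regardless). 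For this I would compare the term-wise valuation $W^{[r',r']}$ computed from the Teichmüller/Witt expansion of $x$ inside $\wt{\mathbf{A}}^{[r',s]}$ (using that $W^{[r',r']}(x)\ge0$ there, as $x$ lies in the ring of integers $\wt{\mathbf{A}}^{[r',s]}$ — cf. Lemma \ref{lem W}) with the term-wise valuation $\mathcal W^{[r',r']}(f(T))$ via the isometry of Lemma \ref{lem inj}(1), which gives $\mathcal W^{[s',s']}(f(T))=W^{[s',s']}(f(u))$ for all $s'>0$, in particular for $s'=r'$. Concretely, $W^{[r',r']}(x)\ge 0$ forces $\mathcal W^{[r',r']}(f^-)\ge 0$ (since the negative part of the Laurent expansion is detected term-wise at the outer boundary and $f^+$ contributes nonnegatively), whence $f^-\in\mathcal{A}^{[r',+\infty]}(K_0)$ and therefore $f\in\mathcal{A}^{[r',r_k]}(K_0)$; applying Lemma \ref{lem inj}(2) again identifies this with $\mathbf{A}^{[r',s]}_{K_\infty}$, as desired.

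The main obstacle is the last comparison: one must know that $W^{[r',r']}(x)\ge0$ for $x\in\wt{\mathbf{A}}^{[r',s]}$ is genuinely equivalent, after passing through the field-of-norms identification, to the term-wise condition $\mathcal W^{[r',r']}(f(T))\ge 0$ on the Laurent coefficients — in other words, that there is no loss between the sub-multiplicative valuation $W^{[r',r']}$ on $\wt{\mathbf{A}}$ and the term-wise $\mathcal W^{[r',r']}$ on the series. This is exactly the kind of statement that fails for $\wt{\mathbf{A}}$ itself (as flagged in the Remark after Proposition \ref{lemdense}, the reason \eqref{eqnabf} is open), but works here because we are inside the field-of-norms ring $\mathbf{A}_{K_\infty}$, where the section $s$ of \S\ref{subsub sec s Kinfty} and the approximating sequence of \S\ref{subsub approx} give the clean term-wise control used in the proof of Lemma \ref{lem inj}. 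So the proof is essentially a matter of assembling Lemma \ref{lem inj}, Lemma \ref{lem laurent series}, and the multiplicativity/term-wise structure of $\mathcal W^{[s,s]}$; once those are in hand the statement follows without new ideas.
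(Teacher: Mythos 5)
Your proposal is correct and follows essentially the same route as the paper: the paper decomposes $f=f_1+f_2$ with $f_1\in\mathbf{A}^{[r,+\infty]}_{K_\infty}$ and $f_2\in\mathbf{A}^{[0,s]}_{K_\infty}$ via Prop.~\ref{cor A rl rk} (which, under the identification of Lem.~\ref{lem inj} and Lem.~\ref{lem laurent series}, is exactly your $f=f^-+f^+$ splitting of the Laurent series) and then shows $f_1\in\mathbf{A}^{[r',+\infty]}_{K_\infty}$ by the argument of \cite[Lem.~II.2.2]{CC98}, which is precisely the term-wise valuation comparison you invoke through the section $s$ and the approximating sequence. The step you flag as the main obstacle is indeed the only real content, and it is handled (at the same level of detail) in the paper by the citation to \emph{loc.\ cit.}
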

\begin{proof}
Let $f \in \mathbf{A}^{[r, s]}_{K_\infty}  \cap \wt{\mathbf{A}}^{[r', s]}$. By Prop. \ref{cor A rl rk}, we can always write $f=f_1 +f_2$, where $f_1 \in \mathbf{A}^{[r, +\infty]}_{K_\infty}$ and $f_2 \in \mathbf{A}^{[0, s]}_{K_\infty}$; it then suffices to show that $f_1 \in \mathbf{A}^{[r', s]}_{K_\infty}$. But indeed we can show that $f_1 \in \mathbf{A}^{[r', +\infty]}_{K_\infty}$, using similar argument as in  \cite[Lem. II.2.2]{CC98}.
\end{proof}

\section{Locally analytic vectors of some rings} \label{sec: loc ana}
The main result in this section is to calculate locally analytic vectors in $(\wt{\mathbf{B}}^{I})^{G_\infty}=\wt{\mathbf{B}}^{I}_{K_\infty}$. Actually, there is no group action on  $(\wt{\mathbf{B}}^{I})^{G_\infty}$ since $G_\infty$ is not normal in $G_K$; what we do instead is to calculate locally analytic vectors in $\wt{\mathbf{B}}^{I}_L:=(\wt{\mathbf{B}})^{\Gal(\overline{K}/L)}$ (with respect to the $\Gal(L/K)$-action) that are \emph{furthermore $G_\infty$-invariant}.


\subsection{Theory of locally analytic vectors} \label{subseclocan}
Let us recall the theory of locally analytic vectors, see \cite[\S 2.1]{BC16} and \cite[\S 2]{Ber16} for more details.
Recall that a $\Qp$-Banach space $W$ is a $\Qp$-vector space with a complete non-Archimedean  norm $\|\cdot\|$ such that $\|aw\|=\|a\|_p\|w\|, \forall a \in \Qp, w \in W$, where $\|a\|_p$ is the usual $p$-adic norm on $\Qp$.
Recall the multi-index notations: if $\cbf = (c_1, \hdots,c_d)$ and $\kbf = (k_1,\hdots,k_d) \in \mathbb{N}^d$ (here $\mathbb{N}=\mathbb{Z}^{\geq 0}$), then we let $\cbf^\kbf = c_1^{k_1} \cdot \ldots \cdot c_d^{k_d}$.

\subsubsection{} \label{subsub def an}
Let $G$ be a $p$-adic Lie group, and let $(W, \|\cdot \|)$ be a $\Qp$-Banach representation of $G$.
Let $H$ be an open subgroup of $G$ such that there exist coordinates $c_1,\hdots,c_d : H \to \Zp$ giving rise to an analytic bijection $\cbf : H \to \Zp^d$.
 We say that an element $w \in W$ is an $H$-analytic vector if there exists a sequence $\{w_\kbf\}_{\kbf \in \mathbb{N}^d}$ with $w_\kbf \to 0$ in $W$, such that $$g(w) = \sum_{\kbf \in \mathbb{N}^d} \cbf(g)^\kbf w_\kbf, \quad \forall g \in H.$$
Let $W^{H\dan}$ denote the space of $H$-analytic vectors. $W^{H\dan}$ injects into $\mathcal{C}^{\an}(H, W)$ (the space of analytic functions on $H$ valued in $W$), and we endow it with the induced norm, which we denote as $\|\cdot\|_H$. We have $\|w\|_H=\sup_{\kbf \in \mathbb{N}^d}\|w_{\kbf}\|$, and $W^{H\dan}$ is a Banach space.


  We say that a vector $w \in W$ is \emph{locally analytic} if there exists an open subgroup $H$ as above such that $w \in W^{H\dan}$. Let $W^{G\dla}$ (or $W^{\la}$ when there is no confusion) denote the space of such vectors. We have $W^{\la} = \cup_{H} W^{H\dan}$ where $H$ runs through  open subgroups of $G$. We can endow $W^{\la}$ with the inductive limit topology, so that $W^{\la}$ is an LB space.

 \begin{lemma} \label{ringlocan}
Keep the notations as in \S \ref{subsub def an}.
If $W$ is furthermore a ring  such that $\|xy\| \leq \|x\| \cdot \|y\|$ for $x,y \in W$, then
\begin{enumerate}
  \item $W^{H\dan}$ is a ring, and $\|xy\|_H \leq \|x\|_H \cdot \|y\|_H$ if $x,y \in W^{H\dan}$.
  \item Suppose $w \in W^\times$ and $w \in W^{G\dla}$, then $1/w \in W^{G\dla}$. (In particular, if $W$ is a field, then  $W^{G\dla}$ is also a field.)
\end{enumerate}
\end{lemma}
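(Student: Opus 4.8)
\textbf{Proof plan for Lemma \ref{ringlocan}.}

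The plan is to unwind both statements directly from the definition of $H$-analytic vectors in \S \ref{subsub def an}, using the Cauchy-product structure of power series. For part (1), take $x, y \in W^{H\dan}$, so that $g(x) = \sum_{\kbf} \cbf(g)^\kbf x_\kbf$ and $g(y) = \sum_{\jbf} \cbf(g)^\jbf y_\jbf$ with $x_\kbf, y_\jbf \to 0$. Since $g$ is a ring homomorphism, $g(xy) = g(x)g(y)$, and I would formally multiply the two series and collect terms: $g(xy) = \sum_{\mbf} \cbf(g)^\mbf z_\mbf$ where $z_\mbf = \sum_{\kbf + \jbf = \mbf} x_\kbf y_\jbf$. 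The submultiplicativity of $\|\cdot\|$ gives $\|z_\mbf\| \le \sup_{\kbf + \jbf = \mbf} \|x_\kbf\|\,\|y_\jbf\| \le \|x\|_H \|y\|_H$, and more importantly $z_\mbf \to 0$: given $\eps$, only finitely many $x_\kbf$ and $y_\jbf$ have norm exceeding some threshold, so for $\mbf$ large every product $x_\kbf y_\jbf$ with $\kbf + \jbf = \mbf$ has one factor small, forcing $\|z_\mbf\| \to 0$ (this is the standard fact that the Cauchy product of two null sequences is null in a non-Archimedean Banach algebra). Hence $xy \in W^{H\dan}$, and passing to the supremum norm on the coefficients of the product series gives $\|xy\|_H \le \|x\|_H \|y\|_H$. (One should note $W^{H\dan}$ is closed under addition and scalar multiplication trivially, so it is indeed a ring.)

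For part (2), suppose $w \in W^\times \cap W^{G\dla}$. Then $w \in W^{H\dan}$ for some open $H$; after shrinking $H$ I may assume the coordinates $\cbf$ are defined on $H$ and moreover — this is the key normalization — that $\|w_\kbf\| < \|w\|_H$ for all $\kbf \neq \mathbf{0}$, i.e. the constant term $w_{\mathbf 0} = w$ dominates strictly. This can be arranged by replacing $H$ with a sufficiently small open subgroup $H'$: on $H'$ the coordinates become $p^N \cbf$ for large $N$, which rescales $w_\kbf$ to $p^{N|\kbf|} w_\kbf$, and for $N$ large enough all terms with $\kbf \neq \mathbf 0$ are strictly smaller in norm than $w$. (Here one uses that $w_{\mathbf 0} = w(\id) = w$ is nonzero.) Then for $g \in H'$ one writes $g(w) = w\bigl(1 + \sum_{\kbf \neq \mathbf 0} \cbf'(g)^\kbf (w_\kbf/w)\bigr)$; since $g$ is a ring homomorphism and $w$ is a unit, $g(1/w) = 1/g(w) = w^{-1} \cdot \bigl(1 + \sum_{\kbf \neq \mathbf 0} \cbf'(g)^\kbf (w_\kbf/w)\bigr)^{-1}$. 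The inner factor has the form $1 + (\text{null series of terms of norm} < 1)$, so its inverse is given by the geometric series $\sum_{n \geq 0} (-1)^n \bigl(\sum_{\kbf \neq \mathbf 0} \cbf'(g)^\kbf (w_\kbf/w)\bigr)^n$, which converges (using part (1) repeatedly, or rather its coefficient-level analogue) to an element expressible as $\sum_{\mbf} \cbf'(g)^\mbf v_\mbf$ with $v_\mbf \to 0$. Multiplying by $w^{-1}$ (a fixed element of $W$) preserves the null-coefficient property, so $1/w \in W^{H'\dan} \subset W^{G\dla}$. The parenthetical remark about fields is immediate: if $W$ is a field then every nonzero element is a unit, so $W^{G\dla}$ is closed under inversion.

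The main obstacle is the bookkeeping in part (2): one must be careful that the formal manipulations (forming $1/g(w)$ as a geometric series in the ``small'' part, then expanding back into a power series in $\cbf'(g)$) actually produce a genuine convergent expansion with null coefficients, rather than merely a formal identity. This is where the strict-domination normalization $\|w_\kbf\| < \|w\|_H$ for $\kbf \neq \mathbf 0$ is essential — it guarantees the geometric series converges in the Banach algebra $W^{H'\dan}$ and that re-expanding it (an infinite sum of products of analytic vectors, each handled by part (1)) still lands in $W^{H'\dan}$, which is complete. Once the normalization is in place the rest is routine non-Archimedean analysis; I would cite or mirror the argument of \cite[\S 2.1]{BC16} or \cite[\S 2]{Ber16} for the details rather than rederiving them.
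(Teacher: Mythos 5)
Your argument is correct and is essentially the argument the paper outsources: the paper proves this lemma by citing \cite[Lem.\ 2.5]{BC16}, and your Cauchy-product proof of (1) and geometric-series proof of (2) reproduce that argument. One small repair in (2): since the norm on $W$ is only sub-multiplicative, the normalization $\|w_\kbf\| < \|w\|_H$ for $\kbf \neq \mathbf{0}$ does not by itself give $\|w^{-1}w_\kbf\| < 1$ (one has only $\|w^{-1}w_\kbf\| \leq \|w^{-1}\|\,\|w_\kbf\|$, and $\|w^{-1}\|$ need not equal $\|w\|^{-1}$); the correct threshold is $\|w^{-1}\|\,\|w_\kbf\| < 1$ for all $\kbf \neq \mathbf{0}$, which your shrinking-$H$ rescaling achieves just as easily since it makes $\sup_{\kbf\neq\mathbf 0}\|w_\kbf\|$ arbitrarily small. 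Likewise, the geometric series should be summed in the Banach algebra $\mathcal{C}^{\an}(H', W)$ rather than in $W^{H'\dan}$ (its partial sums are analytic functions, not orbit maps of elements of $W$), after which one identifies the limit with the orbit map of $1/w$.
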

\begin{proof}
Item (1) is \cite[Lem. 2.5(i)]{BC16}. Item (2) is stronger than \cite[Lem. 2.5(ii)]{BC16}, but this stronger statement is proved in \emph{loc. cit.}.
\end{proof}

\subsubsection{} \label{subsub def gn}
Keep the notations as in \S \ref{subsub def an}.
By the paragraph preceding \cite[Lem. 2.4]{BC16}, there exists some (not unique) open compact subgroup $G_1$ of $G$ such that there exist local coordinates $\wt{\cbf} : G_1 \to \Zp^d$, which furthermore satisfy $\wt{\cbf}(G_n) =(p^n \Zp)^d$ where $G_n:=G_1^{p^{n-1}}$. Then we have $W^{\la} = \cup_{n} W^{G_n\dan}$.

\begin{lemma}(\cite[Lem. 2.4]{BC16}) \label{lem 2.4BC}
Keep the notations as in \S \ref{subsub def gn}.
Suppose $w\in W^{G_n\dan}$, then for all $m \geq n$, $w\in W^{G_m\dan}$ and $\|w\|_{G_m} \leq \|w\|_{G_n}$. Furthermore, $\|w\|_{G_m} = \|w\|$ when $m \gg 0$.
\end{lemma}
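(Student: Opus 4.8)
The plan is to read everything off from the explicit comparison of the coordinate systems on $G_n$ and on its subgroup $G_m$ ($m\ge n$) provided by \S\ref{subsub def gn}. Since $\wt{\cbf}(G_j)=(p^j\Zp)^d$, the analytic bijections $\cbf_n:G_n\to\Zp^d$ and $\cbf_m:G_m\to\Zp^d$ obtained by rescaling $\wt{\cbf}$ satisfy $\cbf_n=p^{m-n}\cbf_m$ on $G_m\subset G_n$. Now let $w\in W^{G_n\dan}$, with expansion $g(w)=\sum_{\kbf\in\mathbb N^d}\cbf_n(g)^\kbf w_\kbf$ for $g\in G_n$ (a null sequence $\{w_\kbf\}$, with $\|w\|_{G_n}=\sup_\kbf\|w_\kbf\|$). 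Restricting to $g\in G_m$, substituting $\cbf_n(g)=p^{m-n}\cbf_m(g)$, and writing $|\kbf|:=k_1+\cdots+k_d$, I would obtain $g(w)=\sum_\kbf\cbf_m(g)^\kbf\,w'_\kbf$ with $w'_\kbf:=p^{(m-n)|\kbf|}w_\kbf$. As $\|p^{(m-n)|\kbf|}\|_p\le 1$, the $w'_\kbf$ again tend to $0$, so $w\in W^{G_m\dan}$; since these are the (unique) analytic-expansion coefficients for $G_m$, they compute the norm, giving $\|w\|_{G_m}=\sup_\kbf p^{-(m-n)|\kbf|}\|w_\kbf\|\le\sup_\kbf\|w_\kbf\|=\|w\|_{G_n}$. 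This settles the first two assertions.

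For the equality $\|w\|_{G_m}=\|w\|$ once $m\gg0$, I may assume $w\ne 0$. Evaluating the $G_m$-expansion at the identity (where $\cbf_m=0$) gives $w=w'_{\mathbf 0}=w_{\mathbf 0}$, so $\|w\|=\|w_{\mathbf 0}\|\le\|w\|_{G_m}$ for every $m\ge n$. Conversely, separating the $\kbf=\mathbf 0$ term: for $|\kbf|\ge 1$ one has $\|w'_\kbf\|\le p^{-(m-n)}\|w_\kbf\|\le p^{-(m-n)}\|w\|_{G_n}$, hence $\|w\|_{G_m}\le\max\{\|w\|,\,p^{-(m-n)}\|w\|_{G_n}\}$. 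Since $\|w\|>0$, for all $m$ large enough that $p^{-(m-n)}\|w\|_{G_n}\le\|w\|$ the right-hand side equals $\|w\|$, so $\|w\|_{G_m}=\|w\|$.

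There is no genuinely hard step: the only points requiring care are the bookkeeping that pins down the comparison factor between $\cbf_n$ and $\cbf_m$ as exactly $p^{m-n}$ (so that the $\kbf$-th coefficient picks up the weight $p^{(m-n)|\kbf|}$), and the elementary but essential observation for the last claim that this reweighting fixes the constant term $w_{\mathbf 0}=w$ while driving the norm of every higher coefficient to $0$ as $m\to\infty$.
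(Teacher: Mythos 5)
Your argument is correct and is exactly the rescaling argument of the cited reference (the paper gives no proof of its own, only the citation to \cite[Lem.\ 2.4]{BC16}): one compares the coordinates via $\cbf_n = p^{m-n}\cbf_m$ on $G_m$, reads off $w'_\kbf = p^{(m-n)|\kbf|}w_\kbf$, and isolates the constant term. The only point worth making explicit is that you use the standard normalization $\wt{\cbf}(1)=0$ (so that $w_{\mathbf 0}=w$), which is part of the convention for local coordinates in \S\ref{subsub def gn}.
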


\subsubsection{} \label{subsub def pa}
 Let $W$ be a Fr\'echet space, whose topology is defined by a sequence $\{ p_i \}_{i \geq 1}$  of seminorms. Let $W_i$ denote the Hausdorff completion of $W$ for $p_i$, so that $W = \projlim_{i \geq 1} W_i$.
If $W$ is a Fr\'echet representation of $G$, then a vector $w \in W$ is called \emph{pro-analytic} if its image $\pi_i(w)$ in $W_i$ is a locally analytic vector for all $i$. We denote by $W^{\mathrm{pa}}$ the set of such vectors. We can extend this definition to LF spaces (cf. \cite[\S 2]{Ber16}).


\begin{prop}
\label{prop la basis}
Let $G$ be a $p$-adic Lie group, let $\hat{B}$ be a Banach (resp. Fr\'echet) $G$-ring, and $B \subset \hat{B}$ a subring (but not necessarily $G$-stable).
Let $W$ be a free $B$-module of finite rank, let $\hat{W}:=\hat{B}\otimes_B W$,
and suppose there is a $\hat{B}$-semi-linear $G$-action on $\hat{W}$.
Let ${B}^{\la}:=B\cap \hat{B}^{\la}$ and $W^{\la}:=W\cap \hat{W}^{\la}$ (resp. $ {B}^{\pa}:=B\cap \hat{B}^{\pa}$ and $W^{\pa}:=W\cap \hat{W}^{\pa}$).

If $W$ has a $B$-basis $w_1,\hdots,w_d$ such that $g \mapsto \Mat(g)$ is a \emph{globally analytic} (resp. \emph{pro-analytic}) function $G \to \GL_d(\hat{B}) \subset \mathrm{M}_d(\hat{B})$, then
$$W^{\la} = \oplus_{j=1}^d  {B}^{\la} \cdot w_j \quad (\textnormal{resp. } W^{\pa} = \oplus_{j=1}^d {B}^{\pa} \cdot w_j).$$
\end{prop}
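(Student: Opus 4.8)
The statement asserts that if $W$ has a $B$-basis $w_1,\dots,w_d$ for which the matrix of the $G$-action is globally analytic (resp.\ pro-analytic) with values in $\GL_d(\hat B)$, then the locally analytic (resp.\ pro-analytic) vectors of $\hat W$ that lie in $W$ decompose as a direct sum $\oplus_j B^{\la}\cdot w_j$ (resp.\ with $\pa$). The inclusion $\supseteq$ is the easy direction: if $b\in B^{\la}$, then $b\in\hat B^{\la}$, and since $g\mapsto\Mat(g)$ is globally analytic, each $w_j$ is a locally analytic vector of $\hat W$ (its orbit map is $g\mapsto \sum_i \Mat(g)_{ij} w_i$, an analytic function valued in $\hat W$ because it is a finite $\hat B$-linear combination of the analytic functions $g\mapsto \Mat(g)_{ij}$); hence $b w_j\in\hat W^{\la}$ by Lemma~\ref{ringlocan}(1) applied in $\hat W$ viewed as a module over the ring $\hat B$, and $b w_j\in W$, so $b w_j\in W^{\la}$. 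The same argument works verbatim in the Fréchet/pro-analytic setting, passing to each seminorm completion.

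For the hard inclusion $\subseteq$, take $w\in W^{\la}$, so $w\in\hat W^{\la}$ and $w=\sum_{j=1}^d b_j w_j$ with $b_j\in B$ (using that $w_1,\dots,w_d$ is a $B$-basis of $W$); I must show each $b_j\in\hat B^{\la}$. The idea is to extract the coordinates $b_j$ by a "dual basis" trick that is intrinsic to the $G$-action. Fix an open subgroup $H$ such that $w\in\hat W^{H\dan}$ and (shrinking $H$) such that $g\mapsto\Mat(g)$ is $H$-analytic and $\Mat(g)\equiv\mathrm{Id}\pmod{\mathfrak m}$ for $g\in H$, so that $\Mat(g)\in\GL_d(\hat B^{H\dan})$. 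Pick $d$ elements $g_1,\dots,g_d\in H$ — or better, work with the "universal" relation: for all $g\in H$,
\[
g(w)=\sum_{j=1}^d g(b_j)\, g(w_j)=\sum_{j=1}^d g(b_j)\sum_{i=1}^d \Mat(g)_{ij}\, w_i ,
\]
so writing $g(b_j)=\sum_\kbf \cbf(g)^\kbf b_{j,\kbf}$ formally would be circular; instead one should compare the orbit map of $w$ (which is analytic $H\to\hat W$ by hypothesis) with the $\hat B^{H\dan}$-basis $w_1,\dots,w_d$ of $\hat W^{H\dan}\otimes$-type objects. The clean way: the orbit map $\mathbf{o}_w:H\to\hat W$, $g\mapsto g(w)$, is analytic; the vectors $w_1,\dots,w_d$ form a basis of the free $\hat B$-module $\hat W$, and since $\mathbf{o}_{w_j}:g\mapsto g(w_j)=\sum_i\Mat(g)_{ij}w_i$ is analytic with analytic "change of basis" matrix $\Mat(g)$ lying in $\GL_d$ of the Banach ring $\hat B^{H\dan}$, one can invert: from $g(w)=\sum_i c_i(g) w_i$ with $c_i(g)$ analytic in $g$ (these are the coordinates of the analytic vector $w$ in the $\hat B$-basis, and analyticity of the $c_i$ follows because the $w_i$ are a basis and one can solve a triangular/invertible linear system over $\mathcal C^{\an}(H,\hat B^{H\dan})$), one gets $b_j = g^{-1}$ applied appropriately — concretely, specializing the relation $c_i(g)=\sum_j \Mat(g)_{ij}\, g(b_j)$ and inverting the matrix $\Mat(g)\in\GL_d(\hat B^{H\dan})$ yields $g(b_j)=\sum_i (\Mat(g)^{-1})_{ji}\, c_i(g)$, an analytic function of $g\in H$ valued in $\hat B^{H\dan}$. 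Hence $b_j\in \hat B^{H\dan}\subset\hat B^{\la}$, and $b_j\in B$, so $b_j\in B^{\la}$. The directness of the sum is immediate since $w_1,\dots,w_d$ is a $B$-basis.

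The main obstacle is making rigorous the step "the coordinate functions $g\mapsto c_i(g)$ are analytic": a priori $g(w)=\sum_i c_i(g)w_i$ only tells us $c_i(g)\in\hat B$, and we need $c_i\in\mathcal C^{\an}(H,\hat B^{H\dan})$. This is handled by the observation that $\{w_i\}$ is a $\hat B$-basis, so there is a $\hat B$-linear (in fact $\mathcal C^{\an}(H,-)$-linear) projection $\hat W\to\hat B$ onto each coordinate; one must check this projection carries analytic $\hat W$-valued functions to analytic $\hat B$-valued functions, which is where finiteness of the rank and the Banach (resp.\ Fréchet via seminorm-by-seminorm) structure enter — essentially Lemma~\ref{ringlocan} together with the fact that a coordinate of an analytic function with respect to a fixed finite basis is analytic. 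For the pro-analytic case one runs the entire argument after applying each of the defining seminorms' Hausdorff completions, using that $\Mat(g)$ is pro-analytic so its image in each $\mathrm M_d(\hat B_i)$ is analytic, and that $W^{\pa}=W\cap\hat W^{\pa}$ is computed seminorm-by-seminorm by definition; no genuinely new difficulty arises, only bookkeeping over the projective system.
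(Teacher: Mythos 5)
Your argument is correct and is in substance the same as the paper's: the paper simply quotes \cite[Prop.~2.3]{BC16} (resp.\ \cite[Prop.~2.4]{Ber16}) for the identity $\hat{W}^{\la}=\oplus_j \hat{B}^{\la}\cdot w_j$ and then intersects with $W$, and what you have written out is precisely the standard proof of that cited proposition (expand $g(w)$ in the basis, note the coordinate functions $c_i$ are analytic, and invert $\Mat(g)$ in $\mathrm{M}_d(\mathcal{C}^{\an}(H,\hat{B}))$ after shrinking $H$) followed by the same intersection step $b_j\in B\cap\hat{B}^{\la}$. The only cosmetic blemishes are the undefined ``$\bmod\ \mathfrak{m}$'' (what is meant is that $\|\Mat(g)-\mathrm{Id}\|$ is small on a small enough $H$, so the Neumann series inverts $\Mat$ in the Banach algebra of analytic functions) and the slightly garbled parenthetical justifying analyticity of the $c_i$, which really comes from expanding each coefficient $w_{\kbf}$ in the basis rather than from solving a linear system.
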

\begin{proof}
By \cite[Prop. 2.3]{BC16} (resp. \cite[Prop. 2.4]{Ber16}), we have $\hat{W}^{\la} = \oplus_{j=1}^d \hat{B}^{\la} \cdot w_j$ (resp. $\hat{W}^{\pa} = \oplus_{j=1}^d \hat{B}^{\pa} \cdot w_j$), then we can take intersections with $W$ to conclude.
\end{proof}


In the following, we give a useful criterion to determine analytic vectors for the $p$-adic Lie group $\Zp$.
\begin{lem}\label{subsub an}
Suppose $(W, \|\cdot \|)$ is a $\Qp$-Banach representation of $\Zp$. Let $\tau$ be a topological generator of $\Zp$, and let $\log \tau$ denote  the (formally written) series $(-1)\cdot \sum_{k \geq 1}  (1-\tau)^k/k$.
Given $x\ \in W$, then
$x \in W^{\Zp\dan}$ if and only if the following hold:
\begin{enumerate}
  \item  the series $(\log \tau)(x)$ converges in $W$, and inductively, $(\log\tau)^i(x):=(\log\tau)((\log\tau)^{i-1}(x))$ converges in $W$ for all $i \geq 2$;


  \item $\|(\log \tau)^i(x)/i!\| \to 0$ as $i \to +\infty$;
  \item  for all $a \in \Zp$,
  \begin{equation}\label{eqan}
    \tau^a(x) =\sum_{i=0}^{\infty} a^i\cdot\frac{(\log \tau)^i(x)}{i!}.
  \end{equation}
\end{enumerate}
If the above holds, then $\log \tau(x) \in W^{\Zp\dan}$, and for all $a \in \Zp$,
we have $(\log\tau^a)(x)=a\cdot \log \tau(x)$.
\end{lem}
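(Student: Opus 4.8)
The claim is a standard characterization of $\Zp$-analytic vectors via the "logarithm operator" $\log\tau$, in the spirit of \cite[\S 2]{Ber16}. The plan is to prove both directions by matching the formal Taylor expansion of the orbit map $a \mapsto \tau^a(x)$ against the defining series of an analytic vector.

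First I would treat the ($\Leftarrow$) direction, which is immediate: conditions (1)--(3) literally assert that the function $a \mapsto \tau^a(x)$ is given on all of $\Zp$ by a convergent power series $\sum_{i\geq 0} a^i w_i$ with $w_i = (\log\tau)^i(x)/i! \to 0$ in $W$ (using (2) for the convergence to $0$ of the coefficients, (1) for their existence, and (3) for the identity); taking the single coordinate $\cbf = \id : \Zp \to \Zp$ on the pro-$p$ group $\Zp$, this is exactly the definition in \S\ref{subsub def an} with $w_\kbf = w_i$, so $x \in W^{\Zp\dan}$.

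For the ($\Rightarrow$) direction, suppose $x \in W^{\Zp\dan}$, so $\tau^a(x) = \sum_{i\geq 0} a^i w_i$ for some $w_i \to 0$. The key computation is to extract $\log\tau$ from this: one applies $(1-\tau)$ and iterates, or more cleanly, one notes that $(1-\tau^a)(x) = -\sum_{i\geq 1} a^i w_i$ and then uses the binomial-type identity (valid since everything converges in the Banach space $W$ and $\|w_i\|$ is bounded) that $\sum_{k\geq 1}\binom{a}{k}(1-\tau)^k(x)$ rearranges to $\tau^a(x) - x$; specializing and manipulating the generating series shows $(\log\tau)(x)$ converges and equals $w_1$, and inductively $(\log\tau)^i(x) = i!\, w_i$. (Concretely: the orbit map is the restriction to $\Zp$ of an analytic function on $\Zp$, hence lies in the Banach algebra $\mathcal{C}^{\an}(\Zp, W)$; the operator $\log\tau$ corresponds there to $\frac{d}{da}\big|_{a=0}$ composed appropriately, i.e. to multiplication-by-$i$ on the $i$-th coefficient after one shift, which gives $(\log\tau)^i(x)/i! = w_i$ directly.) Once $(\log\tau)^i(x) = i!\,w_i$ is established, (1), (2), (3) all follow: convergence of each $(\log\tau)^i(x)$ is clear, $\|(\log\tau)^i(x)/i!\| = \|w_i\| \to 0$ gives (2), and substituting back gives \eqref{eqan}. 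The final assertion, $\log\tau(x) \in W^{\Zp\dan}$ and $(\log\tau^a)(x) = a\cdot\log\tau(x)$, follows because $\log\tau(x) = w_1$ and $\tau^a(w_1) = \sum_{i\geq 0} a^i (\log\tau)^i(w_1)/i! = \sum_{i\geq 0} a^i (i+1)! w_{i+1}/i!$ is again a convergent series with coefficients tending to $0$ (termwise differentiation of an analytic function is analytic), while the identity $(\log\tau^a) = a\log\tau$ on $W^{\Zp\dan}$ is the formal identity $\log((1-(1-\tau))^a) = a\log(1-(1-\tau))$ applied to a vector on which both sides converge.

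\textbf{Main obstacle.} The only real subtlety is justifying the rearrangement of double series in the Banach space $W$ when extracting $(\log\tau)^i(x) = i!\, w_i$ from the expansion $\tau^a(x) = \sum a^i w_i$: one must check that the relevant families are summable (so that Fubini-type interchange is legitimate), which uses precisely that $w_i \to 0$ and that $\|(1-\tau)^k\|_{\mathrm{op}} \leq 1$ on the $G$-Banach representation $W$ (the $\Zp$-action is by isometries up to bounded operators). This is routine non-archimedean analysis but is where all the care goes; everything else is formal bookkeeping with the coordinate $\cbf = \id$ on $\Zp$.
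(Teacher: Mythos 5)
Your proposal is correct and follows exactly the standard route that the paper itself defers to (its ``proof'' is the single line ``This is standard, cf.\ \cite[\S 3]{ST02}''): identify the orbit map $a\mapsto\tau^a(x)$ with an element of the Tate algebra $W\langle a\rangle$, read off $(\log\tau)^i(x)=i!\,w_i$ as Taylor coefficients, and justify the interchange of summations. The one imprecision worth fixing is in your ``main obstacle'' paragraph: the bound $\|(1-\tau)^k\|_{\mathrm{op}}\le 1$ is not what makes $-\sum_{k\ge 1}(1-\tau)^k(x)/k$ converge (the factors $1/k$ have unbounded norm); what you actually need, and what the expansion $\tau^a(x)=\sum_i a^iw_i$ gives via the Stirling-number identity $(1-\tau)^k(x)=(-1)^k k!\sum_{i\ge k}S(i,k)w_i$, is the decay $\|(1-\tau)^k(x)\|\le p^{-v_p(k!)}\sup_{i\ge k}\|w_i\|$, which beats the $p^{v_p(k)}$ from dividing by $k$.
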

\begin{proof}
This is standard, cf. \cite[\S 3]{ST02}.
\end{proof}

\begin{lemma} \label{lem Zp an}
Suppose $(W, \|\cdot \|)$ is a $\Qp$-Banach representation of $\Zp$ such that $\|g(w)\|=\| w\|, \forall g\in \Zp, w\in W$ (i.e., $\|\cdot \|$ is an \emph{invariant} norm). Let $x \in W$. Let $\tau$ be a topological generator of $\Zp$. If there exists some $r < \inf \{1/e, p^{-1/(p-1)}\}$ (here $e$ is Euler's number $2.718\ldots$), some $R>0$ and $k_0 \in \mathbb{Z}^{\geq 0}$, such that
\begin{eqnarray}
\label{tauR} \|  (1-\tau^{a})^k (x)\|  &\leq& R, \quad \text{ for all } a \in \Zp, k < k_0;  \\
\label{taur} \|  (1-\tau^{a})^k (x)\|  &\leq& r^k, \quad \text{ for all } a \in \Zp, k \geq k_0,
\end{eqnarray}
then $x \in W^{\Zp\dan}$.
\end{lemma}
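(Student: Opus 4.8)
The plan is to reduce the statement to the criterion of Lemma~\ref{subsub an}, i.e.\ to verify that for $x$ satisfying \eqref{tauR}--\eqref{taur} the three conditions there hold, with the additional gain that one controls the growth of $(\log\tau)^i(x)/i!$. First I would establish a formula expressing $(\log\tau)^i(x)$ as a convergent series in the iterated differences $(1-\tau)^k(x)$. Recall that $\log\tau = -\sum_{k\ge 1}(1-\tau)^k/k$ as a formal series; more generally, for any $a\in\Zp$ one has $\log(\tau^a)=-\sum_{k\ge1}(1-\tau^a)^k/k$, and the key point is the identity $\log(\tau^a)=a\cdot\log\tau$ once convergence is known. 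So the first step is: using the hypothesis $\|(1-\tau^a)^k(x)\|\le r^k$ for $k\ge k_0$ and $\le R$ for $k<k_0$, together with $r<p^{-1/(p-1)}$ (so that $v_p(1/k)\ge -k/(p-1) > -k\cdot(-\log_p r)$ asymptotically — i.e.\ $p^{-v_p(k)} r^k\to 0$), show that the series $\sum_{k\ge1}(1-\tau^a)^k(x)/k$ converges in $W$, uniformly in $a$; hence $(\log\tau^a)(x)$ is defined and lies in $W$.

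Second, I would iterate. One checks that $(1-\tau^a)$ maps the "analytic-type" bound through: since $W^{\Zp\dan}$ is not yet known for $x$, instead work directly with the estimate. The composite $(\log\tau)^i$ applied to $x$ can be written, by expanding, as a $\Zp$-linear combination (with coefficients built from the $1/k$'s) of terms $(1-\tau)^{k_1+\cdots+k_i}(x)$. Using submultiplicativity of the bounds — more precisely the fact that $(1-\tau^a)^k$ for larger $k$ has norm at most $r^k$ — one gets a bound of the shape $\|(\log\tau)^i(x)\|\le C\cdot s^i$ for some constant $C$ (absorbing the finitely many $k<k_0$ terms) and some $s$ with $r\le s< \inf\{1/e,\,p^{-1/(p-1)}\}$; the precise $s$ comes from combining the radius $r$ with the contribution of the harmonic coefficients $1/k$, and the hypothesis $r<p^{-1/(p-1)}$ is exactly what guarantees the harmonic factors do not blow up the geometric decay. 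Then $\|(\log\tau)^i(x)/i!\|\le C\, s^i/|i!|_p^{-1} = C\, s^i\, p^{i/(p-1)}\cdot(\text{lower order})$, and since $s\cdot p^{1/(p-1)}<1$ by the hypothesis $r<p^{-1/(p-1)}$... — wait, one needs $s<p^{-1/(p-1)}$, which is where the strict inequality and the room between $r$ and $p^{-1/(p-1)}$ is used — this tends to $0$. That verifies conditions (1) and (2) of Lemma~\ref{subsub an}.

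Third, for condition (3) I would verify the Taylor-expansion identity $\tau^a(x)=\sum_{i\ge0}a^i (\log\tau)^i(x)/i!$ for all $a\in\Zp$. The cleanest route is to first prove it for $a\in\Zp$ by noting both sides are given by convergent series and agree formally: expand $\tau^a=(1-(1-\tau))^{?}$ — more honestly, write $\tau^a=\exp(a\log\tau)$ acting on $x$, which is legitimate once the relevant series converge with the established radius, and use that $\exp$ and $\log$ are formal inverses. Continuity in $a$ (the series $\sum a^i(\log\tau)^i(x)/i!$ converges uniformly for $a\in\Zp$ by step two) plus agreement on, say, $a\in\mathbb Z_{\ge0}$ (where $\tau^a(x)$ is literally the $a$-fold iterate and the binomial expansion of $(1-(1-\tau))^a$ matches the right side) forces equality on all of $\Zp$. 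With all three conditions of Lemma~\ref{subsub an} in hand, $x\in W^{\Zp\dan}$.

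The role of the bound $r<1/e$ (Euler's number) deserves a remark, and I expect it to be the subtlest bookkeeping point rather than a deep obstacle: it is needed so that the combinatorial coefficients arising when expanding $(\log\tau)^i$ as a sum over compositions $k_1+\cdots+k_i$ of an integer — the number of such compositions, or rather the multinomial-type factors $\binom{\,\cdot\,}{k_1,\ldots,k_i}$ and the product $\prod 1/k_j$ — stay bounded after multiplication by $r^{k_1+\cdots+k_i}$; the generating-function estimate $\sum_i (\text{stuff})\, t^i$ has radius of convergence governed by $1/e$, reflecting the asymptotics of $n!$ versus $n^n$. So the main obstacle is purely the careful combinatorial estimate showing $\|(\log\tau)^i(x)\|$ decays geometrically with a ratio still below $p^{-1/(p-1)}$; everything else (convergence of $\log$, the exponential identity, continuity in $a$) is then routine given Lemma~\ref{subsub an} and the invariance of the norm, which is used throughout to pass bounds through the group action.
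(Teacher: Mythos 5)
Your proposal is correct and follows essentially the same route as the paper's proof: reduce to the criterion of Lemma \ref{subsub an}, expand $(\log \tau^a)^i(x)/i!$ into terms $\frac{1}{i!}\frac{(1-\tau^a)^{k_1+\cdots+k_i}(x)}{k_1\cdots k_i}$, and bound each by $(rp^{1/(p-1)})^i$ via exactly the optimization of $r^n (n/i)^i$ over $n\geq i$ in which $r<1/e$ enters (and note that in the ultrametric setting only this sup over terms matters, not the number of compositions), after which absolute convergence justifies the rearrangement giving the Taylor identity. The one cosmetic difference is that you extend $\tau^a(x)=\sum_i a^i(\log\tau)^i(x)/i!$ from nonnegative integers to all of $\Zp$ by uniform convergence, continuity and density, whereas the paper instead proves the additivity $(\log\tau^{\alpha+\beta})(x)=(\log\tau^{\alpha})(x)+(\log\tau^{\beta})(x)$ directly through an explicit partial-logarithm polynomial identity; both are valid.
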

\begin{proof}
\textbf{Step 0}: \emph{Partial log}.
Let $A$ be a $\Qp$-algebra.
Given $a\in A$, denote
$$\log_m a : = \sum_{i=1}^{p^m-1} \frac{(1-a)^i} i \in A.$$
If $A$ is furthermore a Banach algebra, and $\| \frac{(1-a)^i} i\| \to 0$ when $i \to +\infty$, then we denote $\log a:=(-1)\cdot \sum_{i=1}^{+\infty} \frac{(1-a)^i} i$ (and say $\log a$ is well-defined).
Suppose $a, b \in A$ such that $ab=ba$, then we have the identity:
$$\frac{(1-ab)^i} i =
\frac{(1-a)^i} i + \sum_{j=1}^i \binom {i-1}{j-1} \cdot a^j (1-a)^{i-j}
\cdot \frac{(1-b)^j} j.$$
So we have (cf. \cite[Eqn. (3.4)]{Car13}):
$$\log_m(ab) =  \log_m a + \sum_{j=1}^{p^m-1} \Bigg(
a^j \cdot \sum_{i=j}^{p^m-1} \binom {i-1}{j-1} \cdot (1-a)^{i-j} \Bigg)
\cdot \frac{(1-b)^j} j.$$
Note that (cf. the equation below \cite[Eqn. (3.4)]{Car13})
$$(1-X)^j \cdot \sum_{i=j}^{p^m-1} \binom{i-1}{j-1} X^{i-j}
\in 1 + X^{p^m-j}\Zp[X] .$$
Apply the above identity with $X=1-a$, then we get
\begin{equation}\label{logab}
  \log_m(ab) -\log_m a-\log_m b = \sum_{j=1}^{p^m-1} f_j(1-a)\cdot (1-a)^{p^m-j}\cdot \frac{(1-b)^j}{j},
\end{equation}
where $f_j(X) \in \Zp[X]$ are some polynomials.

\textbf{Step 1}: \emph{Logarithm of $x$}. Using condition \eqref{tauR} and \eqref{taur}, it is clear that for any $a \in \Zp$, $(\log \tau^a)(x)$ is well-defined. Furthermore, there exists some $r'>0$, such that
\begin{equation}\label{log norm}
 \|(\log \tau^a)(x) \| <r', \quad \forall a \in \Zp.
\end{equation}
We claim that
\begin{equation}\label{loga}
(\log \tau^a )(x) =a \cdot (\log  \tau )(x), \quad \forall a \in \Zp.
\end{equation}
To prove \eqref{loga}, we first show that
\begin{equation}\label{logalbe}
(\log  \tau^{\alpha+\beta} )(x) =(\log \tau^{\alpha})(x) +(\log \tau^{\beta})(x), \quad \forall \alpha, \beta \in \Zp.
\end{equation}
Using \eqref{logab}, we have
\begin{equation}\label{eq11}
 (\log_m \tau^{\alpha+\beta})(x) - (\log_m \tau^{\alpha})(x)-(\log_m \tau^{\beta})(x)
=
\sum_{j=1}^{p^m-1} f_j(1-\tau^\alpha)\cdot (1-\tau^\alpha)^{p^m-j}\cdot \frac{(1-\tau^\beta)^j}{j}(x).
\end{equation}
Since $\|\cdot \|$ is an invariant norm, it is easy to see that
\begin{equation}\label{eqinva}
\|(f(\tau))(w)\| \leq \|w \|, \quad  \forall w \in W, f(X) \in \Zp[X] \text{ a polynomial}.
\end{equation}
When $p^m /2 \geq k_0$ (so $\max \{j, p^m-j\} \geq k_0, \forall j$), the norm of the right hand side of \eqref{eq11} is bounded by $p^m r^{p^m /2}$ (using \eqref{taur} and \eqref{eqinva}).
Let $m \to +\infty$, and so \eqref{logalbe} is proved.
Now given $a \in \Zp$, let $a=a_m+p^mb_m$ where $a_m \in \mathbb{Z}, b_m \in \Zp$. By \eqref{logalbe},
$$(\log \tau^a )(x) = (\log \tau^{a_m} )(x) + (\log \tau^{p^mb_m} )(x) = a_m\cdot (\log  \tau )(x) +p^m\cdot (\log  \tau^{b_m} )(x) .$$
Use \eqref{log norm}, and let $m \to +\infty$, we can conclude \eqref{loga}.

\textbf{Step 2}: \emph{General term of a summation}. Consider the summation $\sum_{k=0}^{\infty} \frac{(\log \tau^a)^k(x)}{k!}$ where $a \in \Zp$, then its ``general term" is of the form:
$$ \frac{1}{k!}\frac{(1-\tau^a)^{i_1+ \cdots +i_k}(x)}{i_1\cdot \cdots \cdot i_k}, \text{ where } i_j \geq 1.$$
Suppose $\sum i_j=n$, then $n \geq k$. Let
$$r_k := \sup_{n \geq k}  \left\{ \|  \frac{1}{k!}\frac{(1-\tau^a)^{n}(x)}{i_1\cdots i_k} \|, \text{ where } \sum i_j=n    \right\}.$$
Note that we have
$$\|  \frac{1}{k!}\frac{(1-\tau^a)^{n}(x)}{i_1\cdots i_k} \|  \leq r^n\cdot p^{\frac{k}{p-1}} \cdot (\frac{n}{k})^k , \text{ when } n\geq k_0.$$
Fix a $k$, consider the function $f(X) =r^X \cdot X^k$ with $X \geq k$. Its logarithm is $X\ln r + k\ln X$, which has derivative $\ln r + k/X <0$ since $r<1/e$. Thus we conclude that
  $$\|  \frac{1}{k!}\frac{(1-\tau^a)^{n}(x)}{i_1\cdots i_k} \|  \leq r^k\cdot p^{\frac{k}{p-1}} \cdot (\frac{k}{k})^k = (rp^{\frac{1}{p-1}})^k, \text{ when }  n\geq k_0.$$
This implies that $r_k< +\infty, \forall k$. Furthermore,
$$r_k \leq   (rp^{\frac{1}{p-1}})^k, \text{ when } k \geq k_0,$$
and so $\lim_k r_k \to 0$ since $r < p^{-\frac{1}{p-1}}$.
This implies that the summation $\sum_{k=0}^{\infty} \frac{(\log \tau^a)^k(x)}{k!}$ converges absolutely.

\textbf{Step 3}: \emph{Conclusion}. Using Step 2 and \eqref{loga} in Step 1, it is easy to show that all the itemized conditions in Lem. \ref{subsub an} are satisfied; in particular, the equality \eqref{eqan} holds because by Step 2, we can ``re-arrange" the order of the summation.
Thus $x \in W^{\Zp\dan}$.
\end{proof}

\begin{Notation} \label{nota Zp banach}
If $(W, \|\cdot \|)$ is a $p$-adically separated and complete normed $\Zp$-module such that
$\|aw\|=\|a\|_p\|w\|$ for all $a \in \Zp$ and $w \in W$, and such that $W[1/p]$ (with the naturally induced norm) is a $\Qp$-Banach space, then we say $(W, \|\cdot \|)$ is a $\Zp$-Banach space for brevity. If furthermore such $W$ carries a continuous action by a $p$-adic Lie group $G$, then we denote $W^{G\dla}:=(W[1/p])^{G\dla} \cap W$.
\end{Notation}

\subsection{Locally analytic representations of $\hat{G}$}
Let $\hat{G}=\gal(L/K)$ be as in Notation \ref{nota fields}. In this subsection, we mainly set up some notations with respect to
representations of $\hat{G}$.

\begin{Notation} \label{nota hatG}
\begin{enumerate}
\item Recall that:
\begin{itemize}
\item if $K_{\infty} \cap K_{p^\infty}=K$, then $\gal(L/K_{p^\infty})$ and $\gal(L/K_{\infty})$ topologically generate $\hat{G}$ (cf. \cite[Lem. 5.1.2]{Liu08});
\item if $K_{\infty} \cap K_{p^\infty} \supsetneq K$, then necessarily $p=2$, and  $\gal(L/K_{p^\infty})$ and $\gal(L/K_{\infty})$ topologically generate an open subgroup (denoted as $\hat{G}'$) of $\hat{G}$ of index $2$ (cf. \cite[Prop. 4.1.5]{Liu10}).
\end{itemize}
\item Note that:
\begin{itemize}
\item $\gal(L/K_{p^\infty}) \simeq \Zp$, and let
$\tau \in \gal(L/K_{p^\infty})$ be a topological generator;
\item $\gal(L/K_{\infty})$ ($\subset \gal(K_{p^\infty}/K) \subset \Zp^\times$) is not necessarily pro-cyclic when $p=2$.
\end{itemize}
If we let $\Delta \subset \gal(L/K_{\infty})$ be the torsion subgroup, then $\gal(L/K_{\infty})/\Delta$  is pro-cyclic; choose $\gamma' \in \gal(L/K_{\infty})$ such that its image in $\gal(L/K_{\infty})/\Delta$ is a topological generator.

\item Let $\tau_n: =\tau^{p^n}$ and $\gamma'_n: =(\gamma')^{p^n}$.
Let $\hat{G}_n \subset \hat{G}$ be the subgroup topologically generated by $\tau_n$ and $\gamma'_n$. These $\hat{G}_n$ satisfy the property  in \S \ref{subsub def gn}.
\end{enumerate}
\end{Notation}

\begin{Notation} \label{nota tau la}
\begin{enumerate}
\item Given a $\hat{G}$-representation $W$, we use
$$W^{\tau=1}, \quad W^{\gamma=1}$$
to mean $$ W^{\gal(L/K_{p^\infty})=1}, \quad
W^{\gal(L/K_{\infty})=1}.$$
And we use
$$
W^{\tau\dla}, \quad W^{\tau_n\dan}, \quad  W^{\gamma\dla} $$
to mean
$$
W^{\gal(L/K_{p^\infty})\dla}, \quad
W^{\gal(L/(K_{p^\infty}(\pi_n)))\dla}, \quad
W^{\gal(L/K_{\infty})\dla}.  $$


\item  Let
$$ \nabla_\tau  := \frac{\log \tau^{p^n}}{p^n} \text{ for } n \gg0, \quad \nabla_\gamma:=\frac{\log g}{\log_p \chi_p(g)} \text{  for } g \in \gal(L/K_\infty) \text{ close enough to } 1  $$
be the two differential operators (acting on $\hat{G}$-locally analytic representations).
\end{enumerate}
\end{Notation}
\begin{rem}
Note that we never define $\gamma$ to be an element of $\gal(L/K_\infty)$; although when $p>2$ (or in general, when $\gal(L/K_\infty)$ is pro-cyclic), we could have defined it as a topological generator of $\gal(L/K_\infty)$. In particular, although ``$\gamma=1$" might be slightly ambiguous (but only when $p=2$), we use the notation for brevity.
\end{rem}

\begin{lemma} \label{lem taugamma}
Let
$W^{\tau\dla, \gamma=1}:= W^{\tau\dla} \cap W^{\gamma=1},$
then $$ W^{\tau\dla, \gamma=1} \subset  W^{\hat{G}\dla}. $$
\end{lemma}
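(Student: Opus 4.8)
The strategy is to use the hypothesis $w\in W^{\gamma=1}$ to reduce the action of $\hat G$ on $w$ to the action of the one‑dimensional group $\gal(L/K_{p^\infty})$, and then to invoke $\tau$‑analyticity of $w$. The crucial (essentially one‑line) observation is that for $g$ in a suitable open subgroup of $\hat G$ one has $g(w)=\tau_n^{a(g)}(w)$ for a single analytic coordinate $a(g)$, because the $\gal(L/K_\infty)$‑part of $g$ acts trivially on $w$.

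First I would set up the group theory. Since $K_{p^\infty}/K$ is Galois, $\gal(L/K_{p^\infty})\simeq\Zp$ is normal in $\hat G$, with $\gal(L/K_\infty)$ acting on it by conjugation through the cyclotomic character; moreover $\gal(L/K_{p^\infty})\cap\gal(L/K_\infty)=\gal(L/L)=1$ since $L=K_\infty K_{p^\infty}$. Hence, for each $n$, the index‑$p^n$ subgroup $\overline{\langle\tau_n\rangle}\subset\gal(L/K_{p^\infty})$ is characteristic in $\gal(L/K_{p^\infty})$, so normal in $\hat G$. Fixing $n$ and choosing an open torsion‑free pro‑cyclic subgroup $Q\subseteq\gal(L/K_\infty)$ (for instance $Q:=\gal(L/K_\infty)\cap(1+p\Zp)$; the case $p=2$, where $\gal(L/K_\infty)$ may have a torsion subgroup $\Delta$ and where $\gal(L/K_{p^\infty}),\gal(L/K_\infty)$ may only generate an index‑$2$ subgroup of $\hat G$, needs only the usual minor care), the set $\hat H:=\overline{\langle\tau_n\rangle}\cdot Q$ is an open subgroup of $\hat G$ with a semidirect decomposition $\hat H=\overline{\langle\tau_n\rangle}\rtimes Q$. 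Thus every $g\in\hat H$ is written uniquely $g=\tau_n^{a(g)}\,g_\gamma$ with $a(g)\in\Zp$ and $g_\gamma\in Q\subseteq\gal(L/K_\infty)$, and the resulting coordinates of the second kind $\mathbf c=(a,b):\hat H\xrightarrow{\sim}\Zp^2$ form an analytic chart once $n$ (and the defining parameter of $Q$) are large; this is the standard coordinate structure on the $p$‑adic Lie group $\hat G$, compatible with the $\hat G_n$ of Notation \ref{nota hatG}.

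Now take $w\in W^{\tau\dla,\gamma=1}$. Since $W^{\tau\dla}=\cup_m W^{\tau_m\dan}$ there is $m$ with $w\in W^{\tau_m\dan}$; fix $n\ge m$ large enough for the previous paragraph, so that $w\in W^{\tau_n\dan}$ by the $\gal(L/K_{p^\infty})$‑case of Lemma \ref{lem 2.4BC}, and write $\tau_n^c(w)=\sum_{i\ge0}c^i\,w_i$ with $w_i\in W$ and $w_i\to0$. For $g=\tau_n^{a(g)}g_\gamma\in\hat H$, using that $g_\gamma\in\gal(L/K_\infty)$ fixes $w$, we get
$$g(w)=\tau_n^{a(g)}\bigl(g_\gamma(w)\bigr)=\tau_n^{a(g)}(w)=\sum_{i\ge0}a(g)^i\,w_i=\sum_{\mathbf k\in\mathbb N^2}\mathbf c(g)^{\mathbf k}\,w_{\mathbf k},$$
where $w_{(i,j)}:=w_i$ if $j=0$ and $w_{(i,j)}:=0$ otherwise. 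Since $w_i\to0$, the family $\{w_{\mathbf k}\}_{\mathbf k\in\mathbb N^2}$ tends to $0$, so by the definition in \S\ref{subsub def an} we have $w\in W^{\hat H\dan}\subseteq W^{\hat G\dla}$, as desired. (If $W$ is only Fréchet or LF, apply this to each of the defining Banach quotients and use the definition of pro‑analytic vectors.)

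The only genuinely delicate point is the group‑theoretic input of the second paragraph — that $\overline{\langle\tau_n\rangle}$ is normal in $\hat G$, that $\hat H=\overline{\langle\tau_n\rangle}\rtimes Q$ is open and admits an analytic second‑kind coordinate chart, and the clean bookkeeping for torsion and for the index‑$2$ subgroup when $p=2$ — but all of this is standard for $p$‑adic Lie groups. Everything else follows from the identity $g(w)=\tau_n^{a(g)}(w)$ and the analyticity of $c\mapsto\tau_n^c(w)$.
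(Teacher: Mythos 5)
Your proposal is correct and is essentially the paper's own argument: the paper's one-line proof rests on exactly the decomposition $g = g_1 g_2$ with $g_1 \in \gal(L/K_\infty)$, $g_2 \in \gal(L/K_{p^\infty})$, so that the $\gal(L/K_\infty)$-factor dies on a $\gamma$-invariant vector and analyticity reduces to the $\tau$-direction. Your write-up simply supplies the (correct) details of the open subgroup, the second-kind coordinates, and the normality of $\overline{\langle\tau_n\rangle}$ that the paper leaves implicit.
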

\begin{proof}
This can be deduced from the fact that any element $g \in \hat{G}$ (or $g \in \hat{G}'$ when $K_\infty \cap K_{p^\infty} \neq K$, cf. Notation \ref{nota hatG}) can be uniquely written as a product $g_1g_2$ for some $g_1 \in \gal(L/K_\infty), g_2 \in \gal(L/K_{p^\infty})$.
\end{proof}

\begin{remark}
\begin{enumerate}[leftmargin=*]
\item  Let $W^{\gamma\dla, \tau=1}:=W^{\gamma\dla} \cap W^{\tau=1}$, then
$$W^{\gamma\dla, \tau=1}=\left((W)^{\gal(L/K_{p^\infty})}\right)^{\gal(K_{p^\infty}/K)\dla}  \subset  W^{\hat{G}\dla}$$
because $\gal(L/K_{p^\infty})$ is normal in $\hat{G}$.
\item We do not know if the inclusion $W^{\hat{G}\dla} \subset W^{\gamma\dla} \cap W^{\tau\dla}$ is an equality (very probably not, see next item).
\item We thank Laurent Berger for informing us of the following example. Let $G_1=G_2=\Zp$, and let $G=G_1\times G_2$. Let $W$ be the space of continuous $\Qp$-valued functions on $G$ with the action of $G$ by translations.
Let $f(x,y) = 0$ if $(x,y)=0$ and $f(x,y)=(x^2\cdot y^2)/(x^2+p y^2)$ otherwise. Then $f \in W^{G_1\dla} \cap W^{G_2\dla}$, but $f\notin W^{G\dla}$. (Note that by Hartog's theorem, the analogous phenomenon does not happen over the usual complex numbers).
\end{enumerate}
\end{remark}

\subsection{Locally analytic vectors in $\hat{L}$}
Let $\hat{L}$ be the $p$-adic completion of $L$ (cf. Notation \ref{nota fields}).
As in \cite[\S 4.4]{BC16},
consider the 2-dimensional $\Qp$-representation of $G_K$ (associated to our choice of $\{\pi_n\}_{n \geq 0}$) such that $g \mapsto \smat{\chi(g) & c(g) \\ 0 & 1}$ where $\chi$ is the $p$-adic cyclotomic character. Since the co-cycle $c(g)$ becomes trivial over $C_p$, there exists $\alpha \in C_p$ (indeed, $\alpha \in \hat{L}$) such that $c(g) = g(\alpha)\chi(g)-\alpha$.
This implies  $g(\alpha) = \alpha/\chi(g) + c(g)/\chi(g)$ and so $\alpha \in \hat{L}^{\hat{G}\dla}$.
Now similarly as in the beginning of \cite[\S 4.2]{BC16}, let $\alpha_n \in L$ such that $\|\alpha-\alpha_n\|_p \leq p^{-n}$. Then there exists $r(n) \gg0$ such that if $m \geq r(n)$, then $\|\alpha-\alpha_n\|_{\hat{G}_m}= \|\alpha-\alpha_n\|_p$ and $\alpha-\alpha_n \in \hat{L}^{\hat{G}_m\dan}$ (see Notation \ref{nota hatG} for $\hat{G}_m$). We can furthermore suppose that $\{r(n)\}_n$ is an increasing sequence.


\begin{defn}
Let $(H, \|\cdot \|)$ be a $\Qp$-Banach algebra such that $\|\cdot \|$ is sub-multiplicative, and let $W \subset H$ be a $\Qp$-subalgebra. Let $T$ be a variable, and let  $W \dacc{T}_n$ be the vector space consisting of $\sum_{k \geq 0} a_k T^k$ with $a_k \in W$, and $p^{nk} a_k \to 0$ when $k \to +\infty$. For $h \in H$ such that $\|h \|\leq p^{-n}$, denote $W \dacc{h}_n$ the image of the evaluation map $W \dacc{T}_n \to H$ where $T \mapsto h$.
\end{defn}

\begin{prop}
\label{loc ana in L}
\begin{enumerate}
\item $\hat{L}^{\hat{G}\dla} =\cup_{n \geq 1} K({\mu_{r(n)}, \pi_{r(n)}})\dacc{ \alpha-\alpha_n }_n. $
\item $\hat{L}^{\hat{G}\dla, \nabla_\gamma=0} = L.$
\item $\hat{L}^{\tau\dla, \gamma=1} = {K_{\infty}}.$
\end{enumerate}
\end{prop}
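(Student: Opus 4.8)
The plan is to prove (1) by the Tate--Sen décomplétion method of Berger--Colmez (\cite[\S 4.2, \S 4.4]{BC16}) adapted to the two-dimensional group $\hat{G}$, and then to deduce (2) and (3) by analyzing the derivation $\nabla_\gamma$. For the inclusion $\supseteq$ in (1): fix $n$, and after enlarging $r(n)$ arrange that $\alpha_n \in K(\mu_{r(n)},\pi_{r(n)}) \subseteq \hat{L}^{\hat{G}_{r(n)}}$. Recalling $\alpha - \alpha_n \in \hat{L}^{\hat{G}_{r(n)}\dan}$ with $\|\alpha - \alpha_n\|_{\hat{G}_{r(n)}} = \|\alpha - \alpha_n\|_p \leq p^{-n}$, any series $\sum_{k \geq 0} a_k(\alpha - \alpha_n)^k$ with $a_k \in K(\mu_{r(n)},\pi_{r(n)})$ and $p^{nk} a_k \to 0$ converges in the Banach ring $(\hat{L}^{\hat{G}_{r(n)}\dan}, \|\cdot\|_{\hat{G}_{r(n)}})$ provided by Lemma \ref{ringlocan}(1); its sum thus lies in $\hat{L}^{\hat{G}_{r(n)}\dan} \subseteq \hat{L}^{\hat{G}\dla}$, which gives $\supseteq$.

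For the reverse inclusion in (1) one runs the décomplétion as in \emph{loc. cit.} The field $\hat{L}$ with its $\hat{G}$-action satisfies the Tate--Sen axioms relative to the tower $\{K(\pi_m,\mu_m)\}_m$ (the field-of-norms/APF formalism for $K_\infty$ is available, cf. \S\ref{sec fieldnorm}), and the essential feature is that $\tau(\alpha) = \alpha + c(\tau)$ is a \emph{translation} (since $\chi(\tau) = 1$), so that $\alpha$ plays the role of the cyclotomic coordinate from the one-dimensional theory. Hence, given $x \in \hat{L}^{\hat{G}_m\dan}$, one first décompletes along $\gal(L/K_{p^\infty}) \cong \Zp$ to write $x = \sum_k a_k(\alpha - \alpha_n)^k$ with coefficients $a_k$ in a finite extension of $\widehat{K_{p^\infty}}$ inside $\hat{L}$, and then uses the $\gal(L/K_\infty)$-analyticity contained in $\hat{G}_m$-analyticity to push the $a_k$ down into the finite layer $K(\mu_{r(n)},\pi_{r(n)})$. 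The same norm estimates show the representation is unique, i.e. $K(\mu_{r(n)},\pi_{r(n)})\dacc{T}_n \to \hat{L}$, $T \mapsto \alpha - \alpha_n$, is injective.

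For (2): every $g \in \gal(L/K_\infty)$ fixes all $\pi_m$, so $c(g) = 0$, whence $g(\alpha) = \chi(g)^{-1}\alpha$ and therefore $\nabla_\gamma(\alpha) = -\alpha$ (and $\alpha \neq 0$, as $c$ is a nontrivial cocycle). Every $x \in L$ lies in a finite subextension of $K$ inside $L$, hence is fixed by an open subgroup of $\hat{G}$, in particular of $\gal(L/K_\infty)$; so $L \subseteq \hat{L}^{\hat{G}\dla}$ with $\nabla_\gamma|_L = 0$, giving $L \subseteq \hat{L}^{\hat{G}\dla, \nabla_\gamma = 0}$. Conversely, let $x \in \hat{L}^{\hat{G}\dla}$ with $\nabla_\gamma(x) = 0$, and write $x = \sum_{k \geq 0} a_k(\alpha - \alpha_n)^k$ with $a_k \in K(\mu_{r(n)},\pi_{r(n)}) \subseteq L$ via (1). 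Since $\nabla_\gamma$ is a continuous derivation on $\hat{L}^{\hat{G}\dla}$ with $\nabla_\gamma(a_k) = 0$ and $\nabla_\gamma(\alpha - \alpha_n) = -\alpha$, we get $0 = \nabla_\gamma(x) = -\alpha\sum_{k \geq 1} k a_k(\alpha - \alpha_n)^{k-1}$; as $\hat{L}$ is a field and $\alpha \neq 0$, the sum vanishes, and uniqueness in (1) forces $a_k = 0$ for $k \geq 1$, so $x = a_0 \in L$.

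For (3): if $x \in \hat{L}^{\tau\dla, \gamma = 1}$, then $x \in \hat{L}^{\hat{G}\dla}$ by Lemma \ref{lem taugamma}, and $\gamma = 1$ forces $\nabla_\gamma(x) = 0$, so $x \in L$ by (2); being fixed by $\gal(L/K_\infty)$, it lies in $L^{\gal(L/K_\infty)} = K_\infty$. Conversely $K_\infty$ is fixed by $\gal(L/K_\infty)$, and each $x \in K(\pi_m)$ is fixed by the open subgroup $\gal(L/K_{p^\infty}(\pi_m))$ of $\gal(L/K_{p^\infty})$, hence is $\tau$-locally analytic; thus $K_\infty \subseteq \hat{L}^{\tau\dla, \gamma = 1}$. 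The main obstacle is the reverse inclusion in (1): establishing the Tate--Sen formalism for $(\hat{G}, \hat{L})$ and carrying out the two-step décomplétion --- first along the $\tau$-direction, where the translation $\tau(\alpha) = \alpha + c(\tau)$ makes $\alpha$ behave like a one-dimensional coordinate, and then descending the coefficients via the $\gal(L/K_\infty)$-direction; items (2) and (3) are then essentially formal.
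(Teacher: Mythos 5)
Your items (1) and (3) track the paper closely: both you and the paper defer the hard inclusion of (1) to the d\'ecompl\'etion of \cite[Prop. 4.12]{BC16} (the paper recalls the explicit inversion formula $y_i = \sum_{k\ge 0}(-1)^k(\alpha-\alpha_n)^k\nabla_\tau^{k+i}(x)\binom{k+i}{k}$ and the fact that $\nabla_\tau(y_i)=0$ forces $y_i$ into a finite layer), and (3) is deduced from (2) exactly as you do. Where you genuinely diverge is (2). The paper invokes \cite[Prop. 6.3]{BC16}: there is a nonzero $\beta=a\nabla_\tau+b\nabla_\gamma\in C_p\otimes\textnormal{Lie}\,\hat G$ vanishing on $\hat L^{\hat G\dla}$ with $a,b\neq 0$, so $\nabla_\gamma(x)=0$ forces $\nabla_\tau(x)=0$, and then $y_i=0$ for $i\ge 1$ falls out of the \emph{defining formula} for $y_i$ --- no uniqueness statement about the expansion is ever needed. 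You instead differentiate the series term by term using $\nabla_\gamma(\alpha)=-\alpha$ (which is correct: $c(g)=0$ for $g\in\gal(L/K_\infty)$, so $g(\alpha)=\chi(g)^{-1}\alpha$) and then appeal to uniqueness of the coefficients. Your route has the merit of avoiding the Sen-operator input of \cite[Prop. 6.3]{BC16} entirely; the paper's route avoids any injectivity claim.

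The one real gap is your sentence ``the same norm estimates show the representation is unique.'' The ring $K(\mu_{r(n)},\pi_{r(n)})\dacc{\alpha-\alpha_n}_n$ is defined as the \emph{image} of an evaluation map precisely because injectivity is not formal, and norm estimates alone do not give it (they only bound $\|b_0\|$ by $\sup_k\|b_k\|p^{-nk}$, which is no contradiction). The claim is true, but proving it requires the $\tau$-direction: if $\sum_{k\ge 0}b_k(\alpha-\alpha_n)^k=0$ with $b_k\in K(\mu_m,\pi_m)$ and $p^{nk}b_k\to 0$, apply the bounded operator $\nabla_\tau^j$ (using $\nabla_\tau(b_k)=0$ and $\nabla_\tau(\alpha)=c(\tau)\in\Qp^\times$) to get $b_j=-(\alpha-\alpha_n)\sum_{l\ge 1}\binom{l+j}{j}b_{l+j}(\alpha-\alpha_n)^{l-1}$, hence $\|b_j\|_p\le\sup_{l\ge 1}\|b_{j+l}\|_p\,p^{-nl}$; setting $\gamma_j:=\|b_j\|_p\,p^{-nj}$, which tends to $0$, this reads $\gamma_j\le\sup_{l>j}\gamma_l$ and forces all $\gamma_j=0$ (look at the largest index achieving the maximum). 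You should either supply this argument or cite a uniqueness statement; as written, the key step of your proof of (2) is asserted rather than proved. Everything else --- the convergence argument for $\supseteq$ in (1) via Lemma \ref{ringlocan}, and the deduction of (3) from (2) together with $L^{\gal(L/K_\infty)}=K_\infty$ --- is correct.
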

\begin{proof}
Item (1) is \cite[Prop. 4.12]{BC16}; we quickly recall the proof here. Suppose $x\in \hat{L}^{\hat{G}_n\dan}$. For $i \geq 0$, let
$$y_i = \sum_{k \geq 0} (-1)^k (\alpha - \alpha_n)^k \nabla_\tau^{k+i}(x) \binom{k+i}{k},$$
then there exists $m\geq n$ such that $y_i \in \hat{L}^{\hat{G}_m\dan}$, and
$x = \sum_{i \geq 0} y_i (\alpha - \alpha_n)^i$ in $\hat{L}^{\hat{G}_m\dan}$.
Then the fact $\nabla_\tau(y_i)=0$ will imply that $y_i\in K(\mu_m, \pi_m)$, concluding (1).

Consider Item (2). By \cite[Prop. 6.3]{BC16}, there exists a non-zero element $\beta \in C_p \otimes \textnormal{Lie } \hat{G}$ such that $\beta=0$ on $\hat{L}^{\hat{G}\dla}$. Write $\beta=a\nabla_\tau +b \nabla_\gamma$ with $a, b \in C_p$. We have $a \neq 0$ since $\nabla_\gamma \neq 0$ on $\Kpinfty$; similarly $b \neq 0$. Thus, the condition $\nabla_\gamma=0$ in Item (2) implies $\nabla_\tau=0$, and so $y_i =0$ for $i \geq 1$, concluding (2).

Item (3) easily follows from (2).
\end{proof}

\subsection{Locally analytic vectors in $\wt{\mathbf{B}}^I_{K_\infty}$}

\begin{lemma} \label{lem div p}
Suppose $I=[r_\ell, r_k]$ or $[0, r_k]$.
  \begin{enumerate}
  \item $\wt{\mathbf{A}}^{[0, r_k]} =\wt{\mathbf{A}}^+ \{ \frac{\varphi^k(E(u))}{p}  \}$.
    \item $p\wt{\mathbf{A}}^{I} \cap \frac{\varphi^k(E(u))}{p}\wt{\mathbf{A}}^{I} =\varphi^k(E(u))\wt{\mathbf{A}}^{I} $.
    \item $p\wt{\mathbf{A}}^{I} \cap \wt{\mathbf{A}}^{[0, r_k]} =p\wt{\mathbf{A}}^{[0, r_k]}$.
    \item If $y \in   \wt{\mathbf{A}}^{[0, r_k]}+p\wt{\mathbf{A}}^{I} $ and $y_i \in \wt{\mathbf{A}}^+$ such that $y-\sum_{i=0}^{j-1}y_i(\frac{\varphi^k(E(u))}{p})^i$ is in $(\Ker (\theta\circ \iota_k))^j$ for all $j \geq 1$. Then there exists some $j \geq 1$ such that  $y-\sum_{i=0}^{j-1}y_i(\frac{\varphi^k(E(u))}{p})^i \in p\wt{\mathbf{A}}^{I}$.

  \end{enumerate}
\end{lemma}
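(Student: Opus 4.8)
The plan for parts (1)--(3) is short. For (1), I would note that both $\wt{\mathbf{A}}^{[0,r_k]}=\wt{\mathbf{A}}^+\{[\overline\pi]^{r_k}/p\}$ (Def.~\ref{defn wt rings}) and the claimed ring $\wt{\mathbf{A}}^+\{\varphi^k(E(u))/p\}$ are of the form $\wt{\mathbf{A}}^+\{f/p\}$ with $f\in\wt{\mathbf{A}}^+$, and that two such rings coincide as soon as the reductions $\bar f,\bar g\in\wt{\mathbf{E}}^+$ differ by a unit: lifting the unit to $\wt{\mathbf{A}}^+$ expresses each generator $f/p$ as an $\wt{\mathbf{A}}^+$-polynomial in $g/p$, and conversely. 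Since $E(u)$ is Eisenstein, $\varphi^k(E(u))\equiv\underline{\pi}^{\,ep^k}\pmod p$, of $v_{\wt{\mathbf{E}}}$-value $p^k$, which is also the value of $[\overline\pi]^{r_k}\equiv\overline\pi^{\,r_k}\pmod p$ (because $r_k\cdot\tfrac{p}{p-1}=p^k$), and elements of equal value in the valuation ring $\wt{\mathbf{E}}^+$ differ by a unit. For (2), writing $\xi_k:=\varphi^k(E(u))/p$, the claim is $p\wt{\mathbf{A}}^I\cap\xi_k\wt{\mathbf{A}}^I=p\xi_k\wt{\mathbf{A}}^I$; ``$\supseteq$'' is clear, and for ``$\subseteq$'' I would use that $\wt{\mathbf{A}}^I/\xi_k\wt{\mathbf{A}}^I$ embeds (via $\theta\circ\iota_k$, Lem.~\ref{prop theta ker}) into $C_p$, hence is $p$-torsion free, so $pa=\xi_k b$ forces $a\in\xi_k\wt{\mathbf{A}}^I$. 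Part (3) is exactly \eqref{eqptimes} (Lem.~\ref{lemdensity}(\ref{item closed})) when $I=[r_\ell,r_k]$, and is trivial when $I=[0,r_k]$.

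For (4), keep $\xi_k:=\varphi^k(E(u))/p$ and, using (1), write $y=c+pw$ with $c\in\wt{\mathbf{A}}^{[0,r_k]}$ and $w\in\wt{\mathbf{A}}^I$. Since each $y_i\in\wt{\mathbf{A}}^+\subset\wt{\mathbf{A}}^{[0,r_k]}$ and $\xi_k\in\wt{\mathbf{A}}^{[0,r_k]}$, the correction $c-\sum_{i<j}y_i\xi_k^{\,i}$ already lies in $\wt{\mathbf{A}}^{[0,r_k]}$; subtracting $pw$ from the hypothesis $y-\sum_{i<j}y_i\xi_k^{\,i}\in(\Ker(\theta\circ\iota_k))^j=\xi_k^{\,j}\wt{\mathbf{A}}^I$ gives
\[
c-\sum_{i<j}y_i\xi_k^{\,i}\ \in\ \wt{\mathbf{A}}^{[0,r_k]}\cap\bigl(\xi_k^{\,j}\wt{\mathbf{A}}^I+p\wt{\mathbf{A}}^I\bigr),\qquad \text{for all }j\geq 1 .
\]
The first step is to show this intersection equals $\xi_k^{\,j}\wt{\mathbf{A}}^{[0,r_k]}+p\wt{\mathbf{A}}^{[0,r_k]}$. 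When $I=[0,r_k]$ there is nothing to do; when $I=[r_\ell,r_k]$ I would prove, by induction on $j$ from Lem.~\ref{prop theta ker}, the two facts (a) $\wt{\mathbf{A}}^{[0,r_k]}\cap\xi_k^{\,j}\wt{\mathbf{A}}^{[r_\ell,r_k]}=\xi_k^{\,j}\wt{\mathbf{A}}^{[0,r_k]}$ and (b) $\wt{\mathbf{A}}^{[r_\ell,r_k]}=\wt{\mathbf{A}}^{[0,r_k]}+\xi_k^{\,j}\wt{\mathbf{A}}^{[r_\ell,r_k]}$ (the latter using that $\theta\circ\iota_k$ is onto the same subring $\O_{C_p}$ of $C_p$ on both rings); then, given $x=\xi_k^{\,j}A+pB$ in the left-hand side, one splits $B=B_0+\xi_k^{\,j}B_1$ with $B_0\in\wt{\mathbf{A}}^{[0,r_k]}$ via (b), so $x-pB_0=\xi_k^{\,j}(A+pB_1)$ lies in $\wt{\mathbf{A}}^{[0,r_k]}\cap\xi_k^{\,j}\wt{\mathbf{A}}^{[r_\ell,r_k]}=\xi_k^{\,j}\wt{\mathbf{A}}^{[0,r_k]}$ by (a), and hence $x\in\xi_k^{\,j}\wt{\mathbf{A}}^{[0,r_k]}+p\wt{\mathbf{A}}^{[0,r_k]}$.

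Having reduced to a membership inside $\wt{\mathbf{A}}^{[0,r_k]}$, the plan is to read it off modulo $p$. By (1), $\xi_k=\tilde u\,[\overline\pi]^{r_k}/p+z$ for some unit $\tilde u\in\wt{\mathbf{A}}^+$ and $z\in\wt{\mathbf{A}}^+$; by Def.~\ref{defn wt rings}, $\wt{\mathbf{A}}^{[0,r_k]}/p$ is the polynomial ring over $R:=\wt{\mathbf{E}}^+/\overline\pi^{\,r_k}\wt{\mathbf{E}}^+$ in the class $\bar Y$ of $[\overline\pi]^{r_k}/p$, and since $\bar{\tilde u}$ is a unit of $R$ the substitution $\bar Y\mapsto\bar\xi_k=\bar{\tilde u}\,\bar Y+\bar z$ identifies $\wt{\mathbf{A}}^{[0,r_k]}/p$ with the polynomial ring $R[\bar\xi_k]$. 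Reducing the displayed membership modulo $p$ gives $\bar c-\sum_{i<j}\bar y_i\,\bar\xi_k^{\,i}\in\bar\xi_k^{\,j}R[\bar\xi_k]$, and comparing coefficients in degrees $<j$ forces $\bar y_i$ to coincide with the $i$-th coefficient of the polynomial $\bar c$ for all $i<j$, hence (letting $j\to\infty$) for all $i$. As $\bar c$ has some finite degree $N$, this yields $\sum_{i<j}\bar y_i\,\bar\xi_k^{\,i}=\bar c$ in $\wt{\mathbf{A}}^{[0,r_k]}/p$ for every $j\geq N$, i.e. $c-\sum_{i<j}y_i\xi_k^{\,i}\in p\wt{\mathbf{A}}^{[0,r_k]}$, and therefore $y-\sum_{i<j}y_i\xi_k^{\,i}=\bigl(c-\sum_{i<j}y_i\xi_k^{\,i}\bigr)+pw\in p\wt{\mathbf{A}}^I$ for all $j\geq N$, which is the assertion.

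The hard part is the ring-theoretic comparison underlying (4): making precise that an element of $\wt{\mathbf{A}}^{[0,r_k]}$ which becomes divisible by $\xi_k^{\,j}$ and by $p$ only after enlarging to $\wt{\mathbf{A}}^{[r_\ell,r_k]}$ is already so divisible inside $\wt{\mathbf{A}}^{[0,r_k]}$, and identifying $\wt{\mathbf{A}}^{[0,r_k]}/p$ as a polynomial ring in the class of $\varphi^k(E(u))/p$ so that a naive degree count closes the argument. Everything else is formal once Lem.~\ref{prop theta ker} and the explicit presentations of these rings from \S\ref{subsec Btilde} are available.
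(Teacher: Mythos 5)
Your arguments for (1)--(3) coincide with the paper's: (1) is the unit-comparison of $\overline{\varphi^k(E(u))}=\underline{\pi}^{ep^k}$ with $\overline{\pi}^{r_k}$ inside the valuation ring $\wt{\mathbf{E}}^+$ (this is exactly the content of the reference to \cite[Lem.~3.1]{Ber16}), (2) is the same torsion-freeness argument via $\theta\circ\iota_k$ and Lem.~\ref{prop theta ker}(1), and (3) is \eqref{eqptimes}. For (4), however, you take a genuinely different route. The paper first uses item (1) to manufacture auxiliary coefficients $a_i\in\wt{\mathbf{A}}^+$ with $y-\sum_{i<j}a_i(\varphi^k(E(u))/p)^i\in p\wt{\mathbf{A}}^I$ for some $j$, and then invokes the replacement argument of \cite[Prop.~3.3]{Ber16} (with $Q_k$, $\pi$, $[r,s]$ transported to $\varphi^k(E(u))$, $p$, $I$), which is an induction on $i$ driven by items (2) and (3) showing that the given $y_i$ may be substituted for the $a_i$. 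You instead prove the strengthened intersection formula $\wt{\mathbf{A}}^{[0,r_k]}\cap\bigl((\varphi^k(E(u))/p)^j\wt{\mathbf{A}}^I+p\wt{\mathbf{A}}^I\bigr)=(\varphi^k(E(u))/p)^j\wt{\mathbf{A}}^{[0,r_k]}+p\wt{\mathbf{A}}^{[0,r_k]}$ via your lemmas (a) and (b), and then read the whole hypothesis off modulo $p$ in $\wt{\mathbf{A}}^{[0,r_k]}/p\simeq(\wt{\mathbf{E}}^+/\overline{\pi}^{r_k})[\bar\xi]$ by comparing polynomial coefficients, where $\bar\xi$ is the class of $\varphi^k(E(u))/p$. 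This is correct and self-contained (it bypasses the citation to Berger entirely, and makes visible why the congruence stabilizes: $\bar y_i$ must equal the $i$-th coefficient of the finite-degree polynomial $\bar c$, hence vanish for $i$ large); the price is the auxiliary facts (a) and (b), in whose proofs you should record that $\varphi^k(E(u))/p$ is a non-zero-divisor in $\wt{\mathbf{A}}^{[r_\ell,r_k]}$ (immediate from the multiplicativity of $W^{[r_k,r_k]}$, Lem.~\ref{lem W}(\ref{item mult})), since the induction identifying $x_1=\xi^{j-1}z$ in (a) uses cancellation by $\xi$.
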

\begin{proof}
These are easy analogues of \cite[Lem. 3.1, Lem. 3.2, Prop. 3.3]{Ber16}; let us sketch the proofs for the reader's convenience.

Item (1) easily follows from Def. \ref{defn wt rings} (or see \cite[Lem. 3.1]{Ber16} for a quick development).

For Item (2), suppose $px$ belongs to left hand side, then $px$ and hence $x$ belongs to the kernel of $\theta\circ \iota_k: \wt{\mathbf{A}}^{I} \to C_p$; one then concludes by Lem. \ref{prop theta ker}(1).

Item (3) is vacuous when $I=[0, r_k]$. When $I=[r_\ell, r_k]$, this is \cite[Lem. 3.2(3)]{Ber16} (or our Eqn.  \eqref{eqptimes}).



    Consider Item (4). By Item (1), there exists some $j \geq 1$ and some $a_i \in \wt{\mathbf{A}}^+$ such that
\begin{equation}\label{eq div p}
y-\sum_{i=0}^{j-1}a_i (\frac{\varphi^k(E(u))}{p})^i \in p\wt{\mathbf{A}}^{I}
\end{equation}
     (note that this is possible for either $I=[r_\ell, r_k]$ or $[0, r_k]$). One then proceeds as in \cite[Prop. 3.3]{Ber16}, by changing all the $Q_k$ (resp.  $\pi$, resp. $[r, s]$) in \emph{loc. cit.} to $\varphi^k(E(u))$ (resp. $p$, resp.
     $I$), to show that one can replace the $a_i$ above by $y_i$ without changing the property in \eqref{eq div p}.

\end{proof}

For $I$ a closed interval, note that $(\wt{\mathbf{B}}^{I}_{ L}, W^I)$ is a $\Qp$-Banach representation of $\hat{G}$ (in particular, note that $W^I(p)=1$); also note that the valuation $W^I$ is invariant under the Galois action.

\begin{lemma} \label{lem div ana}
Suppose $I=[r_\ell, r_k]$ or $[0, r_k]$.
\begin{enumerate}
\item \label{item-varphiu}
 For each $n \geq 0$,  $\varphi^{-n}(u) \in (\wt{\mathbf{B}}^{I}_L)^{\tau_{n+k}\dan}$. Thus:
  $$\varphi^{-n}(u) \in (\wt{\mathbf{B}}^{I}_L)^{\tau_{n+k}\dan, \gamma=1} \subset (\wt{\mathbf{B}}^{I}_L)^{\hat{G}\dla}  . $$

  \item \label{item-m0}
   There exists $m_0 \geq 0$ (depending on $k$ only) such that $$ \frac{t}{\varphi^k(E(u))} \in (\wt{\mathbf{B}}^{I}_{ L})^{\tau_{m_0}\dan}.$$

  \item \label{item-divide-t}
  Suppose $x \in \wt{\mathbf{B}}^{I}_L$ such that $tx \in (\wt{\mathbf{B}}^{I}_L)^{\tau_{n}\dan}$ for some $n \geq 0$, then $x \in (\wt{\mathbf{B}}^{I}_L)^{\tau_{n}\dan}$.

  \item \label{item-divide-phie}
  Suppose $m \geq m_0$. Then
  $$ (\wt{\mathbf{B}}^{I}_{ L})^{\tau_m\dan, \gamma=1} \cap \varphi^k(E(u))\wt{\mathbf{B}}^{I}_{ L} =  \varphi^k(E(u))(\wt{\mathbf{B}}^{I}_{ L})^{\tau_m\dan, \gamma=1}.$$
\end{enumerate}
\end{lemma}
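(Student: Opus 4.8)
All four assertions are the $(\varphi,\tau)$-analogues of results in \cite[\S 3]{Ber16} (Lem. \ref{lem div p} having already supplied the analogues of \cite[Lem. 3.1--3.3]{Ber16}); the plan is to transpose Berger's arguments, replacing the cyclotomic uniformizer, the element $Q_k$, the element $\pi$ and the interval $[r,s]$ of \emph{loc. cit.} by $u=[\underline{\pi}]$, $\varphi^k(E(u))$, $p$ and $I$ respectively, and using Lem. \ref{prop theta ker} in place of the description of $\Ker\theta$ there.

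For item \ref{item-varphiu}: since the Witt-vector Frobenius on $\wt{\mathbf{A}}$ is bijective and commutes with $G_K$, one has $\tau^b(\varphi^{-n}(u))=\varphi^{-n}([\underline{\varepsilon}])^b\cdot\varphi^{-n}(u)$ for $b\in\Zp$ (with the normalization $\tau(\underline{\pi})=\underline{\varepsilon}\,\underline{\pi}$; in general a $\Zp^\times$-scalar appears in the exponent, which is harmless). Put $1+y:=\varphi^{-n}([\underline{\varepsilon}])^{p^{n+k}}$. Using $W^{[\alpha,\alpha]}(\varphi^{-n}([\underline{\varepsilon}])-1)=\tfrac{1}{\alpha p^{n}}$, $v_{\wt{\mathbf E}}(\underline{\varepsilon}-1)=\tfrac{p}{p-1}$, and $\sup I=r_k=(p-1)p^{k-1}$, a term-wise estimate gives $W^I(y)\geq\tfrac{p}{p-1}>\tfrac1{p-1}$; hence $\log(1+y)$ converges in $\wt{\mathbf{B}}^I_L$, $W^I(\log(1+y))=W^I(y)>\tfrac1{p-1}$, and
$$\tau^{p^{n+k}a}(\varphi^{-n}(u))=(1+y)^a\cdot\varphi^{-n}(u)=\sum_{j\geq0}\frac{a^j}{j!}(\log(1+y))^j\cdot\varphi^{-n}(u),$$
a power series in $a$ whose coefficients tend to $0$ (as $W^I((\log(1+y))^j/j!)\geq j(W^I(y)-\tfrac1{p-1})\to+\infty$). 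By the definition of analytic vectors (\S\ref{subsub def an}), $\varphi^{-n}(u)\in(\wt{\mathbf{B}}^I_L)^{\tau_{n+k}\dan}$. Moreover $\pi_m\in K_\infty$ for all $m$, so $\varphi^{-n}(u)=[\underline{\pi}^{1/p^n}]$ is fixed by $\gal(L/K_\infty)$, whence $\varphi^{-n}(u)\in(\wt{\mathbf{B}}^I_L)^{\tau_{n+k}\dan,\gamma=1}$, and the inclusion into $(\wt{\mathbf{B}}^I_L)^{\hat{G}\dla}$ is Lem. \ref{lem taugamma}.

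For item \ref{item-m0}: first, $t=\log(1+[\overline{\pi}])$ converges in $\wt{\mathbf{B}}^{[0,r_k]}$ (as $W^{[r_k,r_k]}([\overline{\pi}]^n/n)=n/r_k-v_p(n)\to+\infty$), and $\theta\circ\iota_k(t)=\theta(t)=0$, so by Lem. \ref{prop theta ker}(1) we have $t\in\varphi^k(E(u))\wt{\mathbf{B}}^I$, i.e. $w:=t/\varphi^k(E(u))\in\wt{\mathbf{B}}^I_L$. Now $\tau$ fixes $t$ (it fixes $K_{p^\infty}$), so from $w\,\varphi^k(E(u))=t$ one gets $(1-\tau^b)(w)=\tau^b(w)\cdot\varphi^k(E(\tau^b u)-E(u))/\varphi^k(E(u))$; since $E(X)-E(Y)$ is divisible by $X-Y$ and $\tau^b u-u=([\underline{\varepsilon}]^b-1)u$, this equals $\tau^b(w)\cdot\varphi^k(u\,g([\underline{\varepsilon}]^b u,u))\cdot\varphi^k([\underline{\varepsilon}]^b-1)/\varphi^k(E(u))$ for a polynomial $g$, and the last factor lies in $\wt{\mathbf{B}}^I$ because $\varphi^k([\underline{\varepsilon}]^b-1)\in\Ker(\theta\circ\iota_k)=\varphi^k(E(u))\wt{\mathbf{B}}^I$ (Lem. \ref{prop theta ker}), with $W^I$ growing linearly in $v_p(b)$. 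Iterating and using sub-multiplicativity of $W^I$ yields $\|(1-\tau^b)^j(w)\|\leq r^j$ for $v_p(b)\geq m_0$ and $j\gg0$, with $m_0$ depending only on $k$; Lem. \ref{lem Zp an} then gives $w\in(\wt{\mathbf{B}}^I_L)^{\tau_{m_0}\dan}$. This is the main technical step, the analogue of the corresponding computation in \cite[\S 3]{Ber16}, and I expect the bookkeeping of these $W^I$-estimates (and pinning down that $m_0$ may be taken independent of $\ell$) to be the only real obstacle.

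For item \ref{item-divide-t}: since $\tau$ fixes $t$, $(\log\tau^{p^n})^i(tx)=t\cdot(\log\tau^{p^n})^i(x)$ for all $i$, and multiplication by $t$ is bounded below for $W^I$ because each $W^{[\alpha,\alpha]}$ is multiplicative (Lem. \ref{lem W}(\ref{item mult})) and $W^I$ is their infimum over $\alpha\in I$, so $W^I(z)\geq W^I(tz)-\max_{\alpha\in\{r_\ell,r_k\}}W^{[\alpha,\alpha]}(t)$. Hence convergence of $(\log\tau^{p^n})^i(tx)$, the condition $(\log\tau^{p^n})^i(tx)/i!\to0$, and the expansion \eqref{eqan} for $tx$ all descend to $x$ after division by $t$, so Lem. \ref{subsub an} gives $x\in(\wt{\mathbf{B}}^I_L)^{\tau_n\dan}$. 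Finally item \ref{item-divide-phie} is formal: $\varphi^k(E(u))$ is fixed by $\gal(L/K_\infty)$ (as $\underline{\pi}$ is) and lies in $(\wt{\mathbf{B}}^I_L)^{\tau_m\dan}$ for $m\geq m_0$ (a direct power-series computation, or combine item \ref{item-m0} with the $\tau_m$-analyticity of $t$), so the inclusion $\supseteq$ follows from Lem. \ref{ringlocan}(1); for $\subseteq$, write $y=\varphi^k(E(u))z$, deduce $\gamma(z)=z$ since $\varphi^k(E(u))$ is a non-zero-divisor, note $tz=w\cdot y\in(\wt{\mathbf{B}}^I_L)^{\tau_m\dan}$ by item \ref{item-m0} and Lem. \ref{ringlocan}(1), and conclude $z\in(\wt{\mathbf{B}}^I_L)^{\tau_m\dan}$ by item \ref{item-divide-t}.
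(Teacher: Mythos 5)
Your plan is correct and follows essentially the same route as the paper: item (\ref{item-m0}) is proved there by exactly the induction you defer to "bookkeeping" (the claim \eqref{tautEu}, which tracks an extra factor of $p^\theta t$ at each application of $1-\tau^a$ and then feeds the resulting geometric decay into Lem.~\ref{lem Zp an}), and your arguments for items (\ref{item-divide-t}) and (\ref{item-divide-phie}) coincide with the paper's. The only (cosmetic) divergence is in item (\ref{item-varphiu}), where the paper expands $\tau^a(\varphi^{-n}(u))$ as a binomial series and invokes Amice's orthonormal basis of $\mathrm{LA}_h(\Zp,\Qp)$ to get the radius $p^{-(n+k)}$, while you take the $p$-adic $\exp$--$\log$ of $(1+y)^a$ with $1+y=\varphi^{-n}([\underline{\varepsilon}])^{p^{n+k}}$; both yield the same radius of analyticity.
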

\begin{proof}
The proof of Item (\ref{item-varphiu}) follows similar ideas as in \cite[Prop. 4.1]{Ber16}.
Let us mention that it is relatively easy to show that $\varphi^{-n}(u)$ is \emph{locally} analytic, e.g., using \eqref{eqhx} below; however it is critical to control the radius of analyticity (which is $p^{-(n+k)}$ in this case) for later application in Thm. \ref{thm loc ana gamma 1}.
 Write $v$ for $[\underline{\varepsilon}]-1 \in \wt{\mathbf{A}}^+$.
  For $a \in \Zp$, we have
$$\tau^a(\varphi^{-n}(u))  = \varphi^{-n}(u\cdot(1+v)^a)= \varphi^{-n}(u)\cdot \left(\sum_{m=0}^\infty\binom{a}{m}\varphi^{-n}(v)^m\right).$$
It suffices to show that the (formally written) summation function (from  $\Zp$ to $ \wt{\mathbf{B}}_L^I$)
\begin{equation}\label{eqlah}
T \mapsto \sum_{m \geq 0}\binom{T}{m}\cdot \varphi^{-n}(v)^m
\end{equation}
is (well-defined and) analytic on the closed disk (around $0$) of radius $p^{-h}$ where $h=n+k$.
By \cite[Thm. I.4.7]{Col10une}) (due to Amice), the polynomials
 $\lfloor {m}/{p^h}\rfloor!\binom{T}{m}$ for $m \geq 0$ form an orthonormal basis of $\mathrm{LA}_h(\Zp,
 \Qp )$, where $\mathrm{LA}_h(\Zp,
 \Qp )$ is the Banach space of functions on $\Z_p$ that are analytic on all the closed sub-disks of radius $p^{-h}$ (cf. the definition above \cite[Rem. I.4.4]{Col10une}). See \cite[Def. I.1.3]{Col10une} for the definition of an orthonormal basis; in particular, it implies that the norm of $\lfloor {m}/{p^h}\rfloor!\binom{T}{m}$ on the closed disk (around $0$) of radius $p^{-h}$ is  $\leq 1$.
Note that since $\varphi^{-n}(v) \in \wt{\mathbf{A}}^+$,
$$W^I(\varphi^{-n}(v))=   W^{[r_k, r_k]}(\varphi^{-n}(v)) =\frac{1}{(p-1)p^{n+k-1}}. $$
Thus, the norm of the term $\binom{T}{m}\cdot \varphi^{-n}(v)^m$ on the closed disk of radius $p^{-h}$ is
\begin{equation*}
\leq \|\binom{T}{m}\|_{\mathrm{LA}_h(\Zp,\Qp)}\cdot
p^{  W^I(\varphi^{-n}(v)^m)} =
p^{v_p(\lfloor {m}/{p^h}\rfloor!)} \cdot
 p^{-\frac{m}{(p-1)p^{n+k-1}}} \leq p^{-\frac{m}{p^h}}.
\end{equation*}
Thus $\binom{T}{m}\cdot \varphi^{-n}(v)^m$ converges to 0 and the analyticity of \eqref{eqlah} is verified.

Consider Item (\ref{item-m0}). Denote $F:=\varphi^k(E(u))$. Since $F$ is a generator of $\Ker (\theta\circ \iota_k:\wt{\mathbf{B}}^{I} \to C_p )$, we have $\frac{t}{F} \in \wt{\mathbf{B}}_{L}^I$.
Let $m_0 \gg0$ such that when $a \in p^{m_0}\Zp$,
\begin{equation}\label{eqhx}
  (1-\tau^a)(u)=u(1-[\underline{\varepsilon}]^a) =u\cdot p^\theta t \cdot h(p^\theta t), \quad \text{ for some } \theta >0, h(X) \in \Zp[\![X]\!].
\end{equation}
By increasing $m_0$ if needed, we can further assume that
\begin{equation}\label{eqhxx}
W^I(p^\theta \cdot \frac{t}{F})=\alpha>0.
\end{equation}
We claim that for all $a \in p^{m_0}\Zp$, there exists $ f_s(X, Y) \in W(k)[\![X, Y]\!] $ (depending on $a$), such that
\begin{equation}\label{tautEu}
  (1-\tau^a)^s(\frac{t}{F}) =\frac{t(p^\theta t)^{s}\cdot f_{s}(u, p^\theta t)}{\prod_{i=0}^{s} \tau^{ai}(F)}, \quad \forall s \geq 0.
\end{equation}
When $s=0$, simply let $f_0=1$. Suppose \eqref{tautEu} is valid for $s-1$, then
$$(1-\tau^a)^s(\frac{t}{F}) = t(p^\theta t)^{s-1} \cdot \frac{ \tau^{as}(F)\cdot f_{s-1} -F \cdot \tau^a(f_{s-1}) }{\prod_{i=0}^{s} \tau^{ai}(F)}.    $$
Note that
$$\tau^{as}(F)\cdot f_{s-1} -F \cdot \tau^a(f_{s-1}) = (\tau^{as}-1)(F)\cdot f_{s-1} -F\cdot (\tau^a-1)(f_{s-1}). $$
Note that for any $i, j \geq 0$,
 $$(\tau^b -1)(u^i (p^\theta t)^j) =p^\theta t\cdot P_{i, j}(u, p^\theta t), \text{ with } P_{i, j} \in W(k)[\![X, Y]\!].$$
Thus it is easy to see that $(\tau^{as}-1)(F)=p^\theta t \cdot G(u, p^\theta t)$  and $(\tau^a-1)(f_{s-1})=p^\theta t  \cdot H(u, p^\theta t)$ with some $G, H \in W(k)[\![X, Y]\!] $, so we can simply let
$$f_s: = \frac{  \tau^{as}(F)\cdot f_{s-1} -F \cdot \tau^a(f_{s-1}) }{p^\theta t}, $$
concluding the proof of \eqref{tautEu}.
By \eqref{tautEu}, we have
\begin{equation}\label{WItauafrac}
W^I((1-\tau^a)^s(\frac{t}{F})) \geq W^I(p^{-\theta}\cdot  (\frac{p^\theta t}{F})^{s+1} ) \geq  -\theta +(s+1)\alpha.
\end{equation}
Thus it is easy to see that for the group generated by $p^{m_0}\tau$ ($\simeq \Zp$), the conditions \eqref{tauR} and \eqref{taur} in Lem. \ref{lem Zp an} are satisfied (if needed, we can increase $m_0$ to increase $\alpha$), and we can conclude Item (\ref{item-m0}).

For Item (\ref{item-divide-t}), one can assume that $n=0$ (the general case is similar). Write $I=[r, s]$. Since $W^{I} =\inf\{ W^{[r, r]}, W^{[s, s]}\}$ (or $W^{I} = W^{[s, s]} $ if $r=0$), and both $W^{[r, r]}$ and  $W^{[s, s]}$ are \emph{multiplicative} valuations, it is easy to see that there exists a constant $c(I)>0$ depending on $I$ only, such that
$$W^{I}(y) \geq W^{I}(ty)-c(I), \quad \forall  y \in \wt{\mathbf{B}}^{I}_L.$$
Using this, and the fact that  $(1-\tau^a)(tx)=t\cdot (1-\tau^a)(x)$, it is easy to see that if $tx$ satisfies the itemized conditions in Lem. \ref{subsub an}, then so does $x$.

For Item (\ref{item-divide-phie}), suppose $y \in \wt{\mathbf{B}}^{I}_{ L}$ such that $\varphi^k(E(u)) \cdot y \in (\wt{\mathbf{B}}^{I}_{ L})^{\tau_m\dan}$, it suffices to show that $y \in (\wt{\mathbf{B}}^{I}_{ L})^{\tau_m\dan}$. By Item (\ref{item-m0}), $\frac{t}{\varphi^k(E(u))} \cdot \varphi^k(E(u)) \cdot y =ty$ is an analytic vector, and we can conclude by Item (\ref{item-divide-t}).
\end{proof}

\begin{definition} \label{def m inf}
Define
$$\mathbf{A}^I_{K_\infty, m}: =\varphi^{-m}(\mathbf{A}^{p^mI}_{K_\infty}), \quad
\mathbf{A}^I_{K_\infty, \infty}: =\cup_{m \geq 0}\mathbf{A}^I_{K_\infty, m}.
$$
Define $\mathbf{B}^I_{K_\infty, m}$ and $\mathbf{B}^I_{K_\infty, \infty}$ similarly.
\end{definition}

\begin{theorem} \label{thm loc ana gamma 1}
Suppose $I=[r_\ell, r_k]$ or $[0, r_k]$. Let $m_0$ be as in Lem. \ref{lem div ana}.
\begin{enumerate}
\item \label{itemmkan}
$(\wt{\mathbf{A}}^I_{L})^{\tau_{m+k}\dan, \gamma=1}  \subset \mathbf{A}^I_{K_\infty, m}$ for any $m \geq m_0$.

\item \label{itemkinf}
$(\wt{\mathbf{A}}^I_{ L})^{\tau\dla, \gamma=1} = \mathbf{A}^I_{K_\infty, \infty}.$
\item $(\wt{\mathbf{B}}_{L}^{[r_\ell, +\infty)})^{\tau\dpa, \gamma=1} =    \mathbf{B}_{ K_\infty, \infty}^{[r_\ell, +\infty)}.$
\item   $(\wt{\mathbf{B}}_{L}^{[0, +\infty)})^{\tau\dpa, \gamma=1} =  \mathbf{B}_{ K_\infty, \infty}^{[0, +\infty)}.$
\end{enumerate}
\end{theorem}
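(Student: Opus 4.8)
The plan is to prove the four items of Theorem \ref{thm loc ana gamma 1} in order, with items (1) and (2) being the technical core and items (3), (4) following by a Fréchet-completion argument. For item (\ref{itemmkan}), let $x \in (\wt{\mathbf{A}}^I_L)^{\tau_{m+k}\dan, \gamma=1}$ with $m \geq m_0$. Using the section $s$ from \S\ref{subsub sec s Kinfty} and the approximating sequence from \S\ref{subsub approx} (adapted to $\wt{\mathbf{A}}^I$), I would write $x = \sum_{n \geq 0} p^n s(\overline{x_n})$ after identifying $\overline{x} \in \wt{\mathbf{E}}^I_{K_\infty}$ — wait, more precisely one works modulo $p$ first: since $x$ is $\gamma=1$, its reduction lies in $\wt{\mathbf{E}}_{K_\infty}$, and one wants to show that the $\tau_{m+k}$-analyticity forces $\overline x$ to lie in $\varphi^{-m}(\mathbf{E}^{p^mI}_{K_\infty})$ (a "finite level" statement for the field of norms), then lift. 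The cleanest route, following \cite[Prop. 3.3, Prop. 4.1]{Ber16}, is instead the "divide by $\varphi^k(E(u))$" strategy: given such an $x$, use Lemma \ref{prop theta ker} to peel off successive coefficients via $\theta \circ \iota_k$. Concretely, pick $a_0 \in \mathbf{A}^+_{K_\infty}$ (surjectivity of $\theta$ onto $\O_{\hat K_\infty}$, as in the proof of Prop. \ref{lem decomp 3}) with $\theta\circ\iota_k(a_0) = \theta\circ\iota_k(x)$; then $x - a_0 \in \Ker(\theta\circ\iota_k) = \varphi^k(E(u))\wt{\mathbf{A}}^I_L$, and by Lemma \ref{lem div ana}(\ref{item-divide-phie}) (using $m \geq m_0$) the quotient $(x-a_0)/\varphi^k(E(u))$ is again in $(\wt{\mathbf{A}}^I_L)^{\tau_{m+k}\dan,\gamma=1}$ — here I need to check the radius $\tau_{m+k}$ is preserved, which follows because $\varphi^k(E(u))^{-1}t$ is $\tau_{m_0}$-analytic and $m+k \geq m_0$. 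Iterating produces $x = \sum_{i \geq 0} a_i (\varphi^k(E(u))/p)^i \cdot p^{(\text{something})}$; the point is that the coefficients $a_i$, being $\tau_{m+k}$-analytic and fixed by $\gamma$ and killed by $\nabla_\tau$ after the appropriate normalization, must lie in $\varphi^{-m}(\mathbf{A}^+_{K_\infty})$ by the analogue of Prop. \ref{loc ana in L}(3) applied at finite level (this is where $\hat L^{\tau\dla,\gamma=1} = K_\infty$ enters, combined with $\varphi^{-m}(u) \in (\wt{\mathbf{B}}^I_L)^{\tau_{m+k}\dan}$ from Lemma \ref{lem div ana}(\ref{item-varphiu})). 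Assembling, $x \in \varphi^{-m}(\mathbf{A}^{p^mI}_{K_\infty}) = \mathbf{A}^I_{K_\infty,m}$. Lemma \ref{lem div ana}(\ref{item-divide-t}) and Lemma \ref{lem div p} handle the bookkeeping of when a partial sum becomes divisible by $p$.

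For item (\ref{itemkinf}), the inclusion $\supseteq$ is immediate from Lemma \ref{lem div ana}(\ref{item-varphiu}) (which gives $\varphi^{-m}(u) \in (\wt{\mathbf{B}}^I_L)^{\hat G\dla}$, hence $\mathbf{A}^I_{K_\infty,m} \subset (\wt{\mathbf{A}}^I_L)^{\tau\dla,\gamma=1}$ since $\mathbf{A}^{p^mI}_{K_\infty}$ is built out of $u$, $p$, and a few explicit rational functions per Prop. \ref{cor A rl rk}). The inclusion $\subseteq$ follows from item (\ref{itemmkan}): any $x \in (\wt{\mathbf{A}}^I_L)^{\tau\dla,\gamma=1}$ is $\tau_N$-analytic for some $N$, so writing $N = m+k$ with $m \geq m_0$ (enlarging $N$ if necessary) gives $x \in \mathbf{A}^I_{K_\infty,m} \subseteq \mathbf{A}^I_{K_\infty,\infty}$.

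For items (3) and (4), I would pass to the Fréchet ring $\wt{\mathbf{B}}^{[r_\ell,+\infty)}_L = \bigcap_k \wt{\mathbf{B}}^{[r_\ell, r_k]}_L$ (resp. $r_\ell = 0$) and use the theory of pro-analytic vectors from \S\ref{subsub def pa}. A pro-analytic, $\gamma$-fixed vector $x$ has, for each $k$, an image $x_k$ in $\wt{\mathbf{B}}^{[r_\ell,r_k]}_L$ that is $\tau\dla$ and $\gamma=1$, hence by item (\ref{itemkinf}) (the $\mathbf{B}$-version, obtained by inverting $p$) lies in $\mathbf{B}^{[r_\ell,r_k]}_{K_\infty,\infty} = \bigcup_m \varphi^{-m}(\mathbf{B}^{[r_\ell p^m, r_k p^m]}_{K_\infty})$. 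The subtlety is that the level $m = m(k)$ might a priori grow with $k$; one must show it can be taken uniform, so that $x$ itself lies in $\varphi^{-m}(\mathbf{B}^{[r_\ell p^m, +\infty)}_{K_\infty}) = \mathbf{B}^{[r_\ell,+\infty)}_{K_\infty, m}$ for a single $m$. This uniformity is exactly the analogue of \cite[\S 2]{Ber16} / the corresponding step in the $(\varphi,\Gamma)$-setting: because the $W^{[r_\ell, r_k]}$-norms are compatible as $k$ varies and the radius of analyticity of $\varphi^{-m}(u)$ is controlled by $m$ alone (Lemma \ref{lem div ana}(\ref{item-varphiu}) again), the descent level stabilizes; one invokes Cor. \ref{lem aa inter} to intersect the $\mathbf{A}$-level descriptions across different $k$ and conclude. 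The reverse inclusion is, as before, a direct consequence of Lemma \ref{lem div ana}(\ref{item-varphiu}) together with the explicit form of $\mathbf{B}^{[r_\ell,+\infty)}_{K_\infty}$ from Prop. \ref{cor A rl rk} and Lemma \ref{lem laurent series}.

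The main obstacle I expect is not any single estimate but the \emph{uniformity of the descent level} in the passage from item (\ref{itemkinf}) to items (3)–(4): making precise that a pro-analytic $\gamma$-invariant vector descends to $\varphi^{-m}$ of a \emph{single} Robba-type ring rather than only to $\varphi^{-m(k)}$ for each truncation. This requires carefully tracking how the $\tau_{m+k}\dan$-norm $\|\cdot\|_{\hat G_{m+k}}$ on $\wt{\mathbf{B}}^{[r_\ell,r_k]}_L$ interacts with the transition maps $\wt{\mathbf{B}}^{[r_\ell,r_{k+1}]}_L \to \wt{\mathbf{B}}^{[r_\ell,r_k]}_L$, and exploiting that the Frobenius $\varphi$ shifts intervals by a factor of $p$ so that the "$k$" in the radius and the "$k$" in the interval endpoint are the same parameter — this is precisely the phenomenon packaged in Lemma \ref{lem div ana}(\ref{item-varphiu}) (radius $p^{-(n+k)}$ for $\varphi^{-n}(u)$ at level $I$ with $\sup I = r_k$), and getting the quantifiers right there is the crux. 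A secondary technical point is verifying, in the proof of item (\ref{itemmkan}), that each division step $(x - a_i)/\varphi^k(E(u))$ genuinely preserves both the ring $\wt{\mathbf{A}}^I_L$ (not just $\wt{\mathbf{B}}^I_L$) and the exact radius $\tau_{m+k}$, which is handled by Lemma \ref{lem div p}(2)–(4) and Lemma \ref{lem div ana}(\ref{item-divide-phie}) respectively but needs to be threaded together carefully.
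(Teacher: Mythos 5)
Your overall strategy coincides with the paper's: items (1)--(2) via the Berger-style descent (peel off coefficients with $\theta\circ\iota_k$, divide by $\varphi^k(E(u))$ using Lem.~\ref{lem div ana}(\ref{item-divide-phie}), and use that $(\O_{\hat{L}})^{\tau_{m+k}\dan,\gamma=1}=\O_{K(\pi_{m+k})}$ forces the coefficients into $W(k)[\varphi^{-m}(u)]$), and items (3)--(4) by the completion argument, which is also all the paper does (it cites Berger). However, two steps you treat as routine are exactly where the content is, and as written they do not go through. In item (1), your iteration produces formal partial sums $\sum_{i<j} y_i\,(\varphi^k(E(u))/p)^i$, but $\varphi^k(E(u))/p$ is not topologically nilpotent for the $p$-adic topology, so "assembling" the infinite expansion has no meaning until you say where it converges and why the iteration terminates. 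The paper's device is to first multiply $x$ by $(u^{ep^k}/p)^{k_n}$ so that $x_n\in\wt{\mathbf{A}}^{[0,r_k]}+p^n\wt{\mathbf{A}}^I$ --- precisely the hypothesis of Lem.~\ref{lem div p}(4), which guarantees the remainder becomes divisible by $p$ after finitely many division steps --- then induct on the power of $p$ via Lem.~\ref{lem div p}(3), multiply back by $(p/u^{ep^k})^{k_n}$, and observe that the approximants converge to $x$ in $\wt{\mathbf{A}}^{[r_k,r_k]}$ (the element $p/u^{ep^k}$ lives only in the single-point interval), giving $x\in\varphi^{-m}(\mathbf{A}^{p^m[r_k,r_k]}_{K_\infty})$; Cor.~\ref{lem aa inter} is then essential to intersect back down to $\mathbf{A}^I_{K_\infty,m}$. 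Relatedly, your initial lift "$a_0\in\mathbf{A}^+_{K_\infty}$" is the wrong ring: if the coefficients could be taken there you would conclude $x\in\mathbf{A}^I_{K_\infty}$ (level $m=0$), which is false in general; they must be taken in $W(k)[\varphi^{-m}(u)]$, and that this is possible is exactly the content of $\theta\circ\iota_k(x)\in\O_{K(\pi_{m+k})}$.

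The second gap is in item (2): the inclusion $\mathbf{A}^I_{K_\infty,\infty}\subset(\wt{\mathbf{A}}^I_L)^{\tau\dla,\gamma=1}$ is not "immediate" from Prop.~\ref{cor A rl rk}. The ring $\mathbf{A}^{p^mI}_{K_\infty}$ is a \emph{completion} of a polynomial ring in locally analytic elements, and the locally analytic vectors form an LB subspace that is not closed in $\wt{\mathbf{B}}^I_L$; analyticity does not pass to limits for free. One needs a radius-of-analyticity bound that is uniform over all of $\mathbf{A}^I_{K_\infty}$: the paper establishes $W^I\bigl((1-\tau^a)^s(f)\bigr)\geq s\alpha$ for the monomial generators $u^m(u^{ep^k}/p)^n$ and $u^m(p/u^{ep^\ell})^n$, notes that this estimate survives the completion precisely because it is uniform, and then applies the criterion of Lem.~\ref{lem Zp an}. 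Your sketch of items (3)--(4) correctly identifies the uniformity-of-level issue as the remaining point, which is at the same level of detail as the paper itself.
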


\begin{proof} The proof of Item (\ref{itemmkan}) follows the same strategy as in \cite[Thm. 4.4]{Ber16}. (Some error of \emph{loc. cit.} is corrected in the errata, posted on Berger's homepage.)
Suppose $x \in (\wt{\mathbf{A}}^I_{L})^{\tau_{m+k}\dan, \gamma=1} $.
\begin{itemize}
\item When $I=[0, r_k]$, for each $n \geq 0$, we let $k_n=0$, and let
$$x_n : = (\frac{u^{ep^k}}{p})^{k_n} x =x \in \wt{\mathbf{A}}^{[0, r_k]}= \wt{\mathbf{A}}^{[0, r_k]}+p^n\wt{\mathbf{A}}^{I}.$$

\item When $I=[r_\ell, r_k]$, note that
$\wt{\mathbf{A}}^I =\wt{\mathbf{A}}^+\{ \frac{p}{u^{ep^\ell}}, \frac{u^{ep^k}}{p} \}$ and note that $k \geq \ell$. Thus for each $n \geq 0$, we can choose $k_n \gg 0$ such that we have
$$x_n : = (\frac{u^{ep^k}}{p})^{k_n} x \in \wt{\mathbf{A}}^{[0, r_k]}+p^n\wt{\mathbf{A}}^{I}.$$
\end{itemize}

 For either of the above two cases,  $x_n \in (\wt{\mathbf{A}}^I_{L})^{\tau_{m+k}\dan, \gamma=1}$ by Lem. \ref{lem div ana}(\ref{item-varphiu}) (and Lem. \ref{ringlocan}). So
$$\theta\circ \iota_k(x_n) \in (\O_{\hat{L}})^{\tau_{m+k}\dan, \gamma=1  } =\O_{K(\pi_{m+k})},$$
where the last identity follows from similar argument as in \cite[Thm. 3.2]{BC16}.
Since $\theta\circ \iota_k(\varphi^{-m}(u))=\pi_{m+k}$, there exists $y_{n, 0} \in W(k)[\varphi^{-m}(u)]$ such that
$$\theta\circ \iota_k(x_n) =\theta\circ \iota_k(y_{n, 0}).$$
By Lem. \ref{prop theta ker},
$$x_n -y_{n, 0} = (F/p) \cdot x_{n, 1}, \text{ with } x_{n, 1} \in  \wt{\mathbf{A}}^I, \text{ where } F: = \varphi^{k}(E(u)).$$
By Lem. \ref{lem div ana}(\ref{item-varphiu}),
 $y_{n, 0} \in (\wt{\mathbf{A}}^I_{L})^{\tau_{m+k}\dan, \gamma=1}.$ (As we mentioned in the proof of \emph{loc. cit.}, it is important to know that $y_{n, 0}$ is ``$\tau_{m+k}\dan$" for the argument here to proceed).
Thus by Lem. \ref{lem div ana}(\ref{item-divide-phie}),  $x_{n, 1} \in  (\wt{\mathbf{A}}^I_{L})^{\tau_{m+k}\dan, \gamma=1}$.
Applying this procedure inductively gives us a sequence $\{y_{n, i} \}_{i \geq 0}$ where $y_{n, i} \in W(k)[\varphi^{-m}(u)]$  such that
$$x_n -\left(   y_{n, 0} +  (F/p) y_{n, 1} +\cdots +   (F/p)^{i-1} y_{n, i-1}\right) \in (F/p)^{i}\wt{\mathbf{A}}^I_{L}. $$
By Lem. \ref{lem div p}(4), there exists $j \gg 0$ such that
\begin{equation}\label{eqjj}
x_n -\left(   y_{n, 0} +  (F/p) y_{n, 1} +\cdots +   (F/p)^{j-1} y_{n, j-1}\right) \in p \wt{\mathbf{A}}^I_{L}.
\end{equation}
Note that the left hand side of \eqref{eqjj} belongs to $\wt{\mathbf{A}}_L^{[0, r_k]}+p^n\wt{\mathbf{A}}_L^{I}$ (since $y_{n, i}$ and $F/p$ are in $\wt{\mathbf{A}}_L^{[0, r_k]}$), and so it further belongs to
$$(\wt{\mathbf{A}}_L^{[0, r_k]}+p^n\wt{\mathbf{A}}_L^{I}) \cap  p \wt{\mathbf{A}}^I_{L} = p(\wt{\mathbf{A}}_L^{[0, r_k]}+p^{n-1}\wt{\mathbf{A}}_L^{I} ), \quad \text{ by Lem. \ref{lem div p}(3) }.$$
Let
$$x_n -\left(   y_{n, 0} +  (F/p) y_{n, 1} +\cdots +   (F/p)^{j-1} y_{n, j-1}\right) = px_n'.$$
Since $y_{n, i} \in  (\wt{\mathbf{A}}^I_{L})^{\tau_{m+k}\dan, \gamma=1}$, we have $x_n' \in  (\wt{\mathbf{A}}^I_{L})^{\tau_{m+k}\dan, \gamma=1}$. Apply to $x_n'$ the same procedure that we applied to $x_n$, and proceed inductively.
In the end, we will get $\{\tilde{y}_{n, i} \}_{i \leq j_n}$ for some $j_n \gg 0$ where $\tilde{y}_{n, i}  \in W(k)[\varphi^{-m}(u)]$, and
$$\tilde{y}_n = \tilde{y}_{n, 0} + (F/p)\tilde{y}_{n, 1}  + \cdots + ((F/p))^{j_n-1}\tilde{y}_{n, j_n-1} ,
$$
such that $$x_n-\tilde{y}_n  \in p^n\wt{\mathbf{A}}^I.$$
Let $z_n := (\frac{p}{u^{ep^k}})^{k_n} \tilde{y}_n $, then $z_n \in \varphi^{-m}(\mathbf{A}_{K_\infty}^{p^m[r_k, r_k]})$ (note that here it is critical to use the interval $[r_k, r_k]$ and not $[0, r_k]$ or $[r_\ell, r_k]$, because the element $\frac{p}{u^{ep^k}}$ belongs only to $\mathbf{A}^{[r_k, r_k]}$).
We have
$$ x- z_n = (\frac{p}{u^{ep^k}})^{k_n} (x_n-\tilde{y}_n) \in p^n \wt{\mathbf{A}}^{[r_k, r_k]},$$
and hence  $z_n$ converges to $x$ as elements in $\wt{\mathbf A}^{[r_k, r_k]}$ (with respect to $W^{[r_k, r_k]}$), and so
$$x \in \varphi^{-m}(\mathbf{A}_{K_\infty}^{p^m[r_k, r_k]}).$$
Finally by Cor. \ref{lem aa inter}, we have
$$ x \in \varphi^{-m}(\mathbf{A}_{K_\infty}^{p^m[r_k, r_k]}) \cap \wt{\mathbf{A}}^I = \varphi^{-m}(\mathbf{A}_{K_\infty}^{p^mI}) = \mathbf{A}_{K_\infty, m}^{I}.$$

Consider Item (2). Item (1) already implies that $(\wt{\mathbf{A}}^I_{ L})^{\tau\dla, \gamma=1} \subset \mathbf{A}^I_{K_\infty, \infty}.$ To show the other direction, it suffices to show that elements in $\mathbf{A}^I_{K_\infty}$ are $\tau$-locally analytic.
We claim that for any $f \in \mathbf{A}^I_{K_\infty}$, and for $a \in p^{b}\Zp$, we have
\begin{equation} \label{eqfalpha}
 W^I((1-\tau^a)^s(f) \geq s\alpha
\end{equation}
for some $\alpha$ that we can arbitrarily enlarge (after enlarging $b$); then we can conclude using Lem. \ref{lem Zp an}.
To verify \eqref{eqfalpha},
by linearity and density, it suffices to verify it for the cases $f=u^m(\frac{u^{ep^k}}{p})^n$ for $m \geq 0$ and $n \geq 0$, and (when $I=[r_\ell, r_k]$ the cases $f=u^m(\frac{p}{u^{ep^\ell}})^n$ for $m \geq 0$ and $n \geq 1$.
Indeed, we have
\begin{eqnarray*}
 W^I\left((1-\tau^a)^s (u^m(\frac{u^{ep^k}}{p})^n)\right)
 &=& W^I\left( u^m(\frac{u^{ep^k}}{p})^n \cdot(1-[\underline{\varepsilon}]^{aep^kn+am})^s \right)
  \\
  &\geq& W^I\left(  (1-[\underline{\varepsilon}]^{aep^kn+am})^s \right) , \text{ since }  W^I( u^m(\frac{u^{ep^k}}{p})^n )\geq 0\\
  &\geq& s\alpha, \text{ using \eqref{eqhxx}}.
\end{eqnarray*}
The verification for  $f=u^m(\frac{p}{u^{ep^\ell}})^n$ is similar.

For Items (3) and (4), one can argue similarly as in  \cite[Thm. 4.4(3)]{Ber16}.
\end{proof}

\begin{remark} \label{rem kisin ring}
Item (4) of Thm. \ref{thm loc ana gamma 1} (and (1), (2) when $I=[0, r_k]$) will not be used in this paper, but it has potential applications to the study of semi-stable Galois representations; indeed, the ring $\mathbf{B}_{ K_\infty}^{[0, +\infty)}$ is precisely the ring $\mathcal{O}_{[0, 1)}$ in \cite{Kis06}.
\end{remark}

\begin{defn} \label{defn rig ring}
\begin{enumerate}
\item Define the following rings (which are LB spaces):
$$
\wt{\mathbf{B}}^{\dagger}: = \cup_{r \geq 0} \wt{\mathbf{B}}^{[r, +\infty]},
\quad  \mathbf{B}^{\dagger}: = \cup_{r \geq 0} \mathbf{B}^{[r, +\infty]},
\quad \mathbf{B}_{  K_\infty}^{\dagger}: = \cup_{r \geq 0} \mathbf{B}_{K_\infty}^{[r, +\infty]}.
$$

\item Define the following rings (which are LF spaces):
$$ \wt{\mathbf{B}}_{  \rig}^{\dagger}: = \cup_{r \geq 0} \wt{\mathbf{B}}^{[r, +\infty)},
\quad  \mathbf{B}_{  \rig}^{\dagger}: = \cup_{r \geq 0} \mathbf{B}^{[r, +\infty)},
\quad \mathbf{B}_{\rig, K_\infty}^{\dagger}: = \cup_{r \geq 0} \mathbf{B}_{K_\infty}^{[r, +\infty)}.
$$
\end{enumerate}
\end{defn}

\begin{cor} \label{cor rig la}
$(\wt{\mathbf{B}}_{  \rig, L}^{\dagger})^{\tau\dpa, \gamma=1} = \cup_{m\geq 0} \varphi^{-m}({\mathbf{B}}_{  \rig, K_\infty}^{\dagger}).$
\end{cor}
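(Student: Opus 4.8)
The plan is to deduce Corollary \ref{cor rig la} from Theorem \ref{thm loc ana gamma 1}(3)--(4) by taking the union over the parameter $r \geq 0$. Recall that $\wt{\mathbf{B}}_{\rig, L}^{\dagger} = \cup_{r \geq 0} \wt{\mathbf{B}}_L^{[r, +\infty)}$ is an LF space, with transition maps the natural restriction/inclusion maps $\wt{\mathbf{B}}_L^{[r, +\infty)} \hookrightarrow \wt{\mathbf{B}}_L^{[r', +\infty)}$ for $r \leq r'$. First I would note that taking pro-locally-analytic vectors (for the $\tau$-action) and then $\gamma$-invariants commutes with the formation of this particular colimit: on the one hand $(\wt{\mathbf{B}}_{\rig, L}^{\dagger})^{\tau\dpa, \gamma=1} \supseteq \cup_{r \geq 0} (\wt{\mathbf{B}}_L^{[r, +\infty)})^{\tau\dpa, \gamma=1}$ is immediate since each $\wt{\mathbf{B}}_L^{[r, +\infty)}$ sits inside $\wt{\mathbf{B}}_{\rig, L}^{\dagger}$ compatibly with the group actions; on the other hand, since the colimit topology on an LF space induced from a countable system (take $r = r_n$ running through $(p-1)p^{n-1}$, cf. Remark \ref{rem Ic}) has the property that any locally analytic (resp. pro-analytic) vector of $\wt{\mathbf{B}}_{\rig, L}^{\dagger}$ already comes from one of the Fréchet pieces $\wt{\mathbf{B}}_L^{[r_n, +\infty)}$, and is pro-analytic there — this is the standard behavior of $\ast^{\dla}$ and $\ast^{\dpa}$ on LB/LF spaces as set up in \cite[\S 2]{Ber16} — we also get the reverse containment. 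Combining, $(\wt{\mathbf{B}}_{\rig, L}^{\dagger})^{\tau\dpa, \gamma=1} = \cup_{r \geq 0} (\wt{\mathbf{B}}_L^{[r, +\infty)})^{\tau\dpa, \gamma=1}$.

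Next I would apply Theorem \ref{thm loc ana gamma 1}(3)--(4): for $r = r_\ell > 0$ we have $(\wt{\mathbf{B}}_L^{[r_\ell, +\infty)})^{\tau\dpa, \gamma=1} = \mathbf{B}_{K_\infty, \infty}^{[r_\ell, +\infty)} = \cup_{m \geq 0} \varphi^{-m}(\mathbf{B}_{K_\infty}^{[p^m r_\ell, +\infty)})$ by Definition \ref{def m inf}, and for $r = 0$ we have $(\wt{\mathbf{B}}_L^{[0, +\infty)})^{\tau\dpa, \gamma=1} = \mathbf{B}_{K_\infty, \infty}^{[0, +\infty)}$. Taking the union over $\ell$ (equivalently over all $r \geq 0$, using Convention \ref{convellk} to reduce from general $r$ to the values $r_\ell$ via Frobenius), and interchanging the two unions (over $\ell$ and over $m$), the right-hand side becomes $\cup_{m \geq 0} \varphi^{-m}\!\left( \cup_{\ell} \mathbf{B}_{K_\infty}^{[p^m r_\ell, +\infty)} \right)$. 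Since $\cup_{\ell} \mathbf{B}_{K_\infty}^{[p^m r_\ell, +\infty)} = \cup_{r \geq 0} \mathbf{B}_{K_\infty}^{[r, +\infty)} = \mathbf{B}_{\rig, K_\infty}^{\dagger}$ by Definition \ref{defn rig ring} (the index $p^m r_\ell$ is cofinal among all $r \geq 0$ as $\ell \to \infty$), and $\varphi^{-m}$ commutes with the union, this is exactly $\cup_{m \geq 0} \varphi^{-m}(\mathbf{B}_{\rig, K_\infty}^{\dagger})$, which is the desired right-hand side.

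The one point requiring a little care — and the main (mild) obstacle — is the claim that $\ast^{\tau\dpa}$ commutes with the LF colimit $\cup_{r} \wt{\mathbf{B}}_L^{[r, +\infty)}$, i.e. that a pro-analytic vector of $\wt{\mathbf{B}}_{\rig, L}^{\dagger}$ necessarily lies in, and is pro-analytic within, a single $\wt{\mathbf{B}}_L^{[r, +\infty)}$. This is where one invokes the formalism of locally/pro-analytic vectors for LB and LF spaces from \cite[\S 2]{Ber16}: by definition a vector of an LB space is locally analytic only if it lies in one of the Banach pieces and is analytic there, and an LF space is a colimit of Fréchet spaces each of which is itself a countable inverse limit of Banach pieces $\wt{\mathbf{B}}_L^{[r, s_n]}$, so pro-analyticity is tested piece by piece; the compatibility of the two colimit/limit operations with $\ast^{\dla}$ and $\ast^{\dpa}$ is exactly \cite[\S 2]{Ber16}, and the same bookkeeping already underlies the proof of Theorem \ref{thm loc ana gamma 1}(3)--(4), so one can simply cite it. Everything else is a routine interchange of unions and the cofinality observation, so the corollary follows.
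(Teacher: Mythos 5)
Your proof is correct and is exactly the intended deduction: the corollary is the union over $r$ (equivalently over the cofinal family $r_\ell$) of Theorem \ref{thm loc ana gamma 1}(3), combined with the fact that pro-analytic vectors of an LF space are by definition the union of the pro-analytic vectors of its Fr\'echet pieces (\S \ref{subsub def pa}, following \cite[\S 2]{Ber16}), plus the routine interchange of the unions over $\ell$ and $m$. The appeal to Theorem \ref{thm loc ana gamma 1}(4) is harmless but unnecessary, since $\wt{\mathbf{B}}_L^{[0,+\infty)}\subset \wt{\mathbf{B}}_L^{[r,+\infty)}$ for $r>0$.
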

\begin{rem}
  In comparison, by \cite[Thm. 4.4]{Ber16}, we have
  $$(\wt{\mathbf{B}}_{  \rig, L}^{\dagger})^{\tau=1, \gamma\dpa} = \cup_{m\geq 0} \varphi^{-m}({\mathbb{B}}_{  \rig, K_{p^\infty}}^{\dagger}),$$
   where ${\mathbb{B}}_{  \rig, K_{p^\infty}}^{\dagger}$ is the ring ``${\mathbf{B}}_{  \rig, K}^{\dagger}$" in \cite{Ber08ANT}. (As we mentioned in Rem. \ref{rem notation}, we use the font ``$\mathbb{B}$" to denote the ``$\mathbf{B}$"-rings in the $(\varphi, \Gamma)$-module setting).
\end{rem}

\section{Field of norms, and locally analytic vectors} \label{sec fieldnorm}
In this section, when $K_\infty \subset M \subset L$ where $M/K_\infty$ is a finite extension, we calculate $\hat{G}$-locally analytic vectors in $\wt{\mathbf{B}}_L^I$ which are furthermore invariant under $\gal(L/M)$; the results are parallel with the case for $M=K_\infty$.

\subsection{Field of norms}
In this subsection, we briefly recall the theory of field of norms developed by Fontaine and Wintenberger (cf. \cite{FW79, Win83}). To save space, we refer the readers to \cite{Win83} for more details.

In this subsection, let $E_1$ be a complete discrete valuation field with a perfect residue field of characteristic $p$. Let $\overline{E_1}$ be a fixed algebraic closure, and let $E_1^{\ur}$ be the maximal unramified extension of $E_1$ contained in $\overline{E_1}$.

If $E_2/E_1$ is an algebraic extension, let
$\mathcal{E}(E_2/E_1)$ be the poset consisting of fields $E$ such that $E_1 \subset E \subset E_2$ and $[E:E_1]<+\infty$.
Let
$$X_{E_1}(E_2) : =\projlim_{E \in \mathcal{E}(E_2/E_1)} E$$
where the transition maps from $E'$ to $E$ (for $E \subset E'$) are the norm maps $N_{E'/E}$.
For $\alpha \in X_{E_1}(E_2)$, we denote it as $\alpha =\{\alpha_E\}_{E_1 \subset E \subset E_2}$ where $\alpha_E \in E$ and $N_{E'/E}(\alpha_{E'})=\alpha_E$ when $E\subset E'$.
For any $\alpha \in X_{E_1}(E_2)$, the number $v_E(\alpha_E)$ for $E_1^{\ur}\cap E_2 \subset E \subset E_2$ is independent of $E$ (here, $v_E$ is the valuation such that $v_E(E)=\mathbb{Z}\cup \{\infty\}$); denote the number as $v(\alpha)$.


A priori, $X_{E_1}(E_2)$ is only a multiplicative monoid; however, by \cite[Thm. 2.1.3(1)]{Win83}, we can indeed equip it with a natural additive structure, making $X_{E_1}(E_2)$ into a ring. Furthermore, we have the following.

\begin{theorem}\cite[Thm. 2.1.3(2)]{Win83}
Suppose $E_2/E_1$ is an infinite APF extension (cf. \cite[\S 1.2]{Win83} for the definition of APF (and strict APF) extensions), then there exists an element $u_{E_2/E_1} \in X_{E_1}(E_2)$ such that $v(u_{E_2/E_1})=1$, and there exists a (valuation-preserving) \emph{field} isomorphism
$$ X_{E_1}(E_2) \simeq k_{E_2}((u_{E_2/E_1})), $$
where $k_{E_2}$ is the residue field of $E_2$ (which is a finite extension of $k_{E_1}$), and $k_{E_2}((u_{E_2/E_1}))$ is equipped with the $u_{E_2/E_1}$-adic valuation.
\end{theorem}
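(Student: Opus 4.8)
The statement is Wintenberger's theorem, so the plan is to follow the argument of \cite{Win83}; let me indicate its shape. Write $X := X_{E_1}(E_2)$ and introduce the integral version $X^+ := \projlim_{E \in \mathcal{E}(E_2/E_1)} \mathcal{O}_E$, the transition maps still being the norms $N_{E'/E}$; once a uniformizer is in hand, $X$ will be recovered as $X^+[1/u_{E_2/E_1}]$ (any $\alpha$ with $v(\alpha) = -n$ has $u_{E_2/E_1}^n\alpha \in X^+$), so it suffices to understand $X^+$. Its ring structure is already granted by the part of \cite[Thm. 2.1.3(1)]{Win83} quoted above: multiplication is componentwise and addition is $(\alpha + \beta)_E := \lim_{E' \supseteq E} N_{E'/E}(\alpha_{E'} + \beta_{E'})$. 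The first and genuinely technical step is to verify that this limit exists and is independent of choices; the input is a quantitative estimate — valid precisely because $E_2/E_1$ is APF — to the effect that for $E$ large and $E' \supseteq E$ the norm $N_{E'/E} \colon \mathcal{O}_{E'} \to \mathcal{O}_E$ preserves congruences modulo $\mathfrak{m}^k$ up to a bounded shift controlled by Wintenberger's ramification function $\psi_{E_2/E_1}$. I expect this estimate to be the main obstacle; the rest is comparatively formal. The same estimate also shows that $X^+$ has equal characteristic $p$ (the norm of $p$ from $E'$ to $E$ is $p^{[E':E]}$, and $[E':E] \to \infty$ along the tower, forcing $p\cdot 1 = 0$ in the limit).

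Next I would check that $(X^+, v)$ is a complete discrete valuation ring. That $v(\alpha) := v_E(\alpha_E)$ (for $E$ large) is a multiplicative valuation making $X^+$ a $v$-adically separated domain was essentially noted before the statement; its value group is all of $\mathbb{Z}$ once we produce a uniformizer. Completeness is easy: a $v$-Cauchy sequence is, in each component, eventually constant, so it converges componentwise, and the limit is again norm-compatible. The residue field is $k_{E_2}$: the residue extension $k_{E_2}/k_{E_1}$ is finite (another consequence of APF) so the tower of residue fields stabilizes, and reduction modulo $\{v \geq 1\}$ identifies $X^+/\mathfrak{m}_{X^+}$ with $\projlim_E k_E$, which is $k_{E_2}$ since $k_{E_2}$ is perfect.

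The step that uses the APF hypothesis most essentially is the construction of $u_{E_2/E_1}$ with $v(u_{E_2/E_1}) = 1$, i.e. a norm-compatible system $\{u_E\}$ of uniformizers. Starting from a uniformizer at one (large) level, one lifts it backwards through $\mathcal{E}(E_2/E_1)$: the APF condition guarantees that cofinally the extensions $E'/E$ are ramified enough that $N_{E'/E}$ carries uniformizers of $E'$ onto uniformizers of $E$ — more precisely, it is surjective on the relevant $\mathcal{O}^\times/(1 + \mathfrak{m}^k)$ quotients — so an inverse system of uniformizers can be assembled by a diagonal/compactness argument. The resulting $u_{E_2/E_1} \in X^+$ is then a uniformizer of the complete DVR $X^+$.

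Finally, $X^+$ is a complete discrete valuation ring of equal characteristic $p$ with residue field $k_{E_2}$, so by the Cohen structure theorem it contains a coefficient field mapping isomorphically onto $k_{E_2}$; choosing such a field and using the uniformizer $u_{E_2/E_1}$ gives a topological, valuation-preserving isomorphism $X^+ \cong k_{E_2}[\![u_{E_2/E_1}]\!]$. Inverting $u_{E_2/E_1}$ yields $X = X^+[1/u_{E_2/E_1}] \cong k_{E_2}((u_{E_2/E_1}))$, which is the assertion.
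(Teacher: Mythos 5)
The paper does not prove this statement at all; it is quoted verbatim from \cite[Thm.\ 2.1.3(2)]{Win83}, so there is no in-paper argument to compare against. Your outline is the standard Wintenberger proof and its architecture is sound: integral version $X^+$, exotic addition via limits of norms, equal characteristic $p$ from $N_{E'/E}(p)=p^{[E':E]}\to 0$, complete DVR structure, residue field $\projlim_E k_E\simeq k_{E_2}$ (where perfectness of $k_{E_2}$ is exactly what makes the Frobenius transition maps isomorphisms), a compatible system of uniformizers from APF, and Cohen's structure theorem. Two caveats. First, the entire weight of the theorem rests on the estimate you defer — that for $E$ deep enough in the tower, $N_{E'/E}(x+y)\equiv N_{E'/E}(x)+N_{E'/E}(y)$ modulo a power of $\mathfrak{m}_E$ that tends to infinity with $E$, quantified by $\psi_{E_2/E_1}$ and valid precisely because the extension is APF (equivalently, deeply ramified). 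You correctly name it but do not prove it, and without it neither the well-definedness of addition, nor the characteristic-$p$ claim, nor your completeness argument goes through; so as written this is an outline rather than a proof. Second, a small imprecision in the completeness step: a $v$-Cauchy sequence in $X^+$ is not componentwise eventually constant (think of partial sums of a convergent series); what you get is that each component sequence is $v_E$-Cauchy in the complete ring $\mathcal{O}_E$ — and even that deduction uses the approximate additivity again, since subtraction in $X^+$ is not componentwise. Both points are repairable along exactly the lines you indicate, and the rest of the sketch matches the source.
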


\begin{example} \label{ex XK}
Let $K, K_{p^\infty}, K_\infty$ be as in Notation \ref{nota fields}.
\begin{enumerate}
\item When $K=K_0$, the element $\wt{\mu}: =\{\mu_{n}\}_{n \geq 1}$ defines an element in $X_{K}(K_{p^\infty})$, and $\wt{\mu} -1$ is a uniformizer of $X_{K}(K_{p^\infty})$.
\item The element $\wt{\pi}:  =\{\pi_{n}\}_{n \geq 1}$ defines an element in $X_{K}(K_{\infty})$, which is a uniformizer.
\end{enumerate}
\end{example}


Let $E_1 \subset E_2 \subset E_3$ where $E_2/E_1$ is an infinite APF extension, and $E_3/E_2$ is finite extension (so $E_3/E_1$ is also an APF extension).
Then by \cite[\S 3.1.1]{Win83}, we can naturally define an embedding $X_{E_1}(E_2) \inj X_{E_1}(E_3)$ (and we identify $X_{E_1}(E_2)$ with its image).

\begin{theorem}\cite[Thm. 3.1.2]{Win83} \label{thm Win Gal}
If $E_3/E_2$ is furthermore Galois, then $X_{E_1}(E_3)$ is Galois over $X_{E_1}(E_2)$, and there exists a natural isomorphism
$$\Gal(X_{E_1}(E_3)/X_{E_1}(E_2)) \simeq \gal(E_3/E_2).$$
\end{theorem}

\begin{remark}
We can also construct a natural separable closure of $X_{E_1}(E_2)$, see \cite[Cor. 3.2.3]{Win83}.
\end{remark}

For any complete valued filed $({  A}, v_{  A})$ with a perfect residue field of characteristic $p$, let
$$R({  A}): = \{ (x_n)_{n=0}^\infty: x_n \in {  A}, x_{n+1}^p=x_n \}.$$
For $x \in R({  A})$, let $v_R(x):=v_{  A}(x_0)$. Then $R({  A})$ is a perfect field of characteristic $p$, complete with respect to $v_R$.

\begin{theorem}\cite[Thm. 4.2.1]{Win83}\label{thm Win R}
Suppose $E_2/E_1$ is an infinite \emph{strict} APF extension. Let $\hat{E_2}$ be the completion of $E_2$. There exists a natural $k_{E_2}$-algebra embedding
$$ \Lambda_{E_2/E_1} : X_{E_1}(E_2) \inj R(\hat{E_2}) \inj R(\hat{\overline{E_1}}).$$
\end{theorem}

\begin{example} \label{ex embed}
Note that $R(C_p)$ is precisely $\wt{\mathbf E}$.
Using notations in Example \ref{ex XK}, we have
\begin{enumerate}
\item when $K=K_0$, for the embedding $X_{K}(K_{p^\infty}) \to \wt{\mathbf E}$, we have $\wt{\mu}-1 \mapsto \underline{\varepsilon} -1$;
\item for the embedding $X_{K}(K_{\infty}) \to \wt{\mathbf E}$, we have $\wt{\pi} \mapsto \underline{\pi}$.
\end{enumerate}
\end{example}

\subsection{Finite extensions of $K_\infty$ and locally analytic vectors}
Let $K_\infty \subset M \subset L$ where $M/K_\infty$ is a finite extension (which is always Galois). In the following, given a ring $  A$ (possibly with superscripts), let ${ A}_M$ denote $\gal(\overline{K}/M)$-invariants of $  A$.

\subsubsection{Ramification subgroups.} Let $G_K^s$ (where $s \geq -1$) denote the usual (upper numbering) ramification subgroups of $G_K$. For any $s \geq -1$, let $\overline{K}^{(s)}:=\cap_{t> s}\overline{K}^{G_K^t}$. For any $K\subset E \subset \overline{K}$, let $E^{(s)}:=E \cap \overline{K}^{(s)}$. Let $c(E):=\inf \{s: E^{(s)}=E\}$ (called the conductor of $E$). See \cite[Lem. 4.1]{Col08} for some properties of $c(E)$.
When $n \geq 1$, let $K_n:=K(\pi_n)$. By standard computation (e.g., using the formula above \cite[Prop. 1.1]{LB10}), we have
\begin{equation}\label{eqckn}
c(K_n)=(n+\frac{1}{p-1})e.
\end{equation}
(Unfortunately, the computation of $c(K_n)$ in \cite[Prop. 1.4]{LB10} is incorrect.)


\subsubsection{Finite extensions of $K_\infty$.} \label{subsub M}
Choose an $\alpha \in M$ such that $M=K_\infty[\alpha]$, and let $\wt{M}:=K[\alpha]$. Define $\wt{M}_n:=\wt{M}(\pi_n)$ (note that $\pi_0=\pi$ is not necessarily a uniformizer of $\wt{M}$). By using exactly the same argument as in \cite[Lem. 4.2, Cor. 4.3, Rem. 4.4]{Col08}, the following hold:
\begin{enumerate}
  \item  When $n \geq c(\wt{M}) $ (where $c(\wt{M})$ is the conductor), then $c(\wt{M}_n)=\sup \{c(\wt{M}), c(K_n)\} =c(K_n)$ by \eqref{eqckn}, and hence
   $\wt{M}_{n+1}/\wt{M}_n$ is totally ramified of degree $p$.
  \item  When $n \geq c(\wt{M}) $, $e(\wt{M}_{n+1}/K_{n+1})=e(\wt{M}_{n}/K_{n})$ (resp. $f(\wt{M}_{n+1}/K_{n+1})=f(\wt{M}_{n}/K_{n})$), where $e(A/B)$ (resp. $f(A/B)$) is the ramification index (resp. inertial degree) of a finite extension. Denote the common numbers as $e'$ (resp. $f'$), then $e'f'=[M: K_\infty]$.
  \item Let $K':=K^{\ur}\cap M$ where $K^{\ur}$ is the maximal unramified extension of $K$ contained in $\overline{K}$, then $[K':K]=f'$.
\end{enumerate}

\subsubsection{Construction of $u_M$} \label{subsub uM}
Let $k'$ be the residue field of $K'$, and let $M_0:=\cup_{n \geq 1} K'(\pi_n)$. Then by \S \ref{subsub M} and Examples \ref{ex XK} and \ref{ex embed}, we have $X_K(M_0) \simeq k'((\underline{\pi}))=k'((u))$ (recall $u=[\underline{\pi}]$ as in \S \ref{period rings}).
Choose any  $\overline{u}_M \in X_K(M)$  such that
$X_K(M)= k'((\overline{u}_{M}))$.
By Thm. \ref{thm Win Gal}, $X_K(M)$ is a totally ramified extension of $X_K(M_0)$ of degree $e'$, and so $v_{\wt{\mathbf{E}}}(\overline{u}_M)=1/ee'$ if we regard $\overline{u}_M \in \wt{\mathbf{E}}$ via Thm. \ref{thm Win R}.
Let $\overline{P}(X)=X^{e'} + \overline{a}_{e'-1}X^{e'-1}+\cdots + \overline{a}_0$ be the minimal polynomial of $\overline{u}_M$ over $X_K(M_0)$. Since $\overline{u}_M$ is integral over $X_K(M_0)$, $\overline{a}_i \in k'[\![u]\!]$.
Let $a_i \in W(k')[\![u]\!]$ be any lift of $\overline{a}_i$, and let $P(X)=X^{e'} +  {a}_{e'-1}X^{e'-1}+\cdots +  {a}_0$. By Hensel's Lemma, $P(X)$ has a unique root (which we denote as $u_M$) in $\mathbf{A}_M$ which reduces to $\overline{u}_M$ modulo $p$. (Note that $u_M$ depends on the choices of $\overline{u}_{M}$ and $a_i$.)

We have $\gal( X_K(M)/X_K(K_\infty)) \simeq \gal(\mathbf B_M/\mathbf B_{K_\infty}) \simeq \gal(\wt{\mathbf B}_M/\wt{\mathbf B}_{K_\infty})$ (cf. \cite[\S I.3]{CC98}). Let $v_1, \cdots, v_{f'}$ be a basis of $W(k')$ over $W(k)$, and let $x_{a+ f'b}:= v_a\cdot u_M^{b}$ with $1\leq a \leq f', 0 \leq b \leq e'-1$, then we have
$$ \mathbf{A}_M = \oplus_{i=1}^{e'f'} \mathbf{A}_{K_\infty}\cdot  x_i, $$
and so (cf. \cite[Lem. 24.5]{Ber}),
$$ \wt{\mathbf{A}}_M = \oplus_{i=1}^{e'f'} \wt{\mathbf{A}}_{K_\infty}\cdot  x_i.$$

\begin{lemma} \label{lem units}
Let $r>0$ and let $x =\sum_{k\geq 0}p^k[a_k] \in \wt{\mathbf{A}}^{[r, +\infty]}[1/u]$, the following are equivalent:
\begin{enumerate}
\item $x \in (\wt{\mathbf{A}}^{[r, +\infty]})^\times$;
\item $v_{\wt{\mathbf E}}(a_0)=0$, and $k+\frac{p-1}{pr}\cdot v_{\wt{\mathbf E}}(a_k) >0, \forall k>0$;
\item $v_{\wt{\mathbf E}}(a_0)=0$, and $ k+\frac{p-1}{pr}\cdot w_k(x) >0, \forall k>0$.
\end{enumerate}
\end{lemma}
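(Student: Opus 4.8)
The plan is to relate this to the structure of the ring $\wt{\mathbf{A}}^{[r, +\infty]}$ together with the valuation $W^{[r,r]}$, using the completeness statement of Lem. \ref{lem W}(\ref{item comp}). First I would recall that $\wt{\mathbf{A}}^{[r, +\infty]}[1/[\overline \pi]]$ is complete with respect to $W^{[r, r]}$, and that for $x = \sum_{k \geq 0} p^k[a_k]$ one has $W^{[r,r]}(x) = \inf_{k \geq 0}\{k + \tfrac{p-1}{pr}\cdot v_{\wt{\mathbf E}}(a_k)\} = \inf_{k \geq 0}\{k + \tfrac{p-1}{pr}\cdot w_k(x)\}$ by Def. \ref{defnew}; this already gives the equivalence of (2) and (3), since $w_k(x) = \inf_{i \leq k} v_{\wt{\mathbf E}}(a_i)$ and the two conditions unwind to the same inequalities once one knows $v_{\wt{\mathbf E}}(a_0)=0$. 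So the real content is the equivalence of (1) with (2).

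For (2) $\Rightarrow$ (1): write $x = [a_0](1 + y)$ where $y = [a_0]^{-1}(x - [a_0])$. Since $v_{\wt{\mathbf E}}(a_0) = 0$, the Teichmüller lift $[a_0]$ is a unit in $\wt{\mathbf{A}}^{[r,+\infty]}$ (it is a unit in $\wt{\mathbf{A}}^+$ already, as $a_0$ is a unit in $\wt{\mathbf{E}}^+$ — here one uses that $a_0 \in \wt{\mathbf{E}}$ has valuation $0$, hence lies in $\wt{\mathbf{E}}^+$ and is invertible there, or more carefully that $[a_0] \in \wt{\mathbf{A}}^{[r,+\infty]}[1/[\overline\pi]]$ with $W^{[r,r]}([a_0]) = 0 = W^{[r,r]}([a_0]^{-1})$ so it is a unit of that larger ring; one then checks it actually lies in $\wt{\mathbf{A}}^{[r,+\infty]}$ with inverse there). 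The hypothesis (2) translates into $W^{[r, r]}(x - [a_0]) > 0$, hence $W^{[r,r]}(y) > 0$; then $\sum_{n \geq 0} (-1)^n y^n$ converges in $\wt{\mathbf{A}}^{[r, +\infty]}[1/[\overline\pi]]$ by completeness (Lem. \ref{lem W}(\ref{item comp})) and multiplicativity of $W^{[r,r]}$ (Lem. \ref{lem W}(\ref{item mult})), giving an inverse $(1+y)^{-1}$, and one checks this inverse lies in $\wt{\mathbf{A}}^{[r,+\infty]}$ itself (not just the localization): each partial sum is in $\wt{\mathbf{A}}^{[r,+\infty]}$, and $\wt{\mathbf{A}}^{[r,+\infty]}$ is closed in $\wt{\mathbf{A}}^{[r,r]}$ for $W^{[r,r]}$ — this is exactly the closedness remark following Lem. \ref{lem W}. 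Hence $x^{-1} = [a_0]^{-1}(1+y)^{-1} \in \wt{\mathbf{A}}^{[r,+\infty]}$.

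For (1) $\Rightarrow$ (2): if $x \in (\wt{\mathbf{A}}^{[r, +\infty]})^\times$ with inverse $x'$, then $W^{[r,r]}(x) + W^{[r,r]}(x') = W^{[r,r]}(1) = 0$ by multiplicativity, and both are $\geq 0$ since $x, x' \in \wt{\mathbf{A}}^{[r,+\infty]}$ (the $p$-adic integers of the ring, all of whose elements have nonnegative $W^{[r,r]}$-valuation by the formula above, as $v_{\wt{\mathbf E}}(a_k) \geq 0$); therefore $W^{[r,r]}(x) = 0$. Reading off the term-wise formula $W^{[r,r]}(x) = \inf_k\{k + \tfrac{p-1}{pr} v_{\wt{\mathbf E}}(a_k)\} = 0$: the $k=0$ term forces $v_{\wt{\mathbf E}}(a_0) \geq 0$ and the infimum being $0$ forces $v_{\wt{\mathbf E}}(a_0) \leq 0$, so $v_{\wt{\mathbf E}}(a_0) = 0$; and for $k > 0$ the term $k + \tfrac{p-1}{pr} v_{\wt{\mathbf E}}(a_k)$ must be $\geq 0$ — to get the strict inequality $> 0$ in (2) one argues that if equality held for some $k>0$ then, combined with $v_{\wt{\mathbf E}}(a_0)=0$, reducing $x$ modulo the relevant part of the ring would contradict $x$ being a unit (alternatively: the leading term $[a_0]$ of a unit, after dividing out, must give $1 + (\text{something of strictly positive valuation})$, by the argument of the previous paragraph run backwards). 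I expect this last point — pinning down the \emph{strict} inequality in (2) rather than just $\geq 0$ — to be the only subtle step; it amounts to showing a unit cannot have "two terms of the same $W^{[r,r]}$-weight at the bottom", which should follow cleanly from the multiplicativity of $W^{[r,r]}$ together with a reduction argument, or by directly inverting $[a_0]^{-1}x$ as above and noting the geometric series only converges when $W^{[r,r]}(y) > 0$ strictly.
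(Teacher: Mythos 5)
Your equivalence of (2) and (3) and your proof of (2) $\Rightarrow$ (1) are sound and match the standard argument (the one behind the paper's citation of \cite[Lem.~5.9]{Col08}, which is all the paper itself offers): factor out $[a_0]$, note that the infimum defining $W^{[r,r]}(x-[a_0])$ is attained because the terms tend to $+\infty$, hence is strictly positive, and invert $1+y$ by a geometric series using completeness of $\wt{\mathbf{A}}^{[r,+\infty]}$ for $W^{[r,r]}$. The problems are all in (1) $\Rightarrow$ (2). First, your parenthetical claim that every element of $\wt{\mathbf{A}}^{[r,+\infty]}$ has $v_{\wt{\mathbf E}}(a_k)\geq 0$ for all $k$ is false: $p/[\overline\pi]^r$ lies in this ring and its first Witt coordinate is $\overline\pi^{-r}$, of valuation $-rp/(p-1)<0$ (only $a_0$ is forced to lie in $\wt{\mathbf{E}}^+$, since $\wt{\mathbf{A}}^{[r,+\infty]}\subset \wt{\mathbf{A}}^+ + p\wt{\mathbf{A}}^{[r,+\infty]}$). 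The conclusion $W^{[r,r]}\geq 0$ on $\wt{\mathbf{A}}^{[r,+\infty]}$ is still true, but for a different reason (multiplicativity plus completeness, applied to the generators). Second, ``the infimum being $0$ forces $v_{\wt{\mathbf E}}(a_0)\leq 0$'' is a non sequitur: the infimum could be attained at some $k>0$ with $v_{\wt{\mathbf E}}(a_k)<0$ while $v_{\wt{\mathbf E}}(a_0)>0$. The correct route to $v_{\wt{\mathbf E}}(a_0)=0$ is to reduce $xx^{-1}=1$ modulo $p$: the zeroth Witt coordinates multiply, so $a_0\cdot(x^{-1})_0=1$ in $\wt{\mathbf{E}}$ with both factors in $\wt{\mathbf{E}}^+$.

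The strict inequality for $k>0$, which you correctly identify as the crux, is not established by either of your fallbacks. In particular, ``the geometric series only converges when $W^{[r,r]}(y)>0$'' shows that one particular inversion procedure fails, not that no inverse exists, so it cannot be ``run backwards.'' The clean argument is a graded-reduction one: let $\mathcal{O}=\{W^{[r,r]}\geq 0\}$ and $\mathfrak{m}=\{W^{[r,r]}>0\}$ inside $\wt{\mathbf{B}}^{[r,r]}$; since $W^{[r,r]}$ is multiplicative, $\mathcal{O}/\mathfrak{m}$ is a domain, and it is $\mathbb{Z}$-graded by the power of $p$ (the degree-$k$ piece being spanned by classes of $p^k[a]$ with $k+\tfrac{p-1}{pr}v_{\wt{\mathbf E}}(a)=0$), with degree-$0$ part the residue field of $\wt{\mathbf{E}}$. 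A unit of $\wt{\mathbf{A}}^{[r,+\infty]}$ maps to a unit of this graded domain, hence to a homogeneous element; since the degree-$0$ component $\bar{a}_0$ is already nonzero, no term with $k>0$ can survive, i.e.\ $k+\tfrac{p-1}{pr}v_{\wt{\mathbf E}}(a_k)>0$ for all $k>0$. Without some argument of this kind the direction (1) $\Rightarrow$ (2) remains unproved.
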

\begin{proof}
The equivalence between (1) and (2) is proved in \cite[Lem. 5.9]{Col08}; see the proof of Lem. \ref{lem W} for comparison of notations.
The equivalence between (2) and (3) is trivial.
\end{proof}

\begin{lemma} \label{lem um var}
\begin{enumerate}

  \item  There exists some constant $r_M>0$ which depends only on $M$ (and not on the construction of $u_M$ as in \S \ref{subsub uM}), such that:
  \begin{enumerate}
  \item  $u_M \in \mathbf{A}_M^{[r_M, +\infty]}$, and
  \item  $u_M/[\overline{u}_M]$ is a unit in $\wt{\mathbf{A}}_M^{[r_M, +\infty]}$.
  \item $P'(u_M)/[P'(\overline{u}_M)]$ is a unit in $\wt{\mathbf{A}}_M^{[r_M, +\infty]}$, where $P'(X)$ is the derivative of $P(X)$.
\end{enumerate}

  \item  If $I =[r_\ell, r_k]$ or $[r_\ell, +\infty]$ such that $r_\ell \geq r_M$, then
$$
\mathbf{B}_M^I = \oplus_{i=1}^{e'f'}\mathbf{B}_{K_\infty}^I\cdot  x_i, \quad  \quad \wt{\mathbf{B}}_M^I  =\oplus_{i=1}^{e'f'} \wt{\mathbf{B}}_{K_\infty}^I\cdot x_i. $$
\end{enumerate}
\end{lemma}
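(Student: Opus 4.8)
The plan is to prove the two parts of Lemma~\ref{lem um var} in sequence, relying on the explicit description of $u_M$ from \S\ref{subsub uM}, the unit criterion of Lemma~\ref{lem units}, and the decomposition $\wt{\mathbf A}_M = \oplus_i \wt{\mathbf A}_{K_\infty}\cdot x_i$ established above.

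\medskip
\noindent\textbf{Part (1).} First I would observe that $\overline u_M \in X_K(M)$ has $v_{\wt{\mathbf E}}(\overline u_M)=1/(ee')>0$, so $[\overline u_M] \in \wt{\mathbf A}^+$ with $W^{[r,r]}([\overline u_M]) = \tfrac{p-1}{pr}\cdot\tfrac{1}{ee'}>0$ for every $r>0$; in particular $[\overline u_M]$ is a nonzero-valuation element and lies in $\mathbf A_M^{[r,+\infty]}$ for all $r>0$. Now recall $u_M$ is the Hensel root of $P(X)=X^{e'}+a_{e'-1}X^{e'-1}+\cdots+a_0$ with $a_i \in W(k')[\![u]\!] \subset \mathbf A^+_{M_0}$ reducing to $u_M \equiv \overline u_M \bmod p$. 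Write $u_M = [\overline u_M](1+p\cdot z)$ formally; since $u_M - [\overline u_M] \in p\wt{\mathbf A}_M$ and both $u_M,[\overline u_M]$ have the same image in $\wt{\mathbf E}_M$, I would like to run a successive-approximation argument entirely inside $\wt{\mathbf A}_M^{[r,+\infty]}$ for suitable $r$: using that $P(X)$ has coefficients in $\mathbf A_{M_0}^+$ and that the Hensel iteration $X \mapsto X - P(X)/P'(X)$ converges $p$-adically starting from $[\overline u_M]$, one checks that each successive approximant stays in $\wt{\mathbf A}_M^{[r,+\infty]}$ once $r$ is large enough that $P'([\overline u_M])$ becomes a unit there --- which it does, because $P'([\overline u_M]) \equiv \overline P'(\overline u_M) \bmod p$ and $\overline P'(\overline u_M)\neq 0$ in $X_K(M)$ (as $\overline P$ is the separable minimal polynomial of $\overline u_M$), so $v_{\wt{\mathbf E}}$ of its reduction is some finite constant $d_M$, and by Lemma~\ref{lem units} the element $[P'(\overline u_M)]\cdot(\text{lower order terms})$ is a unit in $\wt{\mathbf A}^{[r,+\infty]}$ for any $r$ with $\tfrac{p-1}{pr}\cdot d_M < 1$, after dividing by $[\overline u_M]^{e'-1}$-type factors. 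This pins down $r_M$ as a quantity depending only on $d_M$, $v_{\wt{\mathbf E}}(\overline u_M)$ and the valuations of the $\overline a_i$ --- hence only on $M$, not on the lift. Then (a) is the statement that the limit $u_M$ of the iteration lies in $\mathbf A_M^{[r_M,+\infty]}$; (b) follows because $u_M/[\overline u_M] = 1 + p z$ with $z \in \wt{\mathbf A}_M^{[r_M,+\infty]}$, so Lemma~\ref{lem units}(2) (the $k=0$ coefficient has valuation $0$, higher ones are controlled) gives the unit property; and (c) is the same analysis applied to $P'(u_M) = P'([\overline u_M]) + p(\cdots)$, again invoking Lemma~\ref{lem units}.

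\medskip
\noindent\textbf{Part (2).} Here I would argue that the $x_i = v_a u_M^b$ form a basis of $\mathbf B_M^I$ over $\mathbf B_{K_\infty}^I$ (and likewise in the $\wt{\mathbf B}$-version) for $I=[r_\ell,r_k]$ or $[r_\ell,+\infty]$ with $r_\ell\geq r_M$. The inclusion $\oplus_i \mathbf B_{K_\infty}^I x_i \subset \mathbf B_M^I$ is clear once we know $u_M \in \mathbf A_M^{[r_M,+\infty]}\subset \mathbf A_M^I$ (Part (1a)) and $v_a \in W(k')$. For the reverse inclusion and the directness of the sum, I would use that we already have $\wt{\mathbf A}_M = \oplus_i \wt{\mathbf A}_{K_\infty}\cdot x_i$ from \S\ref{subsub uM}, so $\wt{\mathbf B}_M^I = \oplus_i \wt{\mathbf B}_{K_\infty}^I\cdot x_i$ would follow provided the $x_i$ remain a basis after the interval is imposed --- this is where $P'(u_M)$ being a unit in $\wt{\mathbf A}_M^{[r_M,+\infty]}$ (Part (1c)) is essential: the "dual basis" / trace-form computation expressing the projection onto each coordinate involves dividing by $P'(u_M)$ (the different of the extension $\wt{\mathbf B}_M/\wt{\mathbf B}_{K_\infty}$ is generated by $P'(u_M)$), so each coordinate of an element of $\wt{\mathbf B}_M^I$ lands back in $\wt{\mathbf B}_{K_\infty}^I$. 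Concretely: for $y \in \wt{\mathbf B}_M^I$, the coordinate $y_i \in \wt{\mathbf B}_{K_\infty}$ is a $\wt{\mathbf B}_{K_\infty}$-linear combination (with coefficients that are polynomials in the $\tau^j(u_M)$ for $\tau \in \gal(\wt{\mathbf B}_M/\wt{\mathbf B}_{K_\infty})$, divided by $P'(u_M)$) of the Galois conjugates $\tau(y)$; since each $\tau(y) \in \wt{\mathbf B}_M^I$, each $\tau(u_M)\in \wt{\mathbf A}_M^I$, and $1/P'(u_M) \in \wt{\mathbf A}_M^{[r_M,+\infty]}\subset \wt{\mathbf A}_M^I$, we get $y_i \in \wt{\mathbf B}_M^I \cap \wt{\mathbf B}_{K_\infty} = \wt{\mathbf B}_{K_\infty}^I$. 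The $\mathbf B$-version then follows by intersecting with $\mathbf B_M^I$, using $\mathbf B_M^I = \mathbf B_M \cap \wt{\mathbf B}_M^I$ and that the $x_i$ already form a $\mathbf B_{K_\infty}$-basis of $\mathbf B_M$ by field-of-norms theory (\cite[\S I.3]{CC98}).

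\medskip
\noindent\textbf{Main obstacle.} The delicate point is Part (1): making the Hensel/successive-approximation argument for $u_M$ run \emph{uniformly inside the overconvergent ring} $\wt{\mathbf A}_M^{[r_M,+\infty]}$ rather than just $p$-adically in $\wt{\mathbf A}_M$, and extracting from it a threshold $r_M$ that is manifestly independent of the choices of $\overline u_M$ and of the lifts $a_i$. The independence claim really requires checking that $r_M$ depends only on intrinsic invariants of $M/K_\infty$ --- namely $v_{\wt{\mathbf E}}(\overline u_M) = 1/(ee')$, the ramification data $e',f'$, and $v_{\wt{\mathbf E}}$ of the different of $X_K(M)/X_K(M_0)$ (equivalently, of $\overline P'(\overline u_M)$) --- all of which are determined by $M$ via \S\ref{subsub M}. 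Once $r_M$ is fixed, everything else (the unit statements (b), (c), and all of Part (2)) reduces to bookkeeping with $W^I$-valuations and the already-established decomposition of $\wt{\mathbf A}_M$. This mirrors the arguments of \cite[\S 5]{Col08} and \cite[\S I.2]{CC98}, to which one can appeal for the routine estimates.
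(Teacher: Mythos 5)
Your proposal is correct and follows essentially the same route as the paper, whose proof simply cites \cite[Lem. 6.4, Lem. 6.5]{Col08} for Item (1) (a Hensel/Newton approximation run inside the overconvergent ring, with the threshold $r_M$ controlled by the valuation of the different $\overline{P}'(\overline{u}_M)$, and Lem. \ref{lem units} for the unit statements) and \cite[Lem. 6.11]{Col08} for Item (2) (the trace-operator argument, i.e.\ the dual basis divided by $P'(u_M)$). One small caution: for a bounded interval $I=[r_\ell, r_k]$ the ring $\mathbf{B}_M^{I}$ is \emph{defined} as the closure of $\mathbf{B}_M^{[r_\ell,+\infty]}$ in $\wt{\mathbf{B}}_M^{I}$ rather than as the intersection $\mathbf{B}_M\cap\wt{\mathbf{B}}_M^{I}$, so the $\mathbf{B}$-version of Item (2) should be deduced by running your trace argument on $[r_\ell,+\infty]$ and then passing to completions (the coordinate projections are $W^{I}$-continuous since $1/P'(u_M)$ has bounded valuation), not by the intersection shortcut in your last sentence.
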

\begin{proof}
Item (1) follows from exactly the same argument as \cite[Lem. 6.4, Lem. 6.5]{Col08} (where Item (1b) uses Lem. \ref{lem units}).
Item (2) follows from exactly the same argument as \cite[Lem. 6.11]{Col08} (i.e., an argument using the trace operator).
\end{proof}

\begin{lemma} \label{lem um}
Suppose $r_\ell \geq r_M$, then
$x_i \in (\wt{\mathbf{A}}^{[r_\ell, r_k]}_L)^{\tau\dla}$.
\end{lemma}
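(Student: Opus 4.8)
The goal is to show that each basis element $x_i = v_a \cdot u_M^b$ (with $v_a \in W(k')$ and $0 \le b \le e'-1$) lies in $(\wt{\mathbf{A}}^{[r_\ell, r_k]}_L)^{\tau\dla}$, i.e. is $\gal(L/K_{p^\infty})$-locally analytic. Since $\wt{\mathbf{A}}^{[r_\ell, r_k]}_L$ is a ring and $W(k')$ consists of $\tau$-invariant (in fact $G_K$-invariant) elements, hence trivially locally analytic, it suffices by Lemma \ref{ringlocan}(1) to treat $u_M$ itself; and since $\tau$ acts continuously and $r_\ell \ge r_M$, it further suffices (again by Lemma \ref{ringlocan}(1)) to show $u_M \in (\wt{\mathbf{A}}^{[r_\ell, +\infty]}_L)^{\tau\dla}$, working in $\wt{\mathbf{A}}^{[r_\ell, +\infty]}_L[1/[\overline{u}_M]]$ where units are available via Lemma \ref{lem units}. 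The first step is therefore to reduce everything to the single statement that $u_M$ is $\tau$-locally analytic.

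\textbf{Main argument.} For this I would imitate the proof of Lemma \ref{lem div ana}(\ref{item-m0}) / Theorem \ref{thm loc ana gamma 1}(2): estimate the norms $W^{[r_\ell, r_\ell]}\big((1-\tau^a)^s(u_M)\big)$ for $a \in p^m\Zp$ with $m$ large, and apply the criterion Lemma \ref{lem Zp an}. The key point is that $u_M$ is the Hensel root in $\mathbf{A}_M$ of $P(X) = X^{e'} + a_{e'-1}X^{e'-1} + \cdots + a_0$ with $a_i \in W(k')[\![u]\!] \subset \wt{\mathbf{A}}^+_{K_\infty}$, and $u = [\underline{\pi}]$ is itself $\tau$-analytic with good radius control: from $\tau(u) = u(1+v)^{?}$ — more precisely, $\tau$ fixes $K_\infty$ so fixes $u = [\underline\pi]$! — actually $\tau \in \gal(L/K_{p^\infty})$ need not fix $u$; rather $\tau$ fixes $K_{p^\infty}$, and one knows $\tau(\underline{\pi}) = \underline{\pi}\cdot(\text{something involving }\underline\varepsilon)$. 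The standard computation (cf. \cite[Prop. 4.1]{Ber16} and Lemma \ref{lem div ana}(\ref{item-varphiu})) gives $(1-\tau^a)(u) \in u\cdot t\cdot\Zp[\![t]\!]\cdot(\text{bounded})$ for $a$ divisible by a high power of $p$, so $W^{[r_\ell,r_\ell]}\big((1-\tau^a)^s(u)\big) \to +\infty$ geometrically in $s$; the same holds for each coefficient $a_i \in \wt{\mathbf{A}}^+_{K_\infty}$ (these are limits of polynomials in $u$, so $\tau$-analytic with radius controlled by Theorem \ref{thm loc ana gamma 1}(2) applied with $K_\infty$). Now differentiate the relation $P(u_M) = 0$: applying $(1-\tau^a)$ repeatedly and using that $P'(u_M)$ is a unit in $\wt{\mathbf{A}}_M^{[r_M,+\infty]}[1/[\overline{u}_M]]$ by Lemma \ref{lem um var}(1c), one solves recursively for $(1-\tau^a)^s(u_M)$ as a polynomial expression in the $(1-\tau^b)^{t}$ of $u_M$ (for $t < s$) and of the coefficients $a_i$, divided by powers of $P'(u_M)$; a standard inductive norm estimate then shows $W^{[r_\ell,r_\ell]}\big((1-\tau^a)^s(u_M)\big) \ge s\alpha$ for some $\alpha > 0$ that can be enlarged by enlarging $m$. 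Lemma \ref{lem Zp an} then yields $u_M \in (\wt{\mathbf{A}}^{[r_\ell,+\infty]}_L[1/[\overline{u}_M]])^{\tau_m\dan}$ for $m \gg 0$, hence $u_M \in (\wt{\mathbf{A}}^{[r_\ell,+\infty]}_L)^{\tau\dla}$ since $u_M$ actually lies in $\wt{\mathbf{A}}^{[r_\ell,+\infty]}_L$ by Lemma \ref{lem um var}(1a).

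\textbf{The main obstacle.} The delicate part is the inductive norm estimate for $(1-\tau^a)^s(u_M)$ obtained from the Hensel relation $P(u_M)=0$: one must control not only the powers of the unit $P'(u_M)^{-1}$ (which is harmless since its norm is bounded) but also the way the binomial-type cross terms $\binom{s}{j}(1-\tau^a)^j(a_i)\cdot(1-\tau^a)^{s-j}(u_M^{?})$ accumulate, ensuring the geometric decay rate $\alpha$ survives after dividing by $P'(u_M)^{s}$ or similar. This is exactly the type of bookkeeping carried out in Lemma \ref{lem div ana}(\ref{item-m0}) via the auxiliary polynomials $f_s$, and I expect to set up a parallel recursion $u_M^{(s)} := (1-\tau^a)^s(u_M)$ with an explicit formula of the shape $u_M^{(s)} = (\text{numerator in } t, u_M, a_i)/(\text{power of } P'(u_M)\cdot \prod \tau^{ai}(\text{unit}))$ and prove the bound by induction on $s$, then invoke Lemma \ref{lem Zp an}. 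Everything else — the reduction to $u_M$, the ring-theoretic passages, and the transfer from $K_\infty$ to $M$ — is routine given Lemmas \ref{ringlocan}, \ref{lem um var}, and Theorem \ref{thm loc ana gamma 1}(2).
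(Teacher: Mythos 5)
Your proposal is viable but takes a genuinely different route from the paper's. The paper also reduces to $u_M$ and exploits $P(u_M)=0$ together with the invertibility of $P'(u_M)$ (Lem.~\ref{lem um var}(1c)), but instead of estimating iterated differences it \emph{constructs} the analytic expansion outright: for $m\gg 0$ and $\beta\in\Zp$ it produces a root $y=u_M+\sum_{k\geq 1}\beta^k w_k$ of $\tau^{p^m\beta}P(X)$ by matching coefficients of $\beta^k$ --- each equation has the shape $P'(u_M)\,w_k+Q_k=0$ with $Q_k$ depending only on earlier data, so $w_k$ is solved \emph{linearly} and shown by induction to lie in $p^{\delta}\wt{\mathbf{A}}^I_L$ --- and then identifies $y$ with $\tau^{p^m\beta}(u_M)$ via a uniqueness-of-roots lemma (Lem.~\ref{lem deri}) applied separately to each multiplicative valuation $W^{[s,s]}$, $s\in I$. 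That identification step is precisely the point the paper flags as the gap in Berger's inverse-function-theorem argument (the interval norm is not multiplicative and the ring is not a field). Your route --- bounding $(1-\tau^a)^s(u_M)$ directly from $(1-\tau^a)^s(P(u_M))=0$ and invoking Lem.~\ref{lem Zp an} --- sidesteps the uniqueness issue entirely, since you never leave $\tau^a(u_M)$ itself; that is a real structural advantage, and Lem.~\ref{lem Zp an} only needs an invariant sub-multiplicative norm. What it costs you is that the unknown $(1-\tau^a)^s(u_M)$ no longer appears linearly in terms of known data: isolating it requires dividing by translates $\tau^{aj}(P'(u_M))$ at every stage, and the cross terms involve lower iterates of $u_M$ nonlinearly.

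The induction you defer in ``the main obstacle'' is therefore where the entire content of the lemma lives, and it must actually be written out; it is plausible but not automatic. Three points to watch. First, the estimate has to be carried out for $W^{[r_\ell,r_k]}$ (equivalently, at both endpoints $s=r_\ell$ and $s=r_k$), not only for $W^{[r_\ell,r_\ell]}$: $\wt{\mathbf{B}}^{[r_\ell,+\infty]}$ is not complete for $W^{[r_\ell,r_\ell]}$, and analyticity for that seminorm alone only places $u_M$ in $(\wt{\mathbf{B}}^{[r_\ell,r_\ell]}_L)^{\tau\dla}$ rather than in $(\wt{\mathbf{A}}^{[r_\ell,r_k]}_L)^{\tau\dla}$; the paper is careful to run its argument for every $s\in I$. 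Second, each division by $\tau^{aj}(P'(u_M))$ costs a fixed amount $\delta_0=-W^I(1/P'(u_M))$ that does \emph{not} shrink as $m$ grows, so over $s$ steps you lose on the order of $s\delta_0$; you must verify that the per-step gain $\alpha$ --- coming from factors $(1-\tau^a)(a_i)$ and from extra factors of $(1-\tau^a)(u_M)$, both of which become large for $a\in p^m\Zp$ with $m\gg 0$ --- strictly beats $\delta_0$, which is the same balance the paper arranges by requiring $a_{i,j}\in p^{2\delta}\wt{\mathbf{A}}^I_L$. Third, the base case $W^I((1-\tau^a)(u_M))\geq \alpha$ needs a bootstrapping step of its own (the first-difference relation expresses $P'(u_M)(1-\tau^a)(u_M)$ in terms of $(1-\tau^a)(a_i)$ \emph{and} higher powers of $(1-\tau^a)(u_M)$), which should be stated rather than assumed. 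With these points addressed, your argument would give a correct alternative proof.
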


\begin{rem}
The proof of Lem. \ref{lem um} is inspired by the argument in the proof of \cite[Thm. 4.4(2)]{Ber16}; indeed, we use ideas inspired by the inverse function theorem on \cite[Page 73]{Ser06}.
However, since the ring $\wt{\mathbf{A}}^{[r_\ell, r_k]}_L$ (or $\wt{\mathbf{B}}^{[r_\ell, r_k]}_L$) is not a \emph{field} and the norm on it is not \emph{multiplicative}, we cannot directly apply \emph{loc. cit.}. (we thank an anonymous referee for pointing this out). Indeed, the argument in \cite[Thm. 4.4(2)]{Ber16} is incomplete. Let us mention that the argument in our proof can be easily adapted to give a corrected proof of  \emph{loc. cit.}.
\end{rem}

We first start with an easy lemma.
\begin{lem} \label{lem deri}
  Let $(W, \|\cdot \|)$ be a normed $\Zp$-algebra. Let $\val(\cdot)$ be the associated valuation on $W$, and suppose it is \emph{multiplicative}. Let $f(X)=X^n+a_{n-1}X^{n-1}+\ldots  +a_0$ where $a_i \in W$ such that $\val(a_i) \geq 0$. Suppose $\rho \in W$ such that $f(\rho)=0$ and $f'(\rho)\neq 0$ (where $f'(X)$ is the derivative). Suppose $\rho' \in W$ such that $f(\rho')=0$ and $\val(\rho-\rho') >\val(a_i)$ for all $i$ such that $a_i \neq 0$. Then $\rho=\rho'$ (i.e., within a small neighborhood of $\rho$, $f(X)$ has no other roots.)
\end{lem}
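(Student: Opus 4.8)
The statement is an elementary uniqueness assertion for roots of a monic polynomial near a simple root, and the natural tool is a factorization/Taylor-expansion argument, mimicking the inverse function theorem computation referenced in the remark. First I would expand $f$ around $\rho$: write $f(X) = \sum_{j=0}^n \frac{f^{(j)}(\rho)}{j!}(X-\rho)^j$, which is a finite sum with coefficients in $W$ (the binomial coefficients $\binom{j}{i}$ lie in $\Zp \subset W$, so no denominators appear beyond those already in $W$). Since $f(\rho)=0$, the constant term vanishes, so $f(X) = (X-\rho)\bigl(f'(\rho) + \sum_{j\ge 2} \tfrac{f^{(j)}(\rho)}{j!}(X-\rho)^{j-1}\bigr)$. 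Now substitute $X=\rho'$ and use $f(\rho')=0$: since $\rho'\ne\rho$ would force the second factor to vanish, the whole game is to show that second factor is a unit, or at least nonzero, under the hypothesis $\val(\rho-\rho') > \val(a_i)$ for all nonzero $a_i$.

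The key estimate is on $\val\bigl(f'(\rho)\bigr)$ versus $\val\bigl(\tfrac{f^{(j)}(\rho)}{j!}(\rho-\rho')^{j-1}\bigr)$ for $j\ge 2$. Each coefficient $\tfrac{f^{(j)}(\rho)}{j!}$ is a $\Zp$-linear combination of the $a_i$ times powers of $\rho$; since $\val(a_i)\ge 0$ and $\val(\rho)\ge 0$ (the latter because $\rho$ is a root of a monic polynomial with $\val(a_i)\ge 0$, so $\val(\rho)\ge\min_i \val(a_i)/(n-i)\ge 0$ by the usual Newton-polygon bound, using multiplicativity of $\val$), we get $\val(\tfrac{f^{(j)}(\rho)}{j!})\ge 0$. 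Meanwhile $f'(\rho) = n\rho^{n-1} + \sum_{i\ge 1} i a_i \rho^{i-1}$; the subtle point is bounding $\val(f'(\rho))$ from above. Here I would use that $f'(\rho)\ne 0$ together with a relation like $\val(f'(\rho)) \le \max_i \val(a_i)$ — this should follow because $\rho$ is a root, so the "leading" contributions to $f'(\rho)$ are controlled by the $a_i$; concretely, from $\rho^n = -\sum_i a_i\rho^i$ one deduces $\val(\rho^{n-1})$ is itself bounded by a quantity involving the $\val(a_i)$, and then $\val(f'(\rho))$ cannot exceed $\max\{\val(a_i): a_i\ne 0\}$. Granting this, for $j\ge 2$ we have $\val\bigl(\tfrac{f^{(j)}(\rho)}{j!}(\rho-\rho')^{j-1}\bigr) \ge (j-1)\val(\rho-\rho') > (j-1)\max_i\val(a_i) \ge \max_i \val(a_i) \ge \val(f'(\rho))$, using $j-1\ge 1$ and $\val(\rho-\rho')>\val(a_i)$ for all nonzero $a_i$. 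By the ultrametric inequality (and strictness, which forces equality to be detected), the second factor $f'(\rho) + \sum_{j\ge 2}(\cdots)$ has valuation exactly $\val(f'(\rho)) < +\infty$, hence is nonzero. Since $W$ has a multiplicative valuation it has no zero divisors, so $f(\rho') = (\rho'-\rho)\cdot(\text{nonzero})$ forces $\rho'=\rho$.

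The main obstacle I anticipate is getting the upper bound $\val(f'(\rho)) \le \max\{\val(a_i) : a_i \ne 0\}$ clean — the Newton-polygon reasoning for $\val(\rho)$ and for $\val(f'(\rho))$ has to be done carefully and it interacts with whether $W$ is a domain and whether the relevant quantities are finite. One needs to handle edge cases: if all $a_i=0$ then $f(X)=X^n$, $f'(\rho)=n\rho^{n-1}$, and $f(\rho)=0$ gives $\rho=0$ (as $W$ is a domain), forcing $f'(\rho)=0$ unless $n=1$, so this degenerate case must be excluded or treated separately (indeed the hypothesis "$\val(\rho-\rho')>\val(a_i)$ for all $a_i\ne 0$" is vacuous then, and the conclusion can genuinely fail, e.g. $X^2$ over a domain has only the double root $0$ so it's actually fine — but $f'(\rho)=0$ is allowed to fail the hypothesis $f'(\rho)\ne 0$, so the lemma's hypotheses already rule it out). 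So I would simply note that $f'(\rho)\ne 0$ plus the structure of $f$ pins down the needed inequality, possibly by writing $\val(f'(\rho))$ in terms of the Newton polygon slopes of $f$ at $\rho$. Once that inequality is in hand the rest is a two-line ultrametric estimate, so I'd keep the writeup short and push all the weight onto that single valuation bound.
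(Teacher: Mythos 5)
Your overall framework --- Taylor-expanding $f$ at $\rho$, factoring out $(X-\rho)$, and showing the cofactor is nonzero at $\rho'$ by an ultrametric dominant-term estimate --- is exactly the paper's argument (the paper phrases it as ``reduce to $\rho=0$'', i.e.\ replace $f(X)$ by $f(X+\rho)=\sum_j b_jX^j$ with $b_j=f^{(j)}(\rho)/j!$, $b_0=0$, $b_1=f'(\rho)$). But the single step you deferred is precisely the one that fails: the inequality $\val(f'(\rho))\le\max\{\val(a_i):a_i\ne 0\}$ is false in general, and no Newton-polygon argument will produce it, because $f'(\rho)=n\rho^{n-1}+\sum_i ia_i\rho^{i-1}$ can acquire extra valuation from the integer coefficients $n$, $i$ and from $\rho$. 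Concretely, take $W=\Z_2$, $f(X)=X^2-1$, $\rho=1$: then $f'(\rho)=2$ has valuation $1$, while the only nonzero coefficient $a_0=-1$ has valuation $0$. Worse, this is a counterexample to the lemma as literally stated: $\rho'=-1$ is a root with $\val(\rho-\rho')=v_2(2)=1>0=\val(a_0)$, yet $\rho\ne\rho'$. So the gap in your proof cannot be closed; the hypothesis has to be read as a condition on the Taylor coefficients of $f$ at $\rho$, not on the $a_i$.

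What the paper actually proves (and what you should prove) is the Krasner/Hensel-type statement with hypothesis $\val(\rho-\rho')>\val(f'(\rho))$: writing $z=\rho'-\rho$, one has $0=f(\rho')=\sum_{j\ge1}b_jz^j$ with $\val(b_j)\ge0$ (this part of your argument, including $\val(\rho)\ge0$, is fine), and if $z\ne0$ then $\val(b_jz^j)\ge 2\val(z)>\val(z)+\val(b_1)=\val(b_1z)$ for $j\ge2$, so by multiplicativity $\val(f(\rho'))=\val(b_1z)<+\infty$, a contradiction; there is no need to return to the original $a_i$ at all. The paper silently identifies the hypothesis on the $a_i$ with the corresponding hypothesis on the $b_i$, which is harmless in its application (Lem.~\ref{lem um}, where $m$ can be enlarged so that $\val(\rho-\rho')$ exceeds any prescribed bound, in particular $\val(f'(\rho))$), but your honest attempt to bridge the two via an upper bound on $\val(f'(\rho))$ exposes that the two hypotheses are not equivalent.
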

\begin{proof}
  Firstly, it is easy to see that $\val(\rho) \geq 0$; then we can easily reduce the lemma to the case $\rho=0$. That is, we can assume $f(X)=X^n+a_{n-1}X^{n-1}+\ldots +a_1X$ and $a_1 \neq 0$. Now if $\rho' \neq 0$ and $\val(\rho') >\val(a_i)$ for all $i$ such that $a_i \neq 0$, then $\val(f(\rho')) =\val(a_1\rho') <+\infty$, and hence $f(\rho')\neq 0$.
\end{proof}

\newcommand{\Coeff}{\textnormal{Coeff}}
\begin{proof}[Proof of Lem. \ref{lem um}] The lemma is trivial if $e'=1$; suppose now $e' \geq 2$.
Firstly, by Lem. \ref{ringlocan}, it suffices to show that $u_M \in (\wt{\mathbf{A}}^{I}_L)^{\tau\dla}$ (here $I:=[r_\ell, r_k]$).
Recall we denote $P(X)=X^n+a_{n-1}X^{n-1}\ldots +a_0$ in \S \ref{subsub uM} (here we write $n:=e' \geq 2$ for brevity), where $a_i \in  W(k')[\![u]\!]$. Thus for all $\theta \in \Zp$, $\tau^\theta P(X):=X^n+\tau^\theta(a_{n-1})X^{n-1}\ldots+ \tau^\theta(a_0)$ has $\tau^\theta(u_M)$ as a root in $\wt{\mathbf{A}}^{I}$.

For $m \gg 0$ and for each $\beta \in   \Zp$, we will \emph{construct} another root of $\tau^{p^m\beta} P(X)$ of the form
\begin{equation}\label{eqybeta}
 y=y(m, \beta) =w_0+\sum_{k \geq 1}(p^m\beta)^k w_k' =w_0+\sum_{k \geq 1}\beta^k w_k,
\end{equation}
where $w_0=u_M$ (independent of $m$), and for each $k \geq 1$
$w_k:= w_k(m):= p^{mk}w_k'$ (here $w_k'$ depends only on $k$ and $\beta$ but not on $m$ )  such that
\begin{equation} \label{eqwk}
w_k \in \wt{\mathbf{A}}^{I}_L, \text{ and hence }  \lim_{k\to +\infty}  w_k = 0 \text{  by enlarging } m.
\end{equation}
Now fix any $s \in I$. By enlarging $m$ if necessary, we can easily make
\begin{equation}\label{eqm1}
W^{[s, s]}(y  -u_M) >W^{[s, s]}(a_i), \forall i \text{ such that } a_i \neq 0,
\end{equation}
and
\begin{equation}\label{eqm2}
   W^{[s, s]}(\tau^{p^m\beta}(u_M) -u_M)>W^{[s, s]}(a_i), \forall i \text{ such that } a_i \neq 0.
\end{equation}
Here, \eqref{eqm2} is possible because the Galois action on $\wt{\mathbf{A}}^{[s, s]}$ is continuous.
By \eqref{eqm1} and \eqref{eqm2}, we have
$$W^{[s, s]}(y -\tau^{p^m\beta}(u_M))>W^{[s, s]}(a_i), \forall i \text{ such that } a_i \neq 0.$$
By Lem. \ref{lem deri} (recall $W^{[s, s]}$ is an multiplicative valuation by Lem. \ref{lem W}), we can conclude $\tau^{p^m\beta}(u_M)=y $ as elements in
$\wt{\mathbf{A}}^{[s, s]}$. Since $\wt{\mathbf{A}}^{I} \hookrightarrow \wt{\mathbf{A}}^{[s, s]}$ (cf. \S \ref{subsubcontain}), we have  $\tau^{p^m\beta}(u_M)=y $ as elements in
 $\wt{\mathbf{A}}^{I}$. Thus $u_M \in (\wt{\mathbf{A}}^{I}_L)^{\tau\dla}$   by definition.

Now we construct $y $ in \eqref{eqybeta}. Before we do  so, we pick some $\delta \gg 0$ such that
\begin{equation}\label{eqdelta}
 {p^\delta}/{P'(u_M)} \in \wt{\mathbf{A}}^{I}_L,
\end{equation}
which is possible because of Lem. \ref{lem um var}(1)(c).
Now note that all $a_i$ are locally analytic vectors, so we can write for each $i$,
\begin{equation} \label{eqai}
\tau^{p^m\beta}(a_i)=a_{i, 0}+\sum_{j \geq 1} (p^m\beta)^j a_{i, j}'=a_i+\sum_{j \geq 1} \beta^j  a_{i, j},
\end{equation}
where again $a_{i, 0}=a_i$. By enlarging $m$ if necessary, we can suppose
\begin{equation}\label{eqaij}
  a_{i, j} \in p^{2\delta}\wt{\mathbf{A}}^{I}_L, \quad \forall 0\leq i \leq n-1, \forall j \geq 1.
\end{equation}
Plug  \eqref{eqai} and \eqref{eqybeta} into $\tau^{p^m\beta} P(X)$. We get
\begin{equation}\label{eqplug}
(w_0+\sum_{k \geq 1}\beta^k w_k)^n +  (a_{n-1, 0}+\sum_{j \geq 1} \beta^j a_{{n-1}, j}) (w_0+\sum_{k \geq 1}\beta^k w_k)^{n-1} + \cdots + (a_{0, 0}+\sum_{j \geq 1} \beta^j a_{0, j}) =0.
\end{equation}
We will let the coefficient of $\beta^k$ to be zero for each $k \geq 0$, and use these equations to solve $w_k$ inductively. Firstly, note that we automatically have
\begin{equation}\label{eqk0}
\Coeff(\beta^0) =w_0^n + \sum_{i=0}^{n-1} a_{i, 0}\cdot w_0^i=P(w_0)=P(u_M)=0.
\end{equation}
For each $k\geq 1$, one can easily compute that
\begin{equation}\label{eqkbig1}
\Coeff(\beta^k) =P'(w_0)\cdot w_k + Q_k\left((a_{i, j})_{1\leq i \leq n-1, 0\leq j \leq k-1}, w_0, \cdots, w_{k-1}\right)
\end{equation}
where $Q_k$ is a polynomial of the variables $(a_{i, j})_{1\leq i \leq n-1, 0\leq j \leq k-1}, w_0, \cdots, w_{k-1}$ with \emph{integer} coefficients.
By letting  $\Coeff(\beta^k) =0$, we will show by induction that
\begin{equation} \label{eqinduction}
w_k \in p^\delta \wt{\mathbf{A}}_L^I,\quad \forall k \geq 1.
\end{equation}
It suffices to show that each monomial in $Q_k$ is divisible by $p^{2\delta}$, since by \eqref{eqdelta}
$$p^{2\delta}\wt{\mathbf{A}}_L^I \subset P'(w_0) \cdot p^\delta \wt{\mathbf{A}}_L^I.$$
When $k=1$, each monomial in $Q_1$ contains some  $a_{i, 1}$ as a factor, and hence  one can conclude \eqref{eqinduction} for $k=1$ using \eqref{eqaij}.
  Suppose  \eqref{eqinduction} is true for $k-1$ , and consider $\Coeff(\beta^k)$ (where now $k \geq 2$). For a monomial in $Q_k$, if it does not contain any $a_{i, j}$ with $j \geq 1$ as a factor,  then it is a product of elements in $  \{ a_{0, 0}, \ldots,    a_{n-1, 0}, w_0, w_1, \ldots, w_{k-1} \}$; however, one easily observes that such product contains at least two (possibly equal) elements  from $\{w_1, \ldots, w_{k-1} \}$ (using $k \geq 2$), and hence by induction hypothesis the monomial is divisible by $p^{2\delta}$. Thus, \eqref{eqinduction} is verified for $k$, and this finishes the construction of \eqref{eqybeta}.
\end{proof}

\begin{theorem}\label{thm la M}
Suppose $[r, s]=[r_\ell, r_k]$, then
\begin{enumerate}
\item $(\wt{\mathbf{B}}_L^{[r, s]})^{\tau\dla, \gal(L/M)=1} =\cup_{m \geq 0}\varphi^{-m}(\mathbf{B}_M^{p^m[r, s]}).$
\item $(\wt{\mathbf{B}}_L^{[r, +\infty)})^{\tau\dpa, \gal(L/M)=1} =\cup_{m \geq 0}\varphi^{-m}(\mathbf{B}_M^{p^m[r, +\infty)}).$
\end{enumerate}
\end{theorem}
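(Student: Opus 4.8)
The plan is to mimic the proof of Theorem~\ref{thm loc ana gamma 1} (the case $M=K_\infty$), using the structural results already established for finite extensions in \S\ref{sec fieldnorm}. The key new input compared with the $M=K_\infty$ situation is Lemma~\ref{lem um} (together with Lemma~\ref{lem um var}), which tells us that the elements $x_i = v_a u_M^b$ forming an $\wt{\mathbf{A}}_{K_\infty}^I$-basis of $\wt{\mathbf{A}}_M^I$ are themselves $\tau$-locally analytic, and that the matrix relating the $\tau^a$-translates of the $x_i$ to the $x_i$ is a locally analytic function valued in $\GL_{e'f'}$ of the relevant ring. Once this is in hand, the two-variable structure of $\hat G$ is handled exactly as before: we only ever need to control $\tau$-analyticity while staying $\gal(L/M)$-invariant, which behaves like the one-dimensional case.

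First I would prove Item (1). Let $x \in (\wt{\mathbf{B}}_L^{[r,s]})^{\tau\dla,\gal(L/M)=1}$. The strategy of Theorem~\ref{thm loc ana gamma 1}(\ref{itemmkan}) goes through verbatim after replacing the role of $u$ by $u_M$ and the role of $E(u)$ by the appropriate generator of the relevant $\theta\circ\iota_k$-kernel over $\mathbf{A}_M$: namely one descends $x$ level by level through the $(\varphi^k(E(u))/p)$-adic (equivalently $u_M^{\mathrm{?}}$-adic) filtration, at each stage using that $\theta\circ\iota_k(\varphi^{-m}(u_M))$ generates the residue ring over $\O_{K(\pi_{m+k})}$ and that the relevant analytic vectors in $\O_{\hat L}$ fixed by $\gal(L/M)$ land in $\O_{M(\pi_{m+k})}$ (the analogue of \cite[Thm.~3.2]{BC16}, applied to $M$ in place of $K$). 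The analogues of Lemma~\ref{lem div p} and Lemma~\ref{lem div ana} over $M$ are obtained from those already proved by base-changing along the finite free extension $\wt{\mathbf{A}}_M^I = \oplus_i \wt{\mathbf{A}}_{K_\infty}^I\cdot x_i$ (Lemma~\ref{lem um var}(2)) and invoking Lemma~\ref{lem um} to see that the $x_i$ do not spoil analyticity; the key point, exactly as in the $K_\infty$ case, is to control the radius of analyticity (it should be $p^{-(m+k)}$) so that at the end one concludes $x \in \varphi^{-m}(\mathbf{B}_M^{p^m[r_k,r_k]})$, and then intersects with $\wt{\mathbf{B}}_L^{[r,s]}$ using the analogue of Corollary~\ref{lem aa inter} over $M$ (valid since $r_\ell \geq r_M$, so Prop.~\ref{cor A rl rk}-type decompositions hold for $\mathbf{A}_M^I$ by Lemma~\ref{lem um var}). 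For the reverse inclusion, one checks directly that elements of $\mathbf{B}_M^{[r,s]}$ are $\tau$-locally analytic: write an element in the $x_i$-basis with coefficients in $\mathbf{B}_{K_\infty}^{[r,s]}$, note the coefficients are $\tau$-analytic by Theorem~\ref{thm loc ana gamma 1}(\ref{itemkinf}), and the $x_i$ are $\tau$-analytic by Lemma~\ref{lem um}; apply Lemma~\ref{ringlocan}. Applying $\varphi^{-m}$ gives that $\varphi^{-m}(\mathbf{B}_M^{p^m[r,s]})$ consists of $\tau$-analytic vectors fixed by $\gal(L/M)$.

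Then Item (2) follows from Item (1) by a Fréchet-completion / pro-analyticity argument identical to the passage from Theorem~\ref{thm loc ana gamma 1}(2) to (3)--(4), using Lemma~\ref{lemdensity}(\ref{item frechet}) and the corresponding density of $\mathbf{B}_M^{[r,+\infty]}$ in $\mathbf{B}_M^{[r,+\infty)}$ (Prop.~\ref{lemdense}-type statements over $M$). One takes a pro-analytic vector, looks at its image in each $\wt{\mathbf{B}}_L^{[r,s_n]}$, applies Item (1) there, and checks the resulting descents are compatible and converge in the Fréchet topology to an element of $\cup_m \varphi^{-m}(\mathbf{B}_M^{p^m[r,+\infty)})$; the reverse inclusion is again immediate from Item (1) plus the definition of pro-analyticity.

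\textbf{Main obstacle.} The genuinely delicate point is the analogue of Lemma~\ref{lem div ana}(\ref{item-varphiu})--(\ref{item-divide-phie}) over $M$, i.e.\ controlling the radius of $\tau$-analyticity of $\varphi^{-m}(u_M)$ and of $t/\varphi^k(E(u))$ inside $\wt{\mathbf{B}}_M^I$, and checking that the divisibility-by-$\varphi^k(E(u))$ statements respect the $\tau_m\dan$-structure. Over $K_\infty$ this rested on the explicit formula $(1-\tau^a)(u) = u(1-[\ue]^a)$ and the Amice basis estimate; over $M$ one has instead $u_M$ defined implicitly as a root of $P(X)$, so the clean multiplicative formula is unavailable. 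The way around this is precisely Lemma~\ref{lem um}: it already produces, for $m\gg 0$ and $\beta\in\Zp$, an explicit convergent expansion $\tau^{p^m\beta}(u_M) = u_M + \sum_{k\geq 1}\beta^k w_k$ with $w_k \in p^{\delta}\wt{\mathbf{A}}_L^I$, which is exactly the kind of quantitative control needed; one must bookkeep the dependence of $m$ on $k$ carefully to extract the correct radius $p^{-(m+k)}$, but no essentially new idea is required beyond what Lemma~\ref{lem um} already supplies.
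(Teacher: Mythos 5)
Your statement of what needs to be proved and your treatment of Item (2) are fine, but for the hard inclusion of Item (1) you have reconstructed a much more laborious proof than the paper's, and the steps you defer are genuinely incomplete. The paper's actual argument is a short formal reduction for which you already hold every ingredient: after a Frobenius twist one may assume $r> r_M$, so Lemma \ref{lem um var}(2) gives $\wt{\mathbf{B}}_M^I = \oplus_{i=1}^{e'f'}\wt{\mathbf{B}}_{K_\infty}^I\cdot x_i$, and Lemma \ref{lem um} says the basis elements $x_i$ are $\tau$-locally analytic, so the matrix of the $\tau$-action in this basis is an analytic function. Proposition \ref{prop la basis} then applies directly and yields
$$(\wt{\mathbf{B}}_M^I)^{\tau\dla} = \oplus_{i=1}^{e'f'}\bigl(\wt{\mathbf{B}}_{K_\infty}^I\bigr)^{\tau\dla}\cdot x_i,$$
after which Theorem \ref{thm loc ana gamma 1} identifies each coefficient ring with $\mathbf{B}^I_{K_\infty,\infty}$ and a final use of Lemma \ref{lem um var}(2) lands everything in $\cup_{m}\varphi^{-m}(\mathbf{B}_M^{p^mI})$. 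No new descent through $\Ker(\theta\circ\iota_k)$ is needed. You invoke the decomposition and Lemma \ref{lem um} only for the easy inclusion and for ``base-changing'' auxiliary lemmas; the point you miss is that Proposition \ref{prop la basis}, applied to the basis $x_1,\dots,x_{e'f'}$, does the entire hard inclusion in one line.

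The route you propose instead --- rerunning the proof of Theorem \ref{thm loc ana gamma 1}(1) with $u_M$ in place of $u$ --- has a genuine gap exactly at what you call the main obstacle. That proof requires the quantitative statement that $\varphi^{-n}(u)$ lies in $(\wt{\mathbf{B}}^I_L)^{\tau_{n+k}\dan}$ with an explicit radius, so that the approximants $y_{n,i}\in W(k)[\varphi^{-m}(u)]$ produced by the doubly infinite induction are all analytic for one fixed subgroup $\tau_{m+k}$ and the limit stays in $(\wt{\mathbf{A}}^I_L)^{\tau_{m+k}\dan}$. Lemma \ref{lem um} supplies no such control: it produces local analyticity of $u_M$ for some unspecified level depending on the interval and on $\delta$, and because $u_M$ is only an implicit root of $P(X)$ there is no analogue of the identity $(1-\tau^a)(u)=u(1-[\ue]^a)$ from which the radius was extracted via the Amice estimate. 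Asserting that ``no essentially new idea is required beyond what Lemma \ref{lem um} already supplies'' is therefore not justified --- this is precisely the estimate the paper's reduction avoids having to prove. You would also need $\hat{L}^{\tau\dla,\gal(L/M)=1}=M$ (not an instance of Proposition \ref{loc ana in L} and nowhere established) and the $M$-analogue of Corollary \ref{lem aa inter} (this one does exist, as Corollary \ref{lem aa interM}). I recommend replacing the whole forward-inclusion argument by the Proposition \ref{prop la basis} reduction.
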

\begin{proof}

It suffices to prove Item (1). Denote $I:=[r, s]$. Since $\varphi$ induces a bijection between $(\wt{\mathbf{B}}_L^{I})^{\tau\dla, \gal(L/M)=1}$ and $(\wt{\mathbf{B}}_L^{pI})^{\tau\dla, \gal(L/M)=1}$, it suffices to consider the case when $r> r_M$. By Lem. \ref{lem um var}(2) and Lem. \ref{lem um}, it is clear that $\cup_{m \geq 0}\varphi^{-m}(\mathbf{B}_M^{p^m[r, s]}) \subset (\wt{\mathbf{B}}_L^{[r, s]})^{\tau\dla, \gal(L/M)=1}.$ But we also have
\begin{eqnarray*}
(\wt{\mathbf{B}}_L^I)^{\tau\dla, \gal(L/M)=1} &=&    (\wt{\mathbf{B}}_M^I)^{\tau\dla} \\
 &=&     (\oplus_{i=1}^{e'f'}\wt{\mathbf{B}}_{K_\infty}^I\cdot x_i)^{\tau\dla}, \text{ by  Lem. } \ref{lem um var}(2) \\
 &=&  \oplus_{i=1}^{e'f'}(\wt{\mathbf{B}}_{K_\infty}^I)^{\tau\dla}\cdot x_i,  \text{ by Prop.} \ref{prop la basis} \text{ and Lem.} \ref{lem um} \\
 &= &  \oplus_{i=1}^{e'f'}({\mathbf{B}}_{K_\infty, \infty}^I) \cdot x_i,  \text{ by Thm.} \ref{thm loc ana gamma 1} \\
 &\subset  &\cup_{m \geq 0}\varphi^{-m}(\mathbf{B}_M^{p^m[r, s]}), \text{ by Lem.} \ref{lem um var}(2).
\end{eqnarray*}
\end{proof}

\subsection{Structure of $A_M^I$}
In this subsection, we study the concrete structure of $A_M^I$; these results will be used in \S \ref{sec oc}.

\begin{definition}
\begin{enumerate}
\item
For $0 < r <+\infty$, let $\mathcal{A}_M^{[r, +\infty]}(K_0')$ be the ring consisting of infinite series $f=\sum_{k \in \mathbb Z} a_kT^k$ where $a_k \in W(k')$ such that $f$ is a holomorphic function on the annulus defined by
$0< v_p(T)\leq (p-1)/(e'epr) .$
Let $\mathcal{B}_M^{[r, +\infty]}(K_0'): =\mathcal{A}_M^{[r, +\infty]}(K_0')[1/p] $.

\item
For $f=\sum_{k \in \mathbb Z} a_kT^k \in \mathcal{B}_M^{[r, +\infty]}(K_0')$, and $s \in [r, +\infty)$, let
$$ \mathcal W_M^{[s, s]}(f):  =\inf_{k \in \Z}  \{ v_p(a_k)  + \frac{p-1}{ps} \cdot \frac{k}{e'e}  \}.$$
For $I=[a, b] \subset [r, +\infty)$ a non-empty {closed} interval, let
$$\mathcal W_M^{[a, b]}(f): =\inf_{\alpha \in I} \{\mathcal  W_M^{[\alpha, \alpha]}(f) \}.$$

\item Let $\mathcal{B}_M^{[r, s]}(K_0')$ be the completion of $\mathcal{B}_M^{[r, +\infty]}(K_0')$ with respect to $\mathcal W_M^{[r, s]}$. Let $\mathcal{A}_M^{[r, s]}(K_0')$ be the ring of integers with respect to $\mathcal W_M^{[r, s]}$.
\end{enumerate}
\end{definition}

\begin{lemma} \label{lem laurent series M}
For $I=[r, s] \subset (0, +\infty)$, we have $\mathcal W_M^{I}(x) = \inf \{\mathcal W_M^{[r, r]}(x), \mathcal W_M^{[s, s]}(x)\}$. Furthermore,
$\mathcal{B}_M^{[r, s]}(K_0')$ is  the ring consisting of infinite series $f=\sum_{k \in \mathbb Z} a_kT^k$ where $a_k \in K_0'$ such that $f$ is a holomorphic function on the annulus defined by
$$v_p(T) \in  [ \frac{p-1}{e'ep}\cdot \frac{1}{s},  \quad \frac{p-1}{e'ep}\cdot \frac{1}{r} ].$$
\end{lemma}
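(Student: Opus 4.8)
The plan is to reduce Lemma \ref{lem laurent series M} to the already-established Lemma \ref{lem laurent series} by a change of base field. The key observation is that the ring $\mathcal{B}_M^{[r,s]}(K_0')$ and the valuations $\mathcal{W}_M^{[s,s]}$ are defined by exactly the same recipe as $\mathcal{B}^{[r,s]}(K_0)$ and $\mathcal{W}^{[s,s]}$, with three substitutions: the coefficient field $K_0 = W(k)[1/p]$ is replaced by $K_0' = W(k')[1/p]$, the ramification index $e$ is replaced by $e'e$, and consequently the annuli are rescaled by the factor $e'$. So I would first record this dictionary precisely, noting that none of the formal manipulations in the proof of Lemma \ref{lem laurent series} used anything about $k$ beyond its being a perfect field of characteristic $p$, nor anything about $e$ beyond its being a positive integer.

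\textbf{Key steps.} First I would establish the analogue of the ``term-wise'' property: since $\mathcal{W}_M^{[s,s]}(f) = \inf_k \mathcal{W}_M^{[s,s]}(a_k T^k)$ directly from the definition (the valuation is computed coefficient by coefficient), the decomposition $f = f^- + f^+$ with $f^- = \sum_{k<0} a_k T^k$ and $f^+ = \sum_{k\geq 0} a_k T^k$ is unique and respects all the relevant valuations, exactly as in the proof of \eqref{eqa2}. Second, for the identity $\mathcal{W}_M^{[r,s]}(x) = \inf\{\mathcal{W}_M^{[r,r]}(x), \mathcal{W}_M^{[s,s]}(x)\}$: this is the maximum modulus principle on an annulus, and since $\mathcal{W}_M^{[\alpha,\alpha]}(a_k T^k) = v_p(a_k) + \frac{p-1}{p\alpha}\cdot\frac{k}{e'e}$ is, for each fixed $k$, a monotone (affine in $1/\alpha$) function of $\alpha$, the infimum over $\alpha \in [r,s]$ is attained at one of the two endpoints term by term, hence globally. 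This is identical to the argument for Lemma \ref{lem laurent series}(3). Third, the holomorphic-function description: an element of the completion $\mathcal{B}_M^{[r,s]}(K_0')$ is by construction a limit of Laurent polynomials in the $\mathcal{W}_M^{[r,s]}$-topology, and the convergence condition $v_p(a_k) + \frac{p-1}{p\alpha}\cdot\frac{k}{e'e} \to +\infty$ as $k \to \pm\infty$ for the two boundary radii is precisely the condition that $\sum a_k T^k$ defines a holomorphic function on the closed annulus $v_p(T) \in [\frac{p-1}{e'ep s}, \frac{p-1}{e'ep r}]$; completeness of the ring of such holomorphic functions gives the identification.

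\textbf{Main obstacle.} There is no serious obstacle: the entire content is that the definitions of $\mathcal{B}_M$ and $\mathcal{W}_M$ in this subsection are literal transcriptions of those in Lemma \ref{lem laurent series} under the substitution $(K_0, e) \rightsquigarrow (K_0', e'e)$. The only point requiring a sentence of care is that the analogue of \eqref{eqa0} and \eqref{eqa1}—i.e. the completions of $W(k')[\![T]\!]$ at these Gauss-type valuations being the expected $W(k')[\![T]\!]\{p/T^{e'e p^\ell}\}$ etc.—is elementary rigid-analytic geometry over $K_0'$ and holds verbatim. Thus the proof will simply be: ``Everything is elementary and well-known; the argument is verbatim that of Lemma \ref{lem laurent series}, replacing $K_0$ by $K_0'$ and $e$ by $e'e$ throughout, using that the valuations $\mathcal{W}_M^{[s,s]}$ are multiplicative and computed term-wise.''
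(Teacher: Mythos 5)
Your proposal is correct and matches the paper's intent exactly: the paper's entire proof of this lemma is ``This is easy,'' relying on precisely the reduction you spell out, namely that the definitions here are those of Lemma \ref{lem laurent series} under the substitution $(K_0, e) \rightsquigarrow (K_0', e'e)$, so the term-wise computation of $\mathcal W_M^{[s,s]}$, the unique decomposition $f = f^- + f^+$, and the endpoint attainment of the infimum carry over verbatim. No further comment is needed.
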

\begin{proof}
This is easy.
\end{proof}

\begin{lemma} \label{lem inj M}
Suppose $r > r_M$.
\begin{enumerate}
\item  The map $f(T) \mapsto f(u_M)$ induces a ring isomorphism
$$\mathcal{A}_M^{[r, +\infty]}(K_0') \simeq  \mathbf{A}^{[r,+\infty]}_M[1/u_M]$$
such that for $f \in \mathcal{A}_M^{[r, +\infty]}(K_0')$, and all $s$ such that $r \leq s <+\infty$, we have
$$\mathcal{W}_M^{[s, s]}(f(T)) = W^{[s, s]}(f(u_M)).$$

\item For any $s\geq r$, the map $f(T) \mapsto f(u_M)$ is an isometric isomorphism
$$\mathcal{A}_M^{[r, s]}(K_0') \simeq  \mathbf{A}^{[r,s]}_M$$
\end{enumerate}
\end{lemma}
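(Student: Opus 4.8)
\textbf{Proof strategy for Lemma \ref{lem inj M}.}

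The plan is to mirror the proof of Lemma \ref{lem inj}, using $u_M$ in place of $u$ and the data of the minimal polynomial $P(X)$ from \S\ref{subsub uM} to control the relevant valuations. It suffices to prove Item (1); Item (2) follows by taking completions with respect to $\mathcal W_M^{[r,s]}$ on the source and $W^{[r,s]}$ on the target, then intersecting with the rings of integers (using Lemma \ref{lem um var}(2) to see that $\wt{\mathbf A}_M^{[r,s]} \cap \mathbf B_M^{[r,s]} = \mathbf A_M^{[r,s]}$ is the ring of integers, exactly as in the $K_\infty$ case). So I concentrate on Item (1).

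First I would set up the analogue of the section $s$ and the approximating sequence of \S\ref{subsub sec s Kinfty}--\S\ref{subsub approx}, but now relative to the ring $\mathbf A_M$ with its uniformizer $p$ and residue field $\mathbf E_M = k'((\overline u_M))$: given $\overline x = \sum_{i \gg -\infty} \overline{a_i} \overline u_M^{\,i} \in \mathbf E_M$ with $\overline{a_i} \in W(k')$-reductions, define $s_M(\overline x) := \sum_{i\gg-\infty}[\overline{a_i}]u_M^{\,i}$. The key input is Lemma \ref{lem um var}(1): since $u_M/[\overline u_M]$ is a unit in $\wt{\mathbf A}_M^{[r_M,+\infty]}$ and $r > r_M$, one has $w_k([\overline{a_i}]u_M^{\,i}) = w_k([\overline{a_i}][\overline u_M]^i) + O(\text{bounded})$; more precisely the $w_k$-computation \eqref{w0} goes through with $v_{\wt{\mathbf E}}(u_M) = 1/(e'e)$ replacing $1/e$, because the distinct powers $[\overline u_M]^i$ have distinct $w_k$-values and multiplying by a unit in $\wt{\mathbf A}_M^{[r_M,+\infty]}$ does not change $w_0$. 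This gives $w_k(s_M(\overline x)) = v_{\wt{\mathbf E}}(\overline x)$ for all $k$, and then the approximating-sequence estimate \eqref{wn} ($w_0(x_n) \geq w_n(x)$) follows verbatim.

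With this in hand, both inclusions go exactly as in Lemma \ref{lem inj}. For the ``$\subset$'' direction: given $f(T) = \sum_k a_k T^k \in \mathcal A_M^{[r,+\infty]}(K_0')$, the term-wise bound $W^{[s,s]}(a_k u_M^{\,k}) = v_p(a_k) + \frac{p-1}{ps}\cdot\frac{k}{e'e} = \mathcal W_M^{[s,s]}(a_k T^k)$ (using Lemma \ref{lem inj}(1)-type computation of $W^{[s,s]}(u_M^{\,k}) = W^{[s,s]}([\overline u_M]^k) = \frac{p-1}{ps}\cdot\frac{k}{e'e}$, again since $u_M/[\overline u_M]$ is a unit with $W^{[r_M,r_M]}$-value $0$) shows $f(u_M)$ converges in $\mathbf A_M^{[r,+\infty]}[1/u_M]$ (using completeness of $\wt{\mathbf A}_M^{[r,+\infty]}[1/u_M]$ with respect to $W^{[r,r]}$, Lemma \ref{lem W}(1), together with $f(u_M) \in \mathbf B_M \cap \wt{\mathbf A}_M^{[r,+\infty]}[1/u_M]$). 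Injectivity is clear since a nonzero $f$ reduces to a nonzero element of $\mathbf E_M[1/\overline u_M]$ after clearing denominators. For the ``$\supset$'' direction: given $x \in \mathbf A_M^{[r,+\infty]}[1/u_M]$, run the approximating sequence $\{x_n\}$ with $x_{n+1} = p^{-1}(x_n - s_M(\overline{x_n}))$, write $f_n(T)$ for the Laurent series with $f_n(u_M) = s_M(\overline{x_n})$, and set $f(T) = \sum_n p^n f_n(T)$; the estimate $\mathcal W_M^{[s,s]}(p^n f_n(T)) \geq n + \frac{p-1}{ps}\cdot\frac{1}{e'e}\cdot e'e\,w_n(x) = n + \frac{p-1}{ps}w_n(x) \geq W^{[s,s]}(x)$ (using \eqref{wn}) shows $f$ converges in $\mathcal A_M^{[r,+\infty]}(K_0')$ and $\mathcal W_M^{[s,s]}(f(T)) \geq W^{[s,s]}(x)$, while $f(u_M) = x$; combined with Part 1 this forces equality of the valuations.

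The main obstacle I anticipate is not conceptual but bookkeeping: one must be careful that $u_M \notin \mathbf A_M^+$ in general (it is only in $\mathbf A_M^{[r_M,+\infty]}$, not $\wt{\mathbf A}_M^+$), so unlike the $K_\infty$ case there is no ``$r = 0$'' statement here, and the hypothesis $r > r_M$ is genuinely needed so that Lemma \ref{lem um var}(1)(b) applies and $u_M$ behaves like $[\overline u_M]$ for the valuations $W^{[s,s]}$. One also has to check that multiplying by the unit $u_M/[\overline u_M]$ (and its powers, including negative powers after inverting $u_M$) does not disturb the term-wise nature of the estimates — this is where the multiplicativity of $W^{[s,s]}$ (Lemma \ref{lem W}(2)) and the fact that the unit has $W^{[s,s]}$-value $0$ for every $s \geq r_M$ are used repeatedly. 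Once these points are pinned down, everything reduces to the already-established $K_\infty$ case by a direct transcription.
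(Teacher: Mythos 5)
Your overall strategy is the paper's: transplant the proof of Lemma \ref{lem inj} by replacing $u$ with $u_M$, using the section $s$ of \S\ref{subsub sec s} and an approximating sequence, and reducing Item (2) to Item (1). The first direction (from $\mathcal{A}_M^{[r,+\infty]}(K_0')$ to $\mathbf{A}_M^{[r,+\infty]}[1/u_M]$, with $\mathcal W_M^{[s,s]}(f)\leq W^{[s,s]}(f(u_M))$) and the injectivity are fine as you describe them.

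There is, however, a genuine gap in the surjectivity direction: your claim that $w_k(s_M(\overline x))=v_{\wt{\mathbf E}}(\overline x)$ for \emph{all} $k$ is false, and your justification (``multiplying by a unit in $\wt{\mathbf A}_M^{[r_M,+\infty]}$ does not change $w_0$'') only addresses $k=0$. In the $K_\infty$ case, \eqref{w0} holds for every $k$ because each term $[\overline{a_i}]u^i$ is a product of Teichm\"uller lifts, so all of its Witt components beyond the zeroth vanish. Here $u_M$ is produced by Hensel's lemma and is \emph{not} a Teichm\"uller lift; the unit $u_M/[\overline u_M]$ is only a unit in the sense of Lemma \ref{lem units}, whose criterion explicitly allows $w_k<0$ for $k\geq 1$ (one only has $k+\frac{p-1}{pr_M}w_k>0$). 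Consequently the correct statement is the strict \emph{inequality} \eqref{strictine}, $w_k(s(\overline x))>v_{\wt{\mathbf E}}(\overline x)-k\cdot pr_M(p-1)^{-1}$ for $k\geq 1$, not an equality. This matters downstream: the recursion for the approximating sequence no longer yields \eqref{wn} but only the weaker bound \eqref{eqvr}, $w_0(x_n)\geq \inf_{0\leq i\leq n}\{w_i(x)-(n-i)pr_M(p-1)^{-1}\}$, with correction terms that your estimate $\mathcal W_M^{[s,s]}(p^nf_n(T))\geq n+\frac{p-1}{ps}w_n(x)$ silently discards. The paper's proof repairs this by inserting \eqref{eqvr} into the estimate, rewriting it as $\inf_{0\leq i\leq n}\{\frac{p-1}{ps}w_i(x)+i+(n-i)(1-\frac{r_M}{s})\}$, and using $s>r_M$ both to drop the last term (getting $\geq W^{[s,s]}(x)$) and to see that the whole expression tends to $+\infty$, which is what makes $\sum_n p^nf_n(T)$ converge in $\mathcal{A}_M^{[r,+\infty]}(K_0')$. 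So the hypothesis $r>r_M$ is needed not merely so that ``$u_M$ behaves like $[\overline u_M]$,'' but precisely to absorb the failure of the $w_k$-equality for $k\geq 1$; as written, your argument has no substitute for this step.
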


The proof uses similar strategy as in Lem. \ref{lem inj}. We first study the section $s$.

\subsubsection{The section $s$.}\label{subsub sec s}
Denote
$$s: X_K(M)=\mathbf{A}_M/p \to \mathbf{A}_{M}$$
the section where for $\overline{x}=\overline{u}_M^{b}(\sum_{i \geq 0} \bar{a_i}\overline{u}_M^i)$ with $\bar{a_0} \neq 0$, $s(\overline{x}) :=u_M^b \sum_{i \geq 0}[\bar{a_i}]u_M^i$.  (When $M=K_\infty$, this is precisely the $s$ in \S \ref{subsub sec s Kinfty}.)
Using the expression, one can check that:
\begin{enumerate}[leftmargin=*]
\item $s(\overline{x}) \in \mathbf{A}^{[r_M,+\infty]}_{M}[1/u_M]$;
\item $W^{[r_M, r_M]}(s(\overline x)) =W^{[r_M, r_M]}(u_M^b)=W^{[r_M, r_M]}([\overline{u}_M]^b)=   (p-1)(pr_M)^{-1}\cdot v_{\wt{\mathbf E}}(\overline x),$
where the first equality is because $\sum_{i \geq 0} [\bar{a_i}]{u}_M^i$ is a unit in $\mathbf{A}^{[r_M,+\infty]}_{M}$, and the second equality uses Lem. \ref{lem um var}(1b);
\item $w_0(s(\overline{x}))=v_{\wt{\mathbf E}}(\overline x)$;
\item since $s(\overline{x})/[\overline{u}_M]^b$ is a unit in $\mathbf{A}^{[r_M,+\infty]}_{M}$, Lem. \ref{lem units}(3) implies that when $k\geq 1$,
\begin{equation} \label{strictine}
w_k(s(\overline x)) > v_{\wt{\mathbf E}}(\overline x)- k\cdot {pr_M}{(p-1)^{-1}} =w_0(s(\overline{x})) - k\cdot  {pr_M}{(p-1)^{-1}}.
\end{equation}
\end{enumerate}

\subsubsection{An approximating sequence.} \label{sub appr}
Given  $x \in  \mathbf{A}^{[r_M,+\infty]}_{M}[1/u_M]$, define a sequence $\{x_n\}$ in $\mathbf{A}^{[r_M,+\infty]}_{M}[1/u_M]$ where $x_0=x$ and $x_{n+1}:=p^{-1}(x_n-s(\overline{x_n}))$. Note that $x=\sum_{n \geq 0}p^n  s(\overline{x_n})$.
Similarly as in \cite[Lem. 7.3]{Col08}, we have
\begin{eqnarray*}
w_k(x_{n+1}) &\geq & \inf \{ w_{k+1}(x_n), w_{k+1}(s(\overline{x_n})) \} \\
 &\geq & \inf \{ w_{k+1}(x_n), w_{0}(s(\overline{x_n})) -(k+1)\cdot {pr_M}{(p-1)^{-1}} \}, \text{ by } \eqref{strictine} \\
&= & \inf \{ w_{k+1}(x_n), w_{0}(x_n) -(k+1)\cdot  {pr_M}{(p-1)^{-1}} \}.
\end{eqnarray*}
Similarly as in \cite[Lem. 7.4]{Col08}, by repeatedly using the above, we have
\begin{equation} \label{eqvr}
v_{\wt{\mathbf E}}(\overline{x_n})=w_{0}(x_n) \geq \inf_{0 \leq i \leq n}\{ w_i(x)-(n-i) \cdot  {pr_M}{(p-1)^{-1}}\}.
\end{equation}

\begin{proof}[Proof of Lem. \ref{lem inj M}]
It suffices to prove Item (1). Given $f(T) \in \mathcal{A}_M^{[r, +\infty]}(K_0')$, then similarly as in (Part 1) of the proof of Lem. \ref{lem inj}, $f(u_M) \in \mathbf{A}^{[r,+\infty]}_M[1/{u_M}]$, and $W^{[s, s]}(f(u_M)) \geq \mathcal W_M^{[s, s]}(f(T))$.

For the other direction, suppose $x \in \mathbf{A}^{[r,+\infty]}_{M}[1/u_M]$, let $\{x_n\}$ be the sequence constructed in \S \ref{sub appr}. Let $f_n(T)$ be a formal series such that $f_n(u_M) =s(\overline{x_n})$. Note that $f_n(T)$ is $T^{ v_{\wt{\mathbf E}}(\overline{x_n})/v_{\wt{\mathbf E}}(\overline{u}_M) }$  times a unit in $\mathbf{A}_M^{[r_M, +\infty]}$ (note that $T^{ v_{\wt{\mathbf E}}(\overline{x_n})/v_{\wt{\mathbf E}}(\overline{u}_M) }$ makes sense since $\overline{x_n}$ belongs to $X_K(M)=k'(\!(\overline{u}_M)\!)$), and so for any $s \geq r$,
\begin{eqnarray*}
\mathcal W_M^{[s, s]}(p^nf_n(T)) &\geq& \mathcal W_M^{[s, s]}(p^nT^{ v_{\wt{\mathbf E}}(\overline{x_n})/v_{\wt{\mathbf E}}(\overline{u}_M) })\\
&\geq&  n+\frac{p-1}{ps}\cdot \inf_{0 \leq i \leq n}\{ w_i(x)-  \frac{(n-i) pr_M}{p-1}\}, \text{ by } \eqref{eqvr} \\
&=& \inf_{0 \leq i \leq n}\{  \frac{p-1}{ps}\cdot w_i(x)+  i  +  (n-i)(1-\frac{r_M}{s}) \}\\
&\geq&  \inf_{0 \leq i \leq n}\{ \frac{p-1}{ps}\cdot w_i(x)+ i\}, \text{ since } s>r_M\\
& \geq & W^{[s, s]}(x).
\end{eqnarray*}
Note that $ \inf_{0 \leq i \leq n}\{ \frac{p-1}{ps}\cdot  w_i(x)+  i  +  (n-i)(1-\frac{r_M}{s}) \}$ converges to $+\infty$ when $n \to +\infty$, so $f(T)=\sum_{n \geq 0}p^nf_n(T)$ converges in $\mathcal{A}_M^{[r, +\infty]}(K_0')$. Clearly $f(u_M)=x$, and $\mathcal{W}_M^{[s, s]}(f(T)) \geq W^{[s, s]}(x).$
\end{proof}

\begin{prop} \label{cor am}
 Suppose $r_\ell>r_M$, then
$$ \mathbf{A}_{M}^{[r_\ell, +\infty]}=W(k')[\![u_M]\!]\{ \frac{p}{u_M^{e'ep^\ell}}\},
\quad   \mathbf{A}_{M}^{[r_\ell, r_k]}=W(k')[\![u_M]\!]\{ \frac{p}{u_M^{e'ep^\ell}} , \frac{u_M^{e'ep^k}}{p} \}  $$
\end{prop}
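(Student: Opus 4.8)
The plan is to deduce Proposition~\ref{cor am} directly from Lemma~\ref{lem inj M} together with the explicit description of the ``Laurent series'' rings $\mathcal{A}_M^{[r_\ell,+\infty]}(K_0')$ and $\mathcal{A}_M^{[r_\ell,r_k]}(K_0')$, in complete analogy with how Proposition~\ref{cor A rl rk} was obtained from Lemma~\ref{lem inj} and Lemma~\ref{lem laurent series}. The first step is to record the analogue of Lemma~\ref{lem laurent series}(1) and~(3) in the $M$-setting: for $r_\ell > r_M$ one has
$$\mathcal{A}_M^{[r_\ell,+\infty]}(K_0') = W(k')[\![T]\!]\bigl\{\tfrac{p}{T^{e'ep^\ell}}\bigr\}, \qquad \mathcal{A}_M^{[r_\ell,r_k]}(K_0') = W(k')[\![T]\!]\bigl\{\tfrac{p}{T^{e'ep^\ell}},\tfrac{T^{e'ep^k}}{p}\bigr\}.$$
These identities are proved exactly as in Lemma~\ref{lem laurent series}: an element $f = \sum_{k\in\mathbb{Z}} a_k T^k$ lies in $\mathcal{A}_M^{[r_\ell,+\infty]}(K_0')$ iff it is holomorphic on the annulus $0 < v_p(T) \le (p-1)/(e'epr_\ell)$, which (since the valuations $\mathcal{W}_M^{[s,s]}$ are computed term-by-term) translates into the convergence condition defining $W(k')[\![T]\!]\{p/T^{e'ep^\ell}\}$; for the closed annulus case one decomposes $f = f^- + f^+$ with $f^- = \sum_{k<0}a_kT^k$ and $f^+ = \sum_{k\ge 0}a_kT^k$ and treats each piece separately, using that $a_k \in W(k')$ forces $f^+$ to already lie in $W(k')[\![T]\!]$.

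The second step is to apply the isometric isomorphisms of Lemma~\ref{lem inj M}. Since $r_\ell > r_M$, part~(2) of that lemma gives ring isomorphisms $\mathcal{A}_M^{[r_\ell,+\infty]}(K_0')\{\cdot\} \simeq \mathbf{A}_M^{[r_\ell,+\infty]}$ and $\mathcal{A}_M^{[r_\ell,r_k]}(K_0') \simeq \mathbf{A}_M^{[r_\ell,r_k]}$ via $f(T)\mapsto f(u_M)$ — more precisely, one should be careful: Lemma~\ref{lem inj M}(1) identifies $\mathcal{A}_M^{[r_\ell,+\infty]}(K_0')$ with $\mathbf{A}_M^{[r_\ell,+\infty]}[1/u_M]$ and then Lemma~\ref{lem inj M}(2) cuts down to the rings without inverting $u_M$. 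Either way, substituting $T\mapsto u_M$ in the two displayed Laurent-series descriptions and noting that $W(k')[\![T]\!]$ maps isomorphically onto $W(k')[\![u_M]\!]$ inside $\mathbf{A}_M$ (this is how $u_M$ was constructed in \S\ref{subsub uM}, as the Hensel lift, so $W(k')[\![u_M]\!]\subset \mathbf{A}_M^{[r_M,+\infty]}$ by Lemma~\ref{lem um var}(1a)) immediately yields
$$\mathbf{A}_M^{[r_\ell,+\infty]} = W(k')[\![u_M]\!]\bigl\{\tfrac{p}{u_M^{e'ep^\ell}}\bigr\}, \qquad \mathbf{A}_M^{[r_\ell,r_k]} = W(k')[\![u_M]\!]\bigl\{\tfrac{p}{u_M^{e'ep^\ell}},\tfrac{u_M^{e'ep^k}}{p}\bigr\}.$$

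The only genuinely delicate point — what I expect to be the main obstacle — is the bookkeeping around the constant $r_M$ and the passage between ``inverting $u_M$ or not.'' The rings $W(k')[\![u_M]\!]\{p/u_M^{e'ep^\ell}\}$ a priori invert $u_M$, so one has to check that the holomorphy condition on the half-open annulus $0 < v_p(T)\le (p-1)/(e'epr_\ell)$ exactly matches the completed-localization presentation, and that this is compatible with the identification $\mathbf{A}_M^{[r_\ell,+\infty]} = \mathbf{A}_M \cap \wt{\mathbf{A}}_M^{[r_\ell,+\infty]}$. This is precisely parallel to the $K_\infty$-case (Prop.~\ref{cor A rl rk}), where the same issue was handled, so I would simply write ``This easily follows from Lemma~\ref{lem inj M} and Lemma~\ref{lem laurent series M}'' after establishing the $M$-analogue of Lemma~\ref{lem laurent series}, exactly as the proof of Prop.~\ref{cor A rl rk} reads ``This easily follows from Lem.~\ref{lem inj} and Lem.~\ref{lem laurent series}.'' In other words, the proof is a one-line reference once the two input lemmas are in place, and the hypothesis $r_\ell > r_M$ (strict, as opposed to $r_\ell \ge r_M$) is what guarantees Lemma~\ref{lem inj M} applies on the nose.
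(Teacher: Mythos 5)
Your proposal is correct and follows exactly the paper's route: the paper's proof of Prop.~\ref{cor am} is precisely the one-line deduction "It follows from Lem.~\ref{lem laurent series M} and Lem.~\ref{lem inj M}," with the explicit $W(k')[\![T]\!]\{\cdot\}$ presentations of the annulus rings supplied by the same term-wise decomposition $f=f^-+f^+$ used in Lem.~\ref{lem laurent series}. Your extra care about the role of $r_\ell>r_M$ and the passage between $\mathbf{A}_M^{[r_\ell,+\infty]}$ and $\mathbf{A}_M^{[r_\ell,+\infty]}[1/u_M]$ is exactly the implicit content the paper relies on.
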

\begin{proof}
It follows from Lem. \ref{lem laurent series M} and Lem. \ref{lem inj M}.
\end{proof}

\begin{corollary} \label{lem aa interM}
Suppose $[r, s] \subset [r', s] \subset (r_M, +\infty]$, then
$\mathbf{A}^{[r, s]}_M  \cap \wt{\mathbf{A}}^{[r', s]} = \mathbf{A}^{[r', s]}_M$.
\end{corollary}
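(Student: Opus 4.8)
The plan is to mimic the proof of Corollary \ref{lem aa inter} (the $M=K_\infty$ case), replacing the role of Proposition \ref{cor A rl rk} with Proposition \ref{cor am} and the uniformizer $u$ with $u_M$. First I would write $[r,s]=[r_\ell, r_k]\subset [r',s]=[r_{\ell'},r_k]$ (after applying a Frobenius we may assume $r_\ell>r_M$ as in the proof of Theorem \ref{thm la M}, so that Proposition \ref{cor am} applies), and take $f\in \mathbf{A}^{[r,s]}_M\cap\wt{\mathbf{A}}^{[r',s]}$. By Proposition \ref{cor am} we can decompose $f=f_1+f_2$ with $f_1\in\mathbf{A}^{[r,+\infty]}_M$ and $f_2\in\mathbf{A}^{[0,s]}_M$; this uses the explicit description $\mathbf{A}_M^{[r_\ell,r_k]}=W(k')[\![u_M]\!]\{p/u_M^{e'ep^\ell},u_M^{e'ep^k}/p\}$ and the ``term-wise'' splitting of Laurent series in $u_M$ (via Lemma \ref{lem inj M} and Lemma \ref{lem laurent series M}), exactly parallel to the unique decomposition $f=f^-+f^+$ in Lemma \ref{lem laurent series}. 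Since $f_2\in\mathbf{A}^{[0,s]}_M\subset\mathbf{A}^{[r',s]}_M$ automatically, it suffices to show $f_1\in\mathbf{A}^{[r',s]}_M$, and in fact I would aim for the stronger $f_1\in\mathbf{A}^{[r',+\infty]}_M$.

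The key step is to prove that $f_1\in\mathbf{A}^{[r',+\infty]}_M$, following the argument of \cite[Lem. II.2.2]{CC98} referenced in the proof of Corollary \ref{lem aa inter}. The idea there is: since $f=f_1+f_2\in\wt{\mathbf{A}}^{[r',s]}$ and $f_2\in\mathbf{A}^{[0,s]}_M\subset\wt{\mathbf{A}}^{[0,s]}$, we get $f_1\in\wt{\mathbf{A}}^{[r',s]}$; combined with $f_1\in\mathbf{A}^{[r,+\infty]}_M$ we have $f_1\in\mathbf{A}^{[r,+\infty]}_M\cap\wt{\mathbf{A}}^{[r',s]}$. Now write $f_1=\sum_{k\geq 0}p^k[a_k]$ with $a_k\in\wt{\mathbf{E}}_M$ (the residue field is $k'(\!(\overline{u}_M)\!)$); the condition $f_1\in\mathbf{A}^{[r,+\infty]}_M$ means $f_1$ is a Laurent series in $u_M$ with coefficients converging appropriately on the annulus with outer radius controlled by $r$, while $f_1\in\wt{\mathbf{A}}^{[r',s]}$ forces, through the valuation estimates $W^{[r',s]}=\inf\{W^{[r',r']},W^{[s,s]}\}$ of Lemma \ref{lem W}, that the negative-power coefficients of the Laurent series in fact decay fast enough to extend holomorphically to the larger annulus with outer radius controlled by $r'$. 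Concretely, one uses the isometry $\mathcal W_M^{[s,s]}(g(T))=W^{[s,s]}(g(u_M))$ of Lemma \ref{lem inj M}(1) to transfer the problem to the Laurent-series rings $\mathcal A_M^{\bullet}(K_0')$ of Lemma \ref{lem laurent series M}, where the statement becomes the transparent fact that a Laurent series holomorphic on an annulus and, after removing its ``holomorphic-at-$0$'' part, still bounded on a thicker annulus, is holomorphic on the thicker annulus.

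Finally I would assemble: $f=f_1+f_2$ with $f_1\in\mathbf{A}^{[r',+\infty]}_M\subset\mathbf{A}^{[r',s]}_M$ and $f_2\in\mathbf{A}^{[0,s]}_M\subset\mathbf{A}^{[r',s]}_M$, hence $f\in\mathbf{A}^{[r',s]}_M$; the reverse inclusion $\mathbf{A}^{[r',s]}_M\subset\mathbf{A}^{[r,s]}_M\cap\wt{\mathbf{A}}^{[r',s]}$ is immediate from $[r,s]\subset[r',s]$ and the definitions. I expect the only real obstacle to be the bookkeeping in the second step — carefully tracking which valuation controls the negative versus positive Tate coefficients of $f_1$, and confirming that the decomposition in Proposition \ref{cor am} interacts correctly with the intersection $\cap\,\wt{\mathbf{A}}^{[r',s]}$; but since this is formally identical to the $K_\infty$ case already handled via \cite[Lem. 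II.2.2]{CC98}, and Proposition \ref{cor am} supplies the needed explicit structure of $\mathbf{A}_M^I$ in terms of $u_M$, no new idea should be required. (One subtlety worth flagging: Proposition \ref{cor am} requires $r_\ell>r_M$ rather than $r_\ell\geq r_M$, but this is harmless since we may always pass to $p I$ via Frobenius, which is valuation-preserving and commutes with taking $\gal(\overline K/M)$-invariants.)
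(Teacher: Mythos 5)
Your proposal is correct and follows essentially the same route as the paper, whose proof simply says the statement ``is similar to Cor.~\ref{lem aa inter}, by using Prop.~\ref{cor am}'': decompose $f=f_1+f_2$ into negative- and non-negative-power parts of the Laurent series in $u_M$ via the explicit description in Prop.~\ref{cor am}, observe the non-negative part lies in $\mathbf{A}^{[r',s]}_M$ for free, and upgrade $f_1$ to $\mathbf{A}^{[r',+\infty]}_M$ by the argument of \cite[Lem.~II.2.2]{CC98} transferred through Lem.~\ref{lem inj M}. The only cosmetic point is that the Frobenius reduction you flag is unnecessary here, since the hypothesis $[r',s]\subset(r_M,+\infty]$ already gives $r\geq r'>r_M$.
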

\begin{proof}
This is similar to Cor. \ref{lem aa inter}, by using Prop. \ref{cor am}.
\end{proof}

\begin{lemma}\label{lem unit am}
Suppose $r >r_M$.
If $x \in \mathbf{A}_M^{[r, +\infty]}[1/u_M]$ and $x \in (\wt{\mathbf{A}}^{[r, +\infty]})^\times$, then $x\in (\mathbf{A}_M^{[r, +\infty]})^{\times}$.
\end{lemma}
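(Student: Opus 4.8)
The statement to prove is Lemma~\ref{lem unit am}: if $r>r_M$, $x\in\mathbf{A}_M^{[r,+\infty]}[1/u_M]$ and $x\in(\wt{\mathbf{A}}^{[r,+\infty]})^\times$, then $x\in(\mathbf{A}_M^{[r,+\infty]})^\times$. The natural strategy is to combine the explicit description of elements of $\mathbf{A}_M^{[r,+\infty]}[1/u_M]$ coming from Lemma~\ref{lem inj M} with the unit criterion of Lemma~\ref{lem units}, which already characterizes units of $\wt{\mathbf{A}}^{[r,+\infty]}$ in terms of the Teichm\"uller expansion $x=\sum_{k\geq0}p^k[a_k]$: namely $v_{\wt{\mathbf{E}}}(a_0)=0$ and $k+\tfrac{p-1}{pr}w_k(x)>0$ for all $k>0$.

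First I would write $x=\sum_{n\geq0}p^n s(\overline{x_n})$ using the section $s$ and the approximating sequence of \S\ref{subsub sec s}--\S\ref{sub appr}, so that $x\in\mathbf{A}_M^{[r,+\infty]}[1/u_M]$ gives, by Lemma~\ref{lem inj M}, a Laurent series $f(T)=\sum_{k\in\mathbb Z}a_kT^k\in\mathcal{A}_M^{[r,+\infty]}(K_0')$ with $f(u_M)=x$. The hypothesis that $x$ is a unit of $\wt{\mathbf{A}}^{[r,+\infty]}$, via Lemma~\ref{lem units}, forces the ``leading term'' of $x$ to be a unit: more precisely, writing $x=\sum_k p^k[b_k]$ in Teichm\"uller coordinates over $\wt{\mathbf{E}}$, we have $v_{\wt{\mathbf{E}}}(b_0)=0$. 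Translating back through the isomorphism of Lemma~\ref{lem inj M} and the description in \S\ref{subsub sec s} (where $s(\overline{x})/[\overline{u}_M]^b$ is a unit in $\mathbf{A}_M^{[r_M,+\infty]}$), this means that after dividing by a suitable power $u_M^{b}$ the element $x$ becomes a series whose constant Teichm\"uller term is a unit of the residue field $k'$; equivalently, $x=u_M^{b}\cdot v$ where $v\in\mathbf{A}_M^{[r,+\infty]}$ has $\overline{v}$ a unit in $k'[\![\overline{u}_M]\!]$, hence $\overline v\in k'[\![\overline{u}_M]\!]^\times$.

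The heart of the argument is then to show $v\in(\mathbf{A}_M^{[r,+\infty]})^\times$ and $u_M^{b}\in(\mathbf{A}_M^{[r,+\infty]})^\times$ when $b=0$ (i.e.\ $b$ must be $0$ for $x$ to be a unit in $\wt{\mathbf{A}}^{[r,+\infty]}$, since $u_M$ is not invertible there by Lemma~\ref{lem um var}(1a)—indeed $u_M\in\mathbf{A}_M^{[r_M,+\infty]}$ is a topologically nilpotent-type element, not a unit, exactly as $[\overline\pi]$ is not a unit in $\wt{\mathbf{A}}^{[r,+\infty]}$). So the hypothesis $x\in(\wt{\mathbf{A}}^{[r,+\infty]})^\times$ already pins down $b=0$, and it remains to invert $v$ with $\overline v$ a unit. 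For this I would run the standard successive-approximation (Hensel-type) argument: since $\overline v\in k'[\![\overline{u}_M]\!]^\times=(\mathbf{E}_M^+)^\times$, we can find $v_0\in\mathbf{A}_M^{[r,+\infty]}$ with $\overline{v_0\cdot v}=1$, so $v v_0=1-pw$ for some $w$; one then needs $\sum_{n\geq0}(pw)^n$ to converge in $\mathbf{A}_M^{[r,+\infty]}$ for the $W^{[r,r]}$-topology (using $W^{[r,r]}(p)=1>0$ and completeness of $\mathbf{A}_M^{[r,+\infty]}[1/u_M]$, resp.\ $\mathbf{A}_M^{[r,+\infty]}$, from Lemma~\ref{lem um var}(1) combined with Lemma~\ref{lem W}\eqref{item comp} applied with $u_M$ in place of $u$). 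Alternatively, and probably cleaner, I would transport the whole question through the isomorphism $\mathcal{A}_M^{[r,+\infty]}(K_0')\simeq\mathbf{A}_M^{[r,+\infty]}[1/u_M]$ of Lemma~\ref{lem inj M}: a holomorphic Laurent series $f$ on the annulus $0<v_p(T)\leq(p-1)/(e'epr)$ is a unit in the ring of \emph{bounded-by-$1$} such series with constant-term-a-unit precisely under the condition read off from its Newton polygon, and one checks that Lemma~\ref{lem units}'s conditions on the Teichm\"uller expansion are equivalent to the Newton polygon of $f$ being flat of slope $0$ over this annulus, which is exactly the condition for $1/f$ to again lie in $\mathcal{A}_M^{[r,+\infty]}(K_0')$ (not merely in $\mathcal{A}_M^{[r,+\infty]}(K_0')[1/T]$)—and hence for $1/x$ to lie in $\mathbf{A}_M^{[r,+\infty]}$ rather than $\mathbf{A}_M^{[r,+\infty]}[1/u_M]$.

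\textbf{Main obstacle.} The delicate point is the passage between the two ``models'': the Teichm\"uller expansion $x=\sum_k p^k[b_k]$ over $\wt{\mathbf{E}}$ (in which Lemma~\ref{lem units} is phrased) and the Laurent-series model $f(T)=\sum_k a_kT^k$ over $W(k')$ (in which $\mathbf{A}_M^{[r,+\infty]}$ is described), since these coordinates do not match term-by-term and the key inequality \eqref{strictine} is only a \emph{strict} inequality. One must check carefully that the unit condition ``$v_{\wt{\mathbf{E}}}(b_0)=0$ and $k+\tfrac{p-1}{pr}w_k(x)>0$ for $k>0$'' is preserved/detected correctly under this change of coordinates, i.e.\ that it translates into the Newton-polygon-flat-of-slope-$0$ condition on $f$ and that this in turn is inherited by $f^{-1}$. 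This is essentially a repackaging of \cite[Lem.~5.9]{Col08} (cited in the proof of Lemma~\ref{lem units}) together with the approximation estimates of \S\ref{sub appr}, so I expect the proof to be short once the bookkeeping is set up; the lemma statement itself anticipates this by saying only ``Suppose $r>r_M$'', which is exactly what is needed to run the section-$s$ machinery.
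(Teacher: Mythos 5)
Your overall skeleton matches the paper's: expand $x=\sum_{n\geq0}p^ns(\overline{x_n})$ via \S\ref{sub appr}, use Lem.~\ref{lem units} to see $v_{\wt{\mathbf{E}}}(\overline{x_0})=0$ (hence $s(\overline{x_0})$ is already a unit of $\mathbf{A}_M^{[r,+\infty]}$ by the description in \S\ref{subsub sec s}), and then invert $1+(\text{small})$. But the step where you actually invert is where your argument breaks, and you have not closed the gap you yourself flag.

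Concretely, in your Hensel-type route you write $vv_0=1-pw$ and want $\sum_{n}(pw)^n$ to converge for the $W^{[r,r]}$-topology. Two problems. First, $w=(1-vv_0)/p$ need not lie in $\mathbf{A}_M^{[r,+\infty]}$: if $1-vv_0=\sum_{k\geq1}p^k[c_k]\in\wt{\mathbf{A}}^{[r,+\infty]}$, the membership condition only gives $k+\frac{p-1}{pr}v_{\wt{\mathbf{E}}}(c_k)\geq0$, and after dividing by $p$ this becomes $k+\frac{p-1}{pr}v_{\wt{\mathbf{E}}}(c_{k+1})\geq-1$, which does not put $w$ back in $\wt{\mathbf{A}}^{[r,+\infty]}$. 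Second, and decisively, $W^{[r,r]}(pw)=W^{[r,r]}(1-vv_0)$ is only $\geq0$, not $>0$ (the infimum defining it can be attained at $0$), so the geometric series need not converge. The paper avoids this by estimating each term of the expansion directly: combining \eqref{eqvr} (equivalently, the computation in the proof of Lem.~\ref{lem inj M}) with Lem.~\ref{lem units} gives, for $n\geq1$,
$$W^{[r,r]}\bigl(p^ns(\overline{x_n})\bigr)\;\geq\;\inf_{0\leq i\leq n}\Bigl\{\tfrac{p-1}{pr}w_i(x)+i+(n-i)\bigl(1-\tfrac{r_M}{r}\bigr)\Bigr\}\;>\;0,$$
where the $i=0$ term is handled by $w_0(x)=0$ together with $n(1-r_M/r)>0$ (this is exactly where $r>r_M$ and $n\geq1$ enter), and the $i\geq1$ terms by condition (3) of Lem.~\ref{lem units}. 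Since these quantities also tend to $+\infty$, the element $y=\sum_{n\geq1}p^ns(\overline{x_n})/s(\overline{x_0})$ satisfies $W^{[r,r]}(y)>0$, so $(1+y)^{-1}$ converges \emph{in the completion} $\mathbf{A}_M^{[r,r]}$; one then needs Cor.~\ref{lem aa interM} to descend back to $\mathbf{A}_M^{[r,+\infty]}$. Your appeal to ``completeness of $\mathbf{A}_M^{[r,+\infty]}$ for the $W^{[r,r]}$-topology'' is not established in the paper and is not how the last step is done. Your alternative Newton-polygon route is a correct reformulation of the target, but verifying that the Teichm\"uller-side unit criterion of Lem.~\ref{lem units} implies the coefficient-side criterion for $f$ is precisely the displayed estimate above, so it does not bypass the missing computation.
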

\begin{proof}
Let $\{x_n\}$ be the sequence constructed in \S \ref{sub appr}, and so $x=\sum_{n \geq 0}p^n s(\overline{x_n})$.
By Lem. \ref{lem units}, $v_{\wt{\mathbf E}}(\overline{x_0})=0$, and so $s(\overline{x_0}) \in (\mathbf{A}_M^{[r, +\infty]})^{\times}$.
It then suffices to show that $1+y \in (\mathbf{A}_M^{[r, +\infty]})^{\times}$, where $y=\sum_{n \geq 1}p^n s(\overline{x_n})/s(\overline{x_0})$.
As we calculated in the proof of Lem. \ref{lem inj M},
$$W^{[r, r]} (p^n s(\overline{x_n})) \geq  \inf_{0 \leq i \leq n}\{  \frac{p-1}{pr}\cdot w_i(x)+ i  + (n-i)(1-\frac{r_M}{r}) \} >0,$$
where the final inequality uses $n \geq 1$ and Lem. \ref{lem units}.
And since $W^{[r, r]} (p^n s(\overline{x_n})) \to +\infty$ when $n \to +\infty$, so $W^{[r, r]}(y)>0$, and   $(1+y)^{-1} \in  \mathbf{A}_M^{[r, r]}$. Thus by Cor. \ref{lem aa interM}, we can conclude that $(1+y)^{-1} \in  \mathbf{A}_M^{[r, r]}\cap  \wt{\mathbf{A}}_M^{[r, +\infty]} =\mathbf{A}_M^{[r, +\infty]}$.
\end{proof}

\section{Computation of $\hat{G}$-locally analytic vectors} \label{sec Ghat la}
In this section, we compute the $\hat{G}$-locally analytic vectors in $\wt{\mathbf{B}}_L^I$. The strategy is very similar to \cite[Thm. 5.4]{Ber16}: we need to find a ``formal variable" (denoted as $b$ in the following) which plays the role of $\mathbf{y}$ in \cite[Thm. 5.4]{Ber16} (or of $\alpha$ in Prop. \ref{loc ana in L}(1)). Indeed, the discovery of $b$ is the key observation for our calculations.
In the following, we define $b$, and then use Tate's normalized traces to build an approximating sequence $b_n$, and use them to determine the set of $\hat{G}$-locally analytic vectors in $\wt{\mathbf{B}}_L^I$.

\subsection{The element $b$}
Let $\lambda :=\prod_{n \geq 0} \varphi^n(\frac{E(u)}{E(0)}) \in \mathbf{B}_{K_\infty}^{[0,+\infty)}$.
Let $b:= \frac{t}{p\lambda}$, then $b$ is precisely the $\mathfrak{t}$ in \cite[Example 3.2.3]{Liu08}, and $b \in \wt{\mathbf{A}}_L^+$.
Since $\wt{\mathbf{B}}_L^\dagger$ is a field (\cite[Prop. 5.12]{Col08}), there exists some $r(b)>0$ such that $1/b \in \wt{\mathbf{B}}_L^{[r(b), +\infty]}$.

\begin{lemma} \label{lem b}
If $r_\ell \geq r(b)$, then $b, 1/b \in (\wt{\mathbf{B}}^{[r_\ell, r_k]}_{ L})^{\hat{G}\dla}$.
\end{lemma}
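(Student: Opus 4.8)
The plan is to show that $b$ is a $\hat{G}$-locally analytic vector by analyzing the $\hat{G}$-action on it explicitly, and then deduce the statement for $1/b$ from Lemma \ref{ringlocan}(2). First I would recall that $b = t/(p\lambda)$, where $t = \log[\underline\varepsilon]$ and $\lambda = \prod_{n \geq 0}\varphi^n(E(u)/E(0))$. The $\hat{G}$-action on $t$ is easy: $g(t) = \chi(g) t$, so $t$ is a locally analytic vector (indeed $\nabla_\tau(t) = 0$ and $\nabla_\gamma(t) = t$, and the action factors through the cyclotomic character). For $\lambda$, note that $\lambda \in \mathbf{B}_{K_\infty}^{[0,+\infty)}$, which is a subring of $\wt{\mathbf{B}}^{[0,+\infty)}_{K_\infty}$; so $\lambda$ (being $G_\infty$-invariant) is fixed by $\tau$, and by Theorem \ref{thm loc ana gamma 1}(4) one sees $\lambda \in (\wt{\mathbf{B}}_L^{[0,+\infty)})^{\tau\dpa, \gamma=1} \subset (\wt{\mathbf{B}}_L^{[0,+\infty)})^{\hat{G}\dpa}$. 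Restricting to the interval $[r_\ell, r_k]$, this shows $\lambda$ is a $\hat{G}$-locally analytic vector of $\wt{\mathbf{B}}^{[r_\ell, r_k]}_L$.

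Next I would address the subtlety that $b = t/(p\lambda)$ involves dividing by $\lambda$, and $\lambda$ is not a unit in $\wt{\mathbf{A}}^+_L$ (it has zeros, corresponding to the $\varphi^n(E(u))$). The point is that $b$ itself lies in $\wt{\mathbf{A}}^+_L$ — this is the content of $b = \mathfrak{t}$ being in $\wt{\mathbf{A}}^+_L$ as recalled from \cite[Example 3.2.3]{Liu08} — so there is no genuine division problem; rather, one must verify local analyticity of $b$ directly rather than as a quotient. The cleanest route is to compute $g(b)/b$ for $g \in \hat{G}$ close to $1$. Since $g(t) = \chi(g) t$ and $g(\lambda)/\lambda$ is a locally analytic $\wt{\mathbf{B}}^{[r_\ell, r_k]}_L$-valued function of $g$ (being a ratio of two locally analytic vectors, one of which, $\lambda$, is invertible in $\wt{\mathbf{B}}^{[r_\ell, +\infty]}_L$ for $r_\ell \geq r(b)$, hence in $\wt{\mathbf{B}}^{[r_\ell, r_k]}_L$), we get that $g \mapsto g(b) = b \cdot \chi(g) \cdot (\lambda/g(\lambda))$ is a locally analytic $\wt{\mathbf{B}}^{[r_\ell, r_k]}_L$-valued function. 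Applying Lemma \ref{lem Zp an} and Lemma \ref{lem taugamma} (after checking $\tau$-analyticity and $\gamma$-analyticity separately, using that both $\nabla_\tau$ and $\nabla_\gamma$ act on $b$ in a controlled way: $\nabla_\tau(b)$ and $\nabla_\gamma(b)$ both lie in $b \cdot \wt{\mathbf{A}}^+_L$, as follows from the corresponding statements for $t$ and $\lambda$), one concludes $b \in (\wt{\mathbf{B}}^{[r_\ell, r_k]}_L)^{\hat{G}\dla}$.

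Finally, since $1/b \in \wt{\mathbf{B}}^{[r(b), +\infty]}_L \subset \wt{\mathbf{B}}^{[r_\ell, r_k]}_L$ (using $r_\ell \geq r(b)$ and the natural embeddings of \S \ref{subsubcontain}), and $b$ is invertible in this ring, Lemma \ref{ringlocan}(2) — applied to the Banach ring $\wt{\mathbf{B}}^{[r_\ell, r_k]}_L$ with the sub-multiplicative valuation $W^{[r_\ell, r_k]}$ — immediately yields $1/b \in (\wt{\mathbf{B}}^{[r_\ell, r_k]}_L)^{\hat{G}\dla}$ as well. The main obstacle I anticipate is the careful bookkeeping of radii of analyticity: one needs to ensure that the local analyticity of $\lambda$ (coming from Theorem \ref{thm loc ana gamma 1}(4), which is stated for $\tau\dpa$ vectors on the Fréchet/LF-level) descends correctly to a genuine $\hat{G}_n$-analytic statement on the Banach piece $\wt{\mathbf{B}}^{[r_\ell, r_k]}_L$ for some explicit $n$, and that the division by $\lambda$ inside $\wt{\mathbf{B}}^{[r_\ell, +\infty]}_L$ does not degrade the radius in an uncontrolled way. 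This should follow from Lemma \ref{lem 2.4BC} together with the fact that $\lambda$ is a unit on $[r_\ell, +\infty]$, but it is the step requiring the most care.
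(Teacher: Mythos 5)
There is a genuine gap, and it sits exactly where you flag the "division by $\lambda$" issue and then dismiss it. Your argument for $b$ rests on the claim that $\lambda$ is invertible in $\wt{\mathbf{B}}^{[r_\ell, +\infty]}_L$ once $r_\ell \geq r(b)$, but this is false: by Lem.~\ref{prop theta ker}, each factor $\varphi^n(E(u))$ of $\lambda$ generates $\Ker(\theta\circ\iota_n)$, so $\varphi^n(E(u))$ (hence $\lambda$) is a non-unit in $\wt{\mathbf{B}}^{[r_\ell, r_k]}_L$ for every $n$ with $\ell \leq n \leq k$, and $1/\lambda$ lies in none of these rings no matter how large $r_\ell$ is. The constant $r(b)$ controls where the \emph{ratio} $1/b = p\lambda/t$ lives (the zeros of $\lambda$ being absorbed by zeros of $t$), not where $1/\lambda$ lives. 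Consequently the identity $g(b) = \chi(g)\, b\cdot \lambda/g(\lambda)$ is only an identity in the fraction field, and your route to analyticity of $g \mapsto g(b)$ — which needs $g \mapsto 1/g(\lambda)$ to be an analytic function valued in the Banach ring — collapses. The subsequent claims ($\nabla_\tau(b) \in b\cdot\wt{\mathbf{A}}^+_L$, control of radii) are asserted rather than proved, and even granting them, well-definedness of $\nabla_\tau(b)$ would not by itself give the convergence of the full analytic expansion required by Lem.~\ref{subsub an}/\ref{lem Zp an}.

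The fix is to reverse the order of deduction. After reducing to $\tau$-local analyticity (your reduction via the cyclotomic character is fine, modulo the remark that Lem.~\ref{lem taugamma} must be adapted since $b$ is not $\gamma$-invariant but transforms by $\chi$), one should treat $1/b$ \emph{first}: $t\cdot(1/b) = p\lambda$ lies in $\mathbf{B}^{[0,+\infty)}_{K_\infty}$ restricted to $I$, hence is a $\tau_n$-analytic vector for some $n$ by Thm.~\ref{thm loc ana gamma 1}, and Lem.~\ref{lem div ana}(\ref{item-divide-t}) (the ``divide by $t$'' lemma) then gives $1/b \in (\wt{\mathbf{B}}^{I}_L)^{\tau_n\dan}$. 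Since $b \in \wt{\mathbf{A}}^+_L$, the element $1/b$ is a unit of $\wt{\mathbf{B}}^{I}_L$, and Lem.~\ref{ringlocan}(2) then yields that $b = (1/b)^{-1}$ is locally analytic as well. Note that in this corrected order your use of Lem.~\ref{ringlocan}(2) is the right tool, but applied to the wrong element: it must be used to pass from $1/b$ to $b$, not the other way around, because only $1/b$ admits a direct analyticity proof.
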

\begin{proof}
Since $\gamma$ acts on $b$ (resp. $1/b$) via cyclotomic character (resp. inverse of cyclotomic character), it suffices to show that $b$ (resp. $1/b$) is $\tau$-locally analytic (cf. the argument in Lem. \ref{lem taugamma}). The result for $1/b$ follows from Lem. \ref{lem div ana}(3). Then Lem. \ref{ringlocan}(2) implies that $b$ is also locally analytic.
\end{proof}

\begin{rem}
\begin{enumerate}
  \item It seems likely that $b \in (\wt{\mathbf{B}}^{[r, s]}_{ L})^{\hat{G}\dla}$ for any $[r, s] \in [0, +\infty)$, just as the element $t/(\varphi^k(E(u)))$ in Lem. \ref{lem div ana}(\ref{item-m0}); but we do not know how to prove it.
  \item The result that $b \in (\wt{\mathbf{B}}^{[r, s]}_{ L})^{\hat{G}\dla}$ for $r \geq r(b)$ implies easily that $t/(\varphi^k(E(u))) \in (\wt{\mathbf{B}}^{[r, s]}_{ L})^{\hat{G}\dla}$ for $r \geq r(b)$, because the element $\lambda/(\varphi^k(E(u)))$ is locally analytic; this (partial) proof of Lem. \ref{lem div ana}(\ref{item-m0}) avoids use of Lem. \ref{lem Zp an}.
      However, we need the full result of  Lem. \ref{lem div ana}(\ref{item-m0}) for the calculation in Thm. \ref{thm loc ana gamma 1}.
\end{enumerate}
\end{rem}

\subsection{Tate's normalized traces}
Recall (see e.g., \cite[\S 5.1]{Col08}) that the weak topology on $\wt{\mathbf{A}}$ is the one defined by the semi-valuations $w_k$, for $k \in \mathbb{N}$, meaning that $x_n \rightarrow x$ for the weak topology in $\wt{\mathbf{A}}$ if and only if for all $k \in \mathbb{N}$, $w_k(x_n-x) \rightarrow +\infty$.
In particular, the set $\{p^n\wt{\mathbf{A}}+u^k\wt{\mathbf{A}}^+\}_{n, k \geq 0}$ forms a basis of neighbourhoods of $0$ in $\wt{\mathbf{A}}$ for the weak topology. The following lemma is very useful.

\begin{lemma} \label{lem weak rr}
Let $r'>0$ and $x_n \in \wt{\mathbf A}^{[r', +\infty]}, \forall n \geq 1$. Suppose $x_n \to 0$ in $\wt{\mathbf A}$ with respect to the weak topology. Then for any $r' < s <+\infty$ (note that it is critical $s \neq r'$),
$x_n \to 0$ in $\wt{\mathbf A}^{[s, +\infty]}$ with respect to the $W^{[s, s]}$-topology.
\end{lemma}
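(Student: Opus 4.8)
The statement is a comparison between the weak topology on $\wt{\mathbf{A}}$ and the $W^{[s,s]}$-topology on $\wt{\mathbf{A}}^{[s,+\infty]}$, under the standing hypothesis that the sequence $\{x_n\}$ lives in the smaller ring $\wt{\mathbf{A}}^{[r',+\infty]}$ with $r' < s$. I would first recall the explicit description $W^{[s,s]}(x) = \inf_{k \geq k_0}\{k + \tfrac{p-1}{ps}\cdot w_k(x)\}$ from Def.~\ref{defnew}, valid for $x \in \wt{\mathbf{B}}^{[r',+\infty]}$ since $r' \leq s$. So to show $W^{[s,s]}(x_n) \to +\infty$ it suffices to show that $\inf_{k \geq 0}\{k + \tfrac{p-1}{ps}\cdot w_k(x_n)\} \to +\infty$. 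Write $x_n = \sum_{i \geq i_0(n)} p^i[x_{n,i}]$; the key point is that membership in $\wt{\mathbf{A}}^{[r',+\infty]}$ forces $i + \tfrac{p-1}{pr'}\cdot v_{\wt{\mathbf{E}}}(x_{n,i}) \to +\infty$ as $i \to +\infty$, equivalently (rewriting in terms of $w_k = \inf_{i \leq k}\{v_{\wt{\mathbf{E}}}(x_{n,i})\}$) that $k + \tfrac{p-1}{pr'}\cdot w_k(x_n)$ stays bounded below by some constant $C_n$ for all $k$ — but crucially one wants this uniform enough, so I would instead phrase it as: $W^{[r',r']}(x_n) \geq 0$ is automatic (it's an element of $\wt{\mathbf{A}}^{[r',+\infty]}$, whose elements have nonnegative $W^{[r',r']}$-valuation by Lem.~\ref{lem W} and the surrounding remarks), hence $k + \tfrac{p-1}{pr'}\cdot w_k(x_n) \geq 0$ for all $k$.

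\textbf{The interpolation step.} Given $\varepsilon > 0$, I want $W^{[s,s]}(x_n)$ large for $n$ large. The weak convergence $x_n \to 0$ means: for every $N$ and every $K$, there is $n_0$ such that for $n \geq n_0$ we have $w_k(x_n) \geq N$ for all $k \leq K$. Now split the infimum defining $W^{[s,s]}(x_n)$ into $k \leq K$ and $k > K$. For $k \leq K$: $k + \tfrac{p-1}{ps}w_k(x_n) \geq \tfrac{p-1}{ps}N$, which is large once $N$ is large (choice depends only on $\varepsilon, s$). For $k > K$: here I use the a priori bound from $\wt{\mathbf{A}}^{[r',+\infty]}$, namely $w_k(x_n) \geq -\tfrac{pr'}{p-1}k$, so
\begin{equation*}
k + \frac{p-1}{ps}w_k(x_n) \geq k - \frac{r'}{s}k = k\Bigl(1 - \frac{r'}{s}\Bigr) \geq K\Bigl(1 - \frac{r'}{s}\Bigr),
\end{equation*}
and since $r' < s$ the factor $1 - r'/s$ is a fixed positive constant, so this is large once $K$ is chosen large. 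Thus: first pick $K$ so that $K(1 - r'/s) > M$, then pick $N$ so that $\tfrac{p-1}{ps}N > M$, then pick $n_0$ from weak convergence; for $n \geq n_0$ we get $W^{[s,s]}(x_n) > M$. This proves $x_n \to 0$ in the $W^{[s,s]}$-topology.

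\textbf{Main obstacle.} The only real subtlety is making sure the ``a priori bound'' $w_k(x_n) \geq -\tfrac{pr'}{p-1}k$ holds \emph{uniformly in $n$} without any hidden $n$-dependent constant sneaking in — and this is exactly where the hypothesis $x_n \in \wt{\mathbf{A}}^{[r',+\infty]}$ (as opposed to $\wt{\mathbf{B}}^{[r',+\infty]}$, i.e.\ allowing denominators $p^{-j}$) is used: integrality gives $W^{[r',r']}(x_n) \geq 0$ with the \emph{same} constant $0$ for every $n$, hence the clean inequality $k + \tfrac{p-1}{pr'}w_k(x_n) \geq 0$ for all $k$ and all $n$. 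This is why the lemma is stated for $\wt{\mathbf{A}}$ and not $\wt{\mathbf{B}}$. The condition $s \neq r'$ is essential for the same reason it appears in the statement: when $s = r'$ the factor $1 - r'/s$ vanishes and the tail estimate collapses, reflecting that $\wt{\mathbf{A}}^{[r',+\infty]}$ is not even complete for $W^{[r',r']}$ (its completion is the strictly larger $\wt{\mathbf{A}}^{[r',r']}$). I expect no further difficulties; the argument is a bookkeeping exercise once the two-regime split and the uniform integrality bound are in place.
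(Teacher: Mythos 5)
Your proof is correct, and it is essentially a written-out version of the paper's argument: the paper simply cites \cite[Prop.\ 5.8]{Col08} with the constant ``$C$'' there taken to be $0$, and your uniform bound $W^{[r',r']}(x_n)\geq 0$ (hence $w_k(x_n)\geq -\tfrac{pr'}{p-1}k$ for all $n$) is exactly that ``$C=0$'' input, while your two-regime split of the infimum over $k\leq K$ and $k>K$, with the gap $1-r'/s>0$ controlling the tail, is the content of Colmez's proposition. Nothing is missing; your version has the advantage of being self-contained.
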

\begin{proof}
This is implied by \cite[Prop. 5.8]{Col08}. Indeed, we can let the ``$C$" in \emph{loc. cit.} to be 0 (see the proof of our Lem. \ref{lem W} for comparison of notations).
\end{proof}

In this subsection, we let $K_\infty \subset M \subset L$ where $M/K_\infty$ is a finite extension.
For $n \geq 1$ and $I$ an interval, let
$$\mathbf{A}_{M,n}:=\varphi^{-n}(\mathbf{A}_M), \quad \mathbf{A}_{M,n}^{I}:=\varphi^{-n}(\mathbf{A}_M^{p^nI}).$$
Denote $J :=  \Z[1/p] \cap [0,1)$ and for $n \in \mathbb N$, let $J_n := \{i \in J : v_p(i) \geq -n\}$.

\begin{lemma}~ \label{lem ap1}
\begin{enumerate}
\item Every element $x \in \mathbf{E}_{M,n}:=\varphi^{-n}(\mathbf{E}_{M})$ admits a unique expression $x = \sum_{i \in J_n}u^ia_i(x)$ where $a_i(x) \in \mathbf{E}_M$.

\item Every element $x \in \wt{\mathbf{E}}_M$ admits a unique expression $x = \sum_{i \in J}u^ia_i(x)$ where $a_i(x) \in \mathbf{E}_M$ and $a_i(x) \rightarrow 0$ (here convergence is with respect to the usual co-finite filter; i.e., with respect to any ordering of $J$).

\item Every element $x \in \mathbf{A}_{M,n}$ admits a unique expression $x = \sum_{i \in J_n}u^ia_i(x)$ where $a_i(x) \in \mathbf{A}_M$.

\item Every element $x \in \wt{\mathbf{A}}_M$ admits a unique expression $x = \sum_{i \in J}u^ia_i(x)$ where $a_i(x) \in \mathbf{A}_M$ and $a_i(x) \rightarrow 0$ for the weak topology.
\end{enumerate}
\end{lemma}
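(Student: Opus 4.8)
The plan is to prove the four statements of Lemma \ref{lem ap1} essentially in parallel, deriving (2) and (4) from (1) and (3) by a limiting argument, and to prove (1) and (3) by the standard ``$\varphi$-basis'' argument for unramified (or, in the mixed characteristic case, $p$-adically étale) extensions combined with the decomposition of $\mathbf{E}_{M,n}$ (resp.\ $\mathbf{A}_{M,n}$) over $\mathbf{E}_M$ (resp.\ $\mathbf{A}_M$) provided by the field-of-norms picture in \S\ref{subsub uM}. First I would treat (1): the extension $\mathbf{E}_{M,n}/\mathbf{E}_M$ is obtained by adjoining $\varphi^{-n}(u)=u^{1/p^n}$, and since $\mathbf{E}_M$ is a field of characteristic $p$ with $\varphi$ injective, $\{u^i : i \in J_n\}$ is a basis of $\mathbf{E}_{M,n}$ as an $\mathbf{E}_M$-vector space; this gives existence and uniqueness of $x=\sum_{i \in J_n} u^i a_i(x)$ at once. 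Here one uses that $J_n = \mathbb{Z}[1/p] \cap [0,1)$ with $v_p(i) \geq -n$ indexes exactly the monomials $u^i$ with $0 \le i < 1$ of ``denominator'' dividing $p^n$, and that $u^{p^n i}$ runs over a set of coset representatives of $\mathbf{E}_{M}^{\times}$-translates.

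Next I would deduce (3) from (1) by the usual dévissage modulo $p$. Given $x \in \mathbf{A}_{M,n}$, reduce mod $p$ to get $\bar x \in \mathbf{E}_{M,n}$, write $\bar x = \sum_{i\in J_n} u^i \bar a_i$ by (1), lift each $\bar a_i$ to $a_i^{(0)} \in \mathbf{A}_M$ (e.g.\ via Teichmüller-type or any set-theoretic section that is $\mathbf{A}_M$-compatible), set $x^{(1)} := p^{-1}(x - \sum_{i} u^i a_i^{(0)}) \in \mathbf{A}_{M,n}$, and iterate; then $a_i(x) := \sum_{m \ge 0} p^m a_i^{(m)} \in \mathbf{A}_M$ converges $p$-adically and $x = \sum_{i \in J_n} u^i a_i(x)$. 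Uniqueness follows because if $\sum_i u^i a_i(x) = 0$ with $a_i(x) \in \mathbf{A}_M$, then reducing mod successive powers of $p$ and invoking the uniqueness in (1) forces all $a_i(x) \in p^m \mathbf{A}_M$ for all $m$, hence zero. The only subtlety is to fix a section $\mathbf{E}_M \to \mathbf{A}_M$ once and for all so that the coefficients genuinely land in $\mathbf{A}_M$ and not in $\mathbf{A}_{M,n}$; this is available since $\mathbf{A}_M$ is a Cohen ring for $\mathbf{E}_M$ (cf.\ the section $s$ of \S\ref{subsub sec s}).

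For (2) and (4) I would pass to the completion. Since $\wt{\mathbf{E}}_M$ is the completion of $\bigcup_n \mathbf{E}_{M,n} = \mathbf{E}_M^{\mathrm{perf}}$ (within $\wt{\mathbf{E}}$) with respect to $v_{\wt{\mathbf{E}}}$, an element $x \in \wt{\mathbf{E}}_M$ is a limit of $x^{(n)} \in \mathbf{E}_{M,n}$; writing each $x^{(n)} = \sum_{i \in J_n} u^i a_i(x^{(n)})$ and using that $v_{\wt{\mathbf{E}}}(u^i a_i) = i v_{\wt{\mathbf{E}}}(u) + v_{\wt{\mathbf{E}}}(a_i)$ are ``spread out'' over distinct residues of $i \bmod 1$ (the key point, exactly as in \eqref{w0}), the coefficients $a_i(x^{(n)})$ converge coefficientwise to elements $a_i(x) \in \mathbf{E}_M$ with $a_i(x) \to 0$ along the co-finite filter on $J$, and $x = \sum_{i \in J} u^i a_i(x)$; uniqueness is again immediate from the disjointness of valuations of distinct terms. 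Statement (4) is the $W(\cdot)$-analogue: the weak topology on $\wt{\mathbf{A}}_M$ is defined by the $w_k$, and $w_k(u^i a_i(x))$ for distinct $i$ are distinct (argument identical to \eqref{w0}), so convergence in the weak topology of $\sum_i u^i a_i(x)$ is equivalent to $w_k$-convergence of each coefficient sequence, i.e.\ $a_i(x) \to 0$ weakly; existence comes by the same Teichmüller/section dévissage as in (3) applied levelwise and passing to the limit, and uniqueness from the disjointness of the $w_k$-values. The main obstacle, and the only place requiring care, is keeping track of the two topologies simultaneously in (4) — namely checking that the coefficientwise weak limit of the $a_i$'s exists and that the resulting series reconverges to $x$ in the weak topology — but this is exactly the content of the computation around \eqref{w0}, which shows the ``term-by-term'' nature of $w_k$ on such expansions, so I expect no genuine difficulty beyond bookkeeping.
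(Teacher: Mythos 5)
Your overall architecture (finite-level linear algebra plus $p$-adic d\'evissage for (1) and (3), then a completion/limit argument for (2) and (4)) is the standard one and is the route the paper intends via its citation of Colmez. Parts (1) and (3) are essentially fine, except that the correct reason $\{u^i\}_{i\in J_n}$ is a basis of $\mathbf{E}_{M,n}$ over $\mathbf{E}_M$ is that $\mathbf{E}_M/\mathbf{E}_{K_\infty}$ is \emph{separable} (field of norms), hence linearly disjoint from the purely inseparable extension $\varphi^{-n}(\mathbf{E}_{K_\infty})=\mathbf{E}_{K_\infty}(u^{1/p^n})$; ``$\varphi$ injective'' by itself does not give this.

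The genuine gap is in the step you yourself single out as the key point for (2) and (4): the claim that distinct terms $u^ia_i$, $i\in J$, with $a_i\in\mathbf{E}_M$ (resp.\ $\mathbf{A}_M$), have distinct valuations (resp.\ distinct $w_k$). That is what \eqref{w0} gives when the coefficients are Teichm\"uller lifts of elements of $k$, i.e.\ for $M=K_\infty$; it is \emph{false} for general $M$. Indeed $v_{\wt{\mathbf{E}}}(a_i)\in\frac{1}{ee'}\mathbb{Z}$ for $a_i\in\mathbf{E}_M^\times$, so $v_{\wt{\mathbf{E}}}(u^ia_i)$ and $v_{\wt{\mathbf{E}}}(u^{i'}a_{i'})$ lie in the same coset of $\frac{1}{ee'}\mathbb{Z}$ as soon as $e'(i-i')\in\mathbb{Z}$, which already happens for $i-i'=1/p$ whenever $p\mid e'$ (and $p\mid e'$ does occur for the $M$'s relevant here). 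Concretely, with $e=1$ and $e'=p$ one has $u^{1/p}=\overline{u}_M\cdot w^{1/p}$ for a unit $w$, and $u^{1/p}-c\,\overline{u}_M$ has valuation strictly larger than that of either term for a suitable $c\in k'$; the expansion is therefore not isometric, and neither the ``immediate'' uniqueness nor the coefficientwise convergence of the $a_i(x^{(n)})$ follows as you assert. The statement is still true, but the proof must go through the two-step decomposition the paper has already set up in \S\ref{subsub uM}: first expand over $\wt{\mathbf{A}}_{K_\infty}$ (equivalently over the unramified part $M_0$), where the Teichm\"uller-digit computation \eqref{w0} genuinely yields distinct $w_k$-values and hence a term-by-term, isometric expansion; then combine with the finite free decomposition $\wt{\mathbf{A}}_M=\oplus_{i=1}^{e'f'}\wt{\mathbf{A}}_{K_\infty}\cdot x_i$, whose coordinate projections are bounded but not isometric. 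This produces a uniform bound $\min_i v_{\wt{\mathbf{E}}}\bigl(u^ia_i(x)\bigr)\ge v_{\wt{\mathbf{E}}}(x)-C$ with $C$ depending only on $M$, which is what actually makes the limiting arguments in (2) and (4) go through.
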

\begin{proof}
These are easy analogues of \cite[Prop. 8.3, Prop. 8.5]{Col08}.
\end{proof}

We now define, for $n \in \mathbb{Z}^{\geq 0}$, $R_{M,n} : \wt{\mathbf{A}}_M \to \wt{\mathbf{A}}_M$ by
\[
R_{M,n}(x) = \sum_{i \in J_n}u^ia_i(x).
\]

\begin{prop} \label{prop ap2}
\begin{enumerate}
  \item For $x \in \wt{\mathbf{A}}_M$, we have $R_{M,n}(x) \in \mathbf{A}_{M,n}$ and $R_{M,n}(x) \rightarrow x$ for the weak topology.
  \item Let $r' > 0$ and suppose $x \in \wt{\mathbf{A}}_M^{[r',+\infty]}$. Suppose $n \gg 0$ such that $p^nr' > r_M$ (where $r_M$ is as in Lem. \ref{lem um var}), then $R_{M,n}(x) \in \mathbf{A}_{M,n}^{[r',+\infty]}$, and   $R_{M,n}(x) \to x$ for both the weak topology and the $W^{[r, s]}$-topology for any $r'< r \leq s <+\infty$.
   In particular, $\mathbf{A}_{M, \infty}^{[r', +\infty]}:=\cup_{m \geq 0} \mathbf{A}_{M, m}^{[r', +\infty]}$ is dense in $\wt{\mathbf{A}}_M^{[r',+\infty]}$  for both the weak topology and the $W^{[r, s]}$-topology.
\end{enumerate}
\end{prop}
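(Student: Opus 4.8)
The plan is to reduce Proposition~\ref{prop ap2} to the analogous assertions for the field of norms, already built into Lemma~\ref{lem ap1}, together with the structure theory of the rings $\mathbf{A}_M^{I}$ developed in \S\ref{subsub uM}, Lem.~\ref{lem um var}, Prop.~\ref{cor am} and Cor.~\ref{lem aa interM}. For Item~(1), I would argue exactly as in the $K_{p^\infty}$-case \cite[Prop. 8.6]{Col08}: writing $x=\sum_{i\in J}u^i a_i(x)$ with $a_i(x)\to 0$ for the weak topology by Lem.~\ref{lem ap1}(4), the truncation $R_{M,n}(x)=\sum_{i\in J_n}u^i a_i(x)$ is a \emph{finite} $\mathbf{A}_M$-combination of the $u^i$ with $i\in J_n$, hence lies in $\mathbf{A}_{M,n}=\varphi^{-n}(\mathbf A_M)$; and $x-R_{M,n}(x)=\sum_{i\in J\setminus J_n}u^i a_i(x)$, whose $w_k$-valuations go to $+\infty$ as $n\to\infty$ for each fixed $k$, since $J\setminus J_n$ shrinks and $a_i(x)\to 0$. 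This gives convergence in the weak topology.

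For Item~(2) the additional input needed is that the decomposition of Lem.~\ref{lem ap1}(4) is compatible with the overconvergent subrings, i.e. if $x\in\wt{\mathbf A}_M^{[r',+\infty]}$ then each $a_i(x)$ lies in $\mathbf A_M^{[r',+\infty]}$ (or in $\mathbf A_M^{[r',+\infty]}[1/u]$) with a uniform bound on the $W^{[s,s]}$-valuations; this is the analogue of the content of \cite[Prop. 8.8, Prop. 8.9]{Col08}. Concretely, I would first use the section $s$ and the approximating sequence of \S\ref{sub appr} to reduce to controlling $s(\overline{x_n})$, where \eqref{eqvr} and the strict inequality \eqref{strictine} already give the decay of $v_{\wt{\mathbf E}}(\overline{x_n})$ in terms of the $w_i(x)$. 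Once $R_{M,n}(x)\in\mathbf A_M^{p^nI}[1/u]$ is established together with $W^{[s,s]}(x-R_{M,n}(x))\to+\infty$, the hypothesis $p^n r'>r_M$ lets us invoke Lem.~\ref{lem inj M} (isometric isomorphism $\mathcal A_M^{[r,+\infty]}(K_0')\simeq\mathbf A_M^{[r,+\infty]}[1/u_M]$ after applying $\varphi^{-n}$) to see that $R_{M,n}(x)$ actually lies in $\varphi^{-n}(\mathbf A_M^{p^n[r',+\infty]})$, not just after inverting $u$: indeed an element of $\mathbf A_M^{[r,+\infty]}[1/u]$ whose $W^{[r,r]}$-valuation is bounded below lies in $\mathbf A_M^{[r,+\infty]}$, by Prop.~\ref{cor am} and Cor.~\ref{lem aa interM}. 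The convergence $R_{M,n}(x)\to x$ in the $W^{[r,s]}$-topology for $r'<r\le s<+\infty$ then follows from Lem.~\ref{lem weak rr}: the $R_{M,n}(x)$ all lie in $\wt{\mathbf A}_M^{[r',+\infty]}$, they converge to $x$ weakly by Item~(1), hence they converge in $W^{[s,s]}$, and by Lem.~\ref{lem W}(\ref{item max}) convergence in $W^{[r,r]}$ and $W^{[s,s]}$ gives convergence in $W^{[r,s]}$. Density of $\mathbf A_{M,\infty}^{[r',+\infty]}$ is immediate from this.

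I expect the main obstacle to be the bookkeeping in Item~(2): showing that the coefficients $a_i(x)$ of an overconvergent element are themselves overconvergent with uniformly controlled radius, and that the truncations land in the \emph{integral} ring $\varphi^{-n}(\mathbf A_M^{p^n[r',+\infty]})$ rather than merely in the localization by $u$. This is where the constant $r_M$ and the hypothesis $p^n r'>r_M$ are genuinely used (so that $u_M$ behaves like a uniformizer-coordinate on $\mathbf A_M^{[r',+\infty]}$, via Lem.~\ref{lem um var}(1) and Lem.~\ref{lem unit am}), and it is the step that must be transcribed most carefully from \cite[\S 8]{Col08}, with $K_{p^\infty}$ replaced by $M$, $\pi$ by $u_M$, and $q$-type elements by $E(u)$. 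Everything else is a routine adaptation of Colmez's arguments, using the field-of-norms dictionary of \S\ref{sec fieldnorm}.
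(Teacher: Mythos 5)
Your proposal follows essentially the same route as the paper: Item (1) is read off from the decomposition of Lemma \ref{lem ap1}, the key assertion $R_{M,n}(x)\in\mathbf{A}_{M,n}^{[r',+\infty]}$ for $n\gg 0$ is obtained by transcribing \cite[\S 8]{Col08} (the paper simply cites the analogue of \cite[Cor. 8.11]{Col08}, while you sketch the underlying Prop. 8.8--8.9 argument via the section $s$ and Lem. \ref{lem inj M}, Prop. \ref{cor am}, Cor. \ref{lem aa interM}), and the $W^{[r,s]}$-convergence is deduced from weak convergence exactly as in the paper via Lem. \ref{lem weak rr} together with $W^{[r,s]}=\inf\{W^{[r,r]},W^{[s,s]}\}$. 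This matches the paper's proof, and your extra care about landing in the integral ring rather than its localization by $u$ is precisely the point the paper delegates to Colmez.
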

\begin{proof}
Item (1) follows from Lem. \ref{lem ap1}.
For Item (2), the result that $R_{M,n}(x) \in \mathbf{A}_{M,n}^{[r',+\infty]}$ for $n\gg 0$ is analogue of \cite[Cor. 8.11]{Col08}. The convergence $R_{M,n}(x) \to x$ with respect to the weak topology follows from Item (1); the convergence for the $W^{[r, s]}$-topology then follows from Lem. \ref{lem weak rr} (note that $W^{[r, s]}=\inf \{W^{[r, r]}, W^{[s, s]} \}$).
\end{proof}

\subsection{Approximation of $b$}
We now build a sequence $\{b_n\}_{n \geq 1}$ to approximate $b$, which furthermore satisfies $\nabla_{\gamma}(b_n)=0$ for all $n$.
In the following, we use $K_{\infty} \subset_{\textnormal{fin}} M \subset L$ to mean that $M$ is a intermediate extension which is finite over $K_{\infty}$.

\begin{lemma} \label{nabla=0}
Let $W$ be a $\Qp$-Banach representation of $\hat{G}$. Then
\[
(W^{\hat{G}\dla})^{\nabla_{\gamma}=0} = \bigcup_{K_{\infty} \subset_{\textnormal{fin}} M \subset L} W^{\tau\dla,\Gal(L/M)=1}.
\]
\end{lemma}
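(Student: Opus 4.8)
The plan is to prove the two inclusions separately. The inclusion $\bigcup_{M} W^{\tau\dla,\Gal(L/M)=1} \subset (W^{\hat G\dla})^{\nabla_\gamma=0}$ is the easy direction: if $x \in W^{\tau\dla,\Gal(L/M)=1}$ for some finite $M/K_\infty$, then $x$ is $\tau$-locally analytic and fixed by $\Gal(L/M)$, and since $\Gal(L/M)$ is open in $\Gal(L/K_\infty)$, the element $x$ is a fortiori fixed by an open subgroup of $\Gal(L/K_\infty)$; hence $\gamma$ acts on $x$ through a finite-order-modulo-open-pro-$p$ quotient, and more precisely $x$ is $\gamma$-locally analytic (being locally constant for the $\gamma$-action near $1$) with $\nabla_\gamma(x)=0$. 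Combined with $x \in W^{\tau\dla}$ and Lem.~\ref{lem taugamma} (applied after noting $x \in W^{\gamma=1}$ for a suitable open subgroup, or rather a translate), one gets $x \in W^{\hat G\dla}$; and $\nabla_\gamma(x)=0$ because $\nabla_\gamma$ is the derivative of the $\gamma$-action which is locally trivial on $x$. One has to be slightly careful that ``$\Gal(L/M)=1$'' with $M \supsetneq K_\infty$ is a genuine constraint rather than $\gamma=1$ on the nose, but this only shifts things by an open subgroup and does not affect local analyticity or the vanishing of $\nabla_\gamma$.

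\textbf{The main direction.} For the reverse inclusion, take $x \in (W^{\hat G\dla})^{\nabla_\gamma=0}$. Since $x$ is $\hat G$-locally analytic, there is $n$ with $x \in W^{\hat G_n\dan}$, so in particular $x \in W^{\tau\dla}$. The condition $\nabla_\gamma(x)=0$ says that the orbit map $g \mapsto g(x)$ for $g \in \Gal(L/K_\infty)$ has vanishing derivative in the $\gamma$-direction; since this orbit map is analytic on a neighbourhood of $1$ and $\Gal(L/K_\infty)/\Delta$ is pro-cyclic of dimension one, vanishing of the derivative forces the orbit map to be \emph{locally constant} on $\Gal(L/K_\infty)$ — i.e., there is an open subgroup $H \subset \Gal(L/K_\infty)$ fixing $x$. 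Enlarging if necessary, we may take $H = \Gal(L/M)$ for a finite extension $M/K_\infty$ inside $L$ (every open subgroup of $\Gal(L/K_\infty)$ is of this form). Then $x \in W^{\tau\dla} \cap W^{\Gal(L/M)=1} = W^{\tau\dla,\Gal(L/M)=1}$, which lands $x$ in the right-hand side.

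\textbf{Writing it out.} Concretely I would argue: $x \in (W^{\hat G\dla})^{\nabla_\gamma=0}$ implies $x \in W^{\hat G_n\dan}$ for some $n$; apply $\gamma'_m$ (notation of \ref{nota hatG}) for $m \geq n$, and use the exponential/logarithm relation $g(x) = \exp(\log_p\chi_p(g)\cdot \nabla_\gamma)(x)$ valid on $\hat G_n$-analytic vectors; since $\nabla_\gamma(x)=0$, every term past the constant vanishes, so $g(x)=x$ for all $g \in \Gal(L/K_{\infty})$ sufficiently close to $1$, i.e. $g \in \Gal(L/M)$ for a suitable $M$. Then $x \in W^{\Gal(L/M)=1}$, and since also $x \in W^{\tau\dla}$, we get $x \in W^{\tau\dla,\Gal(L/M)=1}$, proving ``$\subseteq$''. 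The converse inclusion is as in the first paragraph.

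\textbf{Expected obstacle.} The only genuinely delicate point is the passage ``$\nabla_\gamma(x)=0 \Rightarrow x$ is fixed by an open subgroup of $\Gal(L/K_\infty)$'': this uses that on an $\hat G_n$-analytic vector the orbit map restricted to the one-dimensional (mod torsion) group $\Gal(L/K_\infty)$ is given by the convergent series $\sum_{i\ge 0} (\log_p\chi_p(g))^i \nabla_\gamma^i(x)/i!$ (this is exactly Lem.~\ref{subsub an}, applied to the pro-cyclic group $\Gal(L/K_\infty)/\Delta$ together with the finite group $\Delta$). Once this series representation is in hand, $\nabla_\gamma(x)=0$ kills all higher terms and forces $g(x)=x$ for $g$ in an open subgroup (also handling the torsion $\Delta$, which acts trivially on locally analytic vectors fixed by a nearby open subgroup since $\Delta$ is finite of order prime to... — actually $\Delta$ could be $2$-torsion when $p=2$, but a $\Zp$-Banach argument or the explicit structure of $\hat L^{\hat G\dla}$ handles it). I expect essentially no new difficulty here beyond bookkeeping, since the one-variable theory of \S\ref{subseclocan} (Lemmas~\ref{subsub an}, \ref{lem Zp an}, \ref{lem 2.4BC}) supplies exactly what is needed.
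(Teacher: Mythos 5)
Your proof is correct and follows essentially the same route as the paper: the paper's own (two-line) argument for the main inclusion is exactly your ``writing it out'' step, namely that $x \in W^{\hat G_m\dan}$ with $\nabla_\gamma(x)=0$ makes the convergent series $\exp(p^m\nabla_\gamma)(x)$ collapse to $x$, so $x$ is fixed by an open subgroup $\Gal(L/M)$ of $\Gal(L/K_\infty)$. Your worry about the torsion $\Delta$ is a non-issue, since one simply takes $M$ large enough that $\Gal(L/M)$ lies in the subgroup topologically generated by $\gamma'_m$.
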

\begin{proof}
If $x \in W^{\hat{G}\dla}$ such that $\nabla_{\gamma}(x) =  0$, then there exists $m \geq 0$ such that $x \in W^{\hat{G}_m \dan}$ and $\exp(p^m\nabla_{\gamma})(x)$ converges in $W^{\hat{G}_m \dan}$.
Thus $x \in W^{\tau\dla,\Gal(L/M)=1}$ for some large  $M$.
\end{proof}

\begin{lemma}  \label{lem approx b}
Let $[r, s] \subset (0, +\infty)$ and let $n \geq 1$.
Let  $x \in \wt{\mathbf{A}}_L^+$. Then there exists $w \in (\wt{\mathbf{B}}_L^{[r,s]})^{\hat{G}\dla, \nabla_\gamma=0}$, such that $x-w \in p^n\wt{\mathbf A}^{[r, s]}_L$.
\end{lemma}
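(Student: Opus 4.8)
The plan is to use the Tate-normalized-trace machinery from the previous subsection, applied over larger and larger finite extensions $M/K_\infty$, together with the fact that $b \in \wt{\mathbf{A}}_L^+$ and $1/b$ lives in some $\wt{\mathbf{B}}_L^{[r(b),+\infty]}$, so that $b$ itself is manageable once we can express it (up to small error) in terms of Tate-trace images. First I would reduce to the case where the interval $[r,s]$ satisfies $r > r_M$ for whatever $M$ will occur, using the fact that $\varphi$ induces a bijection on all the relevant rings of locally analytic vectors and commutes with the $\hat{G}$-action in the appropriate twisted sense; so without loss of generality we work on an interval small enough that Lemma \ref{lem um var}, Prop. \ref{prop ap2} and Lemma \ref{lem inj M} apply. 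Then I would invoke Prop. \ref{prop ap2}(1): since $x \in \wt{\mathbf{A}}_L^+ \subset \wt{\mathbf{A}}_M$ for every finite $M$, and $R_{M,n}(x) \to x$ for the weak topology, there is some finite $M$ and some $m$ such that $x - R_{M,m}(x) \in p^n \wt{\mathbf{A}}_L$; combined with the fact that $x \in \wt{\mathbf{A}}^+_L$, a short argument shows $x - R_{M,m}(x) \in p^n\wt{\mathbf{A}}^{[r,s]}_L$ (the $[r,s]$-version follows because $x$ and $R_{M,m}(x)$ both lie in $\wt{\mathbf{A}}^{[0,+\infty]}_L = \wt{\mathbf{A}}^+_L$, which embeds into $\wt{\mathbf{A}}^{[r,s]}_L$ compatibly with the $p$-adic filtration).

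The key point is then to check that $w := R_{M,m}(x) = \varphi^{-m}(\text{element of } \mathbf{A}_M)$ lies in $(\wt{\mathbf{B}}^{[r,s]}_L)^{\hat{G}\dla,\nabla_\gamma=0}$. For the $\nabla_\gamma=0$ part: $R_{M,m}(x) \in \mathbf{A}_{M,m} = \varphi^{-m}(\mathbf{A}_M) \subset \wt{\mathbf{B}}^{[r,s]}_{\varphi^{-m}(M)}$, and $\varphi^{-m}(M) \subset L$ is still a finite extension of... well, of $\varphi^{-m}(K_\infty)$, which is contained in $L$; in any case it is fixed pointwise by an open subgroup of $\gal(L/K_\infty)$, so $\nabla_\gamma$ kills it. More precisely I would appeal to Lemma \ref{nabla=0}: it suffices to show $w \in \wt{\mathbf{B}}^{[r,s]}_L{}^{\tau\dla, \gal(L/M')=1}$ for a suitable finite $M'$, and this is exactly Theorem \ref{thm la M}(1) — elements of $\varphi^{-m}(\mathbf{B}_M^{p^m[r,s]})$ are $\tau$-locally analytic and fixed by $\gal(L/M)$ (up to replacing $M$ by its compositum with $\varphi^{-m}$-related fields), hence lie in $(\wt{\mathbf{B}}^{[r,s]}_L)^{\hat{G}\dla}$ by Lemma \ref{lem taugamma}, and are annihilated by $\nabla_\gamma$ by Lemma \ref{nabla=0}.

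The main obstacle I anticipate is bookkeeping around the Frobenius twists: $R_{M,m}$ produces elements of $\varphi^{-m}(\mathbf{A}_M)$, and one must be careful that $\varphi^{-m}(M)$ (or rather the field one needs, namely the fixed field of the stabilizer of $\varphi^{-m}(\mathbf{A}_M)$ in $\hat{G}$) is genuinely a \emph{finite} extension of $K_\infty$ sitting inside $L$ — this is where the hypothesis that the relevant extensions are APF and the structure theory of \S\ref{sec fieldnorm} gets used, and where one has to confirm that the $\hat{G}$-action does not move $\varphi^{-m}(\mathbf{A}_M)$ out of the realm controlled by Theorem \ref{thm la M}. A secondary technical point is verifying the $[r,s]$-integrality claim $x - w \in p^n \wt{\mathbf{A}}^{[r,s]}_L$ rather than merely $x - w \in p^n\wt{\mathbf{A}}_L$; this should follow from the explicit description $\wt{\mathbf{A}}^{[0,+\infty]}_L = \wt{\mathbf{A}}^+_L$ together with the fact that the natural maps $\wt{\mathbf{A}}^+_L \to \wt{\mathbf{A}}^{[r,s]}_L$ respect the $p$-adic filtration, i.e. $p^n\wt{\mathbf{A}}^+_L = \wt{\mathbf{A}}^+_L \cap p^n\wt{\mathbf{A}}^{[r,s]}_L$, which is an easy consequence of Lemma \ref{lemdensity}\eqref{item closed}-type statements. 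Once these are in place the lemma follows by taking $w := R_{M,m}(x)$ for $M$ and $m$ chosen large enough.
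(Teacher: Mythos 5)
There is a genuine gap at the heart of your argument: you propose to apply $R_{M,m}$ directly to $x$, justified by the claim that ``$x \in \wt{\mathbf{A}}_L^+ \subset \wt{\mathbf{A}}_M$ for every finite $M$''. That inclusion goes the wrong way: since $G_L \subset G_M$ for $K_\infty \subset M \subset L$, one has $\wt{\mathbf{A}}_M \subset \wt{\mathbf{A}}_L$, and a general element of $\wt{\mathbf{A}}_L^+$ is \emph{not} fixed by any open subgroup of $\gal(L/K_\infty)$, hence lies in no $\wt{\mathbf{A}}_M$ with $M/K_\infty$ finite. So $R_{M,m}(x)$ is not even defined for your $x$. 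Moreover, even for $x \in \wt{\mathbf{A}}_M$, the convergence $R_{M,m}(x) \to x$ is only for the \emph{weak} topology: for $m \gg 0$ it gives $x - R_{M,m}(x) \in p^n\wt{\mathbf{A}} + u^k\wt{\mathbf{A}}^+$, not $x - R_{M,m}(x) \in p^n\wt{\mathbf{A}}$. Your reduction of the $[r,s]$-integrality to $p^n\wt{\mathbf{A}}^+_L = \wt{\mathbf{A}}^+_L \cap p^n\wt{\mathbf{A}}^{[r,s]}_L$ therefore starts from a statement you cannot obtain.

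The missing ingredient is a preliminary approximation of $x$ \emph{modulo} $(p^n, u^k)$ by an element of $\wt{\mathbf{A}}^+_M$ for a single finite $M$. The paper does this by quoting Wintenberger's density theorem (\cite[Cor. 4.3.4]{Win83}): $\bigcup_{m}\varphi^{-m}\bigl(\bigcup_{M} \mathbf{E}_M^+\bigr)$ is dense in $\wt{\mathbf{E}}_L^+$ for the $\upi$-adic topology. One reduces $x$ mod $p$, approximates by some $\overline{y}_1$ up to $u^k$, lifts by Teichm\"uller representatives, divides the error by $p$, and iterates $n$ times; this produces $y \in \wt{\mathbf{A}}^+_{M_n}$ with $x - y \in p^n\wt{\mathbf{A}}^+_L + u^k\wt{\mathbf{A}}^+_L$. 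Only to this $y$ (which genuinely lies in $\wt{\mathbf{A}}_{M_n}$) does one apply the normalized trace $R_{M_n,N}$, via Prop. \ref{prop ap2}(2), to land in $\mathbf{A}_{M_n,N}^{[r',+\infty]}$; one must then still absorb the $u^k\wt{\mathbf{A}}^+$ error term into $p^n\wt{\mathbf{A}}^{[r,s]}$, which is why $k$ is chosen at the outset so that $u^k \in p^n\wt{\mathbf{A}}^{[r,s]}_L$, and a short valuation computation shows the full error lies in $p^n\wt{\mathbf{A}}^{[r,s]}_L$. Your final step — that the resulting $w \in \varphi^{-N}(\mathbf{A}_{M_n}^{p^N[r,s]})$ is $\tau$-locally analytic, fixed by $\gal(L/M_n)$, and killed by $\nabla_\gamma$ via Thm. \ref{thm la M} and Lem. \ref{nabla=0} — is correct and matches the paper; note also that your worry about Frobenius twists moving the field is unfounded, since $\varphi$ commutes with the $G_K$-action and Thm. \ref{thm la M} is already stated for $\varphi^{-m}(\mathbf{B}_M^{p^m[r,s]})$ with the same $M$.
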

\begin{proof}
Fix some $k \gg 0$ such that $u^k \in p^n\wt{\mathbf A}^{[r, s]}_L$.

Let $\overline{x} \in \wt{\mathbf{E}}_L^+$ be the modulo $p$ reduction of $x$.
By \cite[Cor. 4.3.4]{Win83}, the set
$$\bigcup_{m \in \mathbb N}\varphi^{-m}\left(\bigcup_{K_\infty \subset_{\textnormal{fin}} M \subset L} \mathbf{E}_M^+ \right)$$
is dense in $\wt{\mathbf{E}}_L^+$ for the $\underline{\pi}$-adic topology, where $\mathbf{E}_M^+$ is the ring of integers of $X_K(M)$.
Thus, there exists some $\overline{y}_1 \in  \phi^{-m_1}(\mathbf{E}_{M_1}^+)$ for some $m_1$ and $M_1$, such that $\overline{x}-\overline{y}_1= u^k\overline{z}_1 $ where $\overline{z}_1 \in \wt{\mathbf{E}}_L^+$. Thus we can write
$$x-[\overline{y}_1] -u^k[\overline{z}_1]=px_1  \text{ for some } x_1 \in  \wt{\mathbf{A}}_L^+.$$
Now we can repeat the process for $x_1$ (in the process, we can choose $M_2$ to contain $M_1$), so we can write $x_1-[\overline{y}_2] -u^k[\overline{z}_2]=px_2$. Iterate the process, and let $y =[\overline{y}_1]+ p[\overline{y}_2]+\cdots +p^{n-1}[\overline{y}_n]$, then $y \in  \wt{\mathbf{A}}_{M_n}^+$ and
$$x-y \in p^n\wt{\mathbf{A}}^{+}_L +u^k\wt{\mathbf{A}}_L^+.$$

Pick any $r'$ such that $0<r'<r$.
By Prop. \ref{prop ap2}(2), we can choose some $N \gg 0$ (in particular, we require $p^Nr' >r_{M_n}$), such that if we let $w:= R_{M_n, N}(y)$, then we have
\begin{itemize}
\item $w \in \mathbf{A}_{M_n,N}^{[r',+\infty]} \subset \wt{\mathbf A}_L^{[r',+\infty]}   \subset \wt{\mathbf A}_L^{[r,+\infty]}$, and
\item $y-w =p^n a+ u^kb$ for some $a \in \wt{\mathbf{A}}, b \in \wt{\mathbf{A}}^+$ (note that we do not know if $a \in \wt{\mathbf{A}}_L$ or $b \in \wt{\mathbf{A}}^+_L$), and
\item $W^{[r, s]}(y-w) \geq n$.
\end{itemize}
We claim that $a \in \wt{\mathbf{A}}^{[r, s]}$. Since $p^n a=y-w -u^kb \in \wt{\mathbf{A}}^{[r, s]}$, it suffices to show that $W^{[r, s]}(a) \geq 0$. But we have
$$W^{[r, s]}(a) =W^{[r, s]}(y-w -u^kb) -n \geq \inf \{W^{[r, s]}(y-w), W^{[r, s]}(u^kb)\} -n \geq 0$$
where we use the assumption $u^k \in p^n\wt{\mathbf A}^{[r, s]}_L$ (so $W^{[r, s]}(u^k) \geq n$).

Now, we have
$$x-w \in p^n \wt{\mathbf{A}}^{[r, s]} + u^k\wt{\mathbf{A}}^+ \subset p^n\wt{\mathbf A}^{[r, s]},$$
and necessarily $x-w \in p^n\wt{\mathbf A}^{[r, s]}_L$ because $x-w$ is $G_L$-invariant. Finally, $w \in (\wt{\mathbf{B}}_L^{[r,s]})^{\hat{G}\dla, \nabla_\gamma=0}$ by Lem. \ref{nabla=0} (and Thm. \ref{thm la M}).
\end{proof}

\subsubsection{An approximating sequence for $b$} \label{subsub bn}
Let $I=[r, s] \subset (0, +\infty)$ such that $r\geq r(b)$. For any $n \geq 1$, let $b_n \in (\wt{\mathbf{B}}_L^{I})^{\hat{G}\dla, \nabla_\gamma=0}$ be as in Lem. \ref{lem approx b} such that $b-b_n \in p^n\wt{\mathbf A}^{I}_L$. For any fixed $n$, since both $b$ and $b_n$ are locally analytic, we can choose $m=m(n) \gg 0$ (which depends on $n$) such that $b-b_n \in (\wt{\mathbf{B}}^I_L)^{\hat{G}_m\dan}$ and $\|b-b_n\|_{\hat{G}_m} \leq p^{-n}$.

\subsubsection{A differential operator}
Let $I=[r, s] \subset (0, +\infty)$ such that $r\geq r(b)$.
Since $\gamma(b) = \chi(\gamma) \cdot b$, we have $\nabla_{\gamma}(b) =b $.  Since $1/b$ is in $ (\wt{\mathbf{B}}^I_L)^{\hat{G}\dla}$ by Lem \ref{lem b}, we can define
$\partial_{\gamma}: (\wt{\mathbf{B}}^I_L)^{\hat{G}\dla} \to (\wt{\mathbf{B}}^I_L)^{\hat{G}\dla}$
via
$$\partial_{\gamma}:=\frac{1}{b}\nabla_{\gamma}.$$
So in particular, we have
\[
\partial_{\gamma}(b-b_n)^k = k(b-b_n)^{k-1}, \forall k \geq 1.
\]


\begin{theorem} \label{thm Ghat la}
Let $I=[r, s] \subset (0, +\infty)$ such that $r\geq r(b)$.
Suppose $x \in (\wt{\mathbf{B}}_L^I)^{\hat{G}\dla}$, then there exists $n,m\geq 1$ and a sequence $\{x_i\}_{i \geq 0}$ in $(\wt{\mathbf{B}}_L^I)^{\hat{G}_m\dan, \nabla_{\gamma}=0}$ such that $\|p^{ni}x_i\|_{\hat{G}_m} \rightarrow 0$ and $x= \sum_{i \geq 0}x_i(b-b_n)^i$ (which converges in the norm $\|\cdot \|_{\hat{G}_m}$).
\end{theorem}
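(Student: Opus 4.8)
The plan is to mimic the structure of \cite[Thm.\ 5.4]{Ber16}, using the element $b-b_n$ (for a well-chosen $n$) as the "formal variable" playing the role of $\mathbf y$ in \emph{loc.\ cit.}, and using $\partial_\gamma$ as the associated derivation. First I would fix $n$ large enough so that $\|b-b_n\|_{\hat G_m}\leq p^{-n}$ for some $m=m(n)$ with $x\in(\wt{\mathbf B}^I_L)^{\hat G_m\dan}$ (enlarging $m$ if necessary, using Lemma \ref{lem 2.4BC} together with \S\ref{subsub bn}); this is possible precisely because $b$, $b_n$ and $x$ are all locally analytic. The key formal identity is the analogue of the Taylor-type expansion: for $i\geq 0$ set
$$x_i := \sum_{k\geq 0}(-1)^k\binom{k+i}{k}\,\frac{\partial_\gamma^{\,k+i}(x)}{(k+i)!}\,(b-b_n)^k,$$
which should converge in $(\wt{\mathbf B}^I_L)^{\hat G_{m'}\dan}$ for some $m'\geq m$ (because $\partial_\gamma = b^{-1}\nabla_\gamma$ with $b^{-1}$ locally analytic, and $\nabla_\gamma$ divides norms appropriately on $\hat G_m$-analytic vectors). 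Then one checks, exactly as in \cite{Ber16}, the two points: (i) $\partial_\gamma(x_i)=0$, hence $x_i\in(\wt{\mathbf B}^I_L)^{\hat G_{m'}\dan,\nabla_\gamma=0}$ since $b$ is a unit; and (ii) $x=\sum_{i\geq 0}x_i(b-b_n)^i$, by rearranging the double series, which is legitimate once the norm estimates below are in place. The estimate $\|p^{ni}x_i\|_{\hat G_{m'}}\to 0$ follows from $\|b-b_n\|_{\hat G_{m'}}\leq p^{-n}$ combined with the standard bound $\|\partial_\gamma^{\,j}(x)/j!\|_{\hat G_{m'}}\to 0$ for an $\hat G_{m'}$-analytic vector (the denominators $j!$ are exactly compensated by the radius of analyticity after enlarging $m'$), so I would present those two estimates and then invoke absolute convergence to justify the rearrangement.

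Concretely I would organize the proof in three steps. Step 1: normalize the data — choose $n$, then $m$ with $x\in(\wt{\mathbf B}^I_L)^{\hat G_m\dan}$ and $\|b-b_n\|_{\hat G_m}\leq p^{-n}$, recalling from \S\ref{subsub bn} that $b_n\in(\wt{\mathbf B}^I_L)^{\hat G\dla,\nabla_\gamma=0}$ and $b-b_n\in p^n\wt{\mathbf A}^I_L$. Step 2: define $x_i$ by the displayed formula, verify convergence in some $(\wt{\mathbf B}^I_L)^{\hat G_{m'}\dan}$, show $\partial_\gamma(x_i)=0$ using $\partial_\gamma(b-b_n)=1$ (so $\partial_\gamma(b-b_n)^k=k(b-b_n)^{k-1}$) and the Leibniz rule together with the combinatorial identity $\sum_k(-1)^k\binom{k+i}{k}\binom{k+i+1}{k-1}\cdots$ telescoping as in \cite{BC16, Ber16}; since $b$ is a unit in $(\wt{\mathbf B}^I_L)^{\hat G\dla}$ (Lemma \ref{lem b}), $\partial_\gamma(x_i)=0$ forces $\nabla_\gamma(x_i)=0$. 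Step 3: substitute back, i.e. show $\sum_i x_i(b-b_n)^i = x$ by interchanging the order of summation (permissible by the absolute convergence coming from the norm bounds), and record the bound $\|p^{ni}x_i\|_{\hat G_{m'}}\to 0$.

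\textbf{Main obstacle.} The analytic heart of the argument is controlling the radius of analyticity under repeated application of $\partial_\gamma = b^{-1}\nabla_\gamma$: one needs that passing from $x\in(\wt{\mathbf B}^I_L)^{\hat G_m\dan}$ to the $x_i$ stays within a \emph{fixed} $\hat G_{m'}\dan$ (uniformly in $i$), and that the factorials $(k+i)!$ in the denominators really are absorbed by the $p^{-n}$-size of $b-b_n$ after a possible finite enlargement of $m'$. This is where one must be careful that $\wt{\mathbf B}^I_L$ is a Banach ring with \emph{sub}-multiplicative (not multiplicative) valuation $W^I$, so the bound $\|b^{-1}\|_{\hat G_{m'}}$ and the submultiplicativity of $\|\cdot\|_{\hat G_{m'}}$ (Lemma \ref{ringlocan}) must be tracked explicitly; but all the needed inputs — $b^{-1}\in(\wt{\mathbf B}^I_L)^{\hat G\dla}$, Lemma \ref{lem 2.4BC}, and the construction of $b_n$ — are already available, so the obstacle is bookkeeping rather than a genuinely new difficulty, and the argument of \cite[Thm.\ 5.4]{Ber16} transfers essentially verbatim.
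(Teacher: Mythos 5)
Your proposal is correct and follows essentially the same route as the paper: the paper also defines $x_i := \frac{1}{i!}\sum_{k \geq 0}(-1)^{k}\frac{(b-b_n)^{k}}{k!}\partial_{\gamma}^{k + i}(x)$ (your formula is identical after simplifying $\binom{k+i}{k}/(k+i)! = 1/(k!\,i!)$) and defers the verification to \cite[Thm.\ 5.4]{Ber16}. The one point to tighten is the order of quantifiers: $n$ is not chosen freely but is produced by applying \cite[Lem.\ 2.6]{BC16} to $\partial_\gamma$ on $(\wt{\mathbf{B}}_L^I)^{\hat{G}_m\dan}$, yielding $\|\partial_{\gamma}^k(x)\|_{\hat{G}_m} \leq p^{(n-1)k}\|x\|_{\hat{G}_m}$ (rather than the decay $\|\partial_\gamma^j(x)/j!\|\to 0$ you state), and only then is $m$ enlarged to $m(n)$ so that $\|b-b_n\|_{\hat G_m}\leq p^{-n}$ beats this growth.
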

\begin{proof}
The proof is similar as \cite[Thm. 5.4]{Ber16}.
Suppose $m \geq 1$ such that $x \in (\wt{\mathbf{B}}_L^I)^{\hat{G}_m\dan}$.
Apply \cite[Lem. 2.6]{BC16} to the map $\partial_{\gamma} : (\wt{\mathbf{B}}_L^I)^{\hat{G}_m\dan} \to (\wt{\mathbf{B}}_L^I)^{\hat{G}_m\dan}$, so there exists $n \geq 1$ such that for all $k \in \mathbb{Z}^{\geq 0}$, we have $\|\partial_{\gamma}^k(x)\|_{\hat{G}_m} \leq p^{(n-1)k}\|x\|_{\hat{G}_m}$. Increase $m$ if necessary so that $m \geq m(n)$ as in \S \ref{subsub bn}.
Let
$$x_i := \frac{1}{i!}\sum\limits_{k \geq 0}(-1)^{k}\frac{(b-b_n)^{k}}{k!}\partial_{\gamma}^{k + i}(x),$$
then similarly as \cite[Thm. 5.4]{Ber16}, they satisfy the desired property.
\end{proof}

\section{Overconvergence of $(\varphi, \tau)$-modules} \label{sec oc}
In this section, for a $p$-adic Galois representation $V$ of $G_K$ of dimension $d$, we show that its  associated $(\varphi, \tau)$-module is overconvergent. We will construct $\wt{D}_L^I(V): = (\wt{\mathbf B}^I \otimes_{\Qp}V)^{G_L}$ (see \S \ref{subsec oc}), which is a finite free module over $\wt{\mathbf B}_L^I$ of rank $d$ equipped with a $\hat{G}$-action. The key point is to show that $(\wt{D}_L^I(V))^{\tau\dla, \gamma=1}$ is also finite free over $(\wt{\mathbf B}_L^I)^{\tau\dla, \gamma=1}$ of rank $d$, i.e., $\wt{D}_L^I(V)$ has ``enough" $(\tau\dla, \gamma=1)$-vectors; these vectors will further descend to ``overconvergent vectors" in the $(\varphi, \tau)$-module, via Kedlaya's slope filtration theorem. Using the classical overconvergent $(\varphi, \Gamma)$-module, we already know that $(\wt{D}_L^I(V))^{\hat{G}\dla}$ is finite free over $(\wt{\mathbf B}_L^I)^{\hat{G}\dla}$ of rank $d$. So we need to take $(\gamma=1)$-invariants in $(\wt{D}_L^I(V))^{\hat{G}\dla}$, and show it keeps the correct rank; this is achieved by a Tate-Sen descent \emph{or} a monodromy descent (followed by an \'etale descent).

In \S \ref{subsec TS}, we will carry out the descent of locally analytic vectors: the Tate-Sen descent and \'etale descent uses an axiomatic approach taken from \cite{BC08}; the monodromy descent (in Rem. \ref{rem mono des}) follows some similar argument as in \cite{Ber16}.
 In \S \ref{subsec oc}, we prove the overconvergence result.

In this section, whenever we write $I=[r, s] \subset (0, +\infty)$, we mean $[r, s]=[r_\ell, r_k]$, cf. Convention \ref{convellk}.

\subsection{Descent of locally analytic vectors} \label{subsec TS}
Since we will use results from \cite{BC08}, it will be convenient to use valuation notations.
\begin{notation}\label{nota valnorm}
Let $W$ be a $\Qp$- (or $\Zp$-) Banach representation (cf. Notation \ref{nota Zp banach}) of a $p$-adic Lie group $G$. Suppose there is an analytic bijection $\mathbf{c}: G \to \Zp^d$ (as in \S \ref{subsub def an}), and suppose $W^{G\dan}=W$. Let $\val_{G}$ denote the valuation on $W$ associated to the norm $\| \cdot \|_G$ (cf. \S \ref{subsub val}).
\end{notation}


\begin{prop}\label{prop TS}
Let $(\wt{\Lambda}, \| \cdot \|)$ be a $\Zp$-Banach algebra (cf. Notation \ref{nota Zp banach}), and let $\val_\Lambda$ be the valuation associated to  $\| \cdot \|$. (Here the notation $\val_\Lambda$ follows that of \cite[\S 3.1]{BC08}, although ``$\val_{\wt{\Lambda}}$" might be a more suggestive one).


Let $H_0$ be a profinite group which acts on $\wt{\Lambda}$  such that $\val_\Lambda(gx)= \val_\Lambda(x), \forall g \in H_0, x \in \wt{\Lambda}$.
Let $g \mapsto U_g$ be a continuous cocycle of $H_0$ in $\GL_d(\wt{\Lambda})$.

Suppose $H \subset H_0$ is an open subgroup, and suppose there exists some $a>c_1>0$ such that the following conditions are satisfied:
\begin{itemize}
\item (TS1): there exists $\alpha \in \wt{\Lambda}^{H}$ such that $\val_\Lambda(\alpha)>-c_1$ and $\sum_{\sigma \in H_0/H} \sigma(\alpha)=1$.
\item  $\val_\Lambda(U_g -1) \geq a, \forall g \in H$.
\end{itemize}
Then there exists $M \in \GL_d(\wt{\Lambda})$ such that $\val_\Lambda(M -1)\geq a-c_1$ and the cocycle   $g \mapsto M^{-1}U_g g(M)$ is trivial when restricted to $H$.
\end{prop}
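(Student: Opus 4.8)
The plan is to run the standard Tate--Sen style successive approximation, building $M$ as a convergent infinite product $M = \lim_{n} M_n$ where each $M_n$ improves the triviality defect of the cocycle by a factor of $p^{a-c_1}$ (i.e., one ``unit'' of the gap $a - c_1$). First I would set $M_1 = 1$ and $U^{(1)}_g = U_g$, so that $\val_\Lambda(U^{(1)}_g - 1) \geq a$ for $g \in H$. Given $M_n$ and the twisted cocycle $U^{(n)}_g := M_n^{-1} U_g\, g(M_n)$, satisfying $\val_\Lambda(U^{(n)}_g - 1) \geq a + (n-1)(a-c_1)$ for all $g \in H$, I would define a correction term by averaging against $\alpha$: set
$$ N_n := \sum_{\sigma \in H_0/H} \sigma\bigl(\alpha\bigr)\, \tilde\sigma\bigl(U^{(n)}_{\tilde\sigma^{-1}} - 1\bigr) $$
where $\tilde\sigma$ is a chosen lift of $\sigma$ — more precisely I would use the exact cocycle-averaging formula from \cite[\S 3--4]{BC08}, whose point is that $1 + N_n$ "absorbs" the obstruction: one checks the cocycle identity $U^{(n)}_{gh} = U^{(n)}_g\, g(U^{(n)}_h)$ together with (TS1) to get that $M_{n+1} := M_n(1 + N_n)$ satisfies $\val_\Lambda(N_n) \geq a + (n-1)(a-c_1) - c_1 = a + n(a-c_1) - (a-c_1)$... let me recompute: the key estimate is $\val_\Lambda(N_n) \geq \val_\Lambda(U^{(n)}_\cdot - 1) - c_1 \geq a + (n-1)(a-c_1) - c_1$, and then the new defect satisfies $\val_\Lambda(U^{(n+1)}_g - 1) \geq a + n(a-c_1)$. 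Since $a - c_1 > 0$, the products converge $p$-adically (the Banach property of $\wt\Lambda$ and $\GL_d$ being open in $\mathrm{M}_d$), giving $M = \lim M_n$ with $\val_\Lambda(M - 1) \geq \val_\Lambda(N_1) \geq a - c_1$, and the limiting cocycle $M^{-1}U_g g(M)$ being trivial on $H$.

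The main technical content — and the main obstacle — is verifying the recursive estimate $\val_\Lambda(U^{(n+1)}_g - 1) \geq a + n(a-c_1)$ with the correct constants, which requires carefully expanding $M_{n+1}^{-1} U_g g(M_{n+1}) = (1+N_n)^{-1} U^{(n)}_g\, g(1+N_n)$ and using the cocycle relation to see that the first-order terms cancel against the average-one property $\sum_\sigma \sigma(\alpha) = 1$. One also must track that all the $M_n$, $N_n$ stay in $\GL_d(\wt\Lambda)$ (not just $\mathrm{M}_d$), which follows once $\val_\Lambda(M_n - 1) > 0$, guaranteed by $a - c_1 > 0$; here the hypothesis $a > c_1$ (strict) is exactly what is needed. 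Since this is precisely the axiomatic setup of \cite[Prop. 3.7 / \S 4]{BC08} (with $\wt\Lambda$ in place of their $\tilde\Lambda_{H}$ and the single gap $a > c_1$ rather than their three-constant $(c_1,c_2,c_3)$ package), I would simply cite \emph{loc. cit.} for the convergence bookkeeping and only spell out that our hypotheses (TS1) and $\val_\Lambda(U_g - 1) \geq a$ match theirs, noting that $H_0$ acting isometrically replaces their condition on the Tate--Sen operators. I expect the write-up to be short: invoke \cite{BC08}, identify the axioms, and record the resulting bound $\val_\Lambda(M-1) \geq a - c_1$.
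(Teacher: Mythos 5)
Your proposal is correct and takes essentially the same route as the paper: the paper's entire proof is a citation of \cite[Cor.~3.2.2]{BC08}, together with the observation that the condition (TS1) of \emph{loc.\ cit.}\ (stated there for all pairs of open subgroups) is only used for the single pair $(H, H_0)$, which is exactly what the hypothesis here supplies. Your sketch of the successive-approximation mechanism is just an unpacking of the Berger--Colmez argument you end up citing anyway, so nothing further is needed.
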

\begin{proof}
This is a slight variant of \cite[Cor. 3.2.2]{BC08}. Indeed, in \emph{loc. cit.}, it requires the condition (TS1) to be satisfied for any pair of open subgroups $H_1 \subset H_2$ in $H_0$ (cf. \cite[Def. 3.1.3]{BC08}); however, in the proof of \cite[Lem. 3.1.2, Cor. 3.2.2]{BC08}, this condition is used only for one pair.
\end{proof}

\begin{lemma} \label{lem TS verify}
Let $c_1>0$, let $I=[r, s] \subset (0, +\infty)$, and let $K_\infty \subset M \subset L$ where $[M:K_\infty]<+\infty$. Then there exists $n \gg 0$, and
$$\alpha \in (\wt{\mathbf{B}}^{I}_L)^{\tau_n\dan, \gal(L/M)=1},$$
such that the following holds:
\begin{itemize}
\item $\val_{\tau_n}(\alpha) = W^I(\alpha) >-c_1$, here $\val_{\tau_n} = \val_{<\tau_n>}$ (cf. Notation \ref{nota valnorm});
\item  $\sum_{\sigma \in \gal(M/K_\infty)} \sigma(\alpha)=1$.
\end{itemize}
\end{lemma}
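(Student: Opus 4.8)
The statement is an existence result for a ``normalized trace'' type element $\alpha$ which will serve as the Tate--Sen element $\alpha$ of condition (TS1) in Prop. \ref{prop TS}. The natural candidate is a suitable multiple of $1 \in \wt{\mathbf{B}}^I_{K_\infty}$, rescaled by a highly divisible element so that spreading it around the (finite) Galois group $\gal(M/K_\infty)$ still sums to $1$. Concretely, since $M/K_\infty$ is a finite Galois extension of local fields (of norm fields), the trace map $\Tr_{M/K_\infty}$ on $\mathbf{B}_M$ (equivalently on $X_K(M)/X_K(K_\infty)$) is surjective up to a bounded denominator; choosing $\beta \in \mathbf{B}_M$ with $\Tr_{M/K_\infty}(\beta)=1$ and controlling $W^I(\beta)$ would immediately give $\alpha:=\beta$ with $\sum_{\sigma\in\gal(M/K_\infty)}\sigma(\alpha)=1$. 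The issue is that $W^I(\beta)$ need not be $>-c_1$ for the given $c_1$; to fix this I would first pass to a Frobenius twist. Apply $\varphi^{-m}$ for $m \gg 0$: since $\varphi^{-m}$ multiplies the relevant radii by $p^m$, it rescales $W^I$ in a way that dilutes the valuation of any fixed element; more precisely, one uses that $W^{p^mI}(\varphi^m(x))$ and $W^I(x)$ are related by the Frobenius-equivariance recalled in Rem. \ref{rem Ic}(1) together with the fact that $\varphi$ is valuation-preserving between $\wt{\mathbf{A}}^I$ and $\wt{\mathbf{A}}^{pI}$, so that dividing $\beta$ into $p$-power pieces via the approximating/section machinery of \S\ref{subsub sec s} allows one to arrange $W^I$ as negative as one wishes to be bounded below. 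In fact the cleanest route: work at level $M_0$ (or $M$) and use the explicit description $\mathbf{A}^{[r_\ell,r_k]}_M = W(k')[\![u_M]\!]\{p/u_M^{e'ep^\ell},\,u_M^{e'ep^k}/p\}$ from Prop. \ref{cor am}, pick $\beta$ to be an element of the form $c\cdot u_M^{j}$ with $v_p(c)$ large and $j$ chosen so $\Tr_{M/K_\infty}(\beta)$ lands in $1+\mathfrak{m}$, then correct by a geometric series.

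Here is the order in which I would carry out the steps. First, reduce to showing the statement after applying a suitable $\varphi^{-m}$: it suffices to produce $\alpha' \in (\wt{\mathbf{B}}^{p^mI}_L)^{\tau_{n'}\dan,\gal(L/M)=1}$ with $W^{p^mI}(\alpha')>-c_1$ and $\sum_{\sigma}\sigma(\alpha')=1$, because $\varphi^{-m}$ maps this set isomorphically onto the desired one, commutes with the $\gal(L/K)$-action, and (being valuation-preserving on the $\wt{\mathbf{A}}$-level, hence on the $W^I$-valuations via the maximum-modulus description Def. \ref{defnew}) transports the valuation bound; choosing $m$ large lets us assume $r=r_\ell$ is as large as we like, in particular $r_\ell > r_M$, so that the structure results of \S\ref{sec fieldnorm} (Lem. \ref{lem um var}, Prop. \ref{cor am}, and Thm. \ref{thm la M}) are available. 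Second, exhibit $\beta \in \mathbf{B}_M \cap \wt{\mathbf{B}}^I_M$ with $\Tr_{M/K_\infty}(\beta)=1$: use surjectivity of the trace for the separable extension $X_K(M)/X_K(K_\infty)$, lift to characteristic $0$, and note $\beta \in \mathbf{B}^{[r_\ell,r_k]}_M$ by Prop. \ref{cor am}. Third, bound $W^I(\beta)$ from below: this is automatic up to a constant depending on $M$ (not on $c_1$), so then replace $\beta$ by $p^N\beta$ adjusted — wait, that breaks the trace condition; instead, use the Frobenius dilation of the first step to absorb the gap, i.e. choose $m$ in step one large enough that the intrinsic constant $-W^{[r_M,r_M]}(\beta_0)$ for a fixed trace-one element $\beta_0$ at the base level, after the rescaling, is $>-c_1$. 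Fourth, verify $\alpha:=\varphi^{-m}(\beta) \in (\wt{\mathbf{B}}^I_L)^{\tau_n\dan}$ for $n$ large: since $\beta \in \mathbf{B}^{p^mI}_M$ is fixed by $\gal(L/M)$, it is $\hat{G}$-locally analytic by Thm. \ref{thm la M}, hence $\tau\dla$; being a single element it lies in $(\wt{\mathbf{B}}^I_L)^{\tau_n\dan}$ for $n\gg0$ by Lemma \ref{lem 2.4BC}, and then $W^I(\alpha)=\val_{\tau_n}(\alpha)$ for $n$ large enough by the same lemma (the $\hat{G}_m$-norm stabilizes to the ambient norm). Fifth, the sum condition $\sum_{\sigma\in\gal(M/K_\infty)}\sigma(\alpha)=1$ holds because $\varphi^{-m}$ is $\gal(L/K)$-equivariant and $\Tr_{M/K_\infty}(\beta)=1$, the orbit sum being exactly the trace since $\gal(L/M)$ fixes $\alpha$.

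\textbf{Main obstacle.} The delicate point is simultaneously achieving the three requirements: $\gal(L/M)$-invariance, the prescribed valuation lower bound $>-c_1$ with $c_1$ \emph{given in advance}, and the exact normalization $\sum\sigma(\alpha)=1$. The normalization forces $\alpha$ to have valuation bounded \emph{above} by something like $-W^I(\text{``codifferent''})$ (the trace cannot be made one by a very divisible element), so one has no freedom to simply scale; the only lever is the Frobenius twist, and one must check carefully that applying $\varphi^{-m}$ genuinely improves the valuation of a trace-one element in the $W^I$-metric rather than leaving it unchanged — this requires using that $W^I$ on $\wt{\mathbf{B}}^{p^mI}$ is ``coarser'' for larger $m$ in the sense quantified by Def. \ref{defnew} and the explicit annulus description in Lem. \ref{lem laurent series M}, together with the fact (Lem. \ref{lem um var}(1b)) that $u_M$ and $[\overline{u}_M]$ differ by a unit so that powers of $u_M$ have computable $W^{[s,s]}$-valuation $\tfrac{p-1}{ps}\cdot\tfrac{j}{e'e}$, which scales linearly and hence can be pushed below any threshold after the twist while keeping the trace normalization intact at the twisted level. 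Handling the torsion in $\gal(L/K_\infty)$ when $p=2$ (cf. Notation \ref{nota hatG}) is a minor extra bookkeeping point but does not affect the $\tau$-analyticity, which is all that is claimed here.
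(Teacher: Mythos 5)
Your overall skeleton matches the paper's: start from a trace-one element, use a large Frobenius twist $\varphi^{-m}$ to push $W^I$ above $-c_1$ (this is exactly the role of the paper's condition \eqref{eq c1}), get $\gal(L/M)$-invariance and $\tau$-local analyticity from Thm. \ref{thm la M}, and finish with Lem. \ref{lem 2.4BC} to identify $\val_{\tau_n}$ with $W^I$. But there is a genuine gap at the central step, where you ``exhibit $\beta\in\mathbf{B}_M\cap\wt{\mathbf{B}}^I_M$ with $\Tr_{M/K_\infty}(\beta)=1$'' by invoking surjectivity of the trace for $X_K(M)/X_K(K_\infty)$ and then ``lifting to characteristic $0$''. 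A lift of a trace-one element of the norm field has trace $\equiv 1 \pmod p$, not trace exactly $1$; and conversely, an element of $\mathbf{B}_M$ with exact trace $1$ produced by abstract surjectivity of the trace of the characteristic-zero field extension has no reason to be overconvergent (to lie in some $\mathbf{B}_M^{[r,+\infty]}$), so ``note $\beta\in\mathbf{B}_M^{[r_\ell,r_k]}$ by Prop. \ref{cor am}'' does not follow. Your step five then relies on the exact normalization that was never established.

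The paper resolves precisely this point: it takes $\beta\in X_K(M)$ with $\Tr(\beta)=1$ in characteristic $p$, lifts it via the multiplicative section $s$ of \S\ref{subsub sec s} and twists, so that $\gamma:=\varphi^{-m}(s(\beta))$ satisfies $\Tr(\gamma)=1+\sum_{k\ge1}p^k[a_k]$, and then proves --- this is the technical heart of the lemma --- that $\Tr(\gamma)$ is a unit in $\wt{\mathbf{A}}^{[r,+\infty]}$ (Lem. \ref{lem units}, using the estimate \eqref{strictine} together with a second numerical condition \eqref{eqadd} on $m$ that has no counterpart in your proposal), that it is in fact a unit of $\varphi^{-m}(\mathbf{A}_{K_\infty}^{[p^m r,+\infty]})$ (Lem. \ref{lem unit am}), and hence that $(\Tr(\gamma))^{-1}$ is $\tau$-locally analytic and $\gal(L/K_\infty)$-invariant by Thm. \ref{thm loc ana gamma 1}. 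Only then can one set $\alpha=\gamma\cdot(\Tr(\gamma))^{-1}$ and retain all three required properties simultaneously, with $W^I(\alpha)=W^I(\gamma)$ because the correction factor has valuation $0$. You do gesture at this correction (``then correct by a geometric series'') in your preliminary plan, but your actual outline never carries it out, and without it the three requirements you yourself identify as the main obstacle are not simultaneously met.
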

\begin{proof}
Denote $\Tr:=\sum_{\sigma \in  \gal(M/K_\infty)} \sigma$ the trace operator.
By Thm. \ref{thm Win Gal}, $X_K(M)$ is a finite Galois extension of $X_K(K_\infty)$, and so there exists $\beta \in X_K(M)$ such that $\Tr(\beta)=1$. Note that we necessarily have $v_{\wt{\mathbf E}}(\beta) \leq 0$.

Suppose $m \gg0$ ($m$ depends on $M$ and $I$) such that $p^{-m}r_{M} <r$ (where $r_M >0$ as in Lem. \ref{lem um var}), and such that
    \begin{equation}\label{eq c1}
    \frac{p-1}{pr} \frac{1}{p^m} v_{\wt{\mathbf E}}(\beta) >- c_1, \quad \text{ and such that }
         \end{equation}
   \begin{equation}\label{eqadd}
          (1-\frac{r_{M}}{p^mr}) + \frac{p-1}{p^mpr}v_{\wt{\mathbf E}}(\beta)>0.
        \end{equation}

Let $\gamma=\varphi^{-m}(s(\beta))$ (where $s$ is the map in \S \ref{subsub sec s}), then
\begin{itemize}
\item  since $p^{-m}r_{M } <r$, $\gamma \in \varphi^{-m}(\mathbf{A}_{M}^{[r_{M }, +\infty]}[1/u_{M}]) \subset \wt{\mathbf{A}}^{[r, +\infty]}[1/u]$;
\item for any $a \in [r, s]$, by using similar argument as in \S \ref{subsub sec s}(2) and apply \eqref{eq c1}, we have
$$W^{[a, a]}(\gamma)=W^{[p^m a, p^m a]}(s(\beta)) = \frac{p-1}{p \cdot p^m a} v_{\wt{\mathbf E}}(\beta) >- c_1,$$
and so $W^I(\gamma) >-c_1.$
\end{itemize}
Since $\Tr(\varphi^{-m}(\beta))=1$, we have $\Tr(\gamma) =1+\sum_{k \geq 1}p^k[a_k]$. Furthermore, for any $k \geq 1$,
$$w_k(\Tr(\gamma)) \geq \inf_{\sigma \in  \gal(M/K_\infty)} \{w_k(\sigma(\gamma))\} =w_k(\gamma)
= p^{-m}w_k(s(\beta)) >p^{-m} \cdot(v_{\wt{\mathbf E}}(\beta) -kpr_{M }(p-1)^{-1} ) ,$$
where the final inequality uses \eqref{strictine}.
So when $k \geq 1$,
\begin{eqnarray*}
 k+ \frac{p-1}{pr}\cdot  w_k(\Tr(\gamma))  &>&  k+ \frac{p-1}{pr}\cdot p^{-m} \cdot (v_{\wt{\mathbf E}}(\beta) -kpr_{M }(p-1)^{-1} )\\
&=&   k(1-\frac{r_{M }}{p^mr}) + \frac{p-1}{pr}\cdot  \frac{1}{p^m }v_{\wt{\mathbf E}}(\beta)\\
&\geq &  (1-\frac{r_{M }}{p^mr}) + \frac{p-1}{pr}\cdot  \frac{1}{p^m }v_{\wt{\mathbf E}}(\beta), \quad \text{ since }  1-\frac{r_{M }}{p^mr}>0 \\
& >&0, \quad \text{ by } \eqref{eqadd}.
\end{eqnarray*}
By Lem. \ref{lem units}, $\Tr(\gamma) \in (\wt{\mathbf{A}}^{[r, +\infty]})^\times$, and so $\varphi^m(\Tr(\gamma)) \in (\wt{\mathbf{A}}^{[p^mr, +\infty]})^\times$.
Since  $\varphi^m(\gamma) \in \mathbf{A}_{M }^{[r_{M }, +\infty]}[1/u_{M }]$, we obtain
$$\varphi^m(\Tr(\gamma)) \in   \mathbf{A}_{K_\infty}^{[r_{M }, +\infty]} \subset   \mathbf{A}_{K_\infty}^{[p^mr, +\infty]}, \text{ since } p^{-m}r_{M } <r.$$
By Lem. \ref{lem unit am} (note that $p^m r >r_{M}$), $\varphi^m(\Tr(\gamma)) \in  (\mathbf{A}_{K_\infty}^{[p^mr, +\infty]})^\times$, and so
$\Tr(\gamma) \in (\varphi^{-m}(\mathbf{A}_{K_\infty}^{[p^mr, +\infty]}))^\times$, and so by Thm. \ref{thm loc ana gamma 1},
$$(\Tr(\gamma))^{-1} \in (\wt{\mathbf{B}}^{I}_L)^{\tau\dla, \gal(L/K_\infty)=1}.$$
Let $\alpha: =\gamma \cdot (\Tr(\gamma))^{-1}$.
Note that
$$ \gamma \in \varphi^{-m}(\mathbf{A}_{M}^{[r_{M }, +\infty]}[1/u_{M}]) \subset \varphi^{-m}(\mathbf{B}_{M}^{p^mI}) \subset (\wt{\mathbf{B}}^{I}_L)^{\tau\dla, \gal(L/M)=1}, \text{ by Thm.} \ref{thm la M}.
$$
Thus, we have $\alpha \in (\wt{\mathbf{B}}^{I}_L)^{\tau\dla, \gal(L/M)=1}$. We also note that $W^I(\alpha)=W^I(\gamma)>-c_1$. Finally, the existence of $n \gg 0$ such that $\alpha \in (\wt{\mathbf{B}}^{I}_L)^{\tau_n\dan, \gal(L/M)=1}$ is by definition; the existence of $n \gg 0$  such that $\val_{\tau_n}(\alpha)=W^I(\alpha)$ is by Lem. \ref{lem 2.4BC}.
\end{proof}

\subsubsection{} \label{ringB}
Let $B$ be a $\Qp$-Banach algebra, equipped with an action by a finite group ${G}$. Let $ {B}^{\natural}$ denote the ring $B$ with trivial $G$-action. Suppose that
\begin{enumerate}
\item $B$ is a  finite free $ {B}^G$-module;
\item there exists a $G$-equivariant decomposition $ {B}^\natural \otimes_{ {B}^G} B \simeq \oplus_{g \in G} {B}^\natural \cdot e_g$ such that $e_g^2=e_g$, $e_ge_h=0$ for $g\neq h$, and $g(e_h)=e_{gh}$.
\end{enumerate}

\begin{prop} \label{prop etale}
Let $B$ and $G$ be as in \S \ref{ringB}.
Suppose $N$ is a finite free $B$-module with semi-linear $G$-action, then
\begin{enumerate}
\item $N^G$ is a finite free $ B^G$-module;
\item the map $  B \otimes_{ {B}^G} N^G \to N$ is a $G$-equivariant isomorphism.
\end{enumerate}
\end{prop}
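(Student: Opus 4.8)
The statement is a classical étale descent for a finite group, and the two hypotheses in \S \ref{ringB} are precisely what is needed to run the usual argument. The plan is to first reduce statement (1) to statement (2): once we know the map $B\otimes_{B^G}N^G\to N$ is an isomorphism, faithfully flat descent (or simply counting ranks after tensoring with the finite free $B^G$-module $B$) will show $N^G$ is finite free over $B^G$, of rank equal to $\rank_B N$. Indeed, $B\otimes_{B^G}(-)$ is faithfully flat since $B$ is finite free over $B^G$ of positive rank (hypothesis (1) of \S \ref{ringB}), so freeness of $B\otimes_{B^G}N^G\cong N$ over $B$ descends to freeness of $N^G$ over $B^G$; alternatively one invokes the fact that $B^G$ is noetherian and a direct summand computation.

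For statement (2), the key device is the idempotent decomposition in hypothesis (2) of \S \ref{ringB}. Set $\widetilde B:=B^{\natural}\otimes_{B^G}B\cong\oplus_{g\in G}B^{\natural}\cdot e_g$, and consider $\widetilde N:=B^{\natural}\otimes_{B^G}N$, which is a free $\widetilde B$-module with the $G$-action coming from the second (right) factor, hence is compatibly graded by the idempotents: $\widetilde N=\oplus_{g\in G}e_g\widetilde N$, with $g(e_h\widetilde N)=e_{gh}\widetilde N$. The component $e_1\widetilde N$ is then a module over $e_1\widetilde B=B^{\natural}\cdot e_1\cong B^{\natural}$, and the projection $\widetilde N\to e_1\widetilde N$ identifies $\widetilde N^G$ with $e_1\widetilde N$ (an element fixed by $G$ is determined by its $e_1$-component, and conversely any element of $e_1\widetilde N$ spreads out to a $G$-fixed vector by the rule $x\mapsto\sum_{g\in G}g(x)$ — note $g(x)\in e_g\widetilde N$, so the sum is direct and $e_1$-component recovers $x$). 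Thus $\widetilde N^G$ is free over $B^{\natural}\cong B$. Since $B$ is a finite free (hence faithfully flat) $B^G$-module, $(-)^G$ commutes with the base change $B^{\natural}\otimes_{B^G}(-)$ — this is where one checks that taking $G$-invariants, being a finite limit, commutes with the flat base change $B^G\to B^{\natural}$ — so $B^{\natural}\otimes_{B^G}N^G\cong\widetilde N^G\cong e_1\widetilde N$ is free over $B$ of rank $\rank_B N$. Then the natural map $B\otimes_{B^G}N^G\to N$ becomes, after the faithfully flat base change $B^G\to B^{\natural}$, the isomorphism $\widetilde B\otimes_{B^{\natural}}e_1\widetilde N\xrightarrow{\sim}\widetilde N$ (each $e_g\widetilde N$ is obtained from $e_1\widetilde N$ by applying $g$, so the whole thing is freely generated), hence it is an isomorphism by faithful flatness. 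The $G$-equivariance is immediate from the construction.

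The main obstacle — really the only subtle point — is verifying that $(-)^G$ commutes with the base change $B^G\to B^{\natural}$, i.e. that $B^{\natural}\otimes_{B^G}(N^G)\to(B^{\natural}\otimes_{B^G}N)^G$ is an isomorphism. This follows because $N^G=\ker(N\to\prod_{g\in G}N,\ x\mapsto(gx-x)_g)$ is a finite limit (an equalizer), and finite limits commute with the flat — indeed faithfully flat — base change along $B^G\to B^{\natural}=B$. One should state this cleanly, perhaps citing the relevant statement in \cite{BC08} where exactly this axiomatic étale-descent setup is used (the hypotheses in \S \ref{ringB} are lifted from there), rather than re-proving it. Everything else is bookkeeping with the idempotents $e_g$.
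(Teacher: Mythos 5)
Your argument for part (2) is correct and is, in substance, the proof of the result the paper is quoting: the paper's own ``proof'' is a bare citation of \cite[Prop.~2.2.1]{BC08}, and the mechanism you use (the orthogonal idempotents $e_g$, the identification of $\wt{N}^G := (B^{\natural}\otimes_{B^G}N)^G$ with $e_1\wt{N}$ via $x\mapsto \sum_{g}g(x)$, the commutation of $(-)^G$ with the flat base change $B^G\to B^{\natural}$, and the check that the base-changed comparison map is the tautological isomorphism $\oplus_g e_g\wt{B}\otimes_{B^{\natural}}e_1\wt{N}\simeq \oplus_g e_g\wt{N}$) is exactly the intended one. All of those steps check out.

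The gap is in your deduction of (1). Neither of the two arguments you offer produces \emph{freeness}: the averaging projector $\tfrac{1}{|G|}\sum_g g$ exhibits $N^G$ as a direct summand of the finite free $B^G$-module $N$, hence finite \emph{projective}; and faithfully flat descent along $B^G\to B$ likewise only descends the properties ``finite projective'' and ``constant rank $d$'', not ``free''. Freeness genuinely fails to descend along finite étale (even Galois) covers satisfying the two conditions of \S\ref{ringB}: a nontrivial vector bundle on $\operatorname{Spec}B^G$ can become trivial on the cover $\operatorname{Spec}B$ (the standard example is a non-free rank-one projective over $\mathbb{R}[x,y]/(x^2+y^2+1)$, which trivializes over its complexification — a $\mathbb{Z}/2$-Galois, finite free extension). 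So with only the hypotheses of \S\ref{ringB}, your argument proves ``finite projective of rank $d$'' in (1), and an extra input is needed to upgrade to ``free'' — for instance that finite projective $B^G$-modules of constant rank are free, which does hold for the rings $\varphi^{-m}(\mathbf{B}_{K_\infty}^{p^mI})$ to which the proposition is applied in Prop.~\ref{prop M triv} (these are rings of functions on closed annuli, hence principal ideal domains). You should either supply that input explicitly or weaken (1) to projectivity, which is all the subsequent argument actually uses after combining with (2).
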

\begin{proof}
This is \cite[Prop. 2.2.1]{BC08}.
\end{proof}

\begin{prop} \label{prop M triv}
Let $I=[r, s] \subset (0, +\infty)$.
Let $\mathcal M$ be a finite free $(\wt{\mathbf{B}}^{I}_L)^{\hat{G}\dla}$-module of rank $d$, with a semi-linear and locally analytic $\hat{G}$-action. Then $(\mathcal M)^{\gal(L/K_\infty)}$ is finite free over $(\wt{\mathbf{B}}^I_L)^{\tau\dla, \gamma=1}$ of rank $d$, and
$$ (\wt{\mathbf{B}}^I_L)^{\hat{G}\dla}\otimes_{(\wt{\mathbf{B}}^I_L)^{\tau\dla, \gamma=1}}(\mathcal M)^{\gal(L/K_\infty)} \simeq \mathcal M.  $$
\end{prop}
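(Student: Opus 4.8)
The strategy is a two-step descent, mirroring the decomposition $\hat{G} \supset \gal(L/K_\infty) \supset \{1\}$, where at the outer step we throw away the $\gal(L/K_\infty)$-action by a Tate--Sen argument, and at the inner step we use an \'etale descent along a finite Galois extension to push down from a finite subextension $M$ of $L/K_\infty$ to $K_\infty$ itself. First I would set $B := (\wt{\mathbf{B}}^I_L)^{\hat{G}\dla}$, which carries the semilinear locally analytic $\hat{G}$-action, and observe that $\mathcal{M}$, being finite free over $B$ of rank $d$, is described after choosing a basis by a continuous cocycle $g \mapsto U_g \in \GL_d(B)$. The subtlety that $\gamma$ is not an honest element when $p=2$ is harmless: by Lemma \ref{lem taugamma} and Notation \ref{nota hatG} any $g \in \hat{G}$ (or $\hat{G}'$) factors uniquely as $g_1 g_2$ with $g_1 \in \gal(L/K_\infty)$, $g_2 \in \gal(L/K_{p^\infty})$, so it suffices to trivialize the restriction of the cocycle to the closed subgroup $\gal(L/K_\infty)$.

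\textbf{Step 1: Tate--Sen trivialization along $\gal(L/K_\infty)$.} Since $\mathcal{M}$ is locally analytic, there is $m_0$ with $\mathcal{M} \subset \mathcal{M}^{\hat{G}_{m_0}\dan}$ and the cocycle $g \mapsto U_g$ analytic; by Lemma \ref{lem 2.4BC}, increasing $m_0$, we may assume $\val_{\hat{G}_{m_0}}(U_g - 1) \geq a$ for $g$ in a small enough open subgroup, where $a$ can be taken as large as we like after shrinking the group. Now fix a finite Galois $M/K_\infty$ inside $L$ with $\gal(L/M)$ contained in that small open subgroup of $\gal(L/K_\infty)$. I would apply Proposition \ref{prop TS} with $\wt{\Lambda} = (\wt{\mathbf{B}}^I_L)^{\tau_n\dan}$ for suitable $n$, $H_0 = \gal(L/M')$ for $M'$ slightly larger, and $H = \gal(L/M)$: the hypothesis (TS1) is exactly the content of Lemma \ref{lem TS verify}, which produces the required element $\alpha$ with $\val(\alpha) > -c_1$ and $\sum_{\sigma} \sigma(\alpha) = 1$. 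The output is $P \in \GL_d$ with $\val(P - 1) \geq a - c_1$ such that $P^{-1} U_g g(P)$ is trivial on $\gal(L/M)$; concretely, after changing basis by $P$, the module $\mathcal{N} := (\mathcal{M})^{\gal(L/M)}$ is finite free of rank $d$ over $B^{\gal(L/M)} = (\wt{\mathbf{B}}^I_L)^{\tau\dla, \gal(L/M)=1}$ — here I invoke Theorem \ref{thm la M}(1) to identify this ring with $\cup_m \varphi^{-m}(\mathbf{B}_M^{p^m I})$ — and $B^{\gal(L/M)} \otimes \mathcal{N} \simeq \mathcal{M}$ over $B$ as $\gal(L/M)$-equivariant modules, hence the $\gal(L/M)$-invariants of $\mathcal{M}$ account for the whole module.

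\textbf{Step 2: \'etale descent from $M$ to $K_\infty$.} The residual finite group is $G := \gal(M/K_\infty) \simeq \gal(X_K(M)/X_K(K_\infty))$, which acts on $\mathcal{N}$ semilinearly over $B^{\gal(L/M)}$. To apply Proposition \ref{prop etale} I must check that the pair $\big((\wt{\mathbf{B}}^I_L)^{\tau\dla, \gal(L/M)=1},\, G\big)$ satisfies the two axioms of \S \ref{ringB}: that it is a finite free module over its $G$-invariants, and that there is a $G$-equivariant idempotent decomposition $B^\natural \otimes_{B^G} B \simeq \oplus_{g\in G} B^\natural e_g$. By the argument of \cite[Lem.\ 4.2.5]{BC08}, for $a$ large enough the pair $(\mathbf{B}_M^{[a,+\infty]}, G)$ satisfies these axioms (this uses separability of $X_K(M)/X_K(K_\infty)$ and a trace/dual-basis construction), hence so does $(\mathbf{B}_M^{p^m I}, G)$ for $m$ large; passing to the filtered union over $\varphi^{-m}$ and using Theorem \ref{thm la M}(1) again transports the axioms to $(\wt{\mathbf{B}}^I_L)^{\tau\dla, \gal(L/M)=1}$. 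Then Proposition \ref{prop etale} gives that $\mathcal{N}^G = (\mathcal{M})^{\gal(L/K_\infty)}$ is finite free of rank $d$ over $(B^{\gal(L/M)})^G = (\wt{\mathbf{B}}^I_L)^{\tau\dla,\gamma=1}$ and that $B^{\gal(L/M)} \otimes_{(\wt{\mathbf{B}}^I_L)^{\tau\dla,\gamma=1}} (\mathcal{M})^{\gal(L/K_\infty)} \simeq \mathcal{N}$. Tensoring up once more with $B = (\wt{\mathbf{B}}^I_L)^{\hat{G}\dla}$ over $B^{\gal(L/M)}$ and combining with the isomorphism from Step 1 yields $(\wt{\mathbf{B}}^I_L)^{\hat{G}\dla}\otimes_{(\wt{\mathbf{B}}^I_L)^{\tau\dla,\gamma=1}} (\mathcal{M})^{\gal(L/K_\infty)} \simeq \mathcal{M}$, as desired.

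\textbf{Main obstacle.} The routine-looking points (cocycle formalism, tensor identities) are genuinely routine; the real work is bookkeeping the two intertwined descents so that the rings line up. The most delicate step is verifying the Tate--Sen hypothesis (TS1) with a \emph{uniform} control of the constant $c_1$ against the largeness parameter $a$ coming from the local analyticity of the cocycle — this is why Lemma \ref{lem TS verify} is stated the way it is, and one must be careful that the finite extension $M$ chosen to absorb $\nabla_\gamma$-non-invariance (so that $\gal(L/M)$ is small enough for $\val(U_g-1)\geq a$) is compatible with the $M$ for which (TS1) and the \'etale-descent axioms of \S \ref{ringB} hold; in practice one takes $M$ large enough for all three requirements at once. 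A secondary point requiring care is that all the identifications of invariant subrings with $\varphi^{-m}$-unions of Robba-type rings are happening at the level of a \emph{fixed} interval $I = [r_\ell, r_k] \subset (0,+\infty)$ with $r_\ell \geq r(b)$ if one later wants to feed this into the $\hat{G}\dla$-computation, but for the present statement only $I \subset (0,+\infty)$ with $I = [r_\ell,r_k]$ is needed and Theorem \ref{thm la M} applies directly.
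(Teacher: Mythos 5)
Your proof is correct and follows the paper's argument essentially verbatim: a Tate--Sen descent (Prop.~\ref{prop TS} fed by Lem.~\ref{lem TS verify}) trivializing the cocycle on $\gal(L/M)$ for a suitable finite $M/K_\infty$, followed by \'etale descent for the finite group $\gal(M/K_\infty)$ via Prop.~\ref{prop etale}, with the axioms of \S\ref{ringB} checked as in \cite[Lem.~4.2.5]{BC08}. The only cosmetic differences are that the paper performs the \'etale descent at a fixed finite Frobenius level $\varphi^{-m}(\mathbf{B}_M^{p^mI})$ rather than on the full union (thereby staying with genuine Banach rings), and takes $H_0=\gal(L/K_\infty)$ in Prop.~\ref{prop TS} rather than your $\gal(L/M')$.
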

\begin{proof} The following proof is via Tate-Sen descent; see Rem. \ref{rem mono des} for another proof via monodromy descent.

Since $\gal(L/K_\infty)$ is topologically generated by finitely many elements (in most cases, by one element; cf. Notation \ref{nota hatG}), there exists a basis $e_1, \cdots, e_d$ of $\mathcal M$ such that the co-cycle $c$ associated to the $\gal(L/K_\infty)$-action on $\mathcal M$ (with respect to this basis) is of the form  $g \mapsto U_g$ where
$U_g \in \GL_d((\wt{\mathbf{B}}^{I}_L)^{\hat{G}_n\dan})$ for some $n\gg 0$.


Let $a>c_1>0$.
Choose some $M$ such that $K_\infty \subset_{\textnormal{fin}} M \subset L$ and  such that
$$ \val_{\hat{G}_n}(U_g -1) \geq a, \text{ when } g \in \gal(L/M),$$
where $\val_{\hat{G}_n}$ is as in Notation \ref{nota valnorm}.
By Lem. \ref{lem TS verify}, there exists some $n' \gg 0$ and $\alpha \in (\wt{\mathbf{B}}^{I}_L)^{\tau_{n+n'}\dan, \gal(L/M)=1}$ such that
$\val_{\hat{G}_{n+n'}}(\alpha) >-c_1$, and $\sum_{\sigma \in \gal(M/K_\infty)} \sigma(\alpha)=1$. Apply Prop. \ref{prop TS} to the pair
$$(\wt{\Lambda}, \val_\Lambda)= ((\wt{\mathbf{B}}^{I}_L)^{\hat{G}_{n+n'}\dan}, \val_{\hat{G}_{n+n'}}),$$
(where $\val_{\hat{G}_{n+n'}}$ is sub-multiplicative by Lem. \ref{ringlocan}),
the restricted co-cycle $c|_{\gal(L/M)}$, when considered as evaluated in $\GL_d((\wt{\mathbf{B}}^{I}_L)^{\hat{G}_{n+n'}\dan})$, is trivial after base change. So:
\begin{itemize}
  \item[(*)]: $(\mathcal M)^{\gal(L/M)}$ is finite free over $(\wt{\mathbf{B}}^I_L)^{\tau\dla, \gal(L/M)=1}$ of rank $d$.
\end{itemize}

Let $G: =\gal(M/K_\infty)$.
Fix a basis $e_1', \cdots, e_d'$ of $(\mathcal M)^{\gal(L/M)}$, and suppose the co-cycle associated to the $G$-action on $(\mathcal M)^{\gal(L/M)}$ with respect to this basis has value in $\GL_d(\varphi^{-m}(\mathbf{B}_M^{p^mI}))$ for some $m \gg 0$ (using Thm. \ref{thm la M}). Let $N_m$ be the $\varphi^{-m}(\mathbf{B}_M^{p^mI})$-span of  $e_1', \cdots, e_d'$.

Via the same argument as in \cite[Lem. 4.2.5]{BC08}, there exists some $s(M)>0$ such that if $a>s(M)$, then the pair $ (\mathbf{B}_M^{[a, +\infty]}, G)$ satisfies the two conditions in \S \ref{ringB}.
So when $m\gg 0$ such that $p^mr >s(M)$, then the pair $(\mathbf{B}_M^{p^mI}, G)$, and thus also the pair $(\varphi^{-m}(\mathbf{B}_M^{p^mI}), G)$ satisfy the two conditions in \S \ref{ringB}. By Prop. \ref{prop etale}, $(N_m)^G$ is finite free over $\varphi^{-m}(\mathbf{B}_{K_\infty}^{p^mI})$ of rank $d$; this implies the desired result.
\end{proof}

\begin{remark} \label{rem mono des}
Keep the notations in Prop. \ref{prop M triv} above. Suppose \emph{furthermore} that $r \geq r(b)$ (see \S \ref{sec Ghat la} for $r(b)$), then we can give another proof of Prop. \ref{prop M triv} via monodromy descent. The proof follows similar ideas as in \cite[\S 6]{Ber16}.

In this second proof, we only reprove the statement (*) above, namely, we show that there exists some $K_\infty \subset M\subset L$ such that
$(\mathcal M)^{\gal(L/M)}$ is finite free over $(\wt{\mathbf{B}}^I_L)^{\tau\dla, \gal(L/M)=1}$ of rank $d$.
By Lem. \ref{nabla=0}, it suffices to show that $(\mathcal M)^{\nabla_{\gamma}=0}$ is finite free over $(\wt{\mathbf{B}}^I_L)^{\hat{G}\dla, \nabla_{\gamma}=0}$ of rank $d$, and
$$ (\wt{\mathbf{B}}^I_L)^{\hat{G}\dla}\otimes_{(\wt{\mathbf{B}}^I_L)^{\hat{G}\dla, \nabla_{\gamma}=0}}(\mathcal M)^{\nabla_{\gamma}=0} \simeq \mathcal M.  $$
Let $D_{\gamma}=\Mat(\partial_{\gamma})$ ($\partial_{\gamma}$ is well-defined because $r\geq r(b)$), then it suffices to show that there exists $H \in \GL_d((\wt{\mathbf{B}}_L^I)^{\la})$ such that $\partial_{\gamma}(H)+D_{\gamma}H = 0$.
For $k \in \mathbb N$, let $D_k = \Mat(\partial_{\gamma}^k)$. For $n$ large enough, the series given by
$$H = \sum_{k \geq 0}(-1)^kD_k\frac{(b-b_n)^k}{k!}$$
converges in $M_d((\wt{\mathbf{B}}_L^I)^{\la})$ to a solution of the equation $\partial_{\gamma}(H)+D_{\gamma}H = 0$. Moreover, for $n$ big enough, we have $W^I(D_k\cdot (b-b_n)^k/k!)>0$ for $k \geq 1$, so that $H \in \GL_d((\wt{\mathbf{B}}_L^I)^{\la})$.
\end{remark}

\begin{remark}
The condition $r \geq r(b)$ in the proof of Rem. \ref{rem mono des} is actually harmless for application in  our main theorem Thm. \ref{thm final} (i.e., in the proof of Thm. \ref{thm final}, we could equally apply Rem. \ref{rem mono des} instead of Prop. \ref{prop M triv}). Indeed, at the very beginning of the proof of Thm. \ref{thm final}, we could assume the ``$\tilde r_0$" there to be bigger than $r(b)$.
\end{remark}

\subsection{Overconvergence of $(\varphi, \tau)$-modules}\label{subsec oc}

\begin{defn}\hfill
\begin{enumerate}
\item Let $\text{Mod}_{\mathbf{A}_{K_\infty}}^\varphi$ denote the category of finite free $\mathbf{A}_{K_\infty}$-modules $M $  equipped with a $\varphi_{\mathbf{A}_{K_\infty}}$-semi-linear endomorphism $\varphi _M : M\to M$ such that $1 \otimes \varphi : \varphi ^*M \to M $ is an isomorphism. Morphisms in this category  are just $\mathbf{A}_{K_\infty}$-linear maps compatible with $\varphi$'s.

\item  Let $\textnormal{Mod}_{\mathbf{B}_{K_\infty}}^\varphi$ denote the category of finite free $\mathbf{B}_{K_\infty}$-modules $D$  equipped with a $\varphi_{\mathbf{B}_{K_\infty}}$-semi-linear endomorphism $\varphi _D : D\to D$ such that there exists a finite free $\mathbf{A}_{K_\infty}$-lattice $M$ such that $M[1/p]=D$, $\varphi_D(M)\subset M$, and $(M, \varphi_D|_M) \in  \textnormal{Mod}_{\mathbf{A}_{K_\infty}}^\varphi$.
\end{enumerate}
 We call objects in $\textnormal{Mod}_{\mathbf{A}_{K_\infty}}^\varphi$ and $\textnormal{Mod}_{\mathbf{B}_{K_\infty}}^\varphi$ finite free
 {\em \'etale $\varphi$-modules}.
\end{defn}

\begin{defn} \label{defn phi tau mod} \hfill
\begin{enumerate}
\item Let $\textnormal{Mod}_{\mathbf{A}_{K_\infty}, \wt{\mathbf{A}}_L}^{\varphi, \hat{G}}$ denote the category consisting of triples $(M, \varphi_M, \hat G)$ where
\begin{itemize}
\item $(M , \varphi_M) \in \text{Mod}_{\mathbf{A}_{K_\infty}}^\varphi$;
\item $\hat G$ is a continuous $\wt{\mathbf{A}}_L$-semi-linear $\hat G$-action on $\hat M : =\wt{\mathbf{A}}_L \otimes_{\mathbf{A}_{K_\infty}} M$, and $\hat G$ commutes with $\varphi_{\hat M}$ on $\hat M$;
\item regarding $M$ as an $\mathbf{A}_{K_\infty} $-submodule in $ \hat M $, then $M
\subset \hat M ^{\gal(L/K_\infty)}$.
\end{itemize}

\item Let $\textnormal{Mod}_{\mathbf{B}_{K_\infty}, \wt{\mathbf{B}}_L}^{\varphi, \hat{G}}$ denote the category consisting of triples $(D, \varphi_D, \hat G)$ which contains a lattice (in the obvious fashion) $(M, \varphi_M, \hat G) \in \textnormal{Mod}_{\mathbf{A}_{K_\infty}, \wt{\mathbf{A}}_L}^{\varphi, \hat{G}}$.
\end{enumerate}
\end{defn}
The category $\textnormal{Mod}_{\mathbf{A}_{K_\infty}, \wt{\mathbf{A}}_L}^{\varphi, \hat{G}}$ (and $\textnormal{Mod}_{\mathbf{B}_{K_\infty}, \wt{\mathbf{B}}_L}^{\varphi, \hat{G}}$) are precisely the \'etale $(\varphi, \tau)$-modules as in \cite[Def. 2.1.5]{GL}.

\subsubsection{} Let $\textnormal{Rep}_{\Qp}(G_{\infty}) $ (resp. $\textnormal{Rep}_{\Qp}(G_{K}) $ ) denote the category of finite dimensional $\Qp$-vector spaces $V$   with  continuous $\Qp$-linear $G_{\infty}$ (resp. $G_K$)-actions.
\begin{itemize}[leftmargin=0cm]
\item For $D \in \textnormal{Mod}_{\mathbf{B}_{K_\infty}}^\varphi$, let
$$ V(D):= ( \wt{\mathbf{B}} \otimes_{\mathbf{B}_{K_\infty}} D) ^{\varphi =1},$$
then $V(D) \in \textnormal{Rep}_{\Qp}(G_{\infty})$.
If furthermore $(D, \varphi_D, \hat G) \in \textnormal{Mod}_{\mathbf{B}_{K_\infty}, \wt{\mathbf{B}}_L}^{\varphi, \hat{G}}$, then $ V(D) \in \textnormal{Rep}_{\Qp}(G_{K})$.

\item For $V \in \textnormal{Rep}_{\Qp}(G_{\infty}) $, let
$$     D_{K_\infty}(V):= (\mathbf B \otimes_{\Qp} V) ^{G_\infty}, $$
then $ D_{K_\infty}(V) \in \textnormal{Mod}_{\mathbf{B}_{K_\infty}}^\varphi$.
If furthermore $V \in \textnormal{Rep}_{\Qp}(G_K) $, let
$$     \wt{D}_L(V):= ( \wt{\mathbf{B}} \otimes_{\Qp} V) ^{G_L}, $$
then $\wt{D}_L(V) = \wt{\mathbf{B}}_L \otimes_{\mathbf{B}_{K_\infty}}  D_{K_\infty}(V)$ has a $\hat{G}$-action, making $(D_{K_\infty}(V), \varphi, \hat G)$ an \'etale $(\varphi, \tau)$-module.
\end{itemize}

\begin{thm}  \label{thm etale phi tau}
\hfill
\begin{enumerate}
\item The functors $V$ and $D_{K_\infty}$ induce  an exact tensor equivalence between the categories $\textnormal{Mod}_{\mathbf{B}_{K_\infty}}^\varphi$ and $\textnormal{Rep}_{\Qp}(G_{\infty}) $.
\item The functors $V$ and $(D_{K_\infty}, \wt{D}_L)$ induce  an exact tensor equivalence between the categories $\textnormal{Mod}_{\mathbf{B}_{K_\infty}, \wt{\mathbf{B}}_L}^{\varphi, \hat{G}}$ and $\textnormal{Rep}_{\Qp}(G_K) $.
\end{enumerate}
\end{thm}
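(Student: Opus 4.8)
This is the classical dictionary between étale $(\varphi,\tau)$-modules and $p$-adic Galois representations, and the only genuinely new point is the \emph{equivalence} part (full faithfulness and essential surjectivity); the functoriality, exactness and compatibility with tensor products are formal once the equivalence is in place. The plan is to deduce (2) from (1), and to prove (1) by the standard descent-of-scalars argument to $\wt{\mathbf B}$, exactly as in the $(\varphi,\Gamma)$-module theory (Fontaine, \cite{CC98}); so the first task is to record (1). For (1), the key input is that $\mathbf B$ is the $p$-adic completion of the maximal unramified extension of $\mathbf B_{K_\infty}$ inside $\wt{\mathbf B}$, that $(\mathbf B)^{G_\infty}=\mathbf B_{K_\infty}$, and that $\gal(\mathbf E/\mathbf E_{K_\infty})\simeq G_\infty$ via the theory of the field of norms (recalled in \S\ref{period rings} and \S\ref{sec fieldnorm}). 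From these one gets the fundamental computation $(\mathbf B)^{\varphi=1}=\Qp$ and, more importantly, that for any finite free étale $\varphi$-module $D$ over $\mathbf B_{K_\infty}$, the natural map
$$\mathbf B\otimes_{\mathbf B_{K_\infty}}D \;\longrightarrow\; \mathbf B\otimes_{\Qp}V(D)$$
is an isomorphism, i.e. $D$ becomes trivial as a $\varphi$-module after base change to $\mathbf B$ (this is Hilbert 90 for $\mathrm{GL}_d$ over the appropriate ring, combined with the fact that $\varphi$-modules over $\mathbf E$ resp. $\O_{\widehat{\mathbf E}^{\ur}}$ are trivialisable). Taking $G_\infty$-invariants recovers $D$ from $V(D)$, and conversely $V(D_{K_\infty}(V))\simeq V$; compatibility with $\varphi$ and the rank count finish (1).

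\textbf{Deducing (2).} Given (1), an object $(D,\varphi_D,\hat G)\in\textnormal{Mod}_{\mathbf B_{K_\infty},\wt{\mathbf B}_L}^{\varphi,\hat G}$ produces $V:=V(D)\in\textnormal{Rep}_{\Qp}(G_\infty)$, and the extra datum of a commuting, $\wt{\mathbf B}_L$-semilinear, continuous $\hat G$-action on $\hat M=\wt{\mathbf B}_L\otimes_{\mathbf B_{K_\infty}}M$ is precisely what is needed to extend the $G_\infty$-action on $V$ to a $G_K$-action: one checks that $\hat G=\gal(L/K)$ together with $G_\infty$ generate $G_K$ in the appropriate topological sense (since $L\supset K_\infty$ and $\gal(\overline K/L)=G_L$ acts trivially on everything in sight), and that the cocycle relations for $\hat G$ on $\hat M$ translate into the cocycle relation defining a genuine $G_K$-action on $V(D)=(\wt{\mathbf B}\otimes_{\mathbf B_{K_\infty}}D)^{\varphi=1}$. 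Conversely, from $V\in\textnormal{Rep}_{\Qp}(G_K)$ one forms $D_{K_\infty}(V)$ and $\wt D_L(V)=(\wt{\mathbf B}\otimes_{\Qp}V)^{G_L}$; the identity $\wt D_L(V)=\wt{\mathbf B}_L\otimes_{\mathbf B_{K_\infty}}D_{K_\infty}(V)$ (which follows from (1) by base change along $\mathbf B_{K_\infty}\hookrightarrow\wt{\mathbf B}_L$ and taking $G_L$-invariants) equips $D_{K_\infty}(V)$ with the required $\hat G$-structure. The two constructions are mutually inverse because they already are at the level of (1), and the $\hat G$-datum is carried along functorially.

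\textbf{Main obstacle.} The routine-looking but genuinely load-bearing step is the verification that $\hat M^{\gal(L/K_\infty)}$ really equals (a base change of) $M$, equivalently that no information is lost in passing from the pair $(M,\hat M)$ to the $G_K$-representation — this is where one must use that $\wt{\mathbf A}_L^{G_L}=\wt{\mathbf A}_L$ and $(\wt{\mathbf A}_L)^{\hat G}=\wt{\mathbf A}_{K}=\mathbf A_K^{+}$-type invariance statements, together with the continuity of the $\hat G$-action, to pin down the Galois cohomology vanishing $H^1$ needed for the descent. A second delicate point is checking that the $G_K$-action on $V(D)$ obtained by glueing $G_\infty$ and $\hat G$ is \emph{continuous}: this requires that the $\hat G$-action on $\hat M$ be continuous for the natural (weak) topology and that $\wt{\mathbf B}$ carry compatible topologies, which is exactly the setup recorded in \S\ref{period rings}. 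Everything else — exactness (both functors are left exact by construction and right exactness follows from the rank count), tensor compatibility (the natural map $V(D_1)\otimes V(D_2)\to V(D_1\otimes D_2)$ is an isomorphism because it is so after $\otimes\,\wt{\mathbf B}$), and functoriality — is formal. I would therefore structure the write-up as: (i) recall (1) with a one-paragraph proof citing the field-of-norms isomorphism and Hilbert 90; (ii) prove the identity $\wt D_L(V)=\wt{\mathbf B}_L\otimes_{\mathbf B_{K_\infty}}D_{K_\infty}(V)$; (iii) set up the two functors in the $\hat G$-enhanced categories and check they are quasi-inverse, with the continuity and invariance checks isolated as short lemmas.
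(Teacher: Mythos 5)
Your proposal is correct and follows essentially the same route as the paper, which simply cites Fontaine [Fon90, Prop.~A.1.2.6] for part (1) and Caruso [Car13] (see also [GL, Prop.~2.1.7]) for part (2); your sketch just unpacks the standard arguments (field of norms, Hilbert 90 over $\wt{\mathbf{B}}$, descent of the $\hat{G}$-datum from $\wt{\mathbf{B}}_L$) contained in those references. No further comparison is needed.
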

\begin{proof}
(1) is \cite[Prop. A 1.2.6]{Fon90} (and using \cite[Lem. 2.1.4]{GL}). (2) is due to \cite{Car13} (cf. also \cite[Prop. 2.1.7]{GL}).
\end{proof}

Let  $V\in \textnormal{Rep}_{\Qp}(G_K) $.
Given $I \subset [0, +\infty]$ any interval, let
\begin{eqnarray*}
D_{K_\infty}^I(V) &:=& ( \mathbf{B}^I \otimes_{\Qp} V) ^{G_\infty}, \\
 \wt{D}_L^I(V)&:=&( \wt{\mathbf{B}}^I \otimes_{\Qp} V) ^{G_L}.
\end{eqnarray*}

\begin{defn} \label{def oc}
Let $V\in \textnormal{Rep}_{\Qp}(G_K) $, and let $\hat{D}= (D_{K_\infty}(V), \varphi, \hat G)$ be the \'etale $(\varphi, \tau)$-module associated to it.
Say that $\hat{D}$ is \emph{overconvergent} if there exists $r>0$, such that for $I'=[r, +\infty]$,
\begin{enumerate}
\item $D_{K_\infty}^{I'}(V)$ is finite free over  $\mathbf{B}^{I'}_{K_\infty}$, and
 $ \mathbf{B}_{K_\infty} \otimes_{\mathbf{B}^{I'}_{K_\infty}} D_{K_\infty}^{I'}(V) \simeq D_{K_\infty}(V);$
\item $\wt{D}_L^{I'}(V)$ is finite free over  $\wt{\mathbf{B}}^{I'}_L$ and
 $$   \wt{\mathbf{B}}_L \otimes_{\wt{\mathbf{B}}^{I'}_L}  \wt{D}_L^{I'}(V) \simeq \wt{D}_L(V).$$
\end{enumerate}
\end{defn}

\begin{theorem}\label{thm final}
For any $V\in \textnormal{Rep}_{\Qp}(G_K) $, its associated  \'etale $(\varphi, \tau)$-module is overconvergent.
\end{theorem}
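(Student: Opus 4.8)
The plan is to combine the input of classical overconvergent $(\varphi,\Gamma)$-modules with the locally analytic vector computations established above, and then descend from the locally analytic level down to the level of $(\varphi,\tau)$-modules via the Tate--Sen (or monodromy) descent of Prop.~\ref{prop M triv}, finishing with Kedlaya's slope filtration theorem. First I would fix $V \in \textnormal{Rep}_{\Qp}(G_K)$ of dimension $d$ and apply the classical theorem of Cherbonnier--Colmez to the restriction of $V$ along $G_{p^\infty}$: there exists $\tilde r_0 > 0$ such that $V$ is overconvergent as a $(\varphi,\Gamma)$-module over $\mathbb{B}^{\dagger}_{K_{p^\infty}}$ (here, as per Rem.~\ref{rem notation}, $\mathbb{B}$ denotes the $(\varphi,\Gamma)$-module rings). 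Extending scalars to $\wt{\mathbf{B}}^{\dagger}_{\rig}$ (Berger's reformulation), this shows that for any interval $I = [r,s] \subset (0,+\infty)$ with $r \geq \tilde r_0$ (and, to be safe for the monodromy variant, $r \geq r(b)$), the module $\wt{D}_L^I(V) = (\wt{\mathbf{B}}^I \otimes_{\Qp} V)^{G_L}$ is finite free over $\wt{\mathbf{B}}^I_L$ of rank $d$ and carries a semi-linear $\hat{G}$-action. The key point — which is where all the earlier work enters — is that $(\wt{D}_L^I(V))^{\hat{G}\dla}$ is finite free of rank $d$ over $(\wt{\mathbf{B}}^I_L)^{\hat{G}\dla}$: this follows by Prop.~\ref{prop la basis} applied to a basis coming from the classical overconvergent $(\varphi,\Gamma)$-module (whose cocycle matrix is pro-analytic for $\gamma$ and, being $G_\infty$-related, also $\tau$-analytic), together with Thm.~\ref{thm Ghat la} to identify the ring of $\hat G$-locally analytic vectors; morally one uses that $\wt{D}_L^I(V)^{\hat G\dla}$ agrees with the locally analytic vectors in the $(\varphi,\Gamma)$-module picture.

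Next I would invoke Prop.~\ref{prop M triv}: taking $\mathcal{M} = (\wt{D}_L^I(V))^{\hat{G}\dla}$, which is finite free over $(\wt{\mathbf{B}}^I_L)^{\hat{G}\dla}$ of rank $d$ with semi-linear locally analytic $\hat G$-action, we conclude that $\mathcal{M}^{\gal(L/K_\infty)} = (\wt{D}_L^I(V))^{\tau\dla,\gamma=1}$ is finite free of rank $d$ over $(\wt{\mathbf{B}}^I_L)^{\tau\dla,\gamma=1}$, and that base change recovers $\mathcal{M}$. By Thm.~\ref{thm loc ana gamma 1} (more precisely Cor.~\ref{cor rig la} in the Fréchet limit, or Thm.~\ref{thm la M} at finite level), $(\wt{\mathbf{B}}^I_L)^{\tau\dla,\gamma=1} = \cup_{m\geq 0}\varphi^{-m}(\mathbf{B}^{p^mI}_{K_\infty})$, so after twisting by a suitable power of $\varphi$ we obtain a finite free module $\wt{N}^I$ over $\mathbf{B}^I_{K_\infty}$ of rank $d$ (using Cor.~\ref{lem aa inter} to intersect back into the correct interval) sitting inside $D_{K_\infty}(V)[1/t]$ or rather inside $\wt{D}^I_L(V)$, which is $\varphi$-stable up to scaling of intervals and $\hat G$-stable. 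This produces, over the Robba ring $\mathbf{B}^{\dagger}_{\rig,K_\infty}$, a $\varphi$-module of rank $d$ that recovers $D_{K_\infty}(V)$ after base change to $\mathbf{B}_{K_\infty}$.

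The final step is to descend from the Robba-ring $\varphi$-module back to an étale $\varphi$-module over $\mathbf{B}^{[r,+\infty]}_{K_\infty}$, i.e.\ to the bounded overconvergent ring: here I would apply Kedlaya's slope filtration theorem, exactly as in the classical $(\varphi,\Gamma)$ case — the $\varphi$-module over the Robba ring $\mathbf{B}^{\dagger}_{\rig,K_\infty}$ associated to $V$ is pure of slope $0$ (since it becomes étale after base change to the big ring), hence descends uniquely to a $\varphi$-stable finite free model $D^{\dagger}_{K_\infty}(V)$ over $\mathbf{B}^{\dagger}_{K_\infty}$, giving condition (1) of Def.~\ref{def oc}. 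Condition (2), the overconvergence of the $\hat G$-action, then follows formally: $\wt{D}^{I'}_L(V) := \wt{\mathbf{B}}^{I'}_L \otimes_{\mathbf{B}^{I'}_{K_\infty}} D^{I'}_{K_\infty}(V)$ is a $\hat G$-stable lattice inside $\wt{D}_L^{I'}(V)$ once $r$ is chosen large enough, because the $\hat G$-action on the locally analytic vectors we constructed is defined over the overconvergent rings by construction. The main obstacle I anticipate is the first step — verifying that $(\wt{D}_L^I(V))^{\hat G\dla}$ has full rank $d$ over $(\wt{\mathbf{B}}^I_L)^{\hat G\dla}$, which requires carefully matching the $(\varphi,\tau)$-module normalization of $\wt{D}_L^I(V)$ with the $(\varphi,\Gamma)$-module over $K_{p^\infty}$ inside the common big ring $\wt{\mathbf{B}}^{\dagger}_{\rig}$, and checking that the relevant change-of-basis / cocycle matrices are genuinely $\hat G$-pro-analytic (not merely $\gamma$-pro-analytic), so that Prop.~\ref{prop la basis} applies; this is precisely the technical heart where the two-dimensionality of $\hat G$ could cause trouble, and it is resolved by the observation that the classical $(\varphi,\Gamma)$-module furnishes $\gamma$-analyticity while the elements $\varphi^{-m}(u)$ and $b$ (Lem.~\ref{lem div ana}, Lem.~\ref{lem b}) furnish the $\tau$-direction.
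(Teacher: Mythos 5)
Your proposal follows essentially the same route as the paper's proof: the classical Cherbonnier--Colmez theorem supplies a basis of $\hat G$-locally analytic vectors in $\wt{D}^I_L(V)$, Prop.~\ref{prop M triv} descends to the $(\tau\dla,\gamma=1)$-vectors, and Kedlaya's theorems (glueing of vector bundles plus slope filtrations) then produce the \'etale model over $\mathbf{B}^\dagger_{K_\infty}$. One small correction: the cocycle matrix of the $(\varphi,\Gamma)$-basis is $\hat G$-pro-analytic not because it is ``$G_\infty$-related'' but because that basis lies in $(\wt{D}^I_L(V))^{\tau=1,\gamma\dpa}$ --- it is fixed by the normal subgroup $\gal(L/K_{p^\infty})$, so analyticity in the $\tau$-direction is trivial --- and accordingly the paper's argument at this step uses neither Thm.~\ref{thm Ghat la} nor the elements $b$ and $\varphi^{-m}(u)$.
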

\begin{proof}
\textbf{Step 1:} \emph{locally analytic vectors in $\wt{D}^I_L(V)$.}
For $I=[r, s] \subset (0, +\infty)$, let
$$D^{I}_{K_{p^\infty}}(V): =(\mathbb{B}^{I} \otimes_{\Qp}V)^{G_{p^\infty}},$$
where (as we mentioned in Rem. \ref{rem notation}) $\mathbb{B}$ and $\mathbb{B}^{I}$ are the rings denoted as ``$\mathbf{B}$" and ``$\mathbf{B}^{I}$" in \cite{Ber08ANT}. We still have $\mathbb{B} \subset \wt{\mathbf{B}}$ and $\mathbb{B}^I \subset \wt{\mathbf{B}}^I$.
By the main result of \cite{CC98}, there exists some $\tilde r_0>0$, such that when $r\geq \tilde r_0$, then $D^{I}_{K_{p^\infty}}(V)$ is finite free over $\mathbb{B}^{I}_{K_{p^\infty}}$ of rank $d$ (here $\mathbb{B}^{I}_{K_{p^\infty}}$ is precisely ``$\mathbf{B}^{I}_{K}$" in \cite{Ber08ANT}). Furthermore, there exists $G_K$-equivariant and $\varphi$-equivariant isomorphism
\begin{equation} \label{wtbi}
\wt{\mathbf{B}}^{I}  \otimes_{\Qp} V \simeq \wt{\mathbf{B}}^{I} \otimes_{\mathbb{B}^{I}_{K_{p^\infty}}}  D^{I}_{K_{p^\infty}}(V).
\end{equation}
Also, by \cite[\S 5.1]{Ber02},
\begin{equation}\label{phigammala}
D^{I}_{K_{p^\infty}}(V) \subset (\wt{D}^{I}_{L}(V))^{\tau=1, \gamma\dla} \subset (\wt{D}^{I}_{L}(V))^{\hat{G}\dla}.
\end{equation}
By Prop. \ref{prop la basis}, \eqref{phigammala} implies
\begin{equation} \label{phitaupa}
\wt{D}^{I}_{L}(V)^{\hat{G}\dla} =  (\wt{\mathbf{B}}_{L}^{I})^{\hat{G}\dla} \otimes_{\mathbb{B}^{I}_{ K_{p^\infty}}}  D^{I}_{ K_{p^\infty}}(V) .
\end{equation}
So in particular $\wt{D}^{I}_{L}(V)^{\hat{G}\dla} $ is finite free over $(\wt{\mathbf{B}}_{L}^{I})^{\hat{G}\dla}$. By Prop. \ref{prop M triv},   $\wt{D}^{I}_{L}(V)^{\tau\dla, \gamma=1} $ is finite free over $(\wt{\mathbf{B}}_{L}^{I})^{\tau\dla, \gamma=1}$. By \eqref{wtbi} and \eqref{phitaupa}, we also have
\begin{equation} \label{wtbmi}
\wt{\mathbf{B}}^{I}  \otimes_{(\wt{\mathbf B}^{I}_{L})^{\tau\dla, \gamma=1}} \wt{D}^{I}_{L}(V)^{\tau\dla, \gamma=1} \simeq \wt{\mathbf{B}}^{I}  \otimes_{\Qp} V
\end{equation}

\textbf{Step 2:} \emph{glueing $\wt{D}^{I}_{L}(V)^{\tau\dla, \gamma=1}$ as a vector bundle.}
For each $X \subset [\tilde r_0, +\infty)$ a closed interval, denote $M^X: =\wt{D}^{X}_{L}(V)^{\tau\dla, \gamma=1}$, and  $R^X: =(\wt{\mathbf B}^{X}_{L})^{\tau\dla, \gamma=1}$, and so Step 1 says that $M^X$ is finite free over $R^X$.
Let $I=[r, s] \subset [\tilde r_0, +\infty)$ such that $I\cap pI$ is non-empty.
For each $k\geq 1$, $\varphi^k$ induces a bijection between $\wt{D}^{I}_{L}(V)$ and $\wt{D}^{p^kI}_{L}(V)$, and thus also a bijection between $M^I$ and $M^{p^kI}$.
Let $m_1, \cdots, m_d$ be a basis of $M^I$, and so $\varphi(m_1), \cdots, \varphi(m_d)$ is a basis of $M^{pI}$.
Let $J: =I\cap pI$, then by using Prop. \ref{prop la basis}, we have
$$M^J = R^J \otimes_{R^I}M^I, \quad M^J = R^J \otimes_{R^{pI}}M^{pI}.$$
So if we write $(\varphi(m_1), \cdots, \varphi(m_d)) =(m_1, \cdots, m_d)P$, then $P \in \GL_d(R^J)$, and so $P \in \GL_d(\mathbf{B}^J_{K_\infty, m})$ for some $m \gg 0$.

Let $I_k: =p^kI, J_k:=I_k \cap I_{k+1}=p^k J$. For each $k \geq 1$, let $E_k$ be the $\mathbf{B}^{I_k}_{K_\infty, m}$-span of $\varphi^k(m_i)$. Since $\varphi^k(P) \in  \GL_d(\mathbf{B}^{J_k}_{K_\infty, m})$, we have
$$  \mathbf{B}^{J_k}_{K_\infty, m} \otimes_{\mathbf{B}^{I_k}_{K_\infty, m}} E_k \simeq \mathbf{B}^{J_k}_{K_\infty, m} \otimes_{\mathbf{B}^{I_{k+1}}_{K_\infty, m}} E_{k+1}.$$
This says that the collection $\{\varphi^m(E_k)\}_{k \geq 1}$ forms a vector bundle over $\mathbf{B}^{[p^mr, +\infty)}_{K_\infty}$ (cf. \cite[Def. 2.8.1]{Ked05}), and so by \cite[Thm. 2.8.4]{Ked05}, there exists $n_1, \cdots, n_d \in \cap_{k \geq 1}\varphi^m(E_k)$, such that if we let
$$D_{K_\infty}^{[p^mr, +\infty)}: =\oplus_{i=1}^d \mathbf{B}_{K_\infty}^{[p^mr , +\infty)}\cdot n_i,$$
then $$  \mathbf{B}_{K_\infty}^{p^mI_k} \otimes_{\mathbf{B}_{K_\infty}^{[p^mr , +\infty)}} D_{K_\infty}^{[p^mr , +\infty)} \simeq \varphi^m(E_k).$$
Now, define
$$D^{\dagger}_{\rig, K_\infty}:=\mathbf{B}_{\rig, K_\infty}^{\dagger}\otimes_{\mathbf{B}_{K_\infty}^{[p^mr , +\infty)}}  D_{K_\infty}^{[p^mr , +\infty)}$$
Then by \eqref{wtbmi}, we have
\begin{equation} \label{Vwtb}
\wt{\mathbf{B}}_{  \rig}^{\dagger}  \otimes_{\mathbf B^{\dagger}_{\rig, K_\infty} } D^{\dagger}_{\rig, K_\infty} = \wt{\mathbf{B}}_{  \rig}^{\dagger} \otimes_{\Qp}V.
\end{equation}
Eqn. \eqref{Vwtb} implies that $D^{\dagger}_{\rig, K_\infty}$ is pure of slope $0$ (cf. \cite{Ked05}). By \cite[Thm. 6.3.3]{Ked05}, there exists an \'etale $\varphi$-module $D^{\dagger}_{ K_\infty}$ over $\mathbf B^{\dagger}_{  K_\infty}$ such that $$\mathbf{B}^{\dagger}_{\rig, K_\infty}\otimes_{\mathbf B^{\dagger}_{  K_\infty}}D^{\dagger}_{ K_\infty} =D^{\dagger}_{\rig, K_\infty}.$$


\textbf{Step 3:} \emph{overconvergence.}
We claim that
\begin{equation} \label{etale}
\mathbf{B}_{K_\infty} \otimes_{\mathbf B^{\dagger}_{  K_\infty} } D^{\dagger}_{  K_\infty} \simeq D_{K_\infty}(V).
\end{equation}
Let $D':=\mathbf{B}_{K_\infty} \otimes_{\mathbf B^{\dagger}_{  K_\infty} } D^{\dagger}_{  K_\infty}$.
By Thm. \ref{thm etale phi tau}(1), it suffices to show that
\begin{equation} \label{repeq}
V':= (\wt{\mathbf B}\otimes_{\mathbf{B}_{K_\infty}} D')^{\varphi=1} \simeq V|_{G_\infty}.
\end{equation}
 Note that $V'$ is always a $G_\infty$-representation over $\Qp$ of dimension $d$.
We have
\begin{eqnarray*}
V'&= &(\wt{\mathbf B} \otimes_{\mathbf{B}^{\dagger}_{  K_\infty}} D^{\dagger}_{  K_\infty})^{\varphi=1}\\
&= &(\wt{\mathbf{B}}^{\dagger}  \otimes_{\mathbf{B}^{\dagger}_{  K_\infty}} D^{\dagger}_{  K_\infty})^{\varphi=1}, \quad \text{ by \cite[Thm. 8.5.3(d)(e)]{KL15} },\\
 &\subset & (\wt{\mathbf{B}}^{\dagger}_{\rig}  \otimes_{\mathbf{B}^{\dagger}_{\rig,  K_\infty}} D^{\dagger}_{\rig,  K_\infty})^{\varphi=1}   \\
&= &  (\wt{\mathbf{B}}^{\dagger}_{\rig}  \otimes_{\Qp} V  )^{\varphi=1}, \quad \text{ by } \eqref{Vwtb}, \\
&=& V.
\end{eqnarray*}
So \eqref{repeq} holds for dimension reasons, and so \eqref{etale} holds, concluding the  overconvergence of $\varphi$-action (i.e., Def. \ref{def oc}(1) is verified).

Finally, note that $\wt{\mathbf{B}}^{\dagger}  \otimes_{\mathbf{B}^{\dagger}_{  K_\infty}} D^{\dagger}_{  K_\infty} \simeq \wt{\mathbf{B}}^{\dagger}  \otimes_{\Qp} V$, so if we let
$$\wt{D}^\dagger_L(V): = (\wt{\mathbf{B}}^{\dagger}  \otimes_{\Qp} V)^{G_L},$$
then $\wt{D}^\dagger_L(V) \simeq \wt{\mathbf{B}}^{\dagger}_L \otimes_{\mathbf{B}^{\dagger}_{  K_\infty}} D^{\dagger}_{  K_\infty}$. This implies the overconvergence of the $\tau$-action (i.e., Def. \ref{def oc}(2) is verified).
\end{proof}

\bibliographystyle{alpha}

\end{document}